\newtheorem{theorem}{Theorem}[section]
\newtheorem{lemma}[theorem]{Lemma}
\newtheorem{proposition}[theorem]{Proposition}
\newtheorem{corollary}[theorem]{Corollary}
\theoremstyle{definition}
\newtheorem{definition}[theorem]{Definition}
\theoremstyle{remark}
\newtheorem*{remark}{Remark}
\def\paragraph#1{\noindent \textbf{#1}}
\numberwithin{equation}{section}
\def\sign{\mathop{\rm sign}\nolimits}
\def\d{\mathrm{d}}
\def\<{\langle}
\def\>{\rangle}
\def\a{\alpha}
\def\b{\beta}
\def\e{\epsilon}
\def\g{\gamma}
\def\l{\lambda}
\def\s{\sigma}
\def\t{\tau}
\def\th{\theta}
\def\o{\omega}
\def\L{\Lambda}
\def\G{\Gamma}
\def\O{\Omega}
\def\del{\partial}
\def\R{{\Bbb R}}  
\def\N{{\Bbb N}}  
\def\P{{\Bbb P}}  
\def\C{{\Bbb C}}  
\def\E{{\Bbb E}}  
\def\U{{\Bbb U}}
\def\Va{{\Bbb V}}  
\let\cal=\mathcal
\def\AA{{\cal A}}
\def\BB{{\cal B}}
\def\DD{{\cal D}}
\def\EE{{\cal E}}
\def\FF{{\cal F}}
\def\QQ{{\cal Q}}
\def\TT{{\cal T}}
 \def \X {{\Xi}}
 \def \G {{\Gamma}}
 \def \L {{\Lambda}}
 \def \b {{\beta}}
 \def \e {{\varepsilon}}
 \def \s {{\sigma}}
 \def \m {{\mu}}
 \def \t {{\tau}}
 \def \th {{\theta}}
 \def \g {{\gamma}}
 \def \l {{\lambda}}
 \def \d {{\delta}}
 \def \a {{\alpha}}
 \def \o {{\omega}}
 \def \O {{\Omega}}
 \def \th {{\theta}}
 \def \del {{\partial}}
 \def \c {{\chi}}
 \def \ba {\begin{array}}
 \def \ea {\end{array}}
\def \A {{\mathbb A}}
\def \B {{\mathbb B}}
\def \X {{\mathbb X}}
\def \frf {{\mathfrak f}}
 \def \cC {{\cal C}}
 \def \cE {{\cal E}}
 \def \cF {{\cal F}}
 \def \cG {{\cal G}}
 \def \cH {{\cal H}}
 \def \cQ {{\cal Q}}
 \def \cS {{\cal S}}
 \def \cU {{\cal U}}
 \def \cX {{\cal X}}
 \newcommand{\be}{\begin{equation}}
 \newcommand{\ee}{\end{equation}}
\newcommand{\bea}{\begin{eqnarray}}
 \newcommand{\eea}{\end{eqnarray}}
\def\TH(#1){\label{#1}}\def\thv(#1){\ref{#1}}
\def\Eq(#1){\label{#1}}\def\eqv(#1){(\ref{#1})}
\def\sfrac#1#2{{\textstyle{#1\over #2}}}
\def\capa{\hbox{\rm cap}}
 \def \1{\mathbbm{1}}
\def\wt {\widetilde}
 \def \bz {{\boldsymbol z}}
 \def \bx{{\boldsymbol x}}
   \def \by{{\boldsymbol y}}
\def \bu {{\boldsymbol u}}
\def \bm{{\boldsymbol m}}
 \def \bfe{{\boldsymbol e}}
 \def \bv {{\boldsymbol v}}
\def \bA {{\boldsymbol A}}
\def \bB {{\boldsymbol B}}
\def \bG {{\boldsymbol G}}
\def \bU {{\boldsymbol U}}
\def\checkv{{\boldsymbol {\check v}}}
\def \frf {{\mathfrak f}}
\newcommand \Capm[1]{\C ap_N^\epsilon   ( #1)}
\newcommand \wCapm[1]{\wt{\C ap}_N^\epsilon   ( #1)}
\newcommand{\step}[1]{S{\small TEP}~#1}
\def \lb {\left(}
\def \rb {\right)}
\def \lbr {\left\{}
\def \rbr {\right\}}
\def \sign{\hbox{\rm sign}}
\newcommand{\muc}[1]{\mu_{\beta , N}^{#1}}  
\def \so {\text{\small{o}}(1)}
\def \po {\text{\small{o}}}
\def\vm{\bm}
\renewcommand \Capm[1]{\C {\rm ap}_N^n  ( #1)}
\renewcommand \wCapm[1]{\wt{\C{\rm ap}}_N^n  ( #1)}
\def\ubx{\underline{\bx}}
\def\us{\underline{\s}}
\begin{document}

 \title[Metastability in the RFCW model]
{Sharp asymptotics for metastability in the\\
 random field Curie-Weiss model}
 \author[A. Bianchi]{
Alessandra Bianchi}
\address{A. Bianchi \\Weierstrass-Institut
f\"ur Angewandte Analysis und
 Stochastik\\ Mohrenstrasse 39\\ 10117 Berlin, Germany}
\email{bianchi@wias-berlin.de}
\author[A. Bovier]{Anton Bovier}
 \address{A. Bovier\\Weierstrass-Institut f\"ur Angewandte Analysis und
 Stochastik\\ Mohrenstrasse 39\\ 10117 Berlin, Germany\\ and
 Institut f\"{u}r Mathematik \\Technische Universit\"{a}t Berlin\\
 Strasse des 17.\ Juni 136\\ 10623 Berlin, Germany}
\email{bovier@wias-berlin.de}
\author[D. Ioffe]{Dmitry Ioffe}
 \address{D. Ioffe\\William Davidson Faculty of Industrial Engineering and
 Management\\ Technion\\ Haifa 32000, Israel}
\email{ieioffe@technion.ac.il}

\subjclass[2000]{82C44,60K35,60G70} \keywords{Disordered system,
random field Curie-Weiss
 model, Glauber dynamics, metastability,
 potential theory,
 Dirichlet form, capacity}
\date{\today}

 \begin{abstract}

 In this paper we study the metastable behavior of one of the simplest
 disordered spin system, the random field Curie-Weiss model. We will
 show how the potential theoretic approach can be used to prove sharp
 estimates on capacities and metastable exit times also in the case
 when the distribution of the random field is continuous. Previous work
 was restricted to the case when the random field takes only finitely
 many values, which  allowed the reduction to a finite dimensional
 problem using lumping techniques. Here we produce the first genuine
sharp estimates in a context where entropy is important.
 \end{abstract}

\thanks{This research was supported through a grant by
 the German-Israeli Foundation (GIF). The kind hospitality of the Technion,
 Haifa, and the Weierstrass-Institute for Applied Analysis and
 Stochastics is gratefully acknowledged.}

 \maketitle


 \section{Introduction and main results}
 \label{S1}

The simplest example of disordered mean field  models is the random
field Curie-Weiss model. Here the state space is $\cS_N=\{-1,1\}^N$,
where $N$ is the number of particles of the system. Its Hamiltonian
is
\be
H_N[\o](\s) \equiv -\frac N2 \left(\frac 1N\sum_{i\in\L}
\s_i\right)^2 -\sum_{i\in\L} h_i[\o]\s_i, \Eq(M.5)
\ee
 where $\L\equiv\{1,\ldots, N\}$ and $h_i$, $i\in \L$, are
i.i.d. random variables on some probability space $(\O, \cF, \P_h )$.
For sake of convenience, we will assume throughout
this paper that the common distribution of $h$ has bounded support.

The dynamics of this model has been studied before: dai
Pra and den Hollander studied the short-time dynamics using large
deviation results and obtained the analog of the McKeane-Vlasov
equations \cite{dPdH}.  Mathieu and Picco \cite{MP1}
and Fontes, Mathieu, and Picco \cite{FMP}, considered
convergence to equilibrium in a particularly simple case where the
random field takes only the two  values $\pm \e$. 
Finally, Bovier et al. \cite{BEGK01}  analyzed this model
in the case when $h$ takes finitely many values,
as an example of the use of the potential theoretic approach to
metastability.
In this article we extend this analysis to the case of random fields
with continuous distributions, while at the same time improving the
results by giving sharp estimates of transition times between
metastable states.

The present paper should be seen, beyond the interest presented by
the model as such, as a first case study in the attempt to derive
precise asymptotics of metastable characteristics in kinetic Ising
models in situations where neither the temperature tends to zero nor
an exact reduction to low-dimensional models is possible. While the
RFCW model is certainly one of the simplest examples of this class,
we feel that the general methodology developed here will be
useful in a much wider class of systems.

\subsection{Gibbs measure and order parameter. The static picture}
\label{SS11} The equilibrium statistical mechanics of the RFCW model
was analyzed in detail in \cite{APZ} and \cite{Ku}. We give a very
brief review of some key features that will be useful later. As
usual, we define the Gibbs measure of the model as the random
probability measure
\be\Eq(static.1)
\mu_{\b,N}[\o](\s)\equiv
\frac{2^{-N}e^{-\b
    H_{N}[\o](\s)}}{Z_{\b,N}[\o]},
\ee
where the partition function is defined as
\be\Eq(static.2)
Z_{\b,N}[\o] \equiv \E_\s e^{-\b
    H_{N}[\o](\s)}\equiv2^{-N}\sum_{\s\in S_N}e^{-\b
    H_{N}[\o](\s)}  .
\ee
We define the total magnetization as
\be\Eq(static.3)
 m_N(\s)\equiv \frac 1N\sum_{i\in\L}\s_i.
\ee
The magnetization will be the \emph{order parameter} of the
model, and we define its distribution under the Gibbs measures as
the \emph{induced measure},
\be\Eq(static.4)
\cQ_{\b,N}\equiv
\mu_{\b,N}\circ m_N^{-1}, \ee  on the set of possible
values $\G_N\equiv \{-1,-1+2/N,\dots,1\}$.

Let us begin by writing
\be\Eq(static.11)
Z_{\b,N}[\o]\cQ_{\b,N}[\o](m)=\exp\left(\frac{N\b}2 m^2  \right)
Z^1_{\b,N} [\o](m)
\ee
where
\be\Eq(static.12)
 Z^1_{\b,N}[\o](m)\equiv\E_{\s} \exp\left(\b\sum_{i\in\L} h_i
  \s_i\right)\1_{\left\{ N^{-1}\sum_{i\in\L}\s_i=m\right\}}
  \equiv \E^{h}_{\s}\1_{\left\{N^{-1}\sum_{i\in\L}
\s_i=m\right\}}.
\ee
For simplicity we will in the sequel identify functions defined on the
discrete set $\G_N$ with functions defined on $[-1,1]$ by setting
$f(m)\equiv f([2 N  m]/2N)$.
Then, for  $m\in (-1,1)$, $Z^1_N(m)$ can be expressed,
using sharp large deviation estimates \cite{DIMA}, as
\be\Eq(static.13)
 Z^1_{\b,N}[\o](m)=\frac{\exp\left(-N I_{N}[\o](m)\right)
}{\sqrt{\sfrac {N\pi}2 /I''_{N} [\o](m)}} \left(1+\po(1)\right),
\ee
where
$\po(1)$ goes to zero as $N\uparrow\infty$. This means that we can
express the right-hand side in \eqv(static.11) as
\be\Eq(static.11.1)
Z_{\b,N}[\o]\QQ_{\b,N}[\o](m)= \sqrt{\sfrac{
   2 I''_{N}[\o](m)}{N\pi} }
\exp\left(-N\b F_{\b,N}[\o](m)\right) \left(1+\po(1)\right),
 \ee
where
\bea\Eq(static.116)
F_{\b,N}[\o](m)\equiv -\frac 12 m^2 +\frac
1\b I_{N}[\o](m).
\eea
Here $I_{N}[\o](y)$ is the Legendre-Fenchel transform of the
  log-moment generating function
\bea\Eq(static.14)
U_{N}[\o](t)&\equiv& \frac
1{N}\ln\E^{h}_{\s}\exp\left(t\sum_{i\in\L}  \s_i\right)
\\\nonumber
&=&\frac 1{N}\sum_{i\in \L}\ln\cosh\left(t+\b h_i\right).
\eea
Above we have indicated the random nature of all functions that appear by
making their dependence on the random parameter $\o$ explicit. To
simplify notation, in the sequel this dependence will mostly be dropped.

We are  interested in the behavior of this function near critical
points of $F_{\b,N}$. An  important
 consequence of
Equations \eqv(static.11) through
\eqv(static.14)
is that if  $m^*$ is a
critical point of $F_{\b,N}$, then for  $|v|\leq N^{-1/2+\d}$,
\be \Eq(static.16)
\frac {\QQ_{\b,N}(m^*+v)} {\QQ_{\b,N}(m^*)}
=\exp\left(-\frac{\b N}2 a(m^*)v^2\right)\left(1+\po(1)\right),
 \ee
with
\be\Eq(static.111)
a(m^*) \equiv F''_{\b,N}(m^*) = -1 + \b^{-1} I''_{N}(m^*).
\ee
 Now,   if $m^*$ is a critical
point of $F_{\b,N}$ , then
\be\Eq(static.17)
 m^* =\b^{-1} I'_{N}(m^*)\equiv \b^{-1} t^*,
\ee
or
\be\Eq(static.18) \b m^* = I_{N}'(m^*)=t^*.
\ee
Since $I_{N}$ is the Legendre-Fenchel transform of $U_{N}$,
$I_{N}'(x) = U_{N}'^{-1}
(x)$, so that
\be \Eq(static.19)
m^* =U'_{N}(\b m^*) \equiv \frac
1{N} \sum_{i\in\L} \tanh(\b(m^*+h_i))).
\ee
Finally, using that at a critical point,
$I''_{N,\ell}(m^*)=\frac
  1{U''_{N,\ell}(t^*)}$,
we get the alternative expression
\be \Eq(static.112)
a(m^*)  =-1+ \frac 1{\b  U''_{N} (\b m^*)}
=-1+\frac 1{\frac \b
N\sum_{i\in\L}\left(1-\tanh^2(\b(m^*+h_i))\right)}.
 \ee
We see that, by the law of large numbers, the set of critical points
converges, $\P_h$-almost surely,  to the set of solutions of the equation
\be\Eq(static.200)
m^*=\E_h \tanh\left(\b\left(m^*+h\right)\right),
\ee
and the second derivative of $F_{\b,N}(m^*)$ converges to
\be
\Eq(static.201) \lim_{N\rightarrow\infty} F_{\b,N}''(m^*)
=-1+\frac 1{ \b \E_h \left(1-\tanh^2(\b(m^*+h))\right)}.
\ee
Thus, $m^*$ is a local  minimum if
\be\Eq(static.202)
 \b \E_h \left(1-\tanh^2(\b(m^*+h))\right)<1,
\ee
and a local maximum if
\be\Eq(static.203)
 \b \E_h \left(1-\tanh^2(\b(m^*+h))\right)>1.
\ee
(The cases where $ \b \E_h \left(1-\tanh^2(\b(m^*+h))\right)=1$
correspond to second order phase transitions and will not be
considered here).

\begin{proposition}\TH(static.215)
Let $m^*$ be a critical point of $\QQ_{\b,N}$. Then, $\P_h$-almost surely,
for all but finitely many values of $N$,
\be\Eq(static.161)
Z_{\b,N}\QQ_{\b,N}(m^*) = \frac {\exp\left(-\b NF_{\b,N}(m^*)\right)
\left(1+\po(1)\right)}
{\sqrt{\sfrac {N\pi}{2}\left|\E
\left(1-\tanh^2(\b(m^*+h))\right)\right| }}
\ee
with
\be\Eq(static.162)
F_{\b,N}(m^*)=
\frac {\left(m^*\right)^2}2 - \frac
1{\b N}\sum_{i\in
  \L}\ln\cosh\left(\b(m^*+h_i)\right).
\ee
\end{proposition}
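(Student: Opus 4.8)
The plan is to assemble the identities already collected in \eqv(static.11)--\eqv(static.112) and to add the one probabilistic ingredient still needed, a law of large numbers that is uniform near the limiting critical points. First I would fix the exponential rate. By \eqv(static.116), $F_{\b,N}(m)=-\frac12 m^2+\b^{-1}I_N(m)$; and since $I_N$ is the Legendre--Fenchel transform of the strictly convex function $U_N$, the supremum defining $I_N(m^*)$ is attained at the unique $t$ with $U_N'(t)=m^*$, which by \eqv(static.19) is $t=\b m^*$. Hence $I_N(m^*)=\b(m^*)^2-U_N(\b m^*)$, so that
\[
F_{\b,N}(m^*)=\frac{(m^*)^2}{2}-\frac1\b U_N(\b m^*)
=\frac{(m^*)^2}{2}-\frac1{\b N}\sum_{i\in\L}\ln\cosh\bigl(\b(m^*+h_i)\bigr),
\]
which is \eqv(static.162); this step is purely algebraic.

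Next I would handle the prefactor. Combining \eqv(static.11) with the sharp large-deviation asymptotics \eqv(static.13) gives
\[
Z_{\b,N}\QQ_{\b,N}(m^*)=\sqrt{\frac{2\,I''_N(m^*)}{N\pi}}\,\exp\bigl(-\b N F_{\b,N}(m^*)\bigr)\,(1+\po(1)),
\]
and at a critical point $I''_N(m^*)=1/U''_N(\b m^*)$, while by \eqv(static.112) $U''_N(\b m^*)=\frac1N\sum_{i\in\L}(1-\tanh^2(\b(m^*+h_i)))$; thus the prefactor equals $\bigl(\frac\pi2\sum_{i\in\L}(1-\tanh^2(\b(m^*+h_i)))\bigr)^{-1/2}$. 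What is left of the proposition is then exactly to replace the empirical mean $\frac1N\sum_{i\in\L}(1-\tanh^2(\b(m^*+h_i)))$ by $\E_h(1-\tanh^2(\b(m^*+h)))$ up to a factor $1+\po(1)$, $\P_h$-almost surely.

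This replacement is where care is required, because $m^*=m^*_N$ is random and $N$-dependent, so the law of large numbers may not be applied at a frozen argument. I would first note that, $h$ being bounded by some constant $c$, the fixed-point relation \eqv(static.19) forces $|m^*_N|\le\tanh(\b(1+c))=:M_0<1$ for all $N$ and all realisations, so every critical point lies in the fixed compact set $K:=[-M_0,M_0]\subset(-1,1)$. On $K$ the deterministic function $g(m):=\E_h(1-\tanh^2(\b(m+h)))$ is continuous and bounded below by some $\d>0$, and $m\mapsto 1-\tanh^2(\b(m+h))$ is Lipschitz with a constant independent of $h$; a routine covering argument then yields, $\P_h$-a.s., $\sup_{m\in K}\bigl|\frac1N\sum_{i\in\L}(1-\tanh^2(\b(m+h_i)))-g(m)\bigr|\to 0$, and together with $\inf_K g\ge\d$ this gives $\frac1N\sum_{i\in\L}(1-\tanh^2(\b(m^*_N+h_i)))=g(m^*_N)(1+\po(1))$. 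Since $g\ge 0$, the absolute value appearing in \eqv(static.161) is harmless. The same covering argument upgrades the $\po(1)$ in \eqv(static.13) to one that is uniform for $m\in K$, hence may be read off at the moving point $m^*_N$; finally the countably many exceptional null sets produced by these almost-sure statements are intersected to obtain the ``for all but finitely many $N$'' conclusion, and putting the three displays together yields \eqv(static.161).

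The main obstacle is therefore not a single estimate but precisely this uniformity: one must know that both the local large-deviation expansion \eqv(static.13) and the law of large numbers hold uniformly over the compact $K$ that contains every critical point, so that they survive evaluation at the moving point $m^*_N$. Once that is in hand, the proposition follows by inspection.
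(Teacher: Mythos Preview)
Your proposal is correct and follows the same approach as the paper, which leaves Proposition~\thv(static.215) without an explicit proof but carries out the identical Legendre-duality computation for the $n$-dimensional analogue in the proof of Proposition~\thv(fine.15) (see \eqv(fine.17)--\eqv(fine.18)). Your treatment of the uniformity issue---that $m^*=m^*_N$ is a moving random point, so both the expansion \eqv(static.13) and the law of large numbers must hold uniformly over a fixed compact subset of $(-1,1)$---is in fact more careful than the paper's, which leaves this point implicit.
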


From this discussion we get a very precise picture of the
distribution of the order parameter.

\subsection{Glauber dynamics} \label{SS12}

We will consider for definiteness discrete time
Glauber dynamics with Metropolis
transition probabilities
\be\Eq(metro.1)
p_N[\o](\s,\s')\equiv\frac
1N\exp\left(-\b [H_N[\o](\s')-H_N[\o](\s)]_+\right),
\ee
if $\s$ and $\s'$ differ on a single coordinate,
\be\Eq(metro.2)
p_N[\o](\s,\s)\equiv 1-\sum_{\s'\sim\s}\frac 1N\exp\left(-\b
 [H_N[\o](\s')-H_N[\o](\s)]_+\right),
\ee
and $p_N(\s,\s')=0$ in all other cases. We will denote the
Markov chain corresponding to these transition
 probabilities $\s(t)$ and write $\P_\nu[\o]\equiv \P_\nu$,
for the law of this chain
 with initial distribution $\nu$, and we will set
$\P_\s\equiv \P_{\d_\s}$.
As is well known, this chain is ergodic and reversible with respect
to the Gibbs measure $\mu_{\b,N}[\o]$, for each $\o$.
Note that we might also study chains with different
 transition probabilities that are reversible with respect to the
 same measures. Details of our results will depend on this choice.
The transition matrix associated with these transition probabilities
 will be called $P_N$, and we will denote by $L_N\equiv P_N-\1$ the
 (discrete) generator of the chain.

Our main result will be sharp estimates for mean hitting times
between minima of the function $F_{\b,N}(m)$ defined in \eqv(static.116).

More precisely, for any subset $A\subset S_N$, we define the
stopping time
\be \Eq(strange.20)
\t_A\equiv \inf\{t>0|\s(t)\in A\}.
 \ee
We also need to define, for any two subsets $A,B\subset S_N$,
the probability measure  on $A$ given by
\be\Eq(theorem1.1)
\nu_{A,B}(\s)=\frac{\mu_{\b,N}(\s)\P_\s[\t_{B}<\t_A]}
{\sum_{\s\in A}\mu_{\b,N}(\s)\P_\s[\t_{B}<\t_A]}.
\ee
We will be mainly concerned with sets of
configurations with given magnetization. For any  $I\in \G_N$,
we thus introduce the notation
$S[I]\equiv\{\s\in S_N: m_N(\s)\in I\}$ and state the following:
\begin{theorem}\TH(theorem1)
Assume that $\b$ and the distribution of the magnetic field are such
that there exist more than one local minimum of $F_{\b,N}$. Let
$m^*$ be a local minimum of $F_{\b,N}$, $M\equiv M(m^*)$ be the set
of minima of $F_{\b,N}$ such that $F_{\b,N}(m)<F_{\b,N}(m^*)$,
and  $z^*$ be the minimax between $m$ and $M$, i.e.  the lower of the
highest maxima separating $m$ from $M$ to the left respectively right.
Then, $\P_h$-almost surely, for all but finitely many values of $N$,
\bea\Eq(theorem1.2)
 \E_{\nu_{S[m^*],S[M]}} \t_{S[M]}&=&
\exp\left(\b
N\left[F_{\b,N}(z^*)-F_{\b,N}(m^*)\right]\right)\\\nonumber
&&\times\frac {2\pi N}  {\b |\bar\g_1|}
\sqrt{\frac{\b\E_h\left(1-\tanh^2\left(\b(z^*+h)\right)\right)-1}
{1-\b\E_h\left(
    1-\tanh^2\left(\b(m^*+h)\right)\right)}}\left(1+\po(1)\right),
\eea
where $\bar\g_1$ is the unique negative solution of the
equation

\be\Eq(theorem1.3)
\E_h\left[\frac{ \left(1-\tanh(\b(z^*+h))\right)
\exp{(-2\b\left[z^* + h\right]_+)} }
{\frac{\exp{(-2\b\left[z^* +
 h\right]_+)}}{ \b\left(1+\tanh(\b(z^*+h)) \right)}-2\g}\right]=1.
 \ee
Note that we have the explicit
representation
 for the random quantity
\bea\Eq(theorem.1.2.1)
 F_{\b,N}(z^*)-F_{\b,N}(m^*)\, &=&
 \frac { \left(z^*\right)^2 - \left(m^*\right)^2}2\\\nonumber&-&
\frac{1}{\b N}\sum_{i\in
  \L}\left[\ln\cosh\left(\b(z^*+h_i)\right)
-\ln\cosh\left(\b(m^*+h_i)\right)\right].
\eea
\end{theorem}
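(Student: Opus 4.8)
The plan is to use the potential-theoretic (Dirichlet form) representation of mean hitting times, which expresses
\[
\E_{\nu_{S[m^*],S[M]}}\t_{S[M]}=\frac{\mathrm{cap}_N(S[m^*],S[M])^{-1}}{\dots}
\]
more precisely $\E_{\nu_{A,B}}\t_B=\bigl(\sum_\s\mu_{\b,N}(\s)\P_\s[\t_B<\t_A]\bigr)/\mathrm{cap}_N(A,B)$, where $\mathrm{cap}_N(A,B)$ is the capacity associated with the Dirichlet form of $P_N$. Thus the theorem reduces to two separate sharp estimates: (i) the numerator $\sum_{\s\in S[m^*]}\mu_{\b,N}(\s)\P_\s[\t_{S[M]}<\t_{S[m^*]}]$, and (ii) the capacity $\mathrm{cap}_N(S[m^*],S[M])$. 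For (i), one shows that $\P_\s[\t_{S[M]}<\t_{S[m^*]}]$ is exponentially small off a neighbourhood of the saddle, so the sum is dominated by the mass $\mu_{\b,N}(S[m^*])$ near the metastable well; using Proposition \ref{static.215} and a Gaussian (Laplace) approximation of $\QQ_{\b,N}$ around $m^*$ as in \eqv(static.16), the numerator is, up to $(1+\po(1))$, $Z_{\b,N}^{-1}\exp(-\b N F_{\b,N}(m^*))$ times an explicit Gaussian normalization involving $a(m^*)=1-\b\E_h(1-\tanh^2(\b(m^*+h)))$ (after summing the one-dimensional magnetization density and the entropy factor).

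The heart of the proof is the sharp two-sided estimate on the capacity. For the upper bound I would use the Dirichlet variational principle $\mathrm{cap}_N(A,B)=\inf_h \Phi_N(h)$ over functions $h$ with $h|_A=1$, $h|_B=0$, and insert an explicit test function that depends on $\s$ essentially only through $m_N(\s)$ in the saddle region, interpolating smoothly from $1$ to $0$ across $z^*$; the resulting quadratic form, after carrying out the sum over configurations with given magnetization and a one-dimensional Laplace evaluation at $z^*$, produces $\exp(-\b N F_{\b,N}(z^*))$ times the curvature factor at $z^*$, i.e. $\b\E_h(1-\tanh^2(\b(z^*+h)))-1$, together with the constant $|\bar\g_1|$. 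For the lower bound I would use the Berman--Konsowa / Thomson (flow) variational principle, $\mathrm{cap}_N(A,B)=\sup_f \ \bigl(\text{unit-flow energy}\bigr)^{-1}$, constructing a suitable unit flow from $S[m^*]$ to $S[M]$ that is concentrated near the saddle and essentially one-dimensional in the magnetization coordinate; matching the two bounds pins down the constant $2\pi N/(\b|\bar\g_1|)$ up to $(1+\po(1))$.

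The role of the quantity $\bar\g_1$ and equation \eqv(theorem1.3) is the genuinely new feature, and this is where the main obstacle lies: because the field is continuous, the dynamics near the saddle is \emph{not} one-dimensional — the correct effective object is a discrete Laplacian / quadratic form on the infinite-type lumped variables (one coordinate per value of the field), and $\bar\g_1$ arises as the relevant negative eigenvalue of the linearization of the Glauber generator at $z^*$, obtained by diagonalizing the corresponding Hessian against the local curvature of $F_{\b,N}$. Concretely one must show that the capacity is governed by the unique negative solution of a self-consistent (secular) equation, which after passing $N\to\infty$ and using the law of large numbers over the $h_i$'s becomes \eqv(theorem1.3); controlling the error in replacing empirical field-averages by $\E_h$ uniformly, and showing that the "transverse" (entropic) fluctuations around the one-dimensional saddle neither inflate nor deflate the capacity beyond $(1+\po(1))$, is the technically hardest part.

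Finally, the almost-sure-for-all-but-finitely-many-$N$ statement follows by quantifying all the $\po(1)$ errors above as $\P_h$-probabilities of order $e^{-cN^\delta}$ (via concentration of the i.i.d. sums $\frac1N\sum_i g(h_i)$ appearing in $U_N$, $I_N$, $F_{\b,N}$ and their derivatives) and applying Borel--Cantelli; the explicit formula \eqv(theorem.1.2.1) is then just \eqv(static.162) evaluated at $z^*$ and $m^*$ and subtracted.
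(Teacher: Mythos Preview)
Your overall architecture matches the paper's: the representation \eqv(pot.12), a sharp two--sided bound on the capacity via the Dirichlet principle (upper) and the Berman--Konsowa flow principle (lower), and control of the equilibrium potential to reduce the numerator to the Gaussian mass of the well at $m^*$. However, there is one slip and one genuine gap.

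The slip: your displayed formula has the wrong harmonic function. The numerator in \eqv(pot.12) is $\sum_{\s\in S_N}\mu_{\b,N}(\s)\,h_{A,B}(\s)$ with $h_{A,B}(\s)=\P_\s[\t_A<\t_B]$, not $\P_\s[\t_B<\t_A]$, and the sum runs over \emph{all} of $S_N$, not just $S[m^*]$. The point of the argument (Section~6 in the paper) is that $h_{A,B}\approx 1$ on the whole well $\cU_\d(m^*)$ and $h_{A,B}\approx 0$ on the deeper wells; your sentence ``exponentially small off a neighbourhood of the saddle'' describes the wrong object. Establishing $h_{A,B}\approx 1$ on $\cU_\d(m^*)$ is itself nontrivial here (the usual renewal bound $h_{A,B}(\s)\le \capa(\s,A)/\capa(\s,B)$ is useless because single--point capacities are too small); the paper does it via explicit super--harmonic barrier functions on the mesoscopic space together with a renewal argument.

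The genuine gap concerns the capacity. You propose a test function that ``depends on $\s$ essentially only through $m_N(\s)$'' and a flow that is ``essentially one--dimensional in the magnetization coordinate''. That is precisely the naive approximation the paper discusses after the statement of the theorem, and it yields the \emph{wrong} prefactor $|a(z^*)|$ from \eqv(static.201), not $|\bar\g_1|$. To get $|\bar\g_1|$ one must work on the $n$--dimensional mesoscopic space $\G_N^n$ obtained by partitioning the support of $h$ into $n$ bins and tracking the block magnetizations $\bm(\s)$. The upper--bound test function is $g(\bx)=f((\bv,\bx-\bz^*))$, where $\bv$ is built from the eigenvector of the matrix $\B_{\ell k}=\sqrt{r_\ell}\,\A(\bz^*)_{\ell k}\sqrt{r_k}$ associated with its unique negative eigenvalue $\hat\g_1$; the lower--bound flow is likewise constructed first on $\G_N^n$ (using the approximate harmonicity of $g$ from Lemma~\thv(up.34)) and then lifted to a microscopic flow via a carefully designed subordinate chain. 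Only after this does one let $n\to\infty$, whereupon $\hat\g_1(N,n)\to\bar\g_1$ via \eqv(up.87). You clearly see that a multi--coordinate linearization is behind $\bar\g_1$, but you have not fed that insight back into the variational constructions; as written, your upper and lower bounds would match each other at the wrong constant.
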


The proof of this result on mean transition times relies on the
following result on \emph{capacities} (for a definition see
Eq. \eqv(pot.5) in Section 2 below).

\begin{theorem} \TH(CAP-thm) With the same notation as in Theorem
  \thv(theorem1) we have that
\be\Eq(cap1.2)
Z_{\b,N}\capa\left(S[m^*],S[M]\right)=
\frac   {\b |\bar\g_1|}{2\pi N} \frac{\exp\left(-\b
NF_{\b,N}(z^*)\right)\left(1+\po(1)\right)}{
\sqrt{{\b\E_h\left(1-\tanh^2\left(\b(z^*+h)\right)\right)-1}}}.
\ee
\end{theorem}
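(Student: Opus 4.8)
The plan is to use the standard variational (Dirichlet principle) characterization of capacity together with matching lower and upper bounds, exploiting the fact that the RFCW dynamics is, up to the magnetization coordinate, essentially one-dimensional. First I would recall that $\capa(S[m^*],S[M]) = \tfrac12\min_{h} \sum_{\s,\s'}\mu_{\b,N}(\s)p_N(\s,\s')(h(\s)-h(\s'))^2$, the minimum over functions $h$ equal to $1$ on $S[m^*]$ and $0$ on $S[M]$. The crucial structural observation is that both the Gibbs weight $\mu_{\b,N}(\s)$ and the transition rates $p_N(\s,\s')$ depend on $\s$ only through the magnetization $m_N(\s)$ and, for the energy barrier, through the values of the local fields $h_i$ at the flipped site; near the critical point $z^*$ the natural test function depends only on $m_N(\s)$. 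So the first step is to reduce the $N$-dimensional variational problem to an effectively one-dimensional one, near the saddle $z^*$, and identify the correct effective conductances.

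The second step is the lower bound on $\capa$. Here I would use the monotonicity of capacity under removal of edges (or, equivalently, the dual flow/Thompson principle): restrict the Dirichlet form to flows that pass through the "bottleneck" hyperplane near $m_N(\s)=z^*$, and compute the effective resistance of the thin slab $\{|m_N(\s)-z^*|\le N^{-1/2+\d}\}$. Using the Gaussian approximation \eqv(static.16) for $\QQ_{\b,N}$ around $z^*$ (with the negative curvature $a(z^*)$), together with the asymptotics of Proposition \thv(static.215) for the prefactor, the one-dimensional resistance computation produces a Gaussian integral whose value is $\sqrt{2\pi/(\b N |a(z^*)|)}$ up to constants; the local-CLT fluctuations transverse to the magnetization direction contribute the factor $\bar\g_1$ through the precise structure of the rates $\exp(-2\b[z^*+h_i]_+)$ at the flipped spins. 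This is where the constant $\bar\g_1$, defined implicitly in \eqv(theorem1.3), must emerge: it is the relevant eigenvalue/normalization of the linearized generator restricted to the saddle neighborhood, and pinning down exactly this quantity is the technical heart of the argument.

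The third step is the matching upper bound on $\capa$, obtained by plugging an explicit test function into the Dirichlet principle. The test function should equal $1$ for $m_N(\s)$ well to the $m^*$-side of $z^*$, $0$ well to the $M$-side, and interpolate through the saddle neighborhood along the one-dimensional profile dictated by the effective equation — essentially the equilibrium potential of the linearized problem, which is why the same constant $\bar\g_1$ reappears. One then checks that the Dirichlet form of this test function is dominated by the contribution of the slab near $z^*$ (the contributions away from the saddle are exponentially smaller because $F_{\b,N}$ is strictly below $F_{\b,N}(z^*)$ there), and that this contribution matches the lower bound to leading order, including the $(1+\po(1))$ error. Finally I would verify, using the law of large numbers and the bounded support of $h$, that all the empirical averages $\tfrac1N\sum_i(\cdot)$ appearing in the prefactor converge $\P_h$-almost surely to their $\E_h$-counterparts, so that \eqv(cap1.2) holds for all but finitely many $N$.

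The main obstacle is the second/third step: controlling the entropy of the transverse fluctuations sharply enough to get the exact prefactor $\b|\bar\g_1|/(2\pi N)$ rather than merely the exponential rate $\exp(-\b N F_{\b,N}(z^*))$. Unlike the finitely-valued field case of \cite{BEGK01}, one cannot lump configurations into a finite-dimensional chain, so the sharp asymptotics require a genuine local central limit / sharp large deviation analysis of the constrained sums $\sum_i \s_i = Nz^*$ interacting with the site-dependent rates, and this is precisely the point the introduction flags as "the first genuine sharp estimates in a context where entropy is important." Matching the upper and lower bounds to within $(1+\po(1))$ — in particular showing that the minimizer of the Dirichlet problem really does concentrate on the one-dimensional profile and that no other channel contributes — is the delicate part.
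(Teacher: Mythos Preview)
Your proposal has a genuine gap at the level of the prefactor, and it is precisely the gap the paper is written to close.

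First, the upper bound. You propose a test function that depends on $\s$ only through the scalar magnetization $m_N(\s)$. The paper computes exactly what this yields (the ``naive approximation'' in Section~1.2): the resulting prefactor is $\b|a(z^*)|/(2\pi N)$ with $a(z^*)$ from \eqv(static.201), \emph{not} $\b|\bar\g_1|/(2\pi N)$. These two constants differ, so your test function gives an upper bound that does not match \eqv(cap1.2). The correct construction (Section~4) coarse-grains into an $n$-dimensional vector $\bm(\s)$ of block magnetizations, builds a test function $g(\bx)=f((\bv,\bx))$ along the eigendirection $\bv$ of the matrix $\B=R\A(\bz^*)R$ that mixes the Hessian with the \emph{rates} $r_\ell$, and then lets $n\to\infty$. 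The eigenvalue $\hat\g_1(N,n)$ of $\B$ is what converges to $\bar\g_1$; the one-dimensional reduction loses exactly this rate-weighting and cannot recover the right constant.

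Second, the lower bound. Rayleigh monotonicity / edge removal to a one-dimensional slab is explicitly what the paper says fails here (Section~2, items (ii)--(iii) and the paragraph following): entropy is essential and there is no exact lumping to a low-dimensional chain when $h$ has continuous distribution. The paper's lower bound (Section~5) uses the Berman--Konsowa flow variational principle in a two-level construction: first a carefully engineered \emph{mesoscopic} unit flow $\frf$ through a tube $G_N$ around $\bz^*$ (built from the same approximately harmonic $g$ and satisfying Kirchoff's law via a recursive correction), and then a subordinate \emph{microscopic} flow $f^{\ubx}$ on $\cS_N$ for each mesoscopic trajectory $\ubx$, with an explicit control (Subsection~5.5) on how the ratio $\nu_\ell^{\ubx}/\mu_{\b,N}^{\bx_\ell}$ propagates along the chain. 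Your sentence about ``local-CLT fluctuations transverse to the magnetization direction contribute the factor $\bar\g_1$'' is gesturing at the right phenomenon but is not a mechanism; without the mesoscopic-then-microscopic flow construction you will not get a lower bound matching the upper bound to $(1+\po(1))$.
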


The proof of Theorem \thv(CAP-thm) is the core of the present paper.
As usual, the proof of an upper bound of the form \eqv(cap1.2) will
be relatively easy. The main difficulty is to prove a corresponding
lower bound. The main contribution of this paper is to provide a
method to prove such a lower bound in a situation where the entropy of
paths cannot be neglected.

Before discussing the methods of proof of these results, it will be
interesting to compare this theorem with the prediction of the
simplest uncontrolled approximation.

\paragraph{The naive approximation.} A widespread heuristic picture
 for metastable behavior of systems like the RFCW model is based on
 replacing the full Markov chain on $S_N$ by an effective Markov chain
 on the order parameter, i.e. by a nearest neighbor random walk on
 $\G_N$ with transition probabilities  that are reversible
with respect to the induced measure, $\cQ_{\b,N}$.  The ensuing
model
 can be solved exactly. In the absence of a random magnetic field,
 this replacement is justified since the image of $\s(t)$, $m(t)\equiv
 m_N(\s(t))$, is a Markov chain reversible w.r.t. $\cQ_{\b,N}$;
 unfortunately, this fact relies on the perfect permutation symmetry
 of the Hamiltonian of the Curie-Weiss model and fails to hold in the
 presence
of random field.

A natural choice for the transition rates of the heuristic dynamics
is
\be\Eq(naive.1)
 r_N[\o](m,m')\equiv \frac 1{\cQ_{\b,N}[\o](m)}
 \sum_{\s: m_N(\s)=m}
\mu_{\b,N}[\o](\s)
 \sum_{\s':m_N(\s')=m'} p_N[\o](\s,\s'),
\ee
which are different from zero only if $m'=m\pm 2/N$ or if
$m=m'$. The ensuing Markov process is a one-dimensional nearest
neighbor random walk for which most quantities of interest can be
computed quite explicitly by elementary means (see e.g.
\cite{vK,B04}). In particular, it is easy to show that for this
dynamics,
\bea\Eq(naive.2)\nonumber
 \E_{\nu_{S[m^*],S[M]}}
\t_{S[M]}&=& \exp\left(\b
N\left[F_{\b,N}(z^*)-F_{\b,N}(m^*)\right]\right)\\\nonumber
&&\times\frac {2\pi N}  {\b |a(z^*)|}
\sqrt{\frac{\b\E_h\left(1-\tanh^2\left(\b(z^*+h)\right)\right)-1}
{1-\b\E_h\left(
    1-\tanh^2\left(\b(m^*+h)\right)\right)}}\left(1+\po(1)\right),
\eea
where $a(z^*)$ is defined in \eqv(static.201).

The prediction of the naive approximation is slightly different from the
exact answer, albeit only by a wrong prefactor. One may of course
consider this as a striking confirmation of the quality of the naive
approximation; from a different angle, this shows that a true
understanding of the details of the dynamics is only reached when
the prefactors of the exponential rates are known (see \cite{MaSt}
for a discussion of this point).

The picture above is in some sense generic for a much wider class of
metastable systems: on a heuristic level, one wants to think of the
dynamics on metastable time scales to be well described by a diffusion
in a double (or multi) well potential. While this cannot be made
rigorous, it should be possible to find a family of mesoscopic
variables with corresponding (discrete) diffusion  dynamics that
asymptotically reproduce the metastable behavior of the true
dynamics. The main message of this paper is that such a picture can be
made rigorous within the potential theoretic approach.

\noindent {\bf Acknowledgments.} The authors thank   Alexandre
Gaudilli\`{e}re,  Frank den Hollander, and Cristian Spitoni for useful
discussions on metastability.

\section{Some basic concepts from potential theory}

Our approach to the analysis of the dynamics introduced above will
be based on the ideas developed in \cite{BEGK01,BEGK02,BEGK03} to
analyze metastability through a systematic use of classical
potential theory. Let us recall the basic notions we will need.

For two disjoint sets $A,B\subset S_N$, the equilibrium potential,
$h_{A,B}$, is the harmonic function, i.e. the solution of the
equation
\be\Eq(pot.1)
(Lh_{A,B})(\s)=0, \quad \s\not \in A\cup B,
\ee
with boundary conditions
\be \Eq(pot.2)
h_{A,B}(\s)=\begin{cases} 1,&\hbox{\rm  if }\, \s\in A\\
0, &\hbox{\rm  if }\, \s\in B\end{cases}.
\ee
The \emph{equilibrium measure} is the function
\be\Eq(pot.3)
e_{A,B}(\s)\equiv -(Lh_{A,B})(\s)=(Lh_{B,A})(\s),
 \ee
 which clearly is non-vanishing
only on $A$ and $B$. An important formula  is the
discrete analog of the first Green's identity: Let $D\subset S_N$
and $D^c\equiv S_N\setminus D$. Then, for any function $f$, we have
\bea\Eq(pot.4)
&&\frac 12\sum_{\s,\s'\in S_N}\mu(\s) p_N(\s,\s')
[f(\s)-f(\s')]^2\\\nonumber &&=-\sum_{\s\in D} \mu(\s) f(\s)
(Lf)(\s)-\sum_{\s\in D^c} \mu(\s) f(\s) (Lf)(\s).
\eea
In particular, for $f=h_{A,B}$, we get that
\bea\Eq(pot.5)
&&\frac 12\sum_{\s,\s'\in S_N}\mu(\s) p_N(\s,\s')
[h_{A,B}(\s)-h_{A,B}(\s')]^2
\\\nonumber
&&=\sum_{\s\in A} \mu(\s)
e_{A,B}(\s)\equiv \capa(A,B),
\eea
where the right-hand side is
called the \emph{capacity} of the capacitor $A,B$. The functional
appearing on the left-hand sides of these relations is called the
\emph{Dirichlet form} or \emph{energy}, and denoted
\be\Eq(pot.6)
\Phi_N(f)\equiv \frac 12\sum_{\s,\s'\in S_N}\mu(\s) p_N(\s,\s')
[f(\s)-f(\s')]^2.
\ee
As a consequence of the \emph{maximum principle}, the
function $h_{A,B}$ is the unique minimizer of $\Phi_N$ with boundary
conditions \eqv(pot.2), which implies the  \emph{Dirichlet
principle}:
\be\Eq(pot.7)
\capa(A,B) =\inf_{h\in \cH_{A,B}}\Phi_N(h),
 \ee
where $\cH_{A,B}$ denotes the space of functions
satisfying \eqv(pot.2).

Equilibrium potential and equilibrium measure have an immediate
probabilistic interpretation, namely \be\Eq(pot.8)
\P_\s[\t_A<\t_B]=\begin{cases} h_{A,B}(\s), &\,\hbox{\rm if}\,
\s\not
  \in A\cup B\\
e_{B,A}(\s),  &\,\hbox{\rm if}\, \s\in B.
\end{cases}
\ee
 An important observation is that equilibrium potentials and
equilibrium measures also determine the Green's function. In fact
(see e.g. \cite{BEGK02,Bo5}),
\be
h_{A,B}(\s)=\sum_{\s'\in A}
G_{S_N\setminus  B}(\s,\s') e_{A,B}(\s') \Eq(pot.9)
 \ee
In the case then $A$ is a single point, this relation can  be solved
for the Green's function to give
\be
 G_{S_N\setminus B}(\s,\s')=\frac{\mu(\s')h_{\s,B}(\s)}{\mu(\s)
   e_{\s,B}(\s)}.\Eq(pot.10)
\ee
 This equation is perfect if the cardinality of the state space does
 not grow too fast. In our case, however, it is  of  limited use, since
 both numerator and
denominator  tend to be very close to zero for the wrong reason.
However, \eqv(pot.9) remains useful. In particular, it gives the
following representation for mean hitting times
\be\Eq(pot.11)
\sum_{\s\in A} \mu(\s) e_{A,B}(\s) \E_\s \t_B =\sum_{\s'\in S_N}
\mu(\s') h_{A,B}(\s'),
\ee
 or, using definition \eqv(theorem1.1)
\be\Eq(pot.12)
\E_{\nu_{A,B}} \t_B =\frac{1}{\capa(A,B)}\sum_{\s'\in
S_N } \mu(\s') h_{A,B}(\s').
\ee

From these equations we see that our main task will be to obtain
precise estimates on capacities and some reasonably accurate
estimates on equilibrium potentials. In previous applications
\cite{BEGK01,BEGK02,BEGK03,BM02,BdHN}, three main ideas were used to
obtain such estimates:
\begin{itemize}
\item[(i)] Upper bounds on capacities can be obtained using the
  Dirichlet variational principle with judiciously chosen test
  functions.
\item[(ii)] Lower bounds were usually obtained using the monotonicity
  of capacities in the transition  probabilities (Raighley's
  principle). In most applications, reduction of the network to a set
  of parallel $1$-dimensional chains was sufficient to get good
  bounds.
\item[(iii)] The simple renewal estimate $h_{A,B}(x)\leq
  \frac{\capa(x,A)}{\capa(x,B)}$ was used to bound the equilibrium
  potential through capacities again.
\end{itemize}
These methods were sufficient in previous applications essentially
because entropy were not an issue there. In the models at hand,
entropy is important, and due to the absence of any symmetry, we
cannot use the trick to deal with entropy by a mapping of the model
to a low-dimensional one, as can be done in the standard Curie-Weiss
model and in the RFCW model when the magnetic field takes only
finitely many values \cite{MP1,BEGK01}.

Thus we will need to improve on these ideas. In particular, we will
need a new approach to lower
bounds for capacities. This will be done by exploiting a dual variational
representation of capacities in terms of flows, due to Berman and
Konsowa \cite{BerKon}. Indeed, one of the main messages of this paper
is to illustrate the power of this variational principle.

\paragraph{Random path representation and lower bounds
  on capacities.}
It will be convenient to think of the quantities
$\mu(\s)p_N(\s,\s')$ as \emph{conductances}, $c(\s,\s')$, associated
to the edges $e=(\s,\s')$ of the graph of allowed transitions of our
dynamics. This interpretation is justified since, due to
reversibility, $c(\s,\s')=c(\s',\s)$ is symmetric.

For purposes of the exposition, it will be useful to abstract from the
specific model and to
consider a general finite connected graph, $(S,\cE )$ such that
whenever $e=(a,b)\in \cE$, then also $-e\equiv (b,a)\in \EE$. Let
this graph be endowed with a symmetric function,  $c:\EE\rightarrow
\R_+$, called conductance.

Given two disjoint subsets $A,B\subset S$ define the capacity,
\be\label{Capacity}
\capa (A, B)= \frac12\min_{h|_A =0, \  h|_B
=1}\sum_{e=
  (a,b)\in\cE} c(a,b)\lb h (b) - h(a)\rb^2.
\ee

\begin{definition}\TH(unit-flow) Given two disjoint sets,
  $A,B\subset S$,
a non-negative, cycle free \emph{unit flow}, $f$, from $A$ to $B$ is a function
$f:\cE\rightarrow \R_+\cup \{0\}$, such that the following conditions
are verified:
\begin{itemize}
\item[(i)] if $f(e)>0$, then $f(-e)=0$;
\item[(ii)] $f$ satisfies \emph{Kirchoff's law}, i.e. for any vertex
  $a\in S\setminus (A\cup B)$,
\be\Eq(ab.1)
\sum_{b} f(b,a)=\sum_{d}f(a,d);
\ee
\item[(iii)]
\be \label{Normalization}
 \sum_{a\in A}\sum_{b} f (a,b )= 1=
\sum_{a}\sum_{b\in
  B}f(a,b );
\ee
\item[(iv)]  any path, $\g$, from $A$ to $B$ such that $f(e)>0$ for all $e\in
  \g$,  is self-avoiding.
\end{itemize}
We will denote the space of  non-negative, cycle free unit flows from $A$ to
$B$ by $\U_{A,B}$.
\end{definition}

An important example of a unit flow can be constructed from the
equilibrium potential, $h^*$, i.e. the unique minimizer of
\eqref{Capacity}. Since $h^*$  satisfies,
for any $a\in S\setminus (A\cup B)$,
\be\Eq(ab.2)
\sum_{b} c(a,b )(h^* (b) - h^* (a) )= 0,
 \ee
one verifies easily that the function, $f^*$, defined by
\be\label{fstarflow}
f^* (a,b ) \equiv \frac1{\capa  (A,B)} c(a,b )\lb
h^*(a)- h^*(b)\rb_+,
\ee
is a non-negative unit flow from $A$ to
$B$. We will call $f^*$ the \emph{harmonic flow}.

The key observation is that \emph{any} $f\in \U_{A,B}$
gives rise to a \emph{lower bound} on the capacity
$\capa(A,B)$, and that this bound becomes sharp for the harmonic
flow.
To see this we construct from $f$ a
stopped Markov chain $\X =\lb\X_0 ,\dots ,\X_\tau \rb$ as follows:
For each $a\in S\setminus B$ define $F (a) = \sum_b f (a,b )$.

We define the initial distribution of our chain as  $\P^f (a) =
F(a)$, for $a\in A$, and zero otherwise. The transition
probabilities are given by
\be\Eq(ab.3)
q^f (a,b )= \frac{f(a,b)}{F(a)},
\ee
for $a\not\in B$, and the chain is stopped on arrival
in $B$. Notice that by our choice of the initial distribution and in
view of \eqref{ab.3}
 $\X$ will never visit sites $a\in S\setminus B$ with $F (a) =0$.

Thus, given a trajectory  $\cX = (a_0 ,a_1,\dots ,a_r )$ with
$a_0\in A$, $a_r\in B$ and $a_\ell\in S\setminus (A\cup B)$ for
$\ell=0, \dots, r-1$,
\be \label{PfTrajectory}
\P^f\lb \X = \cX\rb= \frac{\prod_{\ell=0}^{r-1} f (e_\ell )}
{\prod_{\ell=0}^{r-1} F(a_\ell )} ,
 \ee
where $e_\ell = (a_\ell ,a_{\ell+1})$ and we use the  convention
$0/0 =0$. Note that, with the above definitions, the probability that
$\X$ passes through an edge $e$ is
\be \label{ProbFlow}
\P^f\lb e\in \X\rb= \sum_{\cX} \P^f (\cX )\1_{\lbr e\in\cX\rbr}=f(e).
 \ee
 Consequently, we have  a
partition  of unity,
\be\Eq(partunit)
\1_{\lbr f (e) >0\rbr}=
\sum_{\cX}\frac{\P^f (\cX )\1_{\lbr
    e\in\cX\rbr}}{f (e)} .
\ee
We are ready now to derive our $f$-induced lower bound: For
every function $h$ with $h|_A =0$ and $h|_B =1$,
\bea
\nonumber
\frac12\sum_{e} c(e)\lb \nabla_e h\rb^2 &\geq& \sum_{e : f (e) >0}
c(e)\lb \nabla_e h\rb^2\\
\nonumber
&=& \sum_\cX\sum_{e\in\cX} \P^f (\cX )\frac{c (e)}{f(e)} \lb \nabla_e h\rb^2 .
\eea
As a result, interchanging the minimum and the sum,
\bea
\nonumber \capa  (A,B) &\geq &  \sum_r\sum_{\cX = (a_0 ,\dots,
a_r)}\P^f (\cX ) \min_{h (a_0 )=0,\ h (a_r )=1} \sum_0^{r-1} \frac{
c (a_\ell ,a_{\ell+1}
  )}{f (a_\ell ,a_{\ell+1})}
\lb h(a_{\ell+1} ) - h (a_\ell )\rb^2\\
\label{LowerBound} &=& \sum_\cX \P^f (\cX )\left[ \sum_{e\in\cX}
\frac{ f(e)}{c (e)}\right]^{-1} .
\eea
 Since for the equilibrium
flow, $f^* $,
\be\Eq(eqflow.1)
\sum_{e\in\cX} \frac{ f^*(e)}{c (e)}
=  \frac1{\capa (A,B )} ,
\ee
with $\P^{f^*}$-probability one, the bound \eqref{LowerBound} is sharp.

Thus we have proven the following result  from \cite{BerKon}:

\begin{proposition}\TH(ab.4)
Let $A,B\subset S$. Then, with the notation introduced above,
\be\Eq(ab.5)
\capa(A,B)=\sup_{f\in \U_{A,B}}\E^f\left[\sum_{e\in
\cX}\frac{f(e)}{c(e)}\right]^{-1}
\ee
\end{proposition}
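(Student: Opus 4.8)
The equality to be shown has two halves: the inequality $\capa(A,B)\ge\sup_{f\in\U_{A,B}}\E^f[\sum_{e\in\cX}f(e)/c(e)]^{-1}$, which I would establish flow by flow, and the reverse inequality, which I would obtain by exhibiting one flow — the harmonic flow $f^*$ of \eqref{fstarflow} — for which equality holds, so that the supremum is in fact attained.

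For the first half I would fix an arbitrary $f\in\U_{A,B}$ together with an arbitrary competitor $h$ satisfying $h|_A=0$, $h|_B=1$, and bound the Dirichlet energy from below by restricting the sum in \eqref{Capacity} to $\supp f$; since $f$ is non-negative and cycle free, $\{e:f(e)>0\}$ retains at most one orientation of each undirected edge, so at most half the energy is discarded. Feeding in the partition of unity \eqref{partunit} — whose validity is exactly the edge-occupation identity \eqref{ProbFlow} for the chain $\X$ attached to $f$ — rewrites $\sum_{e:f(e)>0}c(e)(\nabla_e h)^2$ as the $\P^f$-average, over self-avoiding paths $\cX$, of $\sum_{e\in\cX}\sfrac{c(e)}{f(e)}(\nabla_e h)^2$. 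I would then interchange $\min_h$ with the path average (the minimum of an average dominating the average of the minima) and carry out the resulting one-dimensional minimization over the interior values of $h$ along a fixed path $\cX=(a_0,\dots,a_r)$: this is the elementary ``resistors in series'' computation, with value $(\sum_{e\in\cX}f(e)/c(e))^{-1}$. Averaging back against $\P^f$ gives \eqref{LowerBound}, and then the supremum over $f$.

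For the reverse inequality I would first confirm, along the lines indicated around \eqref{fstarflow}, that $f^*\in\U_{A,B}$: Kirchhoff's law \eqref{ab.1} for $f^*$ is the harmonicity \eqref{ab.2} of $h^*$, the normalization reduces to the energy identity defining $\capa(A,B)$, and cycle-freeness holds because $h^*$ is strictly monotone along any path carried by $f^*$, so such paths are automatically self-avoiding. The crux is then identity \eqref{eqflow.1}: on each support edge $e=(a,b)$ one has $f^*(e)/c(e)=\capa(A,B)^{-1}|h^*(a)-h^*(b)|$, so along a path from $A$ to $B$ the sum $\sum_{e\in\cX}f^*(e)/c(e)$ telescopes to $\capa(A,B)^{-1}$ times the total variation of $h^*$ between $A$ and $B$, i.e. to $\capa(A,B)^{-1}$. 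Since this value is non-random under $\P^{f^*}$, we get $\E^{f^*}[\sum_{e\in\cX}f^*(e)/c(e)]^{-1}=\capa(A,B)$, so the supremum is attained and equals $\capa(A,B)$.

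The only step carrying real combinatorial content is the edge-occupation identity \eqref{ProbFlow}, $\P^f(e\in\X)=f(e)$; everything else is the variational principle \eqref{pot.7} borrowed from the previous section, a min/average interchange, a one-line series computation, and a telescoping sum. To establish \eqref{ProbFlow} I would induct along a topological ordering of $\supp f$ (which exists since $f$ is cycle free): cycle-freeness (condition (iv) of Definition \thv(unit-flow)) forces $\X$ never to revisit a vertex, so $\P^f(\X\text{ visits }a)=F(a)$ by propagating the initial mass along $\supp f$ via Kirchhoff's law \eqref{ab.1}, whence $\P^f((a,b)\in\X)=F(a)\,q^f(a,b)=f(a,b)$ by \eqref{ab.3}. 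As this is where hypotheses (i), (ii) and (iv) of Definition \thv(unit-flow) all enter, I would expect it to be the part needing the most care.
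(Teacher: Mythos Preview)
Your proposal is correct and follows essentially the same approach as the paper: restrict the Dirichlet energy to $\supp f$, expand via the partition of unity \eqref{partunit}, interchange $\min_h$ with the path average, solve the resulting series problem along each path to obtain \eqref{LowerBound}, and then exhibit the harmonic flow $f^*$ as the maximizer via the telescoping identity \eqref{eqflow.1}. You go beyond the paper in sketching the verification that $f^*\in\U_{A,B}$ and the inductive proof of the edge-occupation identity \eqref{ProbFlow}; the paper simply asserts both.
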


\section{Coarse graining and the mesoscopic approximation}
\label{S2}

The problem of entropy forces us to investigate the model on a
coarse grained scale. When the  random fields take only finitely
many values, this can be done by an exact mapping to a low-dimensional
chain. Here this is not the case, but we can will construct  a sequence
of approximate mappings that in the limit allow to extract the exact result.

\subsection{Coarse graining}
\label{SS21}

Let $I$ denote the support of the distribution of the random fields.
Let $I_\ell$, with $\ell\in\{1,\dots,n\}$, be a partition of $I$
such that, for some $C<\infty$ and
 for all $\ell$, $|I_\ell|\leq C/n\equiv \e$.

\def\mm{m}

Each realization of the random field $\{h_i[\o]\}_{i\in\N}$ induces
a random partition of the  set $\L\equiv \{1,\dots,N\}$ into subsets
\be
\L_k[\o]\equiv \{i\in\L:h_i[\o]\in I_k\}. \Eq(9.2)
\ee
 We may
introduce $n$ order parameters
 \be
  \bm_k[\o](\s)\equiv \frac
1N\sum_{i\in\L_k[\o]}\s_i. \Eq(9.3)
\ee
 We denote by $ \vm\, [\o]$ the
$n$-dimensional vector $(\bm_1[\o],\dots,\bm_n[\o])$. In the sequel
we will use the convention that bold symbols denote $n$-dimensional
vectors and their components, while the sum of the components is
denoted by the corresponding plain symbol,
e.g. $m\equiv \sum_{\ell=1}^n\bm_\ell$.
 $\vm$ takes values
in the set
 \be
 \G_N^n[\o] \equiv\times_{k=1}^n
\left\{-\rho_{N,k}[\o],-\rho_{N,k}[\o]+\sfrac 2N,\dots,
\rho_{N,k}[\o]-\sfrac 2N,\rho_{N,k}[\o]\right\}, \Eq(9.4)
 \ee
where
\be
\rho_k\equiv \rho_{N,k}[\o]\equiv \frac {|\L_k[\o]|}N. \Eq(9.5)
 \ee
We will denote by $\bfe_\ell$, $\ell=1,\dots,n$,
 the lattice vectors of the set $\G_N^n$, i.e. the
vectors of length $2/N$ parallel to  unit vectors.

Note that the random variables $\rho_{N,k}$ concentrate
exponentially (in $N$) around their mean values
$\E_h\rho_{N,k}=\P_h[h_i\in I_k]\equiv p_k$.

\vspace {3mm}
\noindent \textbf{Notational warning:} To simplify statements in the
remainder of the paper, we will henceforth assume that all statements
involving random variables on $(\O,\FF,\P_h)$  hold true with
$\P_h$-probability one, for all but finitely many values of $N$.

\vspace{3mm}

We may write the Hamiltonian in the form
\be
 H_N[\o](\s)= -N E(\vm[\o](\s)) +\sum_{\ell=1}^n \sum_{i\in
  \L_\ell}\s_i\tilde h_i[\o],\Eq(9.6)
\ee
where $E:\R^n\rightarrow \R$ is the function
\be
E(\bx)\equiv \frac 12\left(\sum_{k=1}^n \bx_k\right)^2+\sum_{k=1}^n
\bar h_k \bx_k \Eq(9.7),
\ee
with
\be\Eq(9.7.1)
\bar h_\ell\equiv
\frac 1{|\L_\ell|}\sum_{i\in \L_\ell} h_i,\quad\text{ and}\quad
\tilde h_i\equiv h_i-\bar h_\ell.
\ee
Note that if $h_i=\bar h_\ell$
for all $i\in \L_\ell$, which is the case when $h$ takes only
finitely many values and the partition $I_\ell$ is chosen suitably,
then the Glauber dynamics  under the family of functions $\bm_\ell$ is
again Markovian. This fact was exploited in \cite{MP1,BEGK01}. Here
we will consider the case where this is not the case. However, the
idea behind our approach is to exploit that by choosing $n$ large
we can get to a situation that is rather close to that one.

Let us define the equilibrium distribution of the variables
$\vm[\s]$
\bea
 \cQ_{\b,N}[\o](\bx)
 &\equiv&
\mu_{\b,N}[\o](\vm[\o](\s)=\bx)
\\\nonumber
&=& \frac 1{Z_N[\o]} e^{\b NE(\bx)} \E_\s
\1_{\{\vm[\o](\s)=\bx\}}e^{\sum_{\ell=1}^n \sum_{i\in
\L_\ell}\s_i(h_i-\bar h_\ell)}
%
\Eq(9.8)
 \eea
where $Z_N[\o]$ is the normalizing partition function. Note that with
some abuse of notation, we will use the same symbols $\cQ_{\b,N}$,
$F_{\b,N}$ as in Section 1 for functions defined on the
$n$-dimensional variables $\bx$. Since we distinguish the vectors
from the scalars by use of bold type, there should be no confusion
possible. Similarly, for a mesoscopic subset $\bA\subseteq \G_N^n [\o ]$,
we define its microscopic counterpart,
\be\Eq(put-numbers.10)
 A = \cS_N [\bA ] = \lbr\sigma\in\cS_N~:~ \bm (\sigma )\in \bA\rbr .
\ee

\subsection{The landscape near critical points.}\label{sect:nearcritical}

We now  turn to the precise computation of the behavior of the
measures $\QQ_{\b,N}[\o](\bx)$ in the neighborhood of the critical
points of $F_{\b,N}[\o](\bx)$. We will see that this goes very much
along the lines of the analysis in the one-dimensional case in Section 1.

Let us begin by writing
\be\Eq(fine.1)
Z_{\b,N}[\o]\QQ_{\b,N}[\o](\bx)=\exp\left(N\b\left(\frac
12\left(\sum_{\ell=1}^n\bx_\ell\right)^2 +\sum_{\ell=1}^n
\bx_\ell\bar h_\ell\right)\right) \prod_{\ell=1}^n
Z^\ell_{\b,N}[\o](\bx_\ell/\rho_\ell),
\ee
where
\be\Eq(fine.2)
 Z^\ell_{\b,N}[\o](y)
\equiv\E_{\s_{\L_\ell}} \exp\left(\b\sum_{i\in\L_\ell}\tilde h_i
  \s_i\right)\1_{\left\{|\L_\ell|^{-1}\sum_{i\in\L_\ell}
\s_i=y\right\}} \equiv \E^{\tilde
    h}_{\s_{\L_\ell}}\1_{\left\{|\L_\ell|^{-1}\sum_{i\in\L_\ell}
\s_i=y\right\}}.
\ee
For $y\in(-1,1)$, these $Z^\ell_N$ can be expressed,  using sharp
large deviation estimates \cite{DIMA}, as
\be\Eq(fine.3)
 Z^\ell_{\b,N}[\o](y)=\frac{\exp\left(-|\L_\ell| I_{N,\ell}[\o](y)\right)}
{\sqrt{\sfrac\pi 2 |\L_\ell|/I''_{N,\ell}[\o](y)}}
\left(1+\po(1)\right),
\ee
where $\po(1)$ goes to zero as $|\L_\ell|\uparrow\infty$. Note that as
in the one-dimensional case, we identify functions on $\G_N^n$ with
their natural extensions  to $\R^n$. This
means that we can express the right-hand side in \eqv(fine.1) as
\be\Eq(fine.1.1)
Z_{\b,N}[\o]\QQ_{\b,N}[\o](\bx)=\prod_{\ell=1}^n
{\sqrt\sfrac{
\left(I''_{N,\ell}[\o](\bx_\ell/\rho_\ell)/\rho_\ell\right)}
{ {N\pi}/ 2}}
\exp\left(-N\b F_{\b,N}[\o](\bx)\right) \left(1+\po(1)\right),
\ee
where
\be\Eq(fine.16)
F_{\b,N}[\o](\bx)\equiv - \frac 12
\left(\sum_{\ell=1}^n\bx_\ell\right)^2 -\sum_{\ell=1}^n \bx_\ell
\bar h_\ell +\frac 1\b \sum_{\ell=1}^n \rho_\ell
I_{N,\ell}[\o](\bx_\ell/\rho_\ell).
\ee

Here $I_{N,\ell}[\o](y)$ is the Legendre-Fenchel transform of the
  log-moment generating function,
\bea\Eq(fine.4)
U_{N,\ell}[\o](t)&\equiv& \frac
1{|\L_\ell|}\ln\E^{\tilde
  h}_{\s_{\L_\ell}}\exp\left(t\sum_{i\in\L_\ell}  \s_i\right)
\\\nonumber
&=&\frac 1{|\L_\ell|}\sum_{i\in \L_\ell}\ln\cosh\left(t+\b \tilde
h_i\right).
\eea
We again analyze our functions
near critical points, $\bz^*$, of $F_{\b,N}$.
Equations \eqv(fine.1)-\eqv(fine.4) imply: if $\bz^*$ is  a critical point, then,
for $\|\bv\|\leq
N^{-1/2+\d}$,
\be \Eq(fine.6)
\frac {\QQ_{\b,N}(\bz^*+\bv)}
{\QQ_{\b,N}(\bz^*)} =\exp\left(-\frac{\b
N}2(\bv,\A(\bz^*)\bv)\right)\left(1+\po(1)\right),
\ee
with
\be\Eq(fine.11)
(\A(\bz^*))_{k\ell} =\frac
{\del^2F_{\b,N}(\bz^*)}{\del \bz_k\del \bz_\ell} = -1 +\d_{k,\ell}
\b^{-1}\rho_\ell^{-1} I''_{N,\ell}(\bz^*_\ell/\rho_\ell)\equiv
-1+\d_{\ell,k} \hat\l_\ell.
\ee
Now,  if $\bz^*$ is a critical point of $F_{\b,N}$ ,
\be\Eq(fine.7)
\sum_{j=1}^n \bz^*_j + \bar h_\ell=\b^{-1}
I'_{N,\ell}(\bz^*_\ell/\rho_\ell)\equiv \b^{-1} t^*_\ell,
\ee
or, with $z^*= \sum_{j=1}^n \bz^*_\ell$,
\be\Eq(fine.8)
\b\left(z^* + \bar h_\ell\right)=
I_{N,\ell}'(\bz^*_\ell/\rho_\ell)=t^*_\ell.
\ee
By standard properties of  Legendre-Fenchel transforms,
we have that  $I_{N,\ell}'(x)
=U_{N,\ell}'^{-1} (x)$, so that
\be \Eq(fine.9)
\bz^*_\ell/\rho_\ell
=U'_{N,\ell}(\b(z^*+h_\ell)) \equiv \frac 1{|\L_\ell|}
\sum_{i\in\L_\ell} \tanh(\b(z^*+h_i))).
 \ee
Summing over $\ell$, we see that $z^*$ must satisfy the
equation
\be\Eq(fine.10)
z^*=\frac 1N\sum_{i\in \L}
\tanh(\b(z^*+h_i)),
\ee
which nicely does not depend on our choice
of the coarse graining (and hence on $n$).


 Finally, using that at a critical point
$I''_{N,\ell}(\bz^*_\ell/\rho_\ell)=\frac
  1{U''_{N,\ell}(t^*_\ell)}$,
we get the explicit expression for the random  numbers $\hat\l_\ell$  on
the right hand side of   \eqref{fine.11}
\be \Eq(fine.12)
 \hat\l_\ell =\frac 1{\b \rho_\ell U''_{N,\ell}
(\b(z^*+\bar h_\ell))}=\frac 1{\frac \b
N\sum_{i\in\L_\ell}\left(1-\tanh^2(\b(z^*+h_i))\right)}.
\ee
 The determinant of the matrix $\A(\bz^*)$ has
a simple expression of the form
\bea\Eq(fine.13)
\det\left(\A(\bz^*)\right) &=& \left(1-\sum_{\ell=1}^n \frac
1{\hat\l_\ell}\right)\prod_{\ell=1}^n \hat\l_\ell
\\\nonumber
&=&\left(1- \frac \b N\sum_{i\in \L}
\left(1-\tanh^2(\b(z^*+h_i))\right)\right) \prod_{\ell=1}^n
\hat\l_\ell
\\\nonumber
&=&\left(1-  \b \E_h
\left(1-\tanh^2(\b(z^*+h))\right)\right) \prod_{\ell=1}^n
\hat\l_\ell \left(1 +o(1)\right),
\eea
where $\po(1)\downarrow 0$, a.s., as $N\uparrow\infty$.
Combing these observations, we arrive at
the following proposition.

\begin{proposition}\TH(fine.15)
Let $\bz^*$ be a critical point of
  $\QQ_{\b,N}$. Then $\bz^*$ is given by \eqv(fine.9) where  $z^*$
 is a  solution of \eqv(fine.10).  Moreover,
\bea\Eq(fine.16.1)
Z_{\b,N}\QQ_{\b,N}(\bz^*) &=& \frac { \sqrt{
|\det(\A(\bz^*))| }}{\sqrt{\left( \sfrac {N\pi}
  {2\b}\right)^n\left|\b\E_h
\left(1-\tanh^2(\b(z^*+h))\right)-1\right| }}
\\\nonumber
&\times&\exp\left(\b N\left(-\frac { \left(z^*\right)^2}2 +
\frac 1{\b N}\sum_{i\in\L}\ln\cosh\left(\b(z^*+h_i)
\right)\right)\right)
\left(1+o(1)\right).
\eea
\end{proposition}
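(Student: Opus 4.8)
The plan is to follow the same line of reasoning as in the one-dimensional case (Proposition \thv(static.215)), now in $n$ dimensions, and simply collect the identities already derived in equations \eqv(fine.1) through \eqv(fine.13). The statement that $\bz^*$ is given by \eqv(fine.9) with $z^*$ solving \eqv(fine.10) is exactly what was established in the paragraph containing those equations, so nothing further is needed there. The content to be proved is therefore the asymptotic formula \eqv(fine.16.1) for $Z_{\b,N}\QQ_{\b,N}(\bz^*)$.

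First I would start from \eqv(fine.1.1), evaluated at $\bx=\bz^*$, which already expresses $Z_{\b,N}\QQ_{\b,N}(\bz^*)$ as the product of the exponential factor $\exp(-N\b F_{\b,N}(\bz^*))$, the prefactor $\prod_{\ell=1}^n\sqrt{(I''_{N,\ell}(\bz^*_\ell/\rho_\ell)/\rho_\ell)/(N\pi/2)}$, and $(1+\po(1))$. Then I would rewrite the prefactor using the critical-point identity $I''_{N,\ell}(\bz^*_\ell/\rho_\ell)=1/U''_{N,\ell}(t^*_\ell)$ together with \eqv(fine.12), which gives $I''_{N,\ell}(\bz^*_\ell/\rho_\ell)/\rho_\ell=\b\hat\l_\ell$; hence the prefactor equals $\prod_{\ell=1}^n\sqrt{2\b\hat\l_\ell/(N\pi)}=\sqrt{(2\b/(N\pi))^n\prod_{\ell}\hat\l_\ell}$. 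The next step is to bring in the determinant formula \eqv(fine.13), which says $\prod_{\ell=1}^n\hat\l_\ell=\det(\A(\bz^*))/\bigl(1-\b\E_h(1-\tanh^2(\b(z^*+h)))\bigr)(1+\po(1))$, so that $\prod_\ell\hat\l_\ell=|\det(\A(\bz^*))|/\bigl|\b\E_h(1-\tanh^2(\b(z^*+h)))-1\bigr|(1+\po(1))$ (recall the excluded second-order case \eqv(static.202)--\eqv(static.203) guarantees this quantity is nonzero). Substituting gives the claimed prefactor $\sqrt{|\det(\A(\bz^*))|}\big/\sqrt{(N\pi/(2\b))^n\,|\b\E_h(1-\tanh^2(\b(z^*+h)))-1|}$.

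Finally I would compute the exponent $-N\b F_{\b,N}(\bz^*)$ explicitly. Using the definition \eqv(fine.16) of $F_{\b,N}$, the critical-point relation \eqv(fine.8), and the identity $\rho_\ell I_{N,\ell}(\bz^*_\ell/\rho_\ell)=\bz^*_\ell t^*_\ell-\rho_\ell U_{N,\ell}(t^*_\ell)$ from the Legendre duality (with $t^*_\ell=\b(z^*+\bar h_\ell)$), one checks, exactly as in the derivation of \eqv(static.162), that the cross terms telescope and one is left with $F_{\b,N}(\bz^*)=(z^*)^2/2-\frac1{\b N}\sum_{i\in\L}\ln\cosh(\b(z^*+h_i))$; here the $\tilde h_i$ in $U_{N,\ell}$ recombine with $\bar h_\ell$ to produce the full field $h_i=\tilde h_i+\bar h_\ell$ inside the $\cosh$, which is the one place a small computation is required. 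Multiplying by $-N\b$ yields the exponential factor in \eqv(fine.16.1), completing the proof.

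The main (and really the only) obstacle is bookkeeping: one must be careful that the $\po(1)$ error terms in \eqv(fine.3) are uniform enough in $\ell$ that their product over the $n$ blocks is still $1+\po(1)$ as $N\to\infty$. Since $n$ is fixed while the coarse graining is chosen, and each $|\L_\ell|\to\infty$ linearly in $N$ by the concentration of $\rho_{N,k}$ noted after \eqv(9.5), this is immediate; but it is worth stating, because it is exactly this point that later forces the more delicate $n\to\infty$ analysis elsewhere in the paper. Everything else is a direct substitution of identities already recorded in \eqv(fine.1.1), \eqv(fine.12), and \eqv(fine.13).
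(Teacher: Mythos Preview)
Your proposal is correct and follows essentially the same route as the paper's own proof: start from \eqv(fine.1.1) at $\bz^*$, identify the prefactor via $I''_{N,\ell}(\bz^*_\ell/\rho_\ell)/\rho_\ell=\b\hat\l_\ell$ and the determinant formula \eqv(fine.13), and simplify the exponent using Legendre duality exactly as in \eqv(fine.17)--\eqv(fine.18). Your added remark about uniformity of the $\po(1)$ terms over the $n$ blocks is a sensible clarification that the paper leaves implicit.
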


\begin{proof}
We only need to examine \eqv(fine.1.1) at a critical
  point $\bz^*$.
The equation for the prefactor follows by combining
  \eqv(fine.3) with \eqv(fine.13).
As for the exponential term, $F_{\b,N}$, notice that by convex
duality
\be\Eq(fine.17)
I_{N,\ell}(\bz^*_\ell/\rho_\ell)=t^*_\ell
  \bz^*_\ell/\rho_\ell-U_{N,\ell}(t^*_\ell)
=\b(z^*+\bar h_\ell)
\bz^*_\ell/\rho_\ell-U_{N,\ell}\left(\b(z^*+\bar
  h_\ell)\right).
\ee
Hence \eqv(fine.16) equals
\bea \Eq(fine.18)
\nonumber &&-\frac
12 \left(z^*\right)^2 -\sum_{\ell=1}^n \bz^*_\ell \bar h_\ell +\frac
1\b \sum_{\ell=1}^n \left[\rho_\ell\b(z^*+\bar h_\ell)
  \bz^*_\ell/\rho_\ell-
\rho_\ell U_{N,\ell}\left(\b(z^*+\bar
  h_\ell)\right)\right]
\\\nonumber
&&=-\frac 12 \left(z^*\right)^2 -\sum_{\ell=1}^n \left[\bz^*_\ell
\bar h_\ell - z^*\bz^*_\ell -\bar h\bz^*_\ell + \frac 1{\b
N}\sum_{i\in \L_\ell}\ln\cosh\left(\b(z^*+
  h_i)\right)\right]\\
&&=\frac 12 \left(z^*\right)^2 - \frac 1{\b N}\sum_{i\in
\L}\ln\cosh\left(\b(z^*+
  h_i)\right).
\eea
\end{proof}

\begin{remark}
The form given in Proposition \thv(fine.15) is highly suitable for
our purposes as the dependence on $n$ appears only in the
denominator of the prefactor. We will see that this is just
what we need to get a formula for capacities that is independent of
the choice of the  partition of $I$ and has a limit as $n\uparrow \infty$.
\end{remark}

\paragraph{Eigenvalues of the Hessian}.
We now describe the eigenvalues of the Hessian matrix  $\A(\bz^*)$.
\begin{lemma}\TH(fine.500)
Let $z^*$ be a solution of the equation \eqv(fine.10). Assume in
addition that all numbers $\hat\l_k$ are distinct. Then $\g$ is an
eigenvalue of $\A(\bz^*)$ if and only if it is a solution of the
equation
\be\Eq(fine.501)
 \sum_{\ell=1}^n \frac
1{\frac{1}{\frac{\b}N\sum_{i\in\L_{\ell}}
\left(1-\tanh^2\left(\b\left(z^*+h_i\right)\right)\right)}-\g}=1.
\ee
 Moreover, \eqv(fine.501) has at most one negative solution, and
it has such a negative solution if and only if
\be\Eq(fine.502)
\frac{\b}N\sum_{i=1}^N
\left(1-\tanh^2\left(\b\left(z^*+h_i\right)\right)\right)
>1.
\ee
\end{lemma}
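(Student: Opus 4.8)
The essential structural fact is that, by \eqv(fine.11), the Hessian is diagonal up to a rank-one perturbation:
\[
  \A(\bz^*) = \mathrm{diag}(\hat\l_1,\dots,\hat\l_n) - \bbone\bbone^{\mathrm T},
\qquad \bbone = (1,\dots,1)^{\mathrm T},
\]
where $\bbone$ is the all-ones vector. The plan is therefore to write out the characteristic polynomial explicitly by means of the matrix-determinant lemma, read off the secular equation, and then carry out a short one-variable analysis of it.

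Concretely, I would fix $\g$ with $\g\neq\hat\l_\ell$ for every $\ell$, so that $D_\g\equiv\mathrm{diag}(\hat\l_1-\g,\dots,\hat\l_n-\g)$ is invertible, and factor $\A(\bz^*)-\g\,\Id = D_\g\bigl(\Id - D_\g^{-1}\bbone\bbone^{\mathrm T}\bigr)$. Since $\det\bigl(\Id - D_\g^{-1}\bbone\bbone^{\mathrm T}\bigr) = 1 - \bbone^{\mathrm T}D_\g^{-1}\bbone$, this yields
\[
  \det\bigl(\A(\bz^*)-\g\,\Id\bigr)
  = \Bigl(\prod_{\ell=1}^n(\hat\l_\ell-\g)\Bigr)\Bigl(1-\sum_{\ell=1}^n\frac{1}{\hat\l_\ell-\g}\Bigr).
\]
Because the $\hat\l_\ell$ are pairwise distinct, the first factor does not vanish, so for such $\g$ it follows that $\g$ is an eigenvalue of $\A(\bz^*)$ if and only if $\sum_\ell(\hat\l_\ell-\g)^{-1}=1$; inserting the explicit value of $\hat\l_\ell$ from \eqv(fine.12) turns this into exactly \eqv(fine.501). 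What would remain is to check that none of the excluded values $\g=\hat\l_k$ is an eigenvalue: if $\A(\bz^*)v=\hat\l_k v$, then reading off coordinate $j\neq k$ gives $(\hat\l_j-\hat\l_k)v_j = \sum_m v_m$, while coordinate $k$ gives $\sum_m v_m=0$; distinctness of the $\hat\l$'s then forces $v_j=0$ for all $j\neq k$, hence $v=0$. This completes the spectral characterization.

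For the second assertion I would study the scalar function $\vf(\g)\equiv\sum_{\ell=1}^n(\hat\l_\ell-\g)^{-1}$ on the half-line $(-\infty,0]$. Each $\hat\l_\ell$ is strictly positive, being the reciprocal of $\frac\b N\sum_{i\in\L_\ell}(1-\tanh^2(\b(z^*+h_i)))$, a sum of strictly positive terms; hence on $(-\infty,0]$ every summand of $\vf$ is positive and $\vf'(\g)=\sum_\ell(\hat\l_\ell-\g)^{-2}>0$, so $\vf$ is continuous and strictly increasing there, with $\vf(\g)\downarrow0$ as $\g\to-\infty$ and $\vf(0)=\sum_\ell\hat\l_\ell^{-1}=\frac\b N\sum_{i\in\L}(1-\tanh^2(\b(z^*+h_i)))$, the last equality being the computation already recorded in \eqv(fine.13). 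Strict monotonicity gives at most one solution of $\vf=1$ in $(-\infty,0]$, and any negative root of \eqv(fine.501) necessarily lies in $(-\infty,0]$ since it cannot be a pole $\hat\l_k>0$; hence there is at most one negative solution. Finally, by the intermediate value theorem such a negative solution exists if and only if $\vf(0)>1$, which is precisely the condition \eqv(fine.502).

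I do not expect a genuine obstacle here: the lemma reduces to a rank-one determinant computation together with an elementary monotonicity argument. The only point deserving care — the (minor) main obstacle — is the exclusion of the values $\g=\hat\l_k$ from the spectrum, and it is exactly there that the hypothesis that the $\hat\l_k$ be pairwise distinct is used: were two of them to coincide, the difference of the corresponding coordinate unit vectors would be an eigenvector of $\A(\bz^*)$ with that common eigenvalue, which is a pole of the left-hand side of \eqv(fine.501), and the clean characterization would break down.
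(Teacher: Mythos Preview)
Your argument is correct and follows essentially the same route as the paper: both compute the characteristic polynomial via the rank-one structure to obtain $\det(\A(\bz^*)-\g\,\Id)=\bigl(\prod_\ell(\hat\l_\ell-\g)\bigr)\bigl(1-\sum_\ell(\hat\l_\ell-\g)^{-1}\bigr)$ and then read off the secular equation and its sign structure. Your write-up is in fact more careful than the paper's sketch, since you explicitly verify that no $\hat\l_k$ is an eigenvalue (this is precisely where distinctness enters) and you spell out the monotonicity/IVT argument on $(-\infty,0]$ that the paper leaves as ``easy to see''.
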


\begin{remark}
To analyze the case when some $\hat\l_k$ coincide is
  also not difficult. See \cite{BEGK01}.
\end{remark}

\begin{proof}
To find the eigenvalues of $\A$, just replace  $\hat\l_k$
  by $\hat\l_k-\g$ in the first line of \eqv(fine.13). This gives
\be\Eq(fine.503)
\det\left(\A(\bz^*)-\g\right))=
 \left(1-\sum_{\ell=1}^n
\frac 1{\hat\l_\ell-\g}\right)\prod_{\ell=1}^n (\hat\l_\ell-\g),
\ee
provided none of the $\hat\l_\ell-\g=0$. \eqv(fine.501) is then just
the demand that the first factor on the right of \eqv(fine.503)
vanishes. It is easy to see that, under the hypothesis of the lemma,
this equation has $n$ solutions, and that exactly one of them is
negative under the hypothesis \eqv(fine.502).
\end{proof}

\paragraph{Topology of the landscape.} From the analysis of the
critical points of $F_{\b,N}$ it follows that the landscape of this
function is closely slaved to the one-dimensional landscape
described in Section 1. We collect the following features:
\begin{itemize}
\item[(i)] Let $m_1^*<z_1^*<m_2^*<z_2^*<\dots <z_k^*<m_{k+1}^*$
be the sequence of minima resp. maxima of the one-dimensional
function $F_{\b,N}$ defined in \eqv(static.116). Then to each
minimum, $m^*_i$, corresponds a minimum, $\bm_i^*$ of $F_{\b,N}$, such
that $\sum_{\ell=1}^n \bm_{i,\ell}^* =m_i^*$, and two each maximum,
$z^*_i$, corresponds a saddle point, $\bz_i^*$  of $F_{\b,N}$, such
that $\sum_{\ell=1}^n \bz_{i,\ell}^* =z_i^*$.
\item[(ii)] For any value $m$ of the total magnetization,
the function $F_{\b,N}(\bx)$ takes its relative minimum on the set
$\{\by :\sum \by_\ell =m\}$ at the point $\hat\bx\in\R^n$  determined
 (coordinate-wise)
 by the equation
\be\Eq(fine.505)
\hat\bx_\ell ( m) =\frac
1N\sum_{i\in\L_\ell}\tanh\left(\b\left(m+a+h_i\right)\right),
\ee
where $a = a (m)$ is recovered from
\be\Eq(fine.506)
 m=\frac
1N\sum_{i\in\L}\tanh\left(\b\left(m+a+h_i\right)\right). \ee
Moreover,
 \be\Eq(fine.507)
F_{\b,N}(m)
\leq\ F_{\b,N}(\hat\bx) \leq F_{\b,N}(m) + O(n\ln N/N).
 \ee
\end{itemize}
\begin{remark}
Note that the minimal energy curves $\hat\bx(\cdot )$ defined by
\eqref{fine.505}
pass through
the minima and saddle points, but  are in general not the integral
curves of the gradient flow connecting them.
 Note also that since we assume that
random fields $\lbr h_i (\o )\rbr$ have bounded support, for every
$\delta >0$  there exist two universal
constants $0<c_1\leq c_2<\infty$, such that
\be
\label{eq:mincurve}
c_1\rho_\ell\leq \frac{{\rm d}\hat\bx_\ell (m)}{{\rm d}m}\leq c_2\rho_\ell ,
\ee
uniformly in $N$, $m\in [-1+\delta ,1-\delta]$ and in $\ell =1, \dots ,n$.
\end{remark}

\begin{figure}\label{fig.0}
\begin{center}
\psfrag {m1}{$m_1^*$} \psfrag {m2}{$m_2^*$} \psfrag {z1}{$z_1^*$}
\psfrag {x1}{$m_1^*$} \psfrag {x2}{$m_2^*$} \psfrag {x3}{$z_1^*$}
\includegraphics[width=5cm]{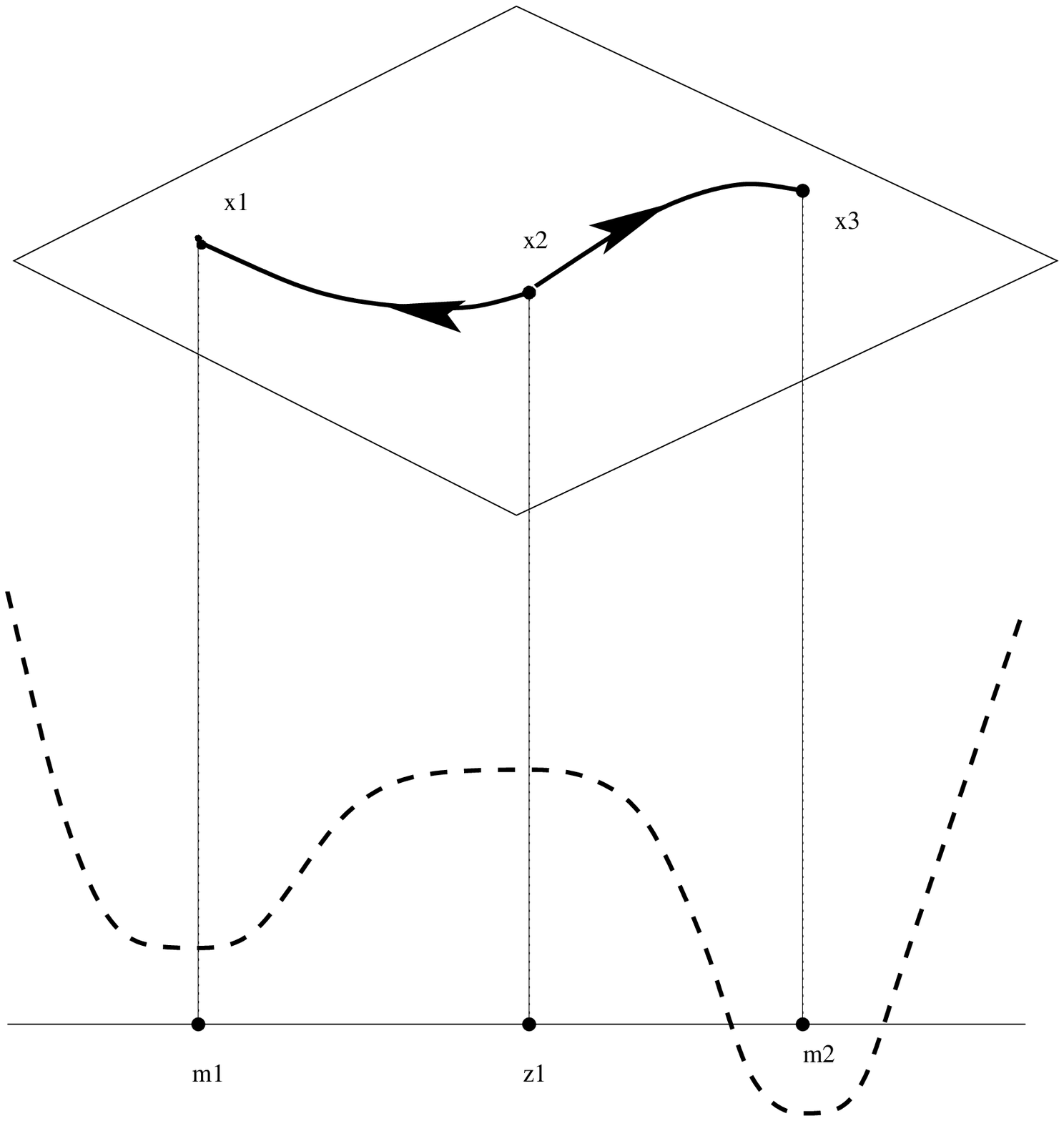}
\end{center}
\caption{Correspondence of one and $n$-dimensional landscape}
\end{figure}

\section{Upper bounds on capacities}

This and the next section are devoted to proving Theorem \thv(CAP-thm).
In this section we derive upper bounds on
capacities between two local minima. The
procedure to obtain these bounds has two steps. First, we show that
using test functions that only depend on the block variables
$\bm(\s)$, we can always get upper bounds in terms of a finite
dimensional Dirichlet form. Second, we produce a good test function
for this Dirichlet form.

\subsection{First blocking.}
Let us consider two sets, $A,B\subset \cS_N$, that are defined in
terms of
block variables $\vm$.
This means that for some
$\bA ,\bB\subseteq \G_N^n$,  $A = \cS_N [\bA ]$ and $B = \cS_N [\bB ]$.
Later we will be  interested in pre-images
of two minima of the function $F_{\b,N}$. We get the obvious
upper bound
\begin{eqnarray}\Eq(rfcw.10)
\capa(A,B)&=&\inf_{h\in \cH_{A,B}}\frac 12 \sum_{\s,\s'\in \cS_N}
\mu_{\b,N}[\o](\s) p(\s,\s')\left[h(\s)-h(\s')\right]^2
\nonumber\\
& \leq& \inf_{u\in \cG_{\bA, \bB}} \frac 12 \sum_{\s,\s'\in \cS_N}
\mu_{\b,N}[\o](\s)
p(\s,\s')\left[u(\vm(\s))-u(\vm(\s'))\right]^2
\nonumber\\
&=&\inf_{u\in \cG_{\bA ,\bB}} \sum_{\bx,\bx'\in \G_N^n}
\left[u(\bx)-u(\bx')\right]^2 \sum_{\s\in
\cS_N [\bx ]}\mu_{\b,N}[\o](\s)
 \sum_{\s'\in\cS_N [\bx' ]} p(\s,\s')
\nonumber\\
&\equiv& \inf_{u\in \cG_{\bA ,\bB}} \sum_{\bx,\bx'\in
\G_N^n}\cQ_{\b,N}[\o](\bx) r_N(\bx,\bx')\left[u(\bx)-u(\bx')\right]^2
\nonumber\\
&\equiv&
\Capm{\bA ,\bB} .
\end{eqnarray}
with
\be\Eq(rfcw.11)
 r_N(\bx,\bx')\equiv \frac
1{\cQ_{\b,N}[\o](\bx)}
 \sum_{\s\in\cS_N [\bx ]}
\mu_{\b,N}[\o](\s)
 \sum_{\s'\in\cS_N [ \bx' ]} p(\s,\s').
\ee
Here
\be\Eq(up.1)
\cH_{A,B}\equiv \{h:S_N\rightarrow [0,1]:
\forall \s\in A, h(\s)=1,\forall \s\in B, h(\s)=0\}
\ee
 and
\be\Eq(up.1.1)
\cG_{\bA ,\bB}\equiv \{u:\G_N^n\rightarrow [0,1]:
\forall \bx\in
\bA,~
 u(\bx)=1,\forall \bx\in
\bB,~ u(\bx)=0\}.
\ee

\subsection {Sharp upper bounds for saddle point crossings}

Let now $\bz^*$ be a saddle point, i.e. a critical point of
$\cQ_{\b,N}$ such that the matrix $\A(\bz^*)$ has  exactly one
negative eigenvalue and that all its other eigenvalues are strictly
positive. Let $\bA, \bB$  be two disjoint
neighborhoods of
 minima of $F_{\b,N}$ that are connected through $\bz^*$,
i.e.
$\bA$ and $\bB$  are strictly contained in two different
connected components of the level set $\{\bx:
F_{\b,N}(\bx)<F_{\b,N}(\bz^*)\}$, and there exists a path $\g$
from $\bA$ to $\bB$ such that  $\max_{\bx\in
  \g}F_{\b,N}(\bx)=F_{\b,N}(\bz^*)$.

To estimate such capacities it suffices to compute the
capacity of some small set near the saddle point (see e.g.
\cite{B04} or \cite{BEGK03} for an explanation). For a given
(small) constant $\rho = \rho (N) \ll 1
$,  we define
 \be\Eq(up.71)
D_N(\rho)\equiv\{\bx\in \G_N^n: |\bz^*_\ell-\bx_\ell|\leq \rho,\forall
1\leq \ell\leq n\},
\ee
In this section we will later  choose $\rho= C\sqrt{\ln N/N}$, with
$C<\infty$.
 $D_N(\rho)$
is the hypercube in $\G_N^n$ centered in $\bz^*$ with sidelenght $2\rho$.
For a fixed
vector, $\bv\in \G_N^n$,
consider  three disjoint subsets,
 \bea
\Eq(up.71.1)
  W_0&=&\{\bx\in \G_N^n: |(\bv,(\bx-\bz^*))|<\rho\}
\nonumber\\
   W_1&=&\{\bx\in \G_N^n: (\bv,(\bx-\bz^*))\leq -\rho\}
\nonumber \\
   W_2&=&\{\bx\in \G_N^n: (\bv,(\bx-\bz^*))\geq
\rho\}.
\eea
 We will compute the capacity of the Dirichlet
form restricted to the set $D_N(\rho)$ with boundary conditions zero
and one, respectively, on the sets $W_1\cap D_N(\rho)$ and $W_2\cap
D_N(\rho)$. This will be done by exhibiting an approximately
harmonic function with these boundary conditions. Before doing this,
it will however be useful to slightly simplify the Dirichlet form
we have to work with.

\paragraph{Cleaning of the Dirichlet form.}
One problem we are faced with in our setting is that the transition
rates $r_N(\bx,\bx')$ are given in a somewhat unpleasant form. At
the same time it would be nicer to be able to replace the measure
$\cQ_{\b,N}$ by the approximation given in \eqv(fine.7). That we are
allowed to do this follows from the simple assertion below, that is
an immediate consequence of the positivity of the terms in the
Dirichlet form, and of the Dirichlet principle.

\begin{lemma} \TH(trivial.1)
Let $\Phi_N, \wt\Phi_N$ be two Dirichlet forms defined on the same
space, $\G$, corresponding to the measure $\cQ$ and transition rates
$r$, respectively $\wt \cQ$ and $\tilde r$. Assume that, for all
$\bx,\bx'\in \G$,
\be\Eq(trivial.2)
\left|\frac{\cQ(\bx)}{\wt\cQ(\bx)}-1\right|
\leq
\d,\quad, \left|\frac{r(\bx,\bx')}{\wt r(\bx,\bx')}-1\right|\leq \d.
\ee
Then for any sets $\bA, \bB$
\be\Eq(trivial.3)
(1-\d)^2\leq \frac{\Capm{\bA ,\bB}}{\wCapm{\bA ,\bB }}\leq (1-\d)^{-2}.
 \ee
\end{lemma}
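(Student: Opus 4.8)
The plan is to prove Lemma \thv(trivial.1) by a direct comparison of the two Dirichlet forms, exploiting that every summand in $\Phi_N$ is nonnegative and that the weights $\cQ(\bx)r(\bx,\bx')$ and $\wt\cQ(\bx)\wt r(\bx,\bx')$ are uniformly comparable. First I would observe that, by \eqv(trivial.2), for every pair $\bx,\bx'\in\G$ the combined weight satisfies
\be\Eq(trivial.4)
(1-\d)^2 \,\wt\cQ(\bx)\wt r(\bx,\bx')
\leq \cQ(\bx) r(\bx,\bx')
\leq (1+\d)^2 \,\wt\cQ(\bx)\wt r(\bx,\bx').
\ee
(Here one uses $|a/b-1|\le\d$ to get $(1-\d)b\le a\le(1+\d)b$ for the positive quantities $\cQ,\wt\cQ,r,\wt r$, and multiplies the two inequalities.) Since the Dirichlet form is a sum of terms of the shape $\text{weight}\times[u(\bx)-u(\bx')]^2$, all of which are nonnegative, \eqv(trivial.4) passes term by term to the whole sum: for any function $u$ on $\G$,
\be\Eq(trivial.5)
(1-\d)^2\,\wt\Phi_N(u) \leq \Phi_N(u) \leq (1+\d)^2\,\wt\Phi_N(u).
\ee

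Next I would apply the Dirichlet principle \eqv(pot.7), which represents the capacities $\Capm{\bA,\bB}$ and $\wCapm{\bA,\bB}$ as the infima of $\Phi_N$ and $\wt\Phi_N$, respectively, over the \emph{same} class $\cG_{\bA,\bB}$ of admissible functions (the constraint set depends only on $\bA,\bB$, not on the measure or rates). Taking the infimum over $u\in\cG_{\bA,\bB}$ in \eqv(trivial.5) and using monotonicity of $\inf$ gives
\be\Eq(trivial.6)
(1-\d)^2\,\wCapm{\bA,\bB} \leq \Capm{\bA,\bB} \leq (1+\d)^2\,\wCapm{\bA,\bB},
\ee
which is the asserted two-sided bound (the upper bound in the statement is written $(1-\d)^{-2}$, but since $(1+\d)^2\le(1-\d)^{-2}$ for $\d\in[0,1)$, the claimed form follows a fortiori; alternatively one runs the same argument with the roles of $\Phi_N$ and $\wt\Phi_N$ reversed to get $(1-\d)^2\wCapm{\bA,\bB}\le\Capm{\bA,\bB}$ directly in the normalization used).

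This is a short argument and there is no serious obstacle: the only point requiring any care is the passage from the pointwise ratio bounds \eqv(trivial.2) to the weight comparison \eqv(trivial.4), which is elementary but relies on all four quantities being strictly positive on the relevant edges — true here because $r_N(\bx,\bx')>0$ exactly when the corresponding microscopic transitions have positive probability, and the same edges carry positive weight for $\wt r$. One should also note that the bound is entirely deterministic in $\bx,\bx'$, so it holds verbatim for each realization $\o$ of the random field; the only use of genuine probabilistic input comes later, when this lemma is invoked with $\wt\cQ_{\b,N}$ the explicit approximation of Proposition \thv(fine.15) and $\d=\po(1)$, $\P_h$-almost surely.
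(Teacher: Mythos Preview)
Your proposal is correct and follows essentially the same approach as the paper: bound the edge weights $\cQ(\bx)r(\bx,\bx')$ against $\wt\cQ(\bx)\wt r(\bx,\bx')$ pointwise via the ratio hypotheses, pass the inequality through the nonnegative sum defining the Dirichlet form, and then take the infimum over the common admissible class $\cG_{\bA,\bB}$. The paper presents only the lower bound explicitly and obtains the upper bound by symmetry (swapping the roles of $\Phi_N$ and $\wt\Phi_N$), exactly as you note in your parenthetical remark; your version packaging both directions at once is a cosmetic variation of the same argument.
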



\begin{proof}
Note that $\Capm{\bA ,\bB } \equiv \inf_{u\in
\cG_{\bA ,\bB }}\Phi_N(u)=\Phi_N(u^*)$, and\newline $\wCapm{\bA, \bB} \equiv
\inf_{u\in
  \cG_{\bA,\bB}}\wt\Phi_N(u)=\wt\Phi_N(\tilde u^*)$.
But clearly
\bea\Eq(trivial.4)
 \Phi_N(u^*) &=& \frac 12
\sum_{\bx,\bx'\in \G} \wt \cQ(\bx) \frac {\cQ(\bx)}{\wt\cQ(\bx)}
\tilde r(\bx,\bx')\frac {r(\bx,\bx'))}{\wt r(\bx,\bx')}
\left(u^*(\bx)-u^*(\bx')\right)
\\\nonumber
&\geq&  \frac 12
\sum_{\bx,\bx'\in \G} \wt \cQ(\bx) (1-\d) \tilde r(\bx,\bx')(1-\d)
\left(u^*(\bx)-u^*(\bx')\right)
\\\nonumber
&\geq&  (1-\d)^2 \inf_{u\in\cG_{\bA ,\bB } }\frac 12
\sum_{\bx,\bx'\in \G} \wt \cQ(\bx)
\tilde r(\bx,\bx') \left(u(\bx)-u(\bx')\right)
\\\nonumber
&=&(1-\d)^2 \wCapm{\bA ,\bB }.
\eea
By the same token,
\bea\Eq(trivial.5)
 \wt\Phi_N( u^*) &\geq& (1-\d)^2 \Capm{\bA ,\bB }.
\eea
 The claimed relation follows.
\end{proof}

To make use of this observation, we need to control the rates
$r_N(\bx,\bx')$ and the measure
$\cQ_{\b,N}(\bx)$ in terms of suitable modified rates and measures.
In fact, we see easily that
\be\Eq(trivial.7)
\wt\cQ_{\b,N}(\bx)\equiv \cQ_{\b,N}(\bz^*)
\exp\left(-\frac{\b N}2((\bx-\bz^*), \A(\bz^*)(\bx-\bz^*))\right),
\ee
so that, for all $\bx\in D_N(\rho)$ and  for some $K<\infty$, it holds
\be\Eq(trivial.7.1)
\left|\frac{\cQ_{\b,N}(\bx)}{\wt\cQ_{\b,N}(\bx)}-1\right|\leq K N\rho^3.
\ee
For that concerns the rates, let us first define, for $\s\in\cS_N$,
\be\Eq(put.20)
 \Lambda^{\pm}_k (\s )\equiv  \lbr i\in \Lambda_k~:~\s (i) =\pm1\rbr .
\ee
For all $\bx\in \G_N^n$,
we then have
\bea\Eq(bounded.1)
r_N(\bx,\bx+\bfe_\ell)&=& \cQ_{\b,N}(\bx)^{-1}
\sum_{\s\in\cS_N[\bx]}
\mu_{\b ,N}[\o ] (\s )
\sum_{i\in \Lambda^{-}_\ell (\s )} p(\s,\s^i)\\
&=&\cQ_{\b,N}(\bx)^{-1}
\sum_{\s\in\cS_N[\bx]}
\mu_{\b ,N}[\o ] (\s )
\sum_{i\in \Lambda^{-}_\ell (\s )}
\sfrac{1}{N} e^{-2\b\left[m(\s)-\sfrac{1}{N} +h_i\right]_+}.\nonumber
\eea
Notice that for all $\s\in\cS_N(\bx)$,
$|\Lambda^{-}_\ell (\s)|$
is a constant just depending on $\bx$.
Using that $h_i= \bar h_\ell + \wt h_i$, with $\wt h_i\in[-\e,\e]$,
we get the bounds
\be\Eq(bounded.2)
 r_N(\bx,\bx+\bfe_\ell)=
\sfrac{\left|\Lambda^{-}_\ell (\bx )\right|}{N}
e^{-2\b\left[m(\s) +\bar h_\ell\right]_+} (1+O(\e)) .
\ee
It follows  easily  that, for all $\bx\in D_N(\rho)$,
\be \label{trivial.6}
\left|\frac{r_N(\bx,\bx+\bfe_\ell)}{r_N(\bz^*,\bz^*+\bfe_\ell)}
-1\right| \leq \b (\e+n\rho)
\ee

With this in mind, we let  $\wt L_{N}$ be the generator
of the dynamics on $D_N(\rho)$ with rates
$\wt r(\bx, \bx +\bfe_\ell)\equiv r_N(\bz^*,\bz^*+\bfe_\ell)\equiv r_\ell$
 and $\wt r (\bx +\bfe_\ell, \bx)\equiv r_\ell\frac{\wt\cQ_{\b,N}(\bx)}{\wt\cQ_{\b,N}(\bx+\bfe_\ell)}$,
 and thus with reversible measure
 $\wt\cQ_{\b,N}(\bx)$. 
 For $u\in \cG_{\bA ,\bB}$, we write the corresponding Dirichlet form as
\be\Eq(trivial.10)
\wt\Phi_{D_N}(u)\equiv \cQ_{\b,N}(\bz^*)\sum_{\bx\in
  D_N(\rho)}\sum_{\ell=1}^nr_\ell
e^{-\b N((\bx-\bz^*),\A(\bz^*)(\bx-\bz^*))}
\left(u(\bx)-u(\bx+\bfe_\ell)\right)^2.
\ee

\subsection{Approximately harmonic functions for $\wt\Phi_{D_N}$}

 We will now describe a function that we will show to be almost
 harmonic with respect to the Dirichlet form $\wt \Phi_{D_N}$.
Define the matrix $\B(\bz^*)\equiv \B$ with elements
\be\Eq(up.20)
\B_{\ell,k}\equiv \sqrt {r_\ell}\A(\bz^*)_{\ell,k}\sqrt{r_k}.
\ee
Let $\hat \bv^{(i)}$, $i=1,\dots,n$ be the normalized eigenvectors
of $\B$, and $\hat\g_i$ be the corresponding  eigenvalues.
We denote by $\hat \g_1$ the unique negative eigenvalue of $\B$,
and characterize it in the following lemma.

\begin{lemma}\TH(up.83)
Let $z^*$ be a solution of the equation \eqv(fine.10) and assume in
addition that
\be\Eq(up.84)
\frac{\b}N\sum_{i=1}^N
\left(1-\tanh^2\left(\b\left(z^*+h_i\right)\right)\right)
>1.
\ee
 Then,
$\bz^*$ defined through \eqv(fine.9)
 is a saddle point and the unique negative eigenvalue of
$\B(\bz^*)$ is the unique negative solution, $\hat\g_1\equiv
\hat\g_1(N,n)$, of the equation
\be\Eq(up.85)
\sum_{\ell=1}^n\rho_\ell \frac{
\frac{1}{|\L_\ell|}
\sum_{i\in\L_\ell}\left(1-\tanh(\b(z^*+h_i))\right)
\exp{(-2\b\left[z^* +\bar h_\ell\right]_+)}}
{\frac{\frac1{|\L_\ell|}
\sum_{i\in\L_\ell}\left(1-\tanh(\b(z^*+h_i))\right)
\exp{(-2\b\left[z^* +
\bar h_\ell\right]_+)}}
{\frac{\b}{|\L_\ell|}\sum_{i\in\L_\ell}\left(1-\tanh^2(\b(z^*+h_i))
\right) }-2\g}=1.
\ee
Moreover, we have that
\be\Eq(up.86)
\lim_{n\uparrow\infty}\lim_{N\uparrow\infty} \hat\g_1(N,n) \equiv
\bar\g_1,
\ee
where $\bar\g_1$ is the unique negative solution of the equation
\be\Eq(up.87)
\E_h\left[\frac{ \left(1-\tanh(\b(z^*+h))\right)\exp{(-2\b\left[z^* +
 h\right]_+)} }
{\frac{\exp{(-2\b\left[z^* +
 h\right]_+)}}{ \b\left(1+\tanh(\b(z^*+h)) \right)}-2\g}\right]=1.
\ee
\end{lemma}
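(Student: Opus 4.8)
The plan is to prove the lemma in three stages, following the structure of the claim itself: first the reduction of the eigenvalue problem for $\B(\bz^*)$ to a scalar equation, then the identification of the number of negative solutions, and finally the $N,n\to\infty$ limit.

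\textbf{Step 1: The eigenvalue equation.} Recall from \eqv(up.20) that $\B = D^{1/2}\A(\bz^*)D^{1/2}$ with $D=\mathrm{diag}(r_1,\dots,r_n)$, so $\B$ and $\A(\bz^*)$ have the same inertia and $\g$ is an eigenvalue of $\B$ iff $D^{-1/2}\g D^{-1/2} = \g D^{-1}$ makes $\A(\bz^*) - \g D^{-1}$ singular. Writing $\A(\bz^*)_{k\ell} = -1 + \d_{k\ell}\hat\l_\ell$ as in \eqv(fine.11), the matrix $\A(\bz^*) - \g D^{-1}$ is again of the form ``rank-one plus diagonal'': it equals $-\mathbf{1}\mathbf{1}^{\sf t} + \mathrm{diag}(\hat\l_\ell - \g/r_\ell)$. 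By exactly the determinant computation used for \eqv(fine.13) and \eqv(fine.503), namely $\det(-\mathbf{1}\mathbf{1}^{\sf t} + \mathrm{diag}(d_\ell)) = (1 - \sum_\ell d_\ell^{-1})\prod_\ell d_\ell$, the singularity condition (when no $\hat\l_\ell - \g/r_\ell$ vanishes) is
\be\Eq(plan.1)
\sum_{\ell=1}^n \frac{1}{\hat\l_\ell - \g/r_\ell} = 1,
\ee
equivalently $\sum_\ell r_\ell/(r_\ell\hat\l_\ell - \g) = 1$. Now I substitute the explicit values: $r_\ell = r_N(\bz^*,\bz^*+\bfe_\ell) = \sfrac{|\Lambda^-_\ell(\bz^*)|}{N}e^{-2\b[z^*+\bar h_\ell]_+}(1+O(\e))$ from \eqv(bounded.2), and $\hat\l_\ell = \big(\frac{\b}{N}\sum_{i\in\L_\ell}(1-\tanh^2(\b(z^*+h_i)))\big)^{-1}$ from \eqv(fine.12); I also use $|\Lambda^-_\ell(\bz^*)|/|\L_\ell| = \frac12(1 - \bz^*_\ell/\rho_\ell) = \frac1{2|\L_\ell|}\sum_{i\in\L_\ell}(1-\tanh(\b(z^*+h_i)))$ from \eqv(fine.9). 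Plugging these in, multiplying numerator and denominator of each term by $\rho_\ell$, and simplifying the product $r_\ell\hat\l_\ell$ gives precisely \eqv(up.85), modulo the $1+O(\e)$ errors in $r_\ell$; since the partition mesh $\e = C/n$ is fixed here, I will state \eqv(up.85) as the equation for the idealized rates and note the $O(\e)$ correction is harmless for the limit in Step 3 (or, cleaner, define $r_\ell$ as the idealized value and absorb the discrepancy via Lemma \thv(trivial.1) later in the paper).

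\textbf{Step 2: Uniqueness of the negative eigenvalue.} Since $\A(\bz^*)$ and $\B(\bz^*)$ have the same inertia, and by Lemma \thv(fine.500) the matrix $\A(\bz^*)$ has exactly one negative eigenvalue precisely under hypothesis \eqv(up.84) (which is \eqv(fine.502)), the matrix $\B(\bz^*)$ also has exactly one negative eigenvalue under \eqv(up.84); so $\bz^*$ is a saddle point and $\hat\g_1$ is well defined. To see directly from \eqv(up.85) that there is a unique negative root, let $\psi(\g) \equiv \sum_\ell a_\ell/(b_\ell - 2\g)$ denote the left-hand side, where $a_\ell, b_\ell > 0$ are the obvious positive quantities (note $1+\tanh > 0$ and $1-\tanh^2 > 0$ on the bounded support of $h$). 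On $(-\infty, 0)$ we have $b_\ell - 2\g > 0$, each summand is positive and strictly increasing in $\g$, so $\psi$ is strictly increasing on $(-\infty,0)$ with $\psi(-\infty) = 0$ and $\psi(0) = \sum_\ell a_\ell/b_\ell$. A short computation shows $\sum_\ell a_\ell/b_\ell = \frac{\b}{N}\sum_{i=1}^N(1-\tanh^2(\b(z^*+h_i)))$, which is $>1$ exactly under \eqv(up.84); hence by the intermediate value theorem there is exactly one $\g\in(-\infty,0)$ with $\psi(\g)=1$. This reproves uniqueness directly and ties the condition to \eqv(up.84).

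\textbf{Step 3: The limit $n\to\infty$ after $N\to\infty$.} For fixed $n$, as $N\to\infty$ the law of large numbers gives $\rho_\ell \to p_\ell = \P_h[h\in I_\ell]$ and $\frac1{|\L_\ell|}\sum_{i\in\L_\ell}g(h_i) \to \E_h[g(h)\mid h\in I_\ell]$ almost surely, for the bounded continuous functions $g$ appearing ($1-\tanh$, $1-\tanh^2$, $e^{-2\b[z^*+\cdot]_+}$). Because $\psi_{N,n}(\g) = 1$ defines $\hat\g_1(N,n)$ implicitly through a function that is monotone and smooth in $\g$ on a fixed interval $(-\infty,0)$ with a uniform-in-$N,n$ lower bound on $|\psi'|$ near the root (using the boundedness of the support of $h$, which keeps all $b_\ell$ in a fixed compact subinterval of $(0,\infty)$), the root depends continuously on the coefficients; thus $\hat\g_1(N,n) \to \hat\g_1(\infty,n)$, the negative root of $\sum_\ell p_\ell \, \tilde a_\ell / (\tilde b_\ell - 2\g) = 1$ with block-conditional expectations. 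Then letting $n\to\infty$, the partition refines, $\e = C/n \to 0$, and $\sum_\ell p_\ell\,\E_h[\cdot\mid I_\ell] \to \E_h[\cdot]$ (a Riemann-sum/martingale-convergence argument, since $z^*$ is the same for all $n$ by \eqv(fine.10)); in the limit $[z^*+\bar h_\ell]_+ \to [z^*+h]_+$ pointwise under the expectation, and the equation $\sum_\ell p_\ell \tilde a_\ell/(\tilde b_\ell - 2\g)=1$ converges to \eqv(up.87). The same monotonicity/uniform-derivative argument gives convergence of the roots, establishing \eqv(up.86).

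\textbf{Main obstacle.} The genuinely delicate point is Step 3: justifying that the double limit of the roots equals the root of the limiting equation, and in particular that one may interchange the limit with the (non-uniform on all of $(-\infty,0)$, but uniform near the root) inversion of $\psi$. The key input is the bounded support of $h$: it forces $b_\ell = \frac{\b}{|\L_\ell|}\sum_{i\in\L_\ell}(1-\tanh^2(\b(z^*+h_i))) \in [c_1, c_2]$ for fixed $0 < c_1 \le c_2 < \infty$ uniformly in $N, n, \ell$, which both keeps the root $\hat\g_1$ bounded away from $-\infty$ and bounds $\psi' $ below near the root uniformly, so that an $\e$-perturbation of the coefficients moves the root by $O(\e)$. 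Everything else — the determinant identity in Step 1, the inertia/monotonicity in Step 2 — is essentially a bookkeeping rerun of the computations already done for \eqv(fine.13) and Lemma \thv(fine.500).
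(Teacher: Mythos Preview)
Your proposal is correct and follows essentially the same route as the paper: derive the scalar equation $\sum_\ell r_\ell/(r_\ell\hat\l_\ell-\g)=1$ for the eigenvalues of $\B$ (the paper does this by solving the eigenvector equations directly rather than via the determinant of $\A-\g D^{-1}$, but the two computations are equivalent), substitute the explicit expressions for $r_\ell$ and $\hat\l_\ell$, use monotonicity on $(-\infty,0)$ together with the value at $\g=0$ to get uniqueness, and then argue the convergence \eqv(up.86). Your additional inertia argument via Sylvester's law and Lemma~\thv(fine.500) is a nice redundancy, and your Step~3 is more careful than the paper's ``follows easily''; you are also right to flag the $1+O(\e)$ discrepancy in $r_\ell$ from \eqv(bounded.2), which the paper silently absorbs.
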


\begin{proof}
The particular form of the matrix $\B$ allows to obtain a
simple characterization of all eigenvalues and
eigenvectors. The eigenvalue equations can be written as
\be\Eq(up.88)
 -\sum_{\ell=1}^n\sqrt{r_\ell r_k}
u_\ell+(r_k\hat\l_k-\g)u_k=0,\forall 1\leq k\leq n.
\ee
Assume for simplicity that all $r_k\hat\l_k$ take distinct
values.
Then  there is no non-trivial solution of these equation
with $\g =r_k\hat\l_k$, and thus $\sum_{\ell=1}^n\sqrt{r_\ell}
u_\ell\neq 0$. Thus,
\be\Eq(Up.89)
u_k=\frac{\sqrt{r_k}\sum_{\ell=1}^n\sqrt{r_\ell}u_\ell}{r_k\hat\l_k-\g}.
\ee
 Multiplying by $\sqrt{r_k}$ and summing over $k$, $u_k$ is a
solution if and only if $\g$ satisfies the equation
\be\Eq(up.90)
\sum_{k=1}^n \frac{r_k}{r_k\hat\l_k-\g}=1.
\ee
Using \eqv(bounded.2) and noticing that
$\frac{|\L_k^-|}N=\sfrac{1}{2}(\rho_k-\bz^*_k)$, we get
\be\Eq(bounded.3)
r_k=\sfrac{1}{2}(\rho_k-\bz^*_k) \exp\left({-2\b\left[m(\s) +
\bar h_k\right]_+}\right)(1+O(\e)).
\ee
Inserting the expressions  for $\bz^*_k/\rho_k$ and  $\hat\l_k$ given
by \eqv(fine.9) and \eqv(fine.12) into \eqv(bounded.3)
and substituting the result into \eqv(up.90),
we recover \eqv(up.85).

Since the left-hand side of \eqv(up.90) is monotone decreasing in $\g$
as long as $\g\geq 0$, it follows that there can be at most one
negative solution of this equation, and such a solution exists if
 and only if left-hand
side is larger than $1$ for $\g=0$.
The claimed convergence property \eqv(up.86) follows easily.
\end{proof}

We continue our construction  defining the vectors $\bv^{(i)}$ by
\be\Eq(up.31)
\bv_\ell^{(i)} \equiv\hat \bv^{(i)}_\ell /\sqrt {r_\ell},
 \ee
and the vectors ${\boldsymbol {\check v}}^{(i)}$ by
\be\Eq(up.31.1)
\checkv_\ell^{(i)} \equiv\hat \bv^{(i)}_\ell \sqrt
{r_\ell} =r_\ell \bv^{(i)}_\ell.
 \ee
We will single out the vectors $\bv\equiv \bv^{(1)}$ and
$\checkv\equiv\checkv^{(1)}$. The important facts about these vectors
is that
\be\Eq(up.31.2)
\A\check \bv^{(i)}=\hat\g_i\bv^{(i)},
\ee
and that
\be\Eq(up.31.3)
(\checkv^{(i)},\bv^{(j)})=\d_{ij}.
\ee
This implies the following non-orthogonal
decomposition of the quadratic form $\A$,
\be\Eq(up.31.5)
(\by,\A\bx)=\sum_{i=1}^n\hat\g_i (\by,\bv^{(i)})(\bx,\bv^{(i)}).
\ee

A consequence of the computation in the proof of Lemma \thv(up.83),
on which we shall rely in the sequel, is the following:
\begin{lemma}
 \label{lem:ventries}
There exists a positive constant $\delta >0$ such that independently of $n$,
\be
\label{eq:ventries}
\delta \leq  \min_k \bv_k \leq\max_k \bv_k \leq \frac{1}{\delta} .
\ee
\end{lemma}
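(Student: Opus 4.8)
The plan is to read off the bounds on the entries of $\bv\equiv\bv^{(1)}$ directly from the explicit formula \eqv(Up.89) that was derived in the proof of Lemma \thv(up.83). Recall from \eqv(up.31) that $\bv_\ell=\hat\bv_\ell^{(1)}/\sqrt{r_\ell}$, and from \eqv(Up.89) that the (unnormalized) eigenvector components satisfy
\be
u_k=\frac{\sqrt{r_k}\,S}{r_k\hat\l_k-\hat\g_1},\qquad S\equiv\sum_{\ell=1}^n\sqrt{r_\ell}\,u_\ell,
\ee
so that $u_k/\sqrt{r_k}=S/(r_k\hat\l_k-\hat\g_1)$. After normalizing $\hat\bv^{(1)}$ to be a unit vector this becomes
\be
\bv_k=\frac{1}{r_k\hat\l_k-\hat\g_1}\Big(\sum_{\ell=1}^n\frac{r_\ell}{(r_\ell\hat\l_\ell-\hat\g_1)^2}\Big)^{-1/2}.
\ee
Thus it suffices to show that the products $r_k\hat\l_k$ and the eigenvalue $\hat\g_1$ are bounded above and below by constants independent of $N$ and $n$, and that the same holds for the normalizing sum (suitably interpreted).

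First I would establish that $r_k\hat\l_k$ is bounded away from $0$ and $\infty$, uniformly in $k$, $N$, $n$. From \eqv(bounded.3) we have $r_k=\tfrac12(\rho_k-\bz^*_k)\exp(-2\b[z^*+\bar h_k]_+)(1+O(\e))$, and from \eqv(fine.12), $\hat\l_k=\big(\tfrac{\b}{N}\sum_{i\in\L_k}(1-\tanh^2(\b(z^*+h_i)))\big)^{-1}$. Multiplying, the factors of $|\L_k|/N=\rho_k$ cancel between $\tfrac12(\rho_k-\bz^*_k)$ (which equals $\tfrac1N|\L_k^-|$, of order $\rho_k$ since $|\bz_k^*|<\rho_k$ with a uniform gap by \eqv(fine.9), as $z^*\in(-1,1)$ is fixed and the fields are bounded) and the $\rho_k$ hidden in the sum over $\L_k$ in $\hat\l_k$. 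Using that $h$ has bounded support, that $z^*$ is a fixed solution of \eqv(fine.10) lying strictly inside $(-1,1)$, and that $\tanh$ is a bounded increasing function, $1-\tanh^2(\b(z^*+h_i))$ is bounded above by $1$ and below by a positive constant; likewise $1-\tanh(\b(z^*+h_i))$ and $\exp(-2\b[z^*+\bar h_k]_+)$ are bounded above and below by positive constants. Hence $r_k\hat\l_k\in[c_-,c_+]$ for some $0<c_-\le c_+<\infty$ depending only on $\b$, $z^*$ and $\supp(h)$.

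Next, I would bound $\hat\g_1$. Since $\hat\g_1<0$ we immediately get $r_k\hat\l_k-\hat\g_1\ge c_-$, giving the upper bound $\bv_k\le$ (const)$\,\cdot(\sum_\ell r_\ell/(r_\ell\hat\l_\ell-\hat\g_1)^2)^{-1/2}$. For a lower bound on $|\hat\g_1|$ I would use the characterizing equation \eqv(up.90), $\sum_k r_k/(r_k\hat\l_k-\hat\g_1)=1$: if $|\hat\g_1|$ were too small the left side would blow up past $1$ (using $r_k\hat\l_k\le c_+$ and $\sum_k r_k=\sum_k\tfrac1N|\L_k^-|\le 1$ is too crude — rather one uses that for $\hat\g_1\to 0^-$ the sum tends to $\sum_k r_k/(r_k\hat\l_k)=\sum_k 1/\hat\l_k=\tfrac{\b}{N}\sum_i(1-\tanh^2(\b(z^*+h_i)))>1$ by hypothesis \eqv(up.84), strictly, so by continuity and monotonicity $|\hat\g_1|$ is bounded below by a positive constant uniform in $N,n$); and an upper bound on $|\hat\g_1|$ follows because each summand in \eqv(up.90) is positive, so $r_1/(r_1\hat\l_1-\hat\g_1)\le 1$, i.e.\ $|\hat\g_1|\le r_1\hat\l_1+r_1|\hat\g_1|$... — cleaner: from $\sum_k r_k/(r_k\hat\l_k+|\hat\g_1|)=1$ and $r_k\hat\l_k\ge c_-$, dropping the denominators' $r_k\hat\l_k$ part gives $\sum_k r_k/|\hat\g_1|\ge 1$ only a lower bound; instead bound $r_k\hat\l_k+|\hat\g_1|\ge|\hat\g_1|$ to get $|\hat\g_1|\le\sum_k r_k\le$ (const), since $\sum_k r_k=\tfrac1N\sum_k|\L_k^-|e^{-2\b[\cdots]_+}(1+O(\e))\le 1+O(\e)$. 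Finally, with $r_k\hat\l_k+|\hat\g_1|$ now pinched between two positive constants uniformly, each term $r_k/(r_k\hat\l_k+|\hat\g_1|)^2$ is comparable to $r_k$ up to constants, so $\sum_\ell r_\ell/(r_\ell\hat\l_\ell-\hat\g_1)^2\asymp\sum_\ell r_\ell=:R$, with $R\in[R_-,R_+]$ bounded by positive constants independent of $N,n$ (lower bound: $\sum_k r_k\ge c\sum_k\rho_k=c$). Plugging these back into the displayed formula for $\bv_k$ yields $\delta\le\bv_k\le 1/\delta$ for a uniform $\delta>0$, which is \eqref{eq:ventries}.

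The main obstacle, and the only place where genuine care is needed, is the uniformity in $n$: as $n\to\infty$ the number of blocks grows, each $r_\ell\sim\rho_\ell\to 0$, and one must check that no cancellation or degeneracy occurs in the normalizing sum $\sum_\ell r_\ell/(r_\ell\hat\l_\ell-\hat\g_1)^2$. This is handled by the cancellation of the $\rho_\ell$ factors noted above, which reduces everything to Riemann-sum expressions for integrals against the law of $h$ that are manifestly $n$-independent in the limit (the $n$-independence of \eqv(fine.10) and of the limiting equation \eqv(up.87) is the structural reason this works). I would also remark that the assumption that all $r_k\hat\l_k$ are distinct, used to derive \eqv(Up.89), is harmless: it can be removed by a perturbation argument exactly as in \cite{BEGK01}, or simply by noting that the final bound \eqref{eq:ventries} is continuous in the data and hence passes to the limit.
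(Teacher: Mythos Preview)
Your proposal is correct and follows essentially the same route as the paper: derive the explicit formula $\bv_k=\phi_k^{-1}\big(\sum_\ell r_\ell/\phi_\ell^2\big)^{-1/2}$ with $\phi_k=r_k\hat\l_k-\hat\g_1$ from \eqv(Up.89) and \eqv(up.31), then observe that $r_k\hat\l_k$ is uniformly bounded away from $0$ and $\infty$ (the $\rho_k$-factors cancel, as you note) and that $|\hat\g_1|$ is bounded above, whence $\phi_k$ and the normalizing sum are pinched between uniform positive constants. The paper's proof is much terser---it simply writes down the product $r_k\hat\l_k$ and asserts the conclusion---so your version fills in exactly the details the paper omits; your digression on a \emph{lower} bound for $|\hat\g_1|$ is unnecessary (only the upper bound matters for bounding $\phi_k$ from above), but it is not wrong.
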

\begin{proof}
 Due to our explicit computations,
\be\label{dimaputnumbers.1}
 r_k\hat\l_k = \frac{1}{2} \lb 1-\frac{\bz_k^*}{\rho_k}\rb
 \left[ \b \frac{1}{\left|\Lambda_k\right|}
\sum_{i\in\Lambda_k} \lb 1- \tanh^2\lb \b(z^* +h_i )\rb\rb\right]^{-1}
 e^{-2\b\left[z^* +\bar h_k\right]_+} .
\ee
Consequently, the quantities $\phi_k\equiv r_k\hat\l_k -\hat{\gamma}_1
(N ,n)$ are bounded away from zero and infinity,  uniformly
in $N$, $n$ and  $k=1, \dots ,n$. Since
by \eqref{up.31} and \eqref{Up.89} the entries
 of $\bv$ are given by
\be\label{dimaputnumbers.2}
 \bv_k = \frac{1}{\phi_k}\lbr \sum_\ell \frac{r_\ell}{\phi_\ell^2}\rbr^{-1/2} ,
\ee
the assertion of the lemma follows.
\end{proof}

Finally, define the function $f:\R\rightarrow \R_+$ by
\bea\Eq(up.32)
f(a)&=&\frac{\int_{-\infty}^a e^{-\b N|\hat\g_1|u^2/2}du}
{\int_{-\infty}^{\infty} e^{-\b N|\hat\g_1|u^2/2}du}
\\\nonumber
&=&  \sqrt{\frac{\b N|\hat\g_1|}{2\pi}}  \int_{-\infty}^a
  e^{-\b N|\hat\g_1|u^2/2}du.
\eea
We claim that the function
\be\Eq(up.33)
g(\bx)\equiv f((\bv,\bx))
 \ee
 is the desired approximately harmonic function.

Notice first, that $g(\bx)=\po(1)$ for all $\bx\in W_1\cap D_N(\rho)$,
while $g(\bx)=1-\po(1)$ for all $\bx\in W_2\cap D_N(\rho)$. Moreover,
 the following holds:
\begin{lemma}\TH(up.34)
Let $g$ be defined in \eqv(up.33). Then, for all $\bx\in D_N(\rho)$,
there exists a constant $c< \infty$ such that
\be\Eq(up.35)
\left|\left(\wt L_{N}g\right)(\bx) \right|\leq
\left(\sqrt{\frac{\b|\hat\g_1|}{2\pi N}} e^
{-\b N|\hat\g_1|(\bx,\bv)^2/2}
\sum_{\ell=1}^n r_\ell {\bv_\ell}\right) c\rho^2.
 \ee
\end{lemma}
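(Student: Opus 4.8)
The strategy is a direct computation: apply the generator $\wt L_N$ to $g(\bx)=f((\bv,\bx))$ and estimate the result using a two-term Taylor expansion of $f$, the eigenvalue equation for $\bv$, and the cancellation built into the definition of $f$. Recall that $\wt L_N$ acts by
\be\Eq(harm.plan.1)
(\wt L_N g)(\bx)=\sum_{\ell=1}^n r_\ell\Bigl[g(\bx+\bfe_\ell)-g(\bx)\Bigr]
+\sum_{\ell=1}^n r_\ell\frac{\wt\cQ_{\b,N}(\bx)}{\wt\cQ_{\b,N}(\bx+\bfe_\ell)}\Bigl[g(\bx-\bfe_\ell)-g(\bx)\Bigr],
\ee
where the lattice step is $\bfe_\ell$ of length $2/N$. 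Since $g(\bx)=f((\bv,\bx))$, we have $g(\bx\pm\bfe_\ell)=f((\bv,\bx)\pm \frac{2}{N}\bv_\ell)$, so the increments are governed by $\bv_\ell/N$, which by Lemma~\ref{lem:ventries} is of order $1/N$ uniformly in $\ell$ and $n$.

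\textbf{Step 1: Taylor expansion of $f$.} Write $a=(\bv,\bx)$ and $\delta_\ell=\frac{2}{N}\bv_\ell$. Then
\be\Eq(harm.plan.2)
f(a+\delta_\ell)-f(a)=\delta_\ell f'(a)+\tfrac12\delta_\ell^2 f''(a)+O(\delta_\ell^3\sup|f'''|),
\ee
and similarly for $f(a-\delta_\ell)-f(a)$. From \eqv(up.32), $f'(a)=\sqrt{\b N|\hat\g_1|/2\pi}\,e^{-\b N|\hat\g_1|a^2/2}$ and $f''(a)=-\b N|\hat\g_1|\,a\,f'(a)$; on $D_N(\rho)$ with $\rho=C\sqrt{\ln N/N}$ one has $|a|=|(\bv,\bx)|\le |(\bv,\bz^*)|+O(\rho)$, and since $\bz^*$ lies on the level curve through the saddle the relevant $a$ is $O(\rho)$, so $f''$ is of order $N\rho f'$ and $f'''$ of order $(N\rho)^2 f'$. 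The error terms in \eqv(harm.plan.2) are thus $O((N\rho)^2 f' \cdot (\bv_\ell/N)^3)=O(f' \bv_\ell^3 N\rho^2/N^2)$, which after summing over $\ell$ contributes to the right-hand side of \eqv(up.35) at the stated order $\rho^2$ (using $N\rho^2\sim C^2\ln N$ absorbed into the constant, or more carefully $\rho^3$ if one is generous).

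\textbf{Step 2: The first-order terms and the measure ratio.} The measure ratio expands as
\be\Eq(harm.plan.3)
\frac{\wt\cQ_{\b,N}(\bx)}{\wt\cQ_{\b,N}(\bx+\bfe_\ell)}
=\exp\Bigl(\b N\bigl[(\bfe_\ell,\A(\bx-\bz^*))+\tfrac12(\bfe_\ell,\A\bfe_\ell)\bigr]\Bigr)
=1+2\b(\bfe_\ell N/2,\A(\bx-\bz^*))+\cdots,
\ee
where I have used $|\bfe_\ell|=2/N$ so $\b N(\bfe_\ell,\A\bfe_\ell)=O(1/N)$. Collecting the two first-order pieces from $\wt L_N$,
\be\Eq(harm.plan.4)
\sum_\ell r_\ell\delta_\ell f'(a)-\sum_\ell r_\ell\delta_\ell f'(a)
+\sum_\ell r_\ell\bigl(\tfrac{\wt\cQ_{\b,N}(\bx)}{\wt\cQ_{\b,N}(\bx+\bfe_\ell)}-1\bigr)(-\delta_\ell)f'(a)+\text{(2nd order)},
\ee
the bare $f'$ terms cancel exactly, and the surviving first-order piece is, to leading order, $-2\b f'(a)\sum_\ell r_\ell(\bfe_\ell N/2,\A(\bx-\bz^*))\delta_\ell$. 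Since $\bfe_\ell N/2$ is the $\ell$-th coordinate unit vector, $(\bfe_\ell N/2,\A(\bx-\bz^*))=(\A(\bx-\bz^*))_\ell$, and $r_\ell\delta_\ell=\frac{2}{N}r_\ell\bv_\ell=\frac{2}{N}\sqrt{r_\ell}\hat\bv_\ell=\frac{2}{N}\checkv_\ell$. Thus this piece equals $-\frac{4\b}{N}f'(a)\,(\checkv,\A(\bx-\bz^*))=-\frac{4\b}{N}f'(a)\,\hat\g_1(\bv,\bx-\bz^*)$ by the eigenrelation \eqv(up.31.2), i.e. it is of order $N\rho f'$ times $1/N=\rho f'$ — wait, more precisely order $|\hat\g_1| a f'/N$.

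\textbf{Step 3: Matching with the second-order term.} The second-order term from summing the symmetric increments is $\tfrac12(\delta_\ell^2+\delta_\ell^2)f''(a)\sum_\ell r_\ell=\frac{4}{N^2}f''(a)\sum_\ell r_\ell\bv_\ell^2=\frac{4}{N^2}f''(a)\sum_\ell\checkv_\ell\bv_\ell$. Using $(\checkv,\bv)=(\checkv^{(1)},\bv^{(1)})=1$ from \eqv(up.31.3) and $f''(a)=-\b N|\hat\g_1|a f'(a)$, this is $-\frac{4\b|\hat\g_1|}{N}a f'(a)$. \emph{This exactly cancels the leading first-order survivor from Step 2} (noting $\hat\g_1=-|\hat\g_1|$ and $(\bv,\bx-\bz^*)=a-(\bv,\bz^*)$, with the $(\bv,\bz^*)$ part being where a genuine but lower-order remainder sits, controlled by the higher-order terms in \eqv(harm.plan.3) and the fact that $\bz^*$ is a critical point). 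What remains after all cancellations is bounded by the product of $f'(a)=\sqrt{\b|\hat\g_1|/2\pi N}\cdot\sqrt{N}e^{-\b N|\hat\g_1|a^2/2}$ — hmm, let me restate: $f'(a)=\sqrt{\b N|\hat\g_1|/2\pi}e^{-\b N|\hat\g_1|a^2/2}$, and the residual is $O(f'(a)\cdot\rho^2\cdot\frac{1}{N}\sum_\ell r_\ell\bv_\ell)\cdot$(something), which on rearranging and writing $f'(a)/\sqrt{N}=\sqrt{\b|\hat\g_1|/2\pi N}\,\sqrt{N}\cdot e^{\cdots}/\sqrt N$ gives precisely the bound \eqv(up.35): a factor $\sqrt{\b|\hat\g_1|/2\pi N}\,e^{-\b N|\hat\g_1|(\bx,\bv)^2/2}\sum_\ell r_\ell\bv_\ell$ times $c\rho^2$.

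\textbf{Main obstacle.} The delicate point is bookkeeping of the \emph{second-order error terms in the measure ratio} \eqv(harm.plan.3) and the \emph{cross terms} between the first-order increment of $f$ and the first-order increment of the measure ratio: these are naively of the same order as the terms we want to keep, and one must verify that they are either absorbed into the cubic-in-$\rho$ remainder or cancel against the $O(\rho^3)$ part of the $f$-expansion. The key enabling facts are the uniform-in-$n$ bounds $\delta\le\bv_k\le 1/\delta$ (Lemma~\ref{lem:ventries}) and $\phi_k=r_k\hat\l_k-\hat\g_1$ bounded away from $0$ and $\infty$, which guarantee that "order $\rho^k$" estimates hold with constants independent of the coarse-graining parameter $n$ — this uniformity is exactly what makes the eventual $n\uparrow\infty$ limit possible and is the reason these lemmas were proved first. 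Everything else is the routine discrete-to-continuous Laplacian computation: $g$ is the exact harmonic function for the continuum quadratic form in the single "unstable" direction $\bv$, and the discretization plus the multiplicative error in $\cQ_{\b,N}$ produce only the advertised $O(\rho^2)$ defect.
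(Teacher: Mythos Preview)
Your approach is essentially the paper's: Taylor-expand the increments of $f$ and the measure ratio, then use the eigenrelation $\A\checkv=\hat\g_1\bv$ (equivalently $\checkv\perp\bv^{(j)}$ for $j\geq 2$) to kill the $O(\rho)$ contribution, leaving only the $O(\rho^2)$ remainder; the paper merely organizes the same computation multiplicatively, factoring out $g(\bx+\bfe_\ell)-g(\bx)$ and working with the ratio $\frac{g(\bx)-g(\bx-\bfe_\ell)}{g(\bx+\bfe_\ell)-g(\bx)}$ instead of your additive split. One correction: the reverse rate is $r_\ell\,\wt\cQ_{\b,N}(\bx-\bfe_\ell)/\wt\cQ_{\b,N}(\bx)$, not $r_\ell\,\wt\cQ_{\b,N}(\bx)/\wt\cQ_{\b,N}(\bx+\bfe_\ell)$, though the two differ only by a factor $1+O(1/N)$ and your cancellation argument goes through unchanged.
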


\begin{remark}
The point of the estimate \eqv(up.35) is that it is by
  a factor $\rho^2$ smaller than what we would get for an arbitrary
  choice of the parameters $\bv$ and $\g_1$. We will actually use this
  estimate in the proof of the \emph{lower bound}.
\end{remark}

\begin{proof}
 To simplify the notation we will assume throughout the
 proof that coordinates are chosen such that $\bz^*=0$. We also set
 $\A\equiv \A(\bz^*)$. Using the detailed balance condition, we get
 \be\Eq(dbc.1)
\wt r(\bx, \bx-\bfe_\ell)= \frac{\wt\cQ_{\b,N}(\bx-\bfe_\ell)}
{\wt\cQ_{\b,N}(\bx)}\wt r(\bx-\bfe_\ell, \bx)= \frac{\wt\cQ_{\b,N}
(\bx-\bfe_\ell)}{\wt\cQ_{\b,N}(\bx)}r_\ell.
 \ee
Moreover, from the definition of $\wt\cQ_{\b,N}$ and using that we are
near a critical point, we have that
\bea\Eq(up.41)
\frac{\wt\cQ_{\b,N}(\bx-\bfe_\ell)}{\wt\cQ_{\b,N}(\bx)}&=&
\exp\left(-\sfrac{\b N}2\bigl[\bigl(\bx,\A \bx \bigr)-
\bigl((\bx-\bfe_\ell), \A(\bx-\bfe_\ell)\bigr)\bigr]\right)
\\\nonumber
&=&\exp\left(-\b \bigl(\bfe_\ell, \A \bx\bigr)
\right)\left(1+O\left(N^{-1}\right)\right).
\eea
From \eqv(dbc.1) and \eqv(up.41), the generator can be written as
\bea\Eq(trivial.20)
\left(\wt L_{N}g\right)(\bx) &=&\sum_{\ell=1}^n r_\ell
\left(g(\bx+\bfe_\ell)-g(\bx)\right)
\\
&& \quad\quad\times \left( 1-\exp\left(-\b \bigl(\bfe_\ell, \A \bx\bigr) \right)
\frac{g(\bx)-g(\bx-\bfe_\ell)}{g(\bx+\bfe_\ell)-g(\bx)}
\left(1+O(N^{-1})\right)\right).\nonumber
\eea
Now we use the explicit form of $g$ to obtain
\bea\Eq(up.50)
g(\bx+\bfe_\ell)-g(\bx) &=&
f((\bx,\bv)+\bv_\ell/N)-f((\bx,\bv)
\\\nonumber
&=&f'((\bx,\bv)) \bv_\ell/N +\bv_\ell^2N^{-2} f''(\bx,\bv)/2
+\bv_\ell^3N^{-3}f'''((\tilde \bx,\bv))/6
\\\nonumber
&=&{\bv_\ell}\sqrt{\frac{\b |\hat\g_1|}{2\pi N}} e^{-\b
  N|\hat\g_1|(\bx,\bv)^2/2}
\left(1 -\bv_\ell {\b |\hat\g_1|(\bx,\bv)}/2 +O\left(\rho^2\right)
\right).
\eea
 In particular, we get from here that
 \bea\Eq(up.51)
\frac{g(\bx)-g(\bx-\bfe_\ell)}{g(\bx+\bfe_\ell)-g(\bx)}
&=&\exp\left({-\b N|\hat\g_1|\left[(\bx-\bfe_\ell,\bv)^2-
(\bx,\bv)^2\right]/2}\right)
\\\nonumber
&&\times \frac {1-\bv_\ell \b |\hat\g_1|[(\bx,\bv)
-\bv_\ell/N]/2+O\left(
\rho^2
\right)} {1-\bv_\ell \b
|\hat\g_1|(\bx,\bv)/2+O\left(
\rho^2
\right)}
\\\nonumber
&=& \exp\left({-\b |\hat\g_1| \bv_\ell(\bx,\bv)}\right) \left(1+
\frac{\bv_\ell^2 \b |\hat\g_1|/2N+O\left(
\rho^2
\right)}
{1-\bv_\ell \b |\hat\g_1|(\bx,\bv) +O\left(
\rho^2
\right)}\right)
\\\nonumber
&=& \exp\left({-\b |\hat\g_1| \bv_\ell(\bx,\bv)}\right) \left(1+
O(\rho^2)\right)
 \eea
 Let us now insert these equations into \eqv(trivial.20):
\bea\Eq(up.55)
\left(\wt L_{N}g\right)(\bx) &=&\sqrt{\frac{\b |\hat\g_1|}{2\pi
N}} e^{-\b  N|\hat\g_1|(\bx,\bv)^2/2}\sum_{\ell=1}^n
r_\ell {\bv_\ell} \left(1 -\bv_\ell \b |\hat\g_1|(\bx,\bv)/2
+O\left(\rho^2\right) \right).
\nonumber\\
&&\times
\left( 1-\exp\left\{-\b \bigl(\bfe_\ell, \A \bx\bigr)-\b |\hat\g_1|
\bv_\ell(\bx,\bv)\right\} \left(1+ O(\rho^2)\right)
   \right).
\eea
Now
\bea\Eq(up.57)
&&1-\exp\left(-\b \bigl(\bfe_\ell, \A
\bx\bigr)-\b |\hat\g_1| \bv_\ell(\bx,\bv)\right) \left(1+
O(\rho^2)\right)
\nonumber\\
&&\quad\quad\quad\quad=\b \bigl(\bfe_\ell, \A \bx\bigr) {+\b |\hat\g_1|
\bv_\ell(\bx,\bv)} +O(\rho^2).
\eea
Using this fact, and collecting
the leading order terms, we get
 \bea\Eq(up.58)
\left(\wt L_{N}g\right)(\bx)&=&\sqrt{\frac{\b |\hat\g_1|}{2\pi N}} e^{-\b
  N|\hat\g_1|(\bx,\bv)^2/2}
\nonumber\\
&&\times\sum_{\ell=1}^n
r_\ell {\bv_\ell} \left[ \left(\b \bigl(\bfe_\ell, \A \bx\bigr)
{+\b|\hat\g_1| \bv_\ell(\bx,\bv)}\right)+O(\rho^2)\right].
\eea
Thus we will have proved the lemma provided that
\be\Eq(up.60)
 \sum_{\ell=1}^n r_\ell {\bv_\ell} \left(\bigl(\bfe_\ell,\A\bx\bigr)
{- \hat\g_1\bv_\ell(\bx,\bv)}\right)=0.
\ee
But note that from \eqv(up.31.5) we
get that
\be\Eq(up.61)
\bigl(\bfe_\ell, \A \bx\bigr) -\hat\g_1
\bv_\ell(\bx,\bv) = \sum_{j=2}^n \hat\g_j \bv_\ell^{(j)}
(\bx,\bv^{(j)}).
 \ee
Hence using that by \eqv(up.31.1) $r_\ell\bv_\ell=\check\bv_\ell$
and that by \eqv(up.31.3) $\check\bv$ is orthogonal to $\bv^{(j)}$
with $j\geq 2$, \eqv(up.60) follows and the lemma is proven.
\end{proof}

Having established that  $g$ is a good approximation of the
equilibrium potential in a neighborhood of $\bz^*$, we can now use it
to compute a good upper bound for the capacity.
Fix now $\rho= C\sqrt{\ln N/N}$.

\begin{proposition}\TH(up.main)
With the notation introduced above and for every $n\in\N$, we get
\be\Eq(up.70)
\capa( A , B )\leq \cQ_{\b,N}(\bz^*) \frac{\b |\hat\g_1|}{2\pi N}
 \left(\frac{\pi N }{2\b}\right)^{n/2}
 \prod_{\ell=1}^n \sqrt{\frac{r_\ell}{|\hat\g_j|}}
\left(1+O(\e+\sqrt{ (\ln N)^3/N})\right).
\ee
\end{proposition}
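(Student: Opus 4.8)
The plan is to feed one well-chosen test function into the Dirichlet variational principle \eqv(pot.7) and to evaluate its energy by a Gaussian (Laplace) computation concentrated in the $\rho$-cube $D_N(\rho)$ around the saddle $\bz^*$. \textbf{Step 1: reduction to a local, cleaned form.} By the standard localization recalled just before \eqv(up.71) (see \cite{B04,BEGK03}), up to a factor $1+\po(1)$ it is enough to bound the capacity of the Dirichlet form restricted to $D_N(\rho)$, with boundary values $0$ on $W_1\cap D_N(\rho)$ and $1$ on $W_2\cap D_N(\rho)$; the edges leaving $D_N(\rho)$ contribute negligibly, since across each of them either the test function is (exponentially) constant or $\cQ_{\b,N}$ is exponentially smaller than $\cQ_{\b,N}(\bz^*)$. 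On $D_N(\rho)$ the bounds \eqv(trivial.7.1) and \eqref{trivial.6} give $|\cQ_{\b,N}/\wt\cQ_{\b,N}-1|\le KN\rho^3$ and $|r_N/r_\ell-1|\le\b(\e+n\rho)$, so Lemma \thv(trivial.1) lets me replace this restricted capacity by the one built from $\wt\cQ_{\b,N}$ and the constant rates $r_\ell$, at the cost of a factor $(1+O(\e+N\rho^3+n\rho))=(1+O(\e+\sqrt{(\ln N)^3/N}))$ once $\rho=C\sqrt{\ln N/N}$. By the Dirichlet principle this last quantity is $\le\wt\Phi_{D_N}(g)$, where $g$ is the explicit function of \eqv(up.33); since $g$ takes the prescribed boundary values only up to $N^{-c}$ with $c=c(C)\uparrow\infty$, I first replace it by its affine truncation into $[0,1]$, which costs another $1+\po(1)$. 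It then remains to show that $\wt\Phi_{D_N}(g)$ equals the right-hand side of \eqv(up.70) up to $1+\po(1)$.

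\textbf{Step 2: Laplace evaluation of $\wt\Phi_{D_N}(g)$.} Inserting the Taylor expansion \eqv(up.50) of $g(\bx+\bfe_\ell)-g(\bx)$ into \eqv(trivial.10), squaring, and discarding the term linear in $(\bx-\bz^*,\bv)$ (which is odd about $\bz^*$ and contributes only $\po(1)$ against the symmetric weight), I obtain
\begin{multline*}
\wt\Phi_{D_N}(g)=\cQ_{\b,N}(\bz^*)\,\frac{\b|\hat\g_1|}{2\pi N}\Bigl(\sum_{\ell=1}^n r_\ell\bv_\ell^2\Bigr)\\
\times\sum_{\bx\in D_N(\rho)}\exp\Bigl(-\tfrac{\b N}2\bigl(\bx-\bz^*,\A(\bz^*)(\bx-\bz^*)\bigr)-\b N|\hat\g_1|(\bx-\bz^*,\bv)^2\Bigr)\bigl(1+O(\rho^2)\bigr).
\end{multline*}
Two identities then finish the computation. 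First, by \eqv(up.31.1) and \eqv(up.31.3), $\sum_\ell r_\ell\bv_\ell^2=(\checkv,\bv)=1$. Second, by the spectral decomposition \eqv(up.31.5), $(\bx-\bz^*,\A(\bz^*)(\bx-\bz^*))=\sum_{i=1}^n\hat\g_i(\bx-\bz^*,\bv^{(i)})^2$ with $\hat\g_1<0$, so after adding the $-\b N|\hat\g_1|(\bx-\bz^*,\bv)^2$ term the unstable ($i=1$) direction turns into the \emph{convergent} Gaussian $-\tfrac{\b N}2|\hat\g_1|(\bx-\bz^*,\bv)^2$ and the full exponent equals $-\tfrac{\b N}2 Q(\bx-\bz^*)$, with $Q$ positive definite and diagonalized by $\{\bv^{(i)}\}$ with eigenvalues $|\hat\g_i|$. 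Approximating the lattice sum over $D_N(\rho)$ (spacing $2/N$ per coordinate) by $(N/2)^n\int_{\R^n}e^{-\frac{\b N}2 Q(\by)}\,d\by$ — the tail outside $D_N(\rho)$ being $O(N^{-c})$ and, by Poisson summation, the discretization error exponentially small — and carrying out the substitution $\by_\ell=\sqrt{r_\ell}\,\bw_\ell$ (Jacobian $\prod_\ell\sqrt{r_\ell}$), under which $\{\sqrt{r_\ell}\,\bv^{(i)}\}_i$ is orthonormal with $Q$-eigenvalues $|\hat\g_i|$, one finds $\sum_{\bx\in D_N(\rho)}e^{-\frac{\b N}2 Q(\bx-\bz^*)}=\bigl(\tfrac{\pi N}{2\b}\bigr)^{n/2}\prod_{\ell=1}^n\sqrt{r_\ell/|\hat\g_\ell|}\,(1+\po(1))$. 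Feeding this and $\sum_\ell r_\ell\bv_\ell^2=1$ back into the display reproduces exactly \eqv(up.70).

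\textbf{The main obstacle} — within what is, as the authors note, overall the easy direction of Theorem \thv(CAP-thm) — is the bookkeeping of Step 2: one must verify that the single algebraic cancellation between $|g(\bx+\bfe_\ell)-g(\bx)|^2$ and the unstable direction of $\A(\bz^*)$ leaves precisely the non-degenerate Gaussian $e^{-\frac{\b N}2 Q}$ with the stated constant; that the prefactor $\sum_\ell r_\ell\bv_\ell^2$ collapses to $1$ through the duality \eqv(up.31.3); and that every error term — the $O(\rho^2)$ Taylor remainder in \eqv(up.50), the truncation to $D_N(\rho)$, the sum-to-integral passage, and the cleaning factor of Lemma \thv(trivial.1) — is genuinely $1+O(\e+\sqrt{(\ln N)^3/N})$ for each fixed $n$. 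The structural inputs that make this go through are the explicit diagonalization of $\B(\bz^*)$ in Lemma \thv(up.83), the duality relations \eqv(up.31.1)--\eqv(up.31.5), and the $n$-uniform bound $\delta\le\min_k\bv_k\le\max_k\bv_k\le\delta^{-1}$ of Lemma \ref{lem:ventries}, which also ensures that the boundary corrections of Step 1 are harmless. The localization of Step 1 is, by contrast, comparatively routine.
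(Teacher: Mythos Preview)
Your proposal is correct and follows essentially the same route as the paper: the same test function $g$ of \eqv(up.33), the same cancellation $\sum_\ell r_\ell\bv_\ell^2=1$, the same substitution $\bx_\ell=\sqrt{r_\ell}\,y_\ell$ turning the exponent into $-\tfrac{\b N}{2}\sum_i|\hat\g_i|(y,\hat\bv^{(i)})^2$, and the same Gaussian integral. The only noteworthy difference is in your Step~1: rather than appealing to a generic localization principle, the paper writes down the global test function $\wt g$ of \eqv(up.80) (equal to $g$ on the slab $W_0$ and to $0,1$ on $W_1,W_2$) and explicitly bounds the three pieces $\wt\Phi_{W_0^{in}}^{\|}(\wt g)$, $\wt\Phi_{W_0^{in}}^{\|}(g)-\wt\Phi_{W_0^{in}}^{\|}(\wt g)$, and $\Phi_{W_0^{out}}^{\Box}(\wt g)$ via \eqv(up.75)--\eqv(up.79.2); this makes transparent that what is being inserted into the Dirichlet principle is a genuine element of $\cH_{A,B}$, whereas your ``affine truncation'' and ``standard localization'' leave that verification implicit.
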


\begin{proof}
The upper bound on $\capa( A , B )$ is inherited from the upper bound
on the mesoscopic capacity $\Capm{\bA ,\bB}$.  As for the latter,
we first estimate the  energy of the mesoscopic neighborhood
$D_N\equiv D_N(\rho)$ of the  saddle point $\bz^*$.
By Lemma \thv(trivial.1), this can be controlled
in terms of the  modified Dirichlet form
$\wt\Phi_{D_N}$ in \eqv(trivial.10). Thus, let $g$ the function defined
in \eqv(up.33) and choose coordinates such that $\bz^*=0$.
Then
\bea\Eq(up.72)
\wt\Phi_{D_N}(g)&\equiv&
\wt\cQ_{\b,N}(0) \sum_{\bx\in D_N}\sum_{\ell=1}^n
e^{-\b N((\bx, \A\bx))/2}  r_\ell
\left(g(\bx+\bfe_\ell)-g(\bx)\right)^2
\\\nonumber
&=& \wt\cQ_{\b,N}(0) {\frac{\b |\hat\g_1|}{2\pi N}} \sum_{\bx\in
D_N} e^{-\b N|\hat\g_1|(\bx,\bv)^2}    e^{-\b N((\bx, \A \bx))/2}
\sum_{\ell=1}^n {r_\ell \bv_\ell^2}
\\\nonumber
&&\quad\quad\times
\left(1 -\bv_\ell {\b |\hat\g_1|(\bx,\bv)} +O\left(N^{-1}\ln
N\right) \right)^2
\\\nonumber
&=& \wt\cQ_{\b,N}(0) {\frac{\b
|\hat\g_1|}{2\pi N}} \sum_{\bx\in D_N} e^{-\b
  N|\hat\g_1|(\bx,\bv)^2}    e^{-\b N((\bx, \A \bx))/2}
\left(1 +O\left(\sqrt{\ln N /N}\right) \right).
\eea
 Here we used
that $\sum_\ell r_\ell \bv_\ell^2 =\sum_\ell \hat \bv_\ell^2 =1$. It
remains to compute the sum over $\bx$. By a standard approximation of
  the sum by an integral we get
\bea\Eq(up.73)
&&\sum_{\bx\in D_N} e^{-\b
  N|\hat\g_1|(\bx,\bv)^2}    e^{-\b N((\bx, \A \bx))/2}
\\\nonumber
&&=\left(\frac N2\right)^n\int d^n \bx e^{-\b
  N|\hat\g_1|(\bx,\bv)^2}    e^{-\b N((\bx, \A \bx))/2}
\left(1+O(\sqrt {\ln
  N/N} )\right)
\\\nonumber
&&=\left(\frac N2\right)^n \left( \prod_{\ell=1}^n \sqrt
{r_\ell}\right) \int d^n y e^{-\b
  N|\hat\g_1|(y,\hat v)^2}    e^{-\b N((y, \B y))/2}
\left(1+O(\sqrt {\ln N/N})\right)
\\\nonumber
&&=\left(\frac
N2\right)^n \left(\prod_{\ell=1}^n \sqrt{r_\ell}\right)
 \int d^n y e^{-\b
  N\left(|\hat\g_1|(y,\hat v)^2  +\sum_{j=1}^n\hat\g_j
(\hat \bv^{(j)}, y)^2/2\right)}
\left(1+O(\sqrt {\ln N/N})\right)
\\\nonumber
&&=\left(\frac N2\right)^n\left(\prod_{\ell=1}^n \sqrt{r_\ell}\right)
 \int d^n y e^{-\b N\sum_{j=1}^n|\hat\g_j| (\hat \bv^{(j)}, y)^2/2}
\left(1+O(\sqrt {\ln N/N})\right)
\\\nonumber
&& =\left(\frac N2\right)^n \left(\prod_{\ell=1}^n \sqrt{r_\ell}
\right) \left(\frac{2\pi }{\b  N}\right)^{n/2}
\frac 1{\sqrt{\prod_{j=1}^n |\hat\g_j|}}
\left(1+O(\sqrt{ \ln N/N})\right)
\\\nonumber
&& =\left(\frac{\pi
N}{2\b} \right)^{n/2} \prod_{\ell=1}^n \sqrt{\frac{r_\ell}{
|\hat\g_\ell|}}\left(1+O(\sqrt{ \ln N/N})\right).
\eea
 Inserting \eqv(up.73) into \eqv(up.72) we see that
the left-hand side of \eqv(up.72) is equal
to the right-hand side of \eqv(up.70) up to error terms.

It remains to show  that the contributions from the sum outside
$D_N$ in the Dirichlet form do not contribute significantly to the
capacity. To do this, we define a global test function $\wt g$ given
by
\be \Eq(up.80)
\wt g(\bx)\equiv\left\{
\begin{array}{ll}
0,& \bx\in W_1 \\
1,& \bx\in  W_2 \\
g(\bx),& \bx\in W_0
\end{array}
\right.
\ee
Clearly, the only non-zero contributions to the Dirichlet form
$ \Phi_N(\wt g)$ come from $\overline{W}_0\equiv W_0 \cup
\partial W_0$, where $\partial W_0$ denotes the boundary of $W_0$.
\begin{figure} \label{fig.1}
\begin{center}
\psfrag{a}{$W_1$} \psfrag{p}{$W_2$} \psfrag{m}{$m_1^*$}
\psfrag{n}{$m_2^*$} \psfrag{d}{$D_N$} \psfrag{wo}{$W_0^{in}$}
\psfrag{wd}{$W_0^{out}$} \psfrag{k}{$f$} \psfrag{z}{$z^*$}
\psfrag{g}{$\wt g=0$} \psfrag{b}{$\wt g=1$}
\includegraphics[width=12cm]{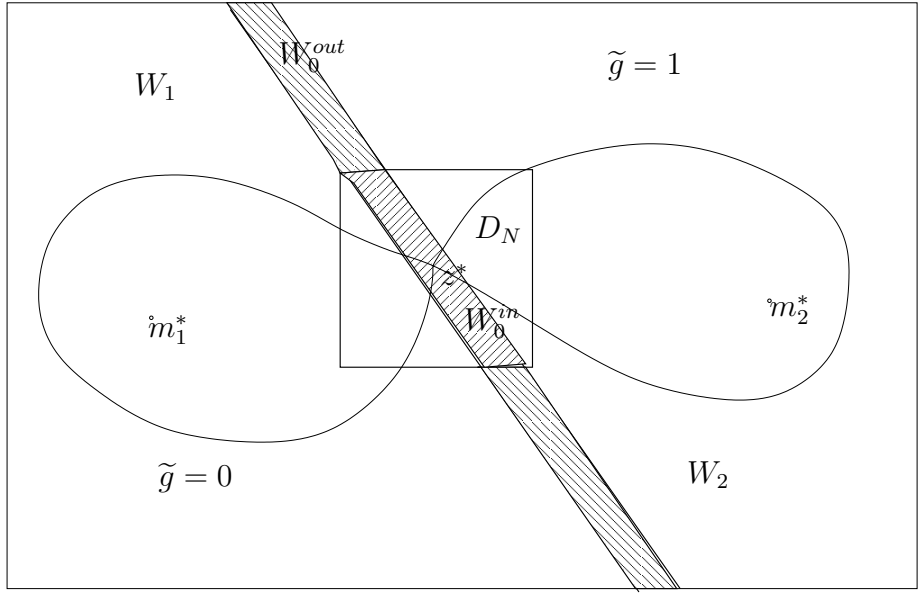}
\caption{Domains for the construction of the test function in the
  upper bound}
\end{center}
\end{figure}
 Let us
thus consider the sets  $W_0^{in}= W_0 \cap D_N$ and $W_0^{out}= W_0
\cap D_N^c$ (see Figure \ref{fig.1}). We denote by
$\Phi_{W_0^{in}}^{||}(\wt g)$ the Dirichlet form of $\wt g$
restricted to $W_0^{in}$ and to the part of its boundary contained in
$D_N$, i.e. to $\overline{W}_0^{in}\cap D_N$, and by
$\Phi_{W_0^{out}}^{\Box}(\wt g)$ the Dirichlet form of $\wt g$
restricted to $\overline{W}_0^{out}$. With this notation, we have
\bea\Eq(up.75)
 \Phi_N(\wt g)& = & \Phi_{W_0^{in}}^{||}(\wt g) +
 \Phi_{W_0^{out}}^{\Box}(\wt g)\\
 &=&\wt\Phi_{W_0^{in}}^{||}(\wt g)
 \left(1+O\left(\sqrt{\ln N/N}\right) \right) +
 \Phi_{W_0^{out}}^{\Box}(\wt g)
\nonumber\\
&=& \left(\wt\Phi_{W_0^{in}}^{||}( g)-
  \left(\wt\Phi_{W_0^{in}}^{||}(g)-\wt\Phi_{W_0^{in}}^{||}(\wt g)\right)
  \right) \left(1+O\left(\sqrt{\ln N/N}\right) \right)
 +  \Phi_{W_0^{out}}^{\Box}(\wt g).\nonumber
  \eea
The first term in \eqv(up.75) satisfies trivially the bound
\be\Eq(up.75.1)
\wt\Phi_{D_N'}(g)\leq\wt\Phi_{W_0^{in}}^{||}( g)\leq \wt\Phi_{D_N}(g),
\ee
where $D_N'\equiv D_N(\rho')$ is defined as in
\eqv(up.75.1) but with constant $\rho'=C'\sqrt{\ln N/N}$ such that
$D_N'\subset W_0^{in}$.  Performing the same computations as in
\eqv(up.72) and \eqv(up.73) it is easy to show that
$\wt\Phi_{D_N'}(g)=\wt\Phi_{D_N}( g)(1+\po(1))$,
and then from \eqv(up.75)  it follows that
\be\Eq(up.75.2)
\wt\Phi_{W_0^{in}}^{||}( g)= \wt\Phi_{D_N}(g)(1-\po(1)).
\ee
Consider now
the second term in \eqv(up.75).
Since $\wt g\equiv g$ on $W_0$, we get
\bea\Eq(up.76)
&&\wt\Phi_{W_0^{in}}^{||}(g)-\wt\Phi_{W_0^{in}}^{||}(\wt g)=
\sum_{\bx\in\partial W_0^{in}\cap W_1}
 \sum_{\ell=1}^n \wt\cQ(\bx)r_\ell
 \left[\left(g(\bx+\bfe_\ell)-g(\bx)\right)^2 -g(\bx)^2\right]
\nonumber\\
&&\quad\quad+\sum_{\bx\in\partial W_0^{in}\cap W_2}
 \sum_{\ell=1}^n \wt\cQ(\bx)r_\ell
 \left[\left(g(\bx+\bfe_\ell)-g(\bx)\right)^2
   -\left(1-g(\bx)\right)^2\right],
\eea
 where we also used that the function $\wt g$ has boundary
conditions zero and one respectively on $W_1$ and $W_2$. By
symmetry, let us just consider the first sum in the r.h.s. of
\eqv(up.76).
For   $\bx\in\partial W_0^{in}\cap W_1$ it holds that
$(\bx,\bv)\leq -\rho=-C\sqrt{\ln N/N}$, and hence
 \be\Eq(up.76.1)
g(\bx)^2 \leq \frac{1}{\sqrt{2\pi\b|\hat\g_1|}
C\sqrt{\ln N}} e^{-\b N|\hat\g_1|\rho^2}.
\ee
Using this bound together with  inequality \eqv(up.50) to
control $\left(g(\bx+\bfe_\ell)-g(\bx)\right)^2$, we get
\bea\Eq(up.77)
 && \sum_{\bx\in\partial W_0^{in}\cap W_1}
 \sum_{\ell=1}^n \wt\cQ(\bx)r_\ell \left[\left(g(\bx+\bfe_\ell)-
g(\bx)\right)^2 -g(\bx)^2\right]\hspace{5cm}
\nonumber\\
&&\quad\quad\leq\frac{\b|\hat\g_1|}{2\pi N}  e^{-\b
N|\hat\g_1|\rho^2}\sum_{\bx\in\partial W_0^{in}\cap W_1}\wt\cQ(\bx)
\left( 1+\frac{c N}{\sqrt{\ln N}}\right)
\nonumber\\
 &&\quad\quad\leq \wt\cQ_{\b,N}(0)\frac{\b|\hat\g_1|}{2\pi N}
e^{-\b N|\hat\g_1|\rho^2} \sum_{\bx\in\partial W_0^{in}\cap W_1}
 e^{-\b N((\bx, \A\bx))/2} \lb 1+c\frac{N}{\sqrt{\ln N}}\rb
\eea
for some constant $c$ independent on $N$.
The sum over $\bx\in\partial W_0^{in}\cap W_1$ in
the last term can then be computed  as in \eqv(up.73).
However, in this case the integration runs over the
$(n-1)$-dimensional hyperplane orthogonal to $v$ and thus we have
\bea\Eq(up.78)
&&\sum_{\bx\in\partial W_0^{in}\cap W_1} e^{-\b N((\bx, \A\bx))/2}
\nonumber\\
  &&=\left(\frac N2\right)^{n-1}\int d^{n-1} \bx
e^{-\b N((\bx, \A \bx))/2}
\nonumber\\
&&=\left(\frac N2\right)^{n-1} \left( \prod_{\ell=2}^n
\sqrt{r_\ell}\right)\int d^{n-1} y e^{-\b N((y, \B y))/2}
\nonumber\\
 &&\leq \left(\frac N2\right)^{n-1}
\left(\prod_{\ell=2}^n \sqrt{r_\ell}\right)
e^{-\b  N\hat\g_1\rho^2/2} \int d^{n-1} y e^{-\b N
\left(\sum_{j=2}^n\hat\g_j (\hat \bv^{(j)}, y)^2/2\right)}
\nonumber\\
&& =\left(\frac{\pi N}{2\b} \right)^{\frac{n-1}{2}}
\prod_{\ell=2}^n\sqrt{\frac{r_\ell}{ |\hat\g_\ell|}}
e^{-\b  N\hat\g_1\rho^2/2}.
\eea
Inserting \eqv(up.78) in \eqv(up.77), and comparing the result with
$\wt \Phi_{D_N}(g)$, we get that the l.h.s of \eqv(up.77) is bounded
as
\be\Eq(up.79)
\left (1+c\frac{N}{\ln N}\right)\sqrt{N} e^{-\b N|\hat\g_1|\rho^2/2}\wt
\Phi_{D_N}(g)= \po(N^{-K})
 \wt \Phi_{D_N}(g),
\ee
 with $K= \frac{\b|\hat\g_1|C-1}{2}$, which is positive if $C$ is
 large enough. A similar bound can be obtained for the second sum in
\eqv(up.76), so that we finally get
\be\Eq(up.79.1)
\left|\wt\Phi_{W_0^{in}}^{||}(g)-\wt\Phi_{W_0^{in}}^{||}(\wt g)\right|
\leq \po(N^{-K})\wt\Phi_{D_N}(g).
 \ee
The last term to analyze is the Dirichlet form
$\Phi_{W_0^{out}}^{\Box}(\wt g)$. But it is easy to realize that
this is negligible with respect to the leading
 term. Indeed, since  for all $\bx\in D_N^c$ it holds that
$F_{\b,N}(\bx)\geq F_{\b,N}(\bz^*) +K'\ln N/N$, for some positive
$K'<\infty$ depending on $C$, we get
 \be\Eq(up.79.2)%
 \Phi_{W_0^{out}}^{\Box}(\wt g)\leq Z_{\b,N}^{-1} e^{-\b N
F_{\b,N}(\bz^*)} N^{-(K'-n)}= \po(N^{-K''})\wt\Phi_{D_N}(g).
 \ee
  From \eqv(up.75) and the estimates given in \eqv(up.75.2),
 \eqv(up.79) and \eqv(up.79.2),  we get that
 $\Phi_{N}(\wt g)= \wt \Phi_{D_N}(g)(1+\po(1))$
 provides the claimed upper bound.
\end{proof}

Combining this proposition with Proposition \thv(fine.15), yields,
after some computations, the following more explicit representation
of the upper bound.

\begin{corollary}\TH(up.better) With the same notation of Proposition \thv(up.main),
\be\Eq(up.70.1)
Z_{\b,N}\capa( A , B )
\leq \frac{\b |\bar\g_1|}{2\pi N}
\frac {\exp\left(-\b N F_{\b,N}(z^*)\right)
 \left(1+\po(1)\right) }{\sqrt{
\b N\E_h
\left(1-\tanh^2\left(\b\left(z^*+h\right)\right)\right)
-1}},
\ee
where $\bar\g_1$ is defined through Eq. \eqv(up.87).

\end{corollary}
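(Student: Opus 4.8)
The statement follows by simply feeding the sharp asymptotics of Proposition~\thv(fine.15) into the mesoscopic upper bound of Proposition~\thv(up.main) and checking that every factor depending on the coarse--graining parameter $n$ cancels. Concretely, the plan is to multiply \eqv(up.70) by $Z_{\b,N}$, to replace $Z_{\b,N}\cQ_{\b,N}(\bz^*)=Z_{\b,N}\QQ_{\b,N}(\bz^*)$ by \eqv(fine.16.1), and to collect terms. Two observations make the bookkeeping trivial. First, the $n$--dimensional Gaussian normalization $(\pi N/2\b)^{n/2}$ produced by the integral in the proof of Proposition~\thv(up.main) cancels exactly against the factor $\bigl((N\pi/2\b)^n\bigr)^{-1/2}$ coming from the $n$ one--dimensional large--deviation corrections collected in \eqv(fine.16.1). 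Second, the exponential prefactor in \eqv(fine.16.1) equals $\exp(-\b N F_{\b,N}(z^*))$ by the identity \eqv(static.162) for $F_{\b,N}$ at a critical point. The error terms $O(\e+\sqrt{(\ln N)^3/N})$ of Proposition~\thv(up.main) and $(1+o(1))$ of Proposition~\thv(fine.15) combine into $1+O(\e)+\po(1)$ as $N\uparrow\infty$.

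It then remains to identify the leftover algebraic factor $\sqrt{|\det\A(\bz^*)|}\,\prod_{\ell=1}^n\sqrt{r_\ell}\big/\sqrt{\prod_{\ell=1}^n|\hat\g_\ell|}$. Here I would invoke the definition \eqv(up.20): the matrix $\B(\bz^*)$ is the congruence $D^{1/2}\A(\bz^*)D^{1/2}$ with $D=\mathrm{diag}(r_1,\dots,r_n)$, so that $\det\B(\bz^*)=\bigl(\prod_{\ell=1}^n r_\ell\bigr)\det\A(\bz^*)$; since $\hat\g_1,\dots,\hat\g_n$ are by construction the eigenvalues of $\B(\bz^*)$, this gives $\prod_{\ell=1}^n|\hat\g_\ell|=\bigl(\prod_{\ell=1}^n r_\ell\bigr)|\det\A(\bz^*)|$, hence
\[
\frac{\sqrt{|\det\A(\bz^*)|}\,\prod_{\ell=1}^n\sqrt{r_\ell}}{\sqrt{\prod_{\ell=1}^n|\hat\g_\ell|}}=1 .
\]
After these cancellations one is left precisely with
\[
Z_{\b,N}\capa(A,B)\leq\frac{\b|\hat\g_1|}{2\pi N}\,\frac{\exp\bigl(-\b N F_{\b,N}(z^*)\bigr)}{\sqrt{\bigl|\b\E_h\bigl(1-\tanh^2(\b(z^*+h))\bigr)-1\bigr|}}\,\bigl(1+O(\e)+\po(1)\bigr),
\]
valid for every fixed $n\in\N$, with $\e=C/n$ and $\hat\g_1=\hat\g_1(N,n)$.

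To conclude, the standing assumption \eqv(up.84) forces $\b\E_h(1-\tanh^2(\b(z^*+h)))>1$, so the absolute value in the denominator may be dropped; and by Lemma~\thv(up.83), more precisely by \eqv(up.85)--\eqv(up.87), one has $\hat\g_1(N,n)\to\bar\g_1$ in the iterated limit $\lim_{n\uparrow\infty}\lim_{N\uparrow\infty}$. Letting first $N\uparrow\infty$ for fixed $n$ and then $n\uparrow\infty$ replaces $|\hat\g_1(N,n)|$ by $|\bar\g_1|$ and absorbs $O(\e)=O(1/n)$ into $1+\po(1)$, which is exactly \eqv(up.70.1). Since the actual content is just the purely algebraic collapse above, the one point requiring a little care is this bookkeeping of the double limit and of the two independent error scales $\e=C/n$ and $\sqrt{(\ln N)^3/N}$: one must send $N\to\infty$ first, so that the $N$--dependent errors and $\po(1)$--terms vanish while $n$ (hence $\e$) is still frozen, and only afterwards let $n\to\infty$. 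There is no analytic obstacle beyond this.
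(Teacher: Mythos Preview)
Your proof is correct and follows essentially the same route as the paper: the key step is the determinant identity $\prod_\ell|\hat\g_\ell|=\bigl(\prod_\ell r_\ell\bigr)|\det\A(\bz^*)|$ obtained from the congruence $\B=R\A R$ with $R=\mathrm{diag}(\sqrt{r_\ell})$, after which one substitutes \eqv(fine.16.1) into \eqv(up.70), observes the cancellations, and passes to the limit $n\to\infty$ using Lemma~\thv(up.83). Your discussion of the order of limits is slightly more explicit than the paper's, but the content is the same.
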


\begin{proof}
First, we want  to show that
\be \Eq(up.81)
|\det (\A(\bz^*))|=\left(\prod_{\ell=1}^n r_\ell \right)^{-1}
\prod_{\ell=1}^n \hat\g_\ell.
 \ee
To see this, note that
$$
\B=R\A(\bz^*)R,
$$
where $R$ is the diagonal matrix with elements
$R_{\ell,k}=\d_{k,\ell} \sqrt {r_\ell}$. Thus
\be\Eq(up.82)
\prod_{\ell=1}^n |\hat \g_\ell| =\left|\det (\B)\right|=\left|
\det(R\A(\bz^*)R)\right| =|\det (\A(\bz^*))| \det( R^2) =\left|
\det(\A(\bz^*))\right|\prod_{\ell=1}^nr_\ell.
\ee
as desired.
Substituting in \eqv(up.70) the expression of $\cQ_{\b,N}(\bz^*)$ given
in Proposition \eqv(fine.15), and after the cancellation due to
\eqv(up.81),
we obtain an upper bound which is almost in the form we want.
The only $n$-dependent quantity is the
eigenvalue $\hat\g_1$ of the matrix $\B$.
Taking the limit of $n\rightarrow\infty$ and using
the second part of Lemma \thv(up.83), we recover
the assertion \eqv(up.70.1) of the corollary.
\end{proof}

This corollary concludes the first part of the proof
of Theorem \thv(CAP-thm). The second part, namely the construction
of a matching lower bound, will be discussed in the next section.

\def \by {{\boldsymbol y}}
\def \bbe {{\boldsymbol e}}


\section{Lower bounds on capacities}

In this section we will exploit the variational principle form
Proposition \thv(ab.5) to derive lower bounds on capacities. Our
task is to construct a suitable non-negative unit flow. This will be
done in  two steps. First we construct a good flow for the coarse
grained Dirichlet form in the mesoscopic variables
 and then we use this to construct a flow on the
microscopic variables.

\subsection{Mesoscopic lower bound: The strategy}
Let $\bA$ and $\bB$ be  mesoscopic neighborhoods of two
minima $\bm_{\bA}$ and $\bm_\bB$ of $F_{\b ,N}$, exactly as in the preceding section,
 and let $\bz^*$ be the highest critical point of
$F_{\b ,N}$ which lies between $\bm_\bA$ and $\bm_\bB$.
It would be convenient to pretend that $\bm_\bA, \bz^* ,\bm_\bB\in \Gamma_N^n$:
In general we should  substitute critical  points by their closest
approximations on the latter grid, but the proofs will not be sensitive to the corresponding
corrections.
Recall that the energy landscape around $\bz^*$ has been described in
Subsection~3.2.

Recall that the {\em mesoscopic capacity},
$\Capm{\bA ,\bB}$, is defined in \eqv(rfcw.10).
We will  construct a unit flow, $\frf_{\bA,\bB}$, from $\bA$ to $\bB$ of the
form
\be \label{FlowForm}
\frf_{\bA,\bB}(\bx ,\bx' )= \frac{\cQ_{\b ,N}
(\bx) r_N (\bx ,\bx' )}{\Phi_N (\wt g)}
\phi_{\bA,\bB} (\bx ,\bx' ),
\ee
such that the associated  Markov chain,
$\lb\P^{\frf_{\bA,\bB}}_N ,\cX_{\bA,\bB}\rb$, satisfies
\be \label{onepath}
\P^{\frf_{\bA,\bB}}_N\lb \sum_{\bbe\in\cX_{\bA,\bB}} \phi_{\bA,\bB} (\bbe)
= 1+\so \rb= 1 - \so .
\ee
In view of the general lower bound \eqref{LowerBound},
Eq. \eqref{onepath} implies that the mesoscopic
capacities satisfy
\be \label{CapNbound1}
 \Capm{\bA ,\bB} \geq \E_N^{\frf_{\bA,\bB}} \lbr
\sum_{\bbe= (\bx ,\bx')\in\cX}\frac{\frf_{\bA,\bB} (\bbe)}{\cQ_{\b ,N
}(\bx) r_N (e)} \rbr^{-1} \geq \Phi_N (\wt g)\lb 1 - \so \rb ,
\ee
which is the  lower bound we want to achieve on the mesoscopic
level.

We shall channel all of the flow $\frf_{\bA,\bB}$ through a certain
(mesoscopic) neighborhood $G_N$ of $\bz^*$ .
Namely, our global flow, $\frf_{\bA,\bB}$, in \eqref{FlowForm}
will consist of three (matching)  parts, $   \frf_\bA ,\frf $
 and $\frf_\bB$, where $\frf_\bA$ will be a flow from $\bA$ to
$\partial G_N$, $\frf$ will be a flow through $G_N$,
and $\frf_\bB$ will be a flow from $\partial G_N$ to $\bB$.
 We will recover \eqref{onepath} as a consequence of the
three estimates
\be \label{onepathGN}
\P^{\frf}_N \lb \sum_{\bbe\in\cX} \phi (\bbe)=
1 +\so \rb= 1 - \so ,
\ee
whereas,
\be \label{onepathAB}
\P^{\frf_\bA}_N \lb \sum_{\bbe\in\cX_\bA} \phi_\bA (\bbe)=  \so \rb=
1 - \so\quad \text{and}\quad \P^{\frf_\bB}_N\lb
\sum_{\bbe\in\cX_\bB} \phi_\bB (\bbe)= \so \rb= 1 - \so .
\ee
The construction of $\frf$ through
$G_N$ will be by far the most difficult part. It will
 rely crucially on  Lemma~\ref{up.34}.

\subsection{Neighborhood $G_N$}  We chose
again  mesoscopic coordinates in such a way that $\bz^* = 0$.
 Set $\rho = N^{-1/2 +\delta}$ and fix a (small)
positive number, $\nu >0$. Define
\be \label{GNset} G_N\equiv G_N
(\rho ,\nu )\equiv D_N (\rho )\cap\lbr\bx~:~ (\bx ,\checkv )\in
(-\nu\rho, \nu \rho)\rbr ,
\ee
 where $\checkv\equiv \checkv^{(1)}$
is defined in \eqref{up.31.1}, and $D_N$ is the same as in
\eqref{up.71}.  Note that in view of the discussion in Section~4,
within the region  $G_N$ we may  work with the modified quantities,
$\widetilde{\cQ}_{\b ,N}$ and $r_\ell$; $\ell=1, \dots ,n$, defined
in \eqref{trivial.7} and \eqref{trivial.10}.

The boundary $\partial G_N$ of $G_N$ consists of three disjoint
pieces, $\partial G_N =\partial_\bA G_N\cup\partial_\bB G_N\cup
\partial_r G_N$, where
\be\Eq(strange.30)
 \partial_\bA G_N= \lbr \bx\in\partial G_N: (\bx ,\checkv)
\leq -\nu\rho\rbr\quad {\rm and}\quad
 \partial_\bB G_N= \lbr \bx\in\partial G_N: (\bx ,\checkv)
\geq  \nu\rho\rbr .
\ee
We choose $\nu$ in \eqref{GNset} to be so small that there
exists $K>0$, such that
\be \label{partialrGN}
 F_{\b ,N} (\bx )> F_{\b ,N} (0) + K\rho^2  ,
\ee
uniformly over the remaining part of the boundary
$\bx\in\partial_r G_N$.

Let $\wt g$ be the approximately harmonic function defined in
\eqref{up.33} and \eqv(up.80).
Proceeding along the lines of \eqref{up.72} and
\eqref{up.73} we infer that,
\be \label{Phinu}
\Phi_N (\wt g)\lb
1+\so\rb= \sum_{\bx\in G_N\cup \partial_\bA G_N}
\widetilde{\cQ}_{\b,N} (\bx ) \sum_{\ell\in I_{G_N} (\bx ) } r_{\ell }
\lb \wt g(\bx +\bbe_{\ell} ) - \wt g (\bx)\rb^2 ,
\ee
where $I_{G_N}(\bx )\equiv \lbr\ell~:~
\bx +\bbe_{\ell}\in G_N\rbr$.
For  functions, $\phi$, on
 oriented edges, $(\bx, \bx + \bbe_\ell)$, of $D_N$, we use the
notation
 $\phi_\ell
(\bx) = \phi (\bx, \bx + \bbe_\ell )$,
 and  set
\bea \label{cFforms}
&&\cF_\ell [\phi ] (\bx)\equiv{\widetilde{\cQ}_{\b ,N}(\bx)}
r_\ell \phi_\ell (\bx) ,\nonumber \\
&& {\rm d}\cF[\phi ](\bx) \equiv
\sum_{\ell=1}^n \lb \cF_\ell [\phi ] (\bx) - \cF_\ell [\phi ] (\bx -
\bbe_\ell )\rb .\nonumber
\eea
In particular, the left hand side of
\eqref{up.35} can be written as
 $|{\rm d}\cF [\nabla \wt g ]|/ \widetilde{\cQ}_{\b ,N}(\bx) $.

Let us sum by parts in \eqref{Phinu}. By \eqref{partialrGN} the
contribution coming from $\partial_r G_N$ is negligible and,
consequently, we have, up to a factor of order $(1+\so )$,
\be\Eq(strange.31)
 \sum_{\bx\in G_N}\wt g(\bx ){\rm d} \cF[\nabla \wt g] (\bx )+
\sum_{\bx\in \partial_\bA G_N}\sum_{\ell\in  I_{G_N}(\bx )}
\cF_{\ell}[\nabla g] (\bx ).
\ee
 Furthermore,  comparison between the claim of Lemma~\ref{up.34} and
 \eqref{up.72}  (recall that $\rho^2 =N^{2\delta -1}\ll N^{-1/2}$)
 shows that the first term above is also negligible with
respect to $\Phi_N (\wt g )$. Hence,
\be \label{flowenters}
\Phi_N (\wt g )\lb 1+\so \rb= \sum_{\bx\in \partial_\bA G_N}\sum_{\ell\in
I_{G_N}(\bx )} \cF_{\ell}[\nabla \wt g] (\bx ) .
\ee

\subsection{Flow through $G_N$}
\label{sub:FlowDN} The relation \eqref{flowenters} is the starting
point for our construction of a unit flow of the form
\be\label{fshape}
\frf_\ell (\bx )= \frac{c}{\Phi_N (\wt g )} \cF_\ell[\phi ](\bx )
\ee
through $G_N$. Above $c= 1+\so$ is a normalization
constant. Let us fix $0<\nu_0\ll \nu$ small enough and define,
\be\label{GNnot}
 G_N^0= G_N\cap\lbr \bx~:~ \left| \bx - \frac{(\bx ,\checkv)\checkv}
{\| \checkv\|^2} \right| <\nu_0\rho\rbr .
\ee
Thus, $G_N^0$ is a narrow tube along the principal
$\checkv$-direction (Figure~\ref{fig:DNsets}). We want to construct
$\phi$ in \eqref{fshape} such that the following properties holds:

\noindent {\bf P1:} $\frf$ is confined to $G_N$, it runs from
$\partial_\bA G_N$ to $\partial_\bB G_N$ and it is a  unit flow.
That is,
\be \label{unitflow}
\forall\bx\in G_N, d\cF [\phi ](\bx )=
0\quad\hbox{\rm and}\quad \sum_{\bx\in\partial_\bA G_N}
\sum_{\ell\in I_{G_N}(\bx )} \frf_{\ell}[\phi ] (\bx )= 1 .
\ee

\noindent {\bf P2:}
$\phi$ is a small distortion of $\nabla \wt g$ inside $G_N^0$,
\be\label{insideDN0}
\phi_\ell (\bx)= \nabla_\ell \wt g (\bx) \lb  1 +\so\rb ,
\ee
uniformly in $\bx\in G_N^0$ and $\ell=1, \dots ,n$.

\noindent {\bf P3:}
The flow $\frf$ is negligible outside $G_N^0$ in
the following sense: For some $\kappa>0$,
\be\label{foutside}
\max_{\bx\in G_N\setminus G_N^0}\max_\ell \frf_\ell
(\bx) \leq \frac1{N^{\kappa}} .
\ee
 Once we are able to construct
$\frf$ which satisfies {\bf P1}-{\bf P3} above, the associated
 Markov chain $\lb \P_N^\frf ,\cX\rb$ obviously satisfies
\eqref{onepathGN}.

The most natural candidate  for $\phi$ would seem to be $\nabla \wt g$.
However, since $\wt g$ is not strictly harmonic, this choice does not
satisfies Kirchoff's law, and we would need to correct this by
adding a (hopefully) small perturbation, which in principle can
be constructed recursively.
It turns out, however, to be more
convenient to use as a starting choice
\be\Eq(ab.999)
\phi_\ell^{(0)}(\bx)\equiv {\bv_\ell}\sqrt{\frac{\b |\hat\g_1|}
{2\pi N}} \exp\left({-\b N|\hat\g_1|(\bx,\bv)^2/2}\right),
\ee
which, by \eqv(up.50), satisfies
\be\Eq(ab.999.1)
\phi_\ell^{(0)}(\bx)
=\left(\wt g(\bx+\bbe_\ell)-\wt g(\bx)\right)\left(1+O(\rho)\right),
\ee
uniformly in $G_N$.
Notice that, by \eqref{fshape}, this choice corresponds to the Markov chain with
transition probabilities
\be
\label{qell}
q(\bx,\bx+\bbe_\ell)=\frac{\check\bv_\ell}
{\sum_k\checkv_k}(1 +\so)\equiv q_\ell (1+\so ) .
\ee
From
 \eqref{fine.6} and the decomposition
\eqref{up.31.5} we see that
 \bea \Eq(strange.40)
\nonumber \frac{1+ O(\rho)}{\wt\cQ_{N ,\b} (0)}\cF_\ell[\phi^{(0)}]
&= & r_\ell\bv_\ell \sqrt{\frac{\b
|\hat\gamma_1|}{2\pi N}} \exp\lb -\sfrac{\b N}{2}\lb
|\hat\gamma_1|(\bx ,\bv )^2 + (\bx ,\A \bx )\rb\rb
\\\nonumber
&=& \checkv_\ell \sqrt{\frac{\b |\hat\gamma_1|}{2\pi N}}
\exp\lb - \sfrac{\b N}{2} \lb \sum_{j=2}^n \hat\gamma_j
 (\bx,\bv^{(j)})^2 \rb\rb .
\eea
 In particular, there exists a constant
$\c_1 >0$ such that
\be \label{FLapriori}
\frac{\cF_\ell [\phi^{(0)}] (x) }{\wt\cQ_{N ,\b} (0)}
 \leq \exp\left({-\c_1 N^{2\delta}}\right) ,
 \ee
uniformly in $\bx \in G_N\setminus G_N^0$ and $l =1,\dots ,n$.

Next, by inspection of the proof of  Lemma~\ref{up.34}, we see that
there exists $\c_2$, such that,
\be \label{dFapriori}
\left| {\rm d}\cF [\phi^{(0)}] (\bx )\right|
\leq \c_2 \rho^2 \cF_\ell [\phi^{(0)}] (\bx ) ,
\ee
uniformly in $x\in G_N$ and $\ell =1,\dots
,n$. Notice that we are relying on the strict uniform (in $n$) positivity of the
entries $\bv_\ell$, as stated in Lemma~\ref{lem:ventries}

\smallskip
\paragraph{Truncation of $\nabla g$, confinement of $\frf$
and property P1. }
Let $\cC_+$ be the positive cone spanned by the axis directions
$\bbe_1 ,\dots ,\bbe_n$. Note that the vector $\checkv$ lies in the
interior of $\cC_+$.  Define (see Figure~\ref{fig:DNsets})
\be\Eq(put-numbers.11)
G_N^1= \text{int}\lb \partial_B G_N^0 - \cC_+\rb\cap
G_N\quad\text{and}\quad G_N^2= \lb \partial_A G_N^1 + \cC_+\rb\cap
G_N .
\ee
We assume that the constants $\nu$ and $\nu_0$ in the definition of
$G_N$
 and, respectively, in the definition of $G_N^0$
 are tuned in such a way that $G_N^2\cap\partial_r G_N = \emptyset$.
\begin{figure}[tbh]
\psfrag{DN}{$G_N$} \psfrag{DN0}{$G_N^0$} \psfrag{DN1}{$G_N^1$}
\psfrag{DN2}{$G_N^2$} \psfrag{z}{$\bz^*$} \psfrag{v}{$\checkv$}
\psfrag{DA}{$\partial_A G_N$} \psfrag{DNB}{$\partial_B G_N$}
\includegraphics[width=11cm]{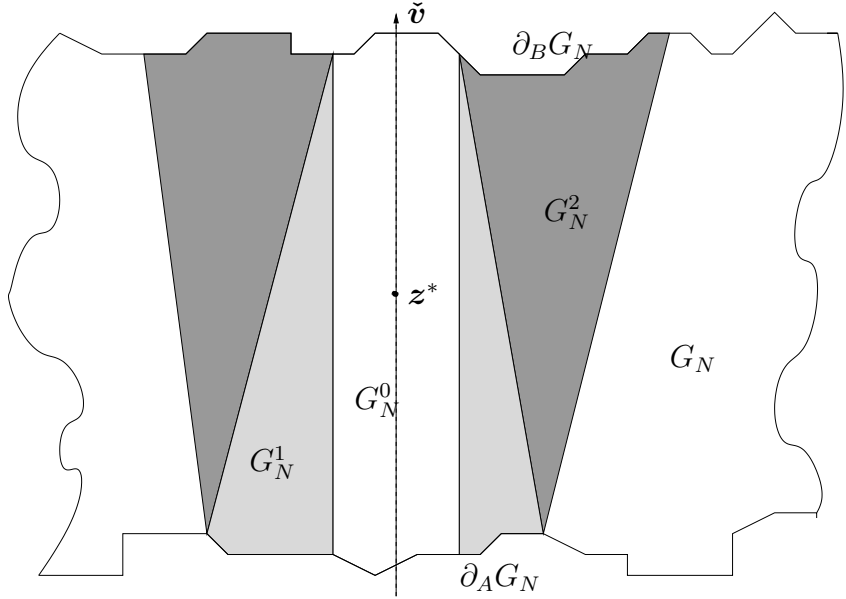}
\label{fig:DNsets}
\caption{ Narrow tube $G_N^0$ and  sets $G_N^1$ and $G_N^2$ }
\end{figure}
Let $\wt\phi^{(0)}$ be the restriction of $\phi^{(0)}$ to $G^1_N$,
\be \label{nablag}
\widetilde\phi^{(0)}_\ell(\bx) \equiv
\phi^{(0)}_\ell(\bx) \1_{\lbr \bx\in G_N^1\rbr}.
\ee

Now we turn to the construction of the full flow. To this end we
start by setting the values of $\phi_\ell$ on $\partial_\bA G_N$
equal to $\widetilde{\phi}_{(0)} $ if $\ell\in I_{G_N} (\bx )$
and zero otherwise. By \eqref{flowenters} and the bound
\eqref{FLapriori},
 the second  of the relations in \eqref{unitflow} is satisfied.

In order to satisfy  Kirchoff's law inside $G_N$, we write $\phi$ as
$\phi = \wt\phi^{(0)} + u$ with $u$ satisfying the recursion,
\be\label{uequation}
\sum_{\ell=1}^n  \cF_\ell [u] (\bx )=
\sum_{\ell=1}^n \cF_\ell[u] (\bx - \bbe_\ell )
- {\rm d}\cF [\widetilde\phi^{(0)}](\bx) .
\ee
Since
$\widetilde\phi^{(0)} \equiv 0$ on $G_N\setminus G_N^1$, we may
trivially take $u\equiv 0$ on $G_N\setminus G_N^2$ and then
 solve \eqref{uequation} on $G_N^2$ using the latter as an
insulated boundary condition on $\partial G_N^2\cap G_N $.
\smallskip

\paragraph{Interpolation of the flow inside $G_N^2$.}
  We  first solve
\eqref{uequation} inside $G_N^1$. By construction, if $\bx\in G_N^1$
then $\bx-\bbe_\ell \in G_N^1\cup \partial_\bA G_N^1$, for every
$\ell=1, \dots ,n$. Accordingly, let us slice $G_N^1$ into
layers ${\mathbb L}_k$ as follows:  Set
\be\Eq(strange.41)
{\mathbb L}_0= \partial_\bA G_N^1 ,
\ee
and, for $k=0,1,\dots $,
\be\Eq(strange.42)
 {\mathbb L}_{k+1}= \lbr \bx\in G_N ~:~ \bx - \bbe_\ell\in
\bigcup_{j=0}^k{\mathbb L}_j\ \text{for all}\ \ell =1, \dots, n\rbr .
\ee
Since all entries of $\bv$ are  positive,
there exists $\c_3 = c_3 (n)$ and $M\leq \c_3/\rho$, such that
\be\Eq(strange.44)
 G_N^1= \bigcup_{j=0}^{M} {\mathbb L}_j .
\ee
Now define recursively, for each $\bx\in {\mathbb L}_{k+1}$,
\be\label{interpolation}
\cF_\ell [u](\bx)= q_\ell\lb \sum_{j=1}^n\cF_j
[u](\bx- \bbe_j) -  {\rm d}\cF[\wt\phi^{(0)}] (\bx)\rb ,
\ee
where the probability distribution, $q_1 ,\dots ,q_n$, is defined as in
\eqref{qell}.
 Obviously,
this produces a solution of  \eqref{uequation}. The particular
choice of the constants $q_\ell$ in \eqref{qell} leads to a
rather miraculous looking cancellation we will encounter below.

\paragraph{Properties P2 and P3.} We now prove recursively a bound on
$u$ that will imply that  Properties {\bf P2} and {\bf P3} hold. Let
$c_k$ be constants such that, for all $\by \in {\mathbb L}_k$,
\be\label{kBound}
 \left| \cF_\ell [u] (\by)\right| \leq
c_k \rho^2 \cF_\ell [\nabla \wt g] (\by).
\ee
Then, for $\bx\in {\mathbb L}_{k+1}$, we get
 by construction \eqref{interpolation} and in view
of \eqref{dFapriori} that
\bea\Eq(strange.45)
\frac{| \cF_\ell [u](\bx)|}{\cF_{\ell } [\wt\phi^{(0)}](\bx)}
&\leq& q_\ell \sum_j \frac{|\cF_j [u](\bx - \bbe_j)|}
{\cF_{\ell }[\wt\phi^{(0)}](\bx)} + \chi_2\rho^2
\\\nonumber
&\leq& \rho^2\lb c_k q_\ell \sum_j \frac{\cF_j
[\wt\phi^{(0)}](\bx - \bbe_j)}
{\cF_{\ell}[\wt \phi^{(0)}] (\bx)} + \chi_2\rb.
\eea
 By our choice of $\phi^{(0)}$ in \eqref{nablag},
\bea\Eq(strange.46)
 \frac{\cF_j [\wt\phi^{(0)}](\bx - \bbe_j)}{\cF_{\ell }
[\wt\phi^{(0)}](\bx)}
&=& \frac{\checkv_j}{\check \bv_\ell} \exp\lbr \frac{\b N}{2}
\sum_{i=2}^n \hat\gamma_i\lb  (\bx ,\bv^{(i)})^2  - (\bx -\bbe_j
,\bv^{(i)})^2 \rb\rbr
\\\nonumber
&=& \frac{\checkv_j}{\check
\bv_\ell} \exp\lbr \b N \sum_{i=2}^n \hat\gamma_i (\bx ,
\bv^{(i)})(\bbe_j ,\bv^{(i)})\rbr\lb 1 + O\lb 1/N\rb\rb
\\\nonumber
&=& \frac { \checkv_j + 2\b  (\bbe_j ,\hat\bv ) \sum_{i=2}^n (\bbe_j
,\hat\bv^{(i)} ) (\bx , \bv^{(i)}) } {\checkv_\ell} \lb 1 + O(\rho^2
)\rb .
 \eea
However, for each $i = 2, \dots, n$,
\be\Eq(strange.47)
 \sum_{j=1}^n (\bbe_j ,\hat\bv )(\bbe_j ,\hat\bv^{(i)} )= 0 .
\ee
Therefore, with the choice $q_\ell =\frac{\check\bv_\ell}
{\sum_k\checkv_k}(1 +\so)$, we get
\be\Eq(strange.48) q_\ell \sum_j \frac{\cF_j [\wt\phi^{(0)} ]
(\bx - \bbe_j)}
{\cF_{\ell }[\wt\phi^{(0)}] (\bx)} =  1 + O(\rho^2 ) ,
\ee
uniformly in $\bx\in G_N^1$ and $l=1, \dots
,n$. Thus, the coefficients $c_k$ satisfy the  recursive bound
\be\Eq(strange.48.1)
 c_{k+1}\leq  c_k \lb 1 + O(\rho^2 )\rb + \chi_2\rho^2,
\ee
with  $c_0 =0$.
Consequently, there exists a constant, $c$, such
that \be\Eq(strange.49) c_{k}\leq k\rho^2 c e^{kc\rho^2}, \ee and
hence,  since  $M \leq \c_3/\rho$,  $c_M= O(\rho )$. As a result,
we have constructed $u$ on $G_N^1$ such that
\be \label{uniBound}
\left| \cF_\ell [u] (\bx)\right|= O\lb\rho \rb
\cF_\ell [\nabla g] (\bx),
\ee
uniformly in $\bx\in G_N^1$ and $\ell=1,\dots ,n$. In
particular,  \eqref{insideDN0}
 holds  uniformly in $\bx\in G_N^1$ and hence, by \eqref{FLapriori},
{\bf P3} is satisfied on $G_N^1\setminus G_N^0$.
Moreover, since by construction $\phi\equiv 0$ on
$G_N\setminus G_N^2$,  {\bf P3} is trivially satisfied
in the latter domain.
Hence
both {\bf P2}
 and {\bf P3} hold on $G_N^1\cup \lb G_N\setminus G_N^2\rb$.
\smallskip

It remains to reconstruct $u$ on $G_N^2\setminus G_N^1$.  Since we
have truncated $\nabla g$ outside $G_N^1$,   Kirchoff's equation
\eqref{uequation}, for $\bx\in G_N^2\setminus G_N^1$, takes the form
 $\cF [u] (\bx) = 0$.  Therefore, whatever we do in order to
reconstruct $\phi$, the total flow through $G_N^2\setminus G_N^1$
equals
\be\Eq(strange.51)
\frac{1+\so}{\Phi_N (\wt g)}\sum_{\bx\in
G_N^1}\sum_{\ell=1}^n \cF_\ell[\phi ]  (\bx)\1_{\lbr
  \bx+\bbe_\ell\not\in G_N^1\rbr} .
\ee
By \eqref{uniBound} and \eqref{FLapriori}, the latter is of the
order  $O\lb \rho^{1-n}e^{-\c_1 N^{2\delta}}\rb$.
Thus, {\bf P3} is established.

\subsection{Flows from $\bA$ to $\partial_\bA G_N$ and from $\partial_\bB
  G_N$ to $\bB$}
\label{sub:Outside} Let $\frf$ be the unit flow through $G_N$
constructed above.
 We need to construct a flow
\be \label{phiA}
 \frf_\bA (\bx ,\by)= (1+\so)\frac{\cQ_{\b ,N} (\bx )
r_N (\bx ,\by )}{\Phi_N (\wt g)}\phi_\bA (\bx ,\by ) \ee from $\bA$ to
$\partial_\bA G_N$ and, respectively, a flow \be\Eq(miracle.1)
 \frf_\bB (\bx ,\by)= (1+\so)\frac{\cQ_{\b ,N} (\bx )
 r_N (\bx ,\by )}{\Phi_N (\wt g)}\phi_\bB (\bx ,\by )
\ee
from $\partial_\bB G_N$ to $\bB$, such that \eqref{onepathAB} holds
and, of course, such that the concatenation $\frf_{\bA,\bB} = \lbr
\frf_\bA ,\frf ,\frf_\bB\rbr$ complies with Kirchoff's law.  We shall
work out only the $\frf_\bA$-case, the $\frf_\bB$-case is
completely analogous.

The expressions for $\Phi_N (\wt g )$ and $\cQ_{\b ,N} (\bx )$ appear
on the right-hand sides
 of \eqref{up.70} and \eqref{fine.1.1}. For the rest we need
only rough bounds: There exists
a constant $L =L(n)$, such that we are able to rewrite \eqref{phiA}
as,
\be \label{phiBound}
 \phi_\bA (\bx ,\by) =  \frac{(1+\so )\Phi_N (\wt g)
\frf_\bA (\bx ,\by)}{{\cQ}_{\b ,N}
  (\bx){r}_N (\bx ,\by)} \leq LN^{n/2 +1}
e^{-N (F_{\b,N} (\bz^* )- F_{\b,N}(\bx))} .
 \ee
This would imply a uniform stretched exponentially small upper bound
on $\phi_\bA$  at points $\bx$ which are mesoscopically away from
$\bz^*$ in the direction of $\nabla F_{\b ,N} $, for example  for $\bx$
satisfying
\be \label{bxbelow}
F_{\b ,N} (\bz^* ) - F_{\b ,N}  (\bx) >c N^{2\delta-1}.
\ee
With the above discussion in mind let us try to construct
$\frf_\bA$  in such a way that it charges only bonds $(\bx, \by )$ for
which \eqref{bxbelow} is satisfied.  Actually we shall do much better and give
a more or less explicit construction of the part of $\frf_\bA$ which flows through
$G_N^0$:  Namely, with each point $\bx\in\partial_\bA G_N^0$  we shall associate
a nearest neighbor path $\gamma^\bx = (\gamma^\bx (-k_A (\bx )), \dots ,
\gamma^\bx (0))$ on $\G_N^n$ such that
 \eqref{bxbelow} holds for all
$\by\in \gamma^\bx$ and,
\be\Eq(put.12)
 \gamma^\bx (-k_A (\bx ))\in\bA, \ \gamma^\bx (0) = \bx\quad\text{and}\quad
m (\gamma^\bx (\cdot +1)) = m (\gamma^\bx (\cdot )) + 2/N .
\ee
The flow from $\bA$ to $\partial_\bA G_N^0$ will be then defined as
\be
\label{fAflow}
\frf_\bA (\bfe ) = \sum_{\bx\in \partial_\bA G_N^0}\1_{\lbr \bfe\in \gamma^\bx\rbr}
\sum_{\ell\in I_{G_N} (\bx )}\frf_\ell (\bx ) .
\ee
By construction $\frf_\bA$ above satisfies the Kirchoff's law and matches with the flow
$\frf$ through $G_N$ on $\partial_\bA G_N^0$.
Strictly speaking, we should also specify how one extends
$\frf$ on the remaining part $\partial_\bA  G_N\setminus\partial_\bA G_N^0$. But this
is irrelevant: Whatever we do the $\P^{\frf_{\bA,\bB}}_N$-probability of
passing through $\partial_\bA G_N\setminus \partial_\bA G_N^0$ is equal to
\be\Eq(put.13)
\sum_{\bx\in \partial_\bA G_N\setminus \partial_\bA G_N^0} \sum_\ell
\frf_\ell (\bx)= \so .
\ee
It remains, therefore, to construct the family of paths $\lbr\gamma^\bx\rbr$
such that \eqref{bxbelow} holds.

\noindent
Each such path $\gamma^\bx$ will be constructed as a concatenation
 $\gamma^\bx= \hat{\gamma}\cup\eta^\bx$.
\vskip 0.2cm

\noindent
\step{1} Construction of $\hat{\gamma}$.
 Pick $\delta$
such that $\delta -1 < m_A = m (\bm_\bA )$ and  consider  the part $\hat\bx [\delta -1 ,z^*]$ of the
minimal energy curve as described in \eqref{fine.505}.  Let ${\gamma}$ be a
nearest neighbor
$\G_N^n$-approximation of $\hat\bx [\delta -1 ,z^*]$, which in addition
satisfies $m(\hat\gamma (\cdot +1)) = m(\hat\gamma (\cdot )) +2/N$. Since by
\eqref{eq:mincurve} the curve $\hat\bx[\delta -1 ,z^*]$ is coordinate-wise increasing, the
Hausdorff distance between $\hat\gamma$ and $\hat\bx[\delta -1 ,z^*]$ is at most
$2\sqrt{n}/N$.  Let $\bx^\bA$ be the first point where $\gamma$ hits
the set $D_N (\rho )$, and let $\bu^\bA$ be the last point where
$\gamma$ hits  $\bA$ (we assume now that the neighborhood $\bA$ is sufficiently large so
that $\bu^\bA$ is well defined). Then $\hat\gamma$ is just the portion of
$\gamma$ from $\bu^\bA$ to $\bx^\bA$.
\vskip 0.2cm

\noindent
\step{2} Construction of $\eta^\bx$. At this stage
 we assume that the parameter $\nu$ in \eqref{GNset} is so small
that $G_N$ lies deeply inside  $D_N (\rho )$. In particular, we may assume that
\[
 F_{\beta, N} (\bx^\bA )\, <\, \min\lbr F_{\beta ,N} (\bx )~:~\bx\in\partial_A G_N^0\rbr ,
\]
and, in view of \eqref{eq:mincurve}, we may also assume that
\be
\label{eq:xellup}
 \bx^\bA_\ell <\bx_\ell\quad \forall \bx\in\partial_A G_N^0\ \text{and}\  \ell=1, \dots, n.
\ee
Therefore, $\bx - \bx^\bA$ has strictly positive entries and, as it now follows from
\eqref{up.31.2},
\[
\lb\A\checkv ,\bx - \bx^\bA\rb \, =\, \lb \bv ,\bx - \bx^\bA\rb > 0 .
\]
By construction $G_N^0$ is a small tube in the direction of $\checkv$. Accordingly,
we may assume that $\lb\A\bx,\bx - \bx^\bA\rb > 0$ uniformly on $\partial_A G_N^0$.
But this means that the function
\[
t: [0,1]\mapsto  \lb \A (\bx^\bA +t (\bx - \bx^\bA ) , (\bx^\bA +t (\bx - \bx^\bA )\rb
\]
is strictly increasing. Therefore, $F_{\beta ,N}$ is, up to negligible corrections,
increasing on the straight line segment, $[\bx^\bA ,\bx ]\subset\R^n$
which connects $\bx^\bA$ and $\bx$.   
Then, our target path $\eta^\bx$ is
 a
nearest neighbor
$\G_N^n$-approximation of $[\bx^\bA ,\bx ]$ which runs from $\bx^\bA$
to $\bx$ . 
In view of the preceeding discussion it is possible to prepare $\eta^\bx$ in such a way
that   $
F_{\b ,N} (\bz^* ) - F_{\b ,N}  (\cdot ) >c N^{2\delta-1}$ along $\eta^\bx$.
Moreover, by
\eqref{eq:xellup} it is possible to ensure that the total
magnetization is increasing along $\eta^\bx$ .

This concludes the construction of a flow $\frf_{\bA ,\bB}$ satisfying \ref{CapNbound1}.  \qed

\vskip 0.2cm

In the sequel  we shall index vertices of $\gamma^\bx = \hat\gamma\cup\eta^\bx$ as,
\be\Eq(put.14)
 \gamma^\bx = \lb \hat\gamma^\bx (-k_\bA ), \dots \hat\gamma^\bx (0)\rb .
\ee
Since,
\be\Eq(put.15)
 F_{\b ,N} (\by  ) \leq F_{\b ,N} (\bz^* ) - c_1 \lb \by - \bz^* , \bv\rb^2 ,
\ee
for every $\by$ lying on the minimal energy curve $\hat\bx [\delta - 1 ,z^*]$ and since  the
Hessian of $F_{\b ,N}$ is uniformly bounded on $\hat\bx [\delta - 1 ,z^*]$, we conclude that
if $\nu_0$ is chosen small enough,
then there exists $c_2 >0$ such that
\be
\label{pathdecay}
F_{\b ,N} (\gamma^\bx ( \cdot )) \leq F_{\b ,N} (\bz^* ) -
c_2\lb \gamma^\bx (\cdot)  - \bz^* , \bv\rb^2 ,
\ee
uniformly in $\bx\in\partial_\bA G_N^0$. Finally, since the entries of $\bv$ are
uniformly strictly positive, it follows from \eqref{pathdecay} that,
\be
\label{pathdecayk}
F_{\b ,N} (\gamma^\bx ( - k )) \leq F_{\b ,N} (\bz^* ) - c_3 \frac{(N^{1/2 +\delta} +k)^2}{N^2},
\ee
uniformly in $\bx\in\partial_\bA$ and $k\in \lbr 0, \dots ,k_\bA (\bx )\rbr$.

\subsection{Lower bound on $\capa (A,B )$ via microscopic flows}
\label{sub:LBMicro}
Recall that $\bA$ and $\bB$ are mesoscopic neighborhoods of two
minima of $F_{\b ,N}$, $\bz^*$ is the corresponding saddle
point, and $A = \cS_N[\bA ]$, $B=\cS_N [\bB ]$ are the microscopic
counterparts of $\bA$ and $\bB$.  Let $\frf_{\bA ,\bB} = \lbr\frf_\bA ,\frf , \frf_\bB\rbr$
 be the mesoscopic flow from $\bA$ to $\bB$ constructed above. In this section we
are going to construct a subordinate microscopic flow, $f_{A ,B}$, from $A$ to $B$. In the
sequel, given a microscopic bond, $b = (\s , \s')$, we use $\bfe (b) = (\bm (\s ) ,\bm (\s' ))$
for its mesoscopic pre-image.  Our subordinate flow will satisfy
\be
\label{Fsubordinate}
\frf_{\bA ,\bB} ( \bfe ) = \sum_{b :\bfe (b )=\bfe} f_{A,B} (b) .
\ee
In fact, we are going to employ a much more stringent notion of
subordination on the level of induced Markov chains:
Let us label the realizations
of the mesoscopic chain $\cX_{\bA ,\bB}$
as $\underline{\bx} = \lb \bx_{-\ell_A}, \dots ,\bx_{\ell_B}\rb$,
in such a way that $\bx_{-\ell_A}\in\bA$, $\bx_{\ell_B}\in\bB$, and $m(\bx_0 )= m (\bz^* )$.
If $\bfe$ is a mesoscopic bond, we write $\bfe\in\ubx$ if $\bfe = (\bx_\ell ,\bx_{\ell+1})$ for some
$\ell = -\ell_A, \dots ,\ell_B -1$.
 To each path, $\underline{\bx}$, of positive probability, we  associate a
 subordinate microscopic \emph{unit flow}, $f^{\ubx}$, such that
\be\Eq(put.16)
 f^{\ubx} (b) >0\ \  \text{if and only if}\ \ \bfe (b)\in\ubx .
\ee
Then the total microscopic flow, $f_{A,B}$, can be decomposed as
\be
\label{FsubordinateC}
f_{A,B} = \sum_{\ubx}\P_N^{\frf_{\bA ,\bB}}\lb \cX_{\bA ,\bB} = \ubx\rb f^{\ubx} .
\ee
Evidently, \eqref{Fsubordinate} is satisfied: By construction,
\be\Eq(put.17)
 \sum_{b : \bfe (b) = \bfe} f^{\ubx}(b)  = 1\ \ \text{for every $\ubx$ and each $\bfe\in\ubx$} .
\ee
On the other hand, $\frf_{\bA ,\bB} (\bfe ) =
\sum_{\ubx}\P_N^{\frf_{\bA ,\bB}}\lb \cX_{\bA ,\bB}=\ubx\rb
\1_{\{\bfe\in\ubx\}}$.

Therefore, \eqref{FsubordinateC} gives rise to the following
decomposition of unity,
\be\Eq(put.19)
 \1_{\{ f_{A ,B} (b) >0\}} = \sum_{\ubx\ni \bfe (b)}\sum_{\us\ni
   b} \frac{\P_N^{\frf_{\bA ,\bB}}
\lb
\cX_{\bA ,\bB} = \ubx\rb \P^{\ubx}\lb \Sigma = \us\rb}{\frf_{\bA ,\bB }(\bfe (b))
f^{\ubx }(b )} ,
\ee
where $\lb \P^{\ubx } ,\Sigma \rb$ is the {\em microscopic} Markov chain from
$A$ to $B$ which is associated to the flow $f^{\ubx}$.

Consequently, our general lower bound \eqref{ab.5} implies that
\bea
\label{frf-fbound}
\nonumber
\capa(A,B)&\geq& \sum_{\ubx} \P_N^{\frf_{\bA ,\bB}} \lb
\cX_{\bA ,\bB} = \ubx\rb \E^{\ubx}
\lbr
\sum_{\ell = -\ell_A}^{\ell_B -1}\frac{\frf_{\bA ,\bB} (\bx_{\ell} ,\bx_{\ell +1})
f^{\ubx}(\s_{\ell} ,\s_{\ell+1})}{\m_{\b ,N} (\s_{\ell}) p_N (\s_{\ell},\s_{\ell +1})}\rbr^{-1}\hspace{5mm} \\
&\geq&
\sum_{\ubx} \P_N^{\frf_{\bA ,\bB}}
\lb
\cX_{\bA ,\bB} = \ubx\rb
\lbr \E^{\ubx}
\sum_{\ell = -\ell_A}^{\ell_B -1}\frac{\frf_{\bA ,\bB} (\bx_{\ell} ,\bx_{\ell +1})
f^{\ubx}(\s_{\ell} ,\s_{\ell+1})}{\m_{\b ,N} (\s_{\ell}) p_N (\s_{\ell},\s_{\ell +1})}\rbr^{-1}
\eea
We need to recover $\Phi_N (\wt g)$ from the latter expression. In view of
\eqref{FlowForm}, write,
\bea
\label{eq:product}
\frac{\frf_{\bA ,\bB} (\bx_{\ell} ,\bx_{\ell +1})
f^{\ubx}(\s_{\ell} ,\s_{\ell+1})}{\m_{\b ,N} (\s_{\ell}) p_N (\s_{\ell},\s_{\ell +1})}
&=&
\frac{\phi_{\bA ,\bB}  (\bx_{\ell} ,\bx_{\ell +1}) }{\Phi_N (\wt g)}\\
\nonumber
&\times&
\frac{\cQ_{\b ,N} (\bx_\ell )r_N (\bx_{\ell  }, \bx_{\ell +1}) f^{\ubx}(\s_{\ell},\s_{\ell +1})}
{\m_{\b ,N}(\s_\ell ) p_N (\s_{\ell},\s_{\ell +1})} .
\eea
Since we prove  lower bounds,
we may restrict attention to a subset of {\em good} realizations $\ubx$ of the mesoscopic chain
$\cX_{\bA ,\bB}$ whose  $\P_N^{\frf_{\bA ,\bB}}$ -probability is
  close to one.
In particular, \eqref{onepathGN} and
\eqref{onepathAB} insure that the first term in the above product is
precisely what we need.
The  remaining effort, therefore, is to find a judicious choice of
$f^{\ubx}$ such that
the second factor  in \eqref{eq:product} is close to one.  To
this end we need
some  additional notation: Given a mesoscopic trajectory $\ubx = (\bx_{-\ell_A} ,\dots ,
\bx_{\ell_B})$, define $k = k(\ell )$ as the direction  of the  increment of $\ell$-th jump. That is,
$\bx_{\ell +1} = \bx_\ell + \bfe_k$.  On the microscopic level such a transition corresponds
to a flip of a spin from the $\Lambda_k$ slot.
Thus, recalling the notation
$\Lambda^{\pm}_k (\s )\equiv  \lbr i\in \Lambda_k~:~\s (i) =\pm1\rbr$,
we have that, if $\s_\ell\in\cS_N [\bx_\ell ]$ and $\s_{\ell+1}\in
\cS_N [\bx_{\ell+1} ]$, then $\s_{\ell+1} = \theta_i^+\s_\ell$ for some
$i\in\Lambda^-_{k(\ell )} (\s_\ell )$.
By our choice of transition probabilities, $p_N$, and their mesoscopic
counterparts,
 $r_N$, in \eqref{rfcw.11},
\be\Eq(put.21)
 \frac{r_N (\bx_\ell , \bx_{\ell +1})}{p_N (\s_\ell , \s_{\ell +1})} =
\left| \Lambda^-_{k(\ell)} (\s_\ell )\right|\lb 1+ O(\epsilon )\rb ,
\ee
uniformly in $\ell$ and in all  pairs of neighbors $\s_\ell,
\s_{\ell+1}$.   Note that
the cardinality, $\left| \Lambda^-_{k(\ell)} (\s_\ell )\right|$, is
the same for all $\s_\ell\in \cS_N [\bx_\ell ]$.

For $\bx\in\Gamma_N^n$, define the canonical measure,
\be\Eq(put.22)
 \m^\bx_{\b ,N} (\s ) = \frac{\1_{\lbr \s\in\cS_N [\bx ]\rbr}\m_{\b ,N} (\s )}
{\cQ_{\b ,N} (\bx )} .
\ee
The second term in \eqref{eq:product} is equal to
\be
\label{eq:ratio}
 \frac{f^{\ubx }(\s_\ell ,\s_{\ell +1} )}
{\m^{\bx_\ell}_{\b ,N} (\s_\ell )\cdot 1/\left| \Lambda^-_{k(\ell)} (\s_\ell )\right|}
\lb 1+ O(\epsilon )\rb  .
\ee
If the magnetic fields, $h$, were  constant on each set $I_k$, then we
could chose the flow $f^{\ubx }(\s_\ell ,\s_{\ell +1} )=
\m^{\bx_\ell}_{\b ,N} (\s_\ell )\cdot 1/\left| \Lambda^-_{k(\ell)}
(\s_\ell )\right|$,
and  consequently we would be done. In the
general case of  continuous distribution of $h$, this is not the
case. However, since the fluctuations of $h$ are bounded by $1/n$, we
can hope  to construct $f^{\ubx}$ in such a way that the ratio in
\eqref{eq:ratio} is kept very  close to one.
\smallskip

\paragraph{Construction of $f^{\ubx}$.}
We  construct now a Markov chain, $\P^{\ubx}$, on microscopic
trajectories, $\Sigma = \lbr \s_0 ,\dots ,\s_{\ell_B}\rbr$, from $\cS
[\bx_0 ]$ to $B$,
 such that $\s_\ell\in \cS [\bx_\ell ]$, for all $\ell=0,\dots ,\ell_B$.
The microscopic
flow, $f^{\ubx}$, is  then defined through the identity
$\P^{\ubx}\lb b\in \Sigma \rb= f^{\ubx} (b) $.

The construction of a microscopic flow from $A$ to $\cS [\bx_0 ]$ is
completely similar (it is just the reversal of the above) and we
will omit it.

 We now construct  $\P^{{\ubx}}$.

\noindent \step{1}. Marginal distributions: For each $\ell=0,\dots, \ell_B$
we use $\nu_\ell^{\ubx}$ to denote the marginal distribution of
$\s_\ell$ under
 $\P^{\ubx}$. The measures $ \nu_\ell^{\ubx} $ are concentrated on
$\cS [\bx_\ell ]$.
The initial measure, $\nu_0^{\ubx}$, is just the canonical measure
$\muc{\bx_0 }$.  The measures $\ \nu_{\ell+1}^{\ubx}$ are then defined
through the recursive equations
\be\label{Measurenul}
\nu_{\ell+1}^{\ubx}
(\s_{\ell +1} )= \sum_{\s_{\ell}\in\cS [\bx_\ell ]} \nu_{\ell}^{\ubx} (\s ) q_\ell
(\s_{\ell} ,\s_{\ell +1} ) .
\ee
\step{2}. Transition probabilities. The transition
probabilities, $q_\ell (\s_{\ell} ,\s_{\ell+1} )$, in \eqref{Measurenul} are
defined in the following way:
As we have already remarked, all the microscopic
jumps are of the form $\s_{\ell} \mapsto \theta_j^+\s_{\ell}$, for some
$j\in\Lambda_{k(\ell )}^- (\s )$, where $ \theta_j^+$ flips the $j$-th spin
from $-1$ to $1$. For such a flip define
\be\label{qRates}
q_\ell  (\s_{\ell} , \theta_j^+\s_{\ell} )= \frac{ e^{2\beta \tilde{h}_j}}{
\sum_{i\in \Lambda_k^- (\s_{\ell} )} e^{2\beta \tilde{h}_i}}  .
 \ee
Then the microscopic flow through an admissible  bound,
$b = (\s_{\ell} , \s_{\ell +1})$, is equal to
\be\Eq(put.23)
 f^{\ubx} (\s_{\ell} ,\s_{\ell +1}) = \P^{\ubx}\lb b\in\Sigma\rb =
\nu^{\ubx}_{\ell}(\s_{\ell}) q_{\ell }(\s_{\ell} ,\s_{\ell+1})=
\frac{\nu^{\ubx}_{\ell}(\s_{\ell})}{\left| \Lambda^-_{k(\ell)} (\s_\ell )\right|}
\lb 1+ O(\epsilon )\rb .
\ee
Consequently, the expression in \eqref{eq:ratio}, and hence the second term
in \eqref{eq:product}, is equal to
\be
\label{eq:Psil}
 \frac{\nu^{\ubx}_{\ell}(\s_{\ell}) }{\m^{\bx_\ell}_{\b ,N} (\s_{\ell} )}\lb 1+ O(\epsilon )\rb
\equiv \Psi_{\ell} (\s_{\ell})\lb 1+ O(\epsilon )\rb  .
\ee

\smallskip
\paragraph{Main result.}   We claim that there exists a set, $\TT_{\bA ,\bB}$,
of {\em good} mesoscopic trajectories from $\bA$ to $\bB$, such that
\be
\label{AllAreGood}
\P_N^{\frf_{\bA ,\bB}}\lb \cX_{\bA ,\bB}\in \TT_{\bA ,\bB}\rb = 1 -\so ,
\ee
and, uniformly in $\ubx\in \TT_{\bA ,\bB}$,
\be
\label{CorrectorBound}
\E^{\ubx}\lb
\sum_{\ell = -\ell_A}^{\ell_B -1} \Psi_{\ell} (\s_{\ell})\phi_{\bA ,\bB}(\bx_{\ell }, \bx_{\ell +1})
\rb
\leq 1 + O(\epsilon )  .
\ee
This will imply  that,
\be
\label{capaLB}
\capa (A, B)\geq \Phi_N (\wt g) \lb 1 - O(\epsilon )\rb ,
\ee
which is the lower bound necessary to prove Theorem \thv(CAP-thm).

The rest of the Section is devoted to
the proof of \eqref{CorrectorBound}. First of all we derive
recursive estimates
 on $\Psi_{\ell}$ for a given realization, $\ubx$, of the mesoscopic chain.
After that it will be obvious how to define $\TT_{\bA ,\bB}$.

\subsection{Propagation of errors along microscopic paths}
Let $\ubx$ be given.
 Notice that $\m^{\bx_\ell}_{\b ,N}$ is the product measure,
\be
\label{eq:muproduct}
 \m^{\bx_\ell}_{\b ,N} = \bigotimes_{j=1}^n \m^{\bx_\ell (j)}_{\b ,N} ,
\ee
where $\m^{\bx_\ell (j)}_{\b ,N}$ is the corresponding canonical measure on the
mesoscopic slot $\cS_N^{(j)} = \lbr -1 ,1\rbr^{\Lambda_j}$.  On the other hand,
 according to \eqref{qRates},
the {\em big}  microscopic chain $\Sigma$ splits into
a direct product of
$n$ {\em small} microscopic chains, $\Sigma^{(1)}, \dots ,\Sigma^{(n)}$, which
independently evolve on $\cS_N^{(1)} ,\dots ,\cS_N^{(n)}$.  Thus,
$k(\ell ) = k$
means that the $\ell$-th step of the mesoscopic chain induces a step
of the $k$-th small
microscopic chain $\Sigma^{(k)}$.  Let $\tau_1 [\ell ], \dots ,\tau_n
[\ell ]$ be the
numbers of steps performed by each of the small microscopic chains after $\ell$
steps of the mesoscopic chain or, equivalently, after $\ell$ steps of
the big microscopic
chain $\Sigma$.  Then the corrector, $\Psi_\ell$, in \eqref{eq:Psil} equals
\be
\label{PsilProduct}
\Psi_\ell\lb \s_{\ell}\rb= \prod_{j=1}^n \psi_{\tau_j [\ell ]}^{(j)} (\s_\ell^{(j)}) ,
\ee
where $\s_\ell^{(j)}$ is the projection of $\s_\ell$ on $\cS_N^{(j)}$.
Therefore we are left
with two separate tasks: On the microscopic level we need to control
the propagation of
errors along {\em small} chains and, on the mesoscopic level, we need
to control the
statistics of  $\tau_1 [\ell ], \dots ,\tau_n [\ell ]$.  The latter
task is related to
characterizing the set, $\TT_{\bA ,\bB}$, of {\em good} mesoscopic
trajectories and
it is relegated to Subsection~\ref{sub:Good}
\smallskip

\paragraph{Small microscopic chains.} It would be convenient to study
the propagation
of errors along small microscopic chains in the following slightly
more general context:   Fix $1\ll M\in \N$ and $0\leq \epsilon \ll 1$.
Let $g_1 ,\dots ,g_M\in [-1,1]$.
Consider spin
configurations, $\xi\in \cS_M = \lbr -1 ,1\rbr^M$, with product weights
\be\Eq(ut.24)
 w (\xi ) = {\rm e}^{\epsilon \sum_i g_i \xi (i)} .
\ee
As before, let $\Lambda^{\pm}(\xi ) = \lbr i~:~\xi (i) =\pm 1\rbr$.
Define layers of fixed  magnetization,  $\cS_M [K ]= \lbr \xi\in\cS_M ~:~
\left|\Lambda^+ (\xi )\right| = K \rbr$.
Finally, fix $\delta_0, \d_1 \in (0,1)$,
such that $\delta_0 <\d_1$.

Set $K_0 =\lfloor \delta_0 M\rfloor$ and $r = \lfloor (\d_1 -
 \delta_0)M\rfloor$.
 We  consider a
Markov chain, $\Xi = \lbr \Xi_0 ,\Xi_1, \dots, \Xi_r\rbr $ on $\cS_M$,
 such that
$\Xi_\tau\in \cS_M [K_0 +\tau ]\equiv \cS_M^\tau$ for $\tau  = 0,1,\dots , r$.
Let $\mu_\tau $ be the canonical measure,
\be\Eq(put.25)
 \mu_\tau  (\xi )= \frac{w (\xi ) \1_{\lbr \xi\in \cS_M^\tau \rbr}}{Z_{\tau }} .
\ee
We take $\nu_0= \m_0$ as the initial distribution of $\Xi_0$ and,
following \eqref{qRates},
we define transition rates,
\be
\label{qRatestau}
 q_\tau  (\xi_\tau ,\theta_j^+\xi_\tau  )= \frac{{\rm e}^{2\epsilon g_j}}
{\sum_{i\in\Lambda^- (\xi_\tau  )}{\rm e}^{2\epsilon g_i}}.
\ee
We denote by  $\P$ the  law of this Markov chain and
let $\nu_\tau$ be the distribution of $\Xi_\tau $ (which is concentrated
on $\cS_M^\tau $), that is, $\nu_{\tau }(\xi ) = \P \lb \Xi_\tau =
\xi\rb$.   The propagation of errors
along paths of our chain  is then quantified in terms of
$\psi_\tau (\cdot) \equiv\nu_\tau (\cdot )/ \mu_\tau (\cdot  )$.
\begin{proposition}
For every $\tau=1, \dots, r$ and each $\xi\in \cS_M^\tau$ define
\be
\label{ABtau}
\BB_\tau (\xi ) \equiv \sum_{i=1}^M {\rm e}^{2\epsilon g_i}\1_{\lbr i\in\Lambda^{-} (\xi )\rbr}\quad
\text{and}\quad \AA_\tau =\mu_\tau\lb \BB_\tau(\cdot )\rb
= \sum_{i=1}^M {\rm e}^{2\epsilon g_i}
\mu_\tau \lb i\in\Lambda^-  (\cdot )\rb .
\ee
Then there exists
$c =c (\delta_0 ,\d_1 )$ such that the following holds: For any
 trajectory, $\underline{\xi} = (\xi_0 , \dots ,\xi_r )$,  of positive
 probability under $\P$, it holds that
\be
\label{eq:psiBound}
\psi_\tau (\xi_\tau )\leq \left[\frac{\AA_0}{\BB_0 (\xi_0 )}\right]^\tau
{\rm e}^{c\epsilon\tau^2/M},
\ee
for all $\tau = 0, 1, \dots , r$.
\end{proposition}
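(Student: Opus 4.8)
The plan is to bypass the forward recursion for $\nu_\tau$ altogether and instead derive an \emph{exact path representation} of the corrector $\psi_\tau=\nu_\tau/\mu_\tau$, which can then be estimated factor by factor. The first ingredient is the partition-function identity $Z_\tau/Z_{\tau-1}=\AA_{\tau-1}/(K_0+\tau)$: for each $\xi\in\cS_M^\tau$ one lists its $K_0+\tau$ predecessors $\theta_j^-\xi$, $j\in\Lambda^+(\xi)$, and uses ${\rm e}^{2\epsilon g_j}w(\theta_j^-\xi)=w(\xi)$ to get $(K_0+\tau)Z_\tau=\sum_{\zeta\in\cS_M^{\tau-1}}w(\zeta)\BB_{\tau-1}(\zeta)=Z_{\tau-1}\AA_{\tau-1}$, whence $Z_\tau/Z_0=K_0!\,\prod_{s=0}^{\tau-1}\AA_s\,/\,(K_0+\tau)!$. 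The second ingredient is a count of paths: a trajectory of positive probability reaching a given $\xi\in\cS_M^\tau$ is encoded by an ordered list of the $\tau$ spins to be flipped out of $\Lambda^+(\xi)$ (the other $K_0$ being the $+1$-spins of the starting configuration), so there are exactly $(K_0+\tau)!/K_0!$ of them. Along such a path $P$, with starting point $\xi_0^P$ and intermediate configurations $\xi_0^P,\dots,\xi_\tau^P=\xi$, the rates \eqref{qRatestau} multiply to $\big(w(\xi)/w(\xi_0^P)\big)\big/\prod_{s<\tau}\BB_s(\xi_s^P)$; since $\nu_0=\mu_0$ so that $\nu_0(\xi_0^P)=w(\xi_0^P)/Z_0$, the starting weights cancel, and dividing by $\mu_\tau(\xi)=w(\xi)/Z_\tau$ gives
\[
\psi_\tau(\xi)=\frac{Z_\tau}{Z_0}\sum_{P}\frac1{\prod_{s=0}^{\tau-1}\BB_s(\xi_s^P)}=\Big\langle\ \prod_{s=0}^{\tau-1}\frac{\AA_s}{\BB_s(\xi_s^P)}\ \Big\rangle_{\!P},
\]
the average being over the $(K_0+\tau)!/K_0!$ paths $P$ reaching $\xi$. (For $\tau=0$ this reads $\psi_0\equiv1$, matching \eqref{eq:psiBound}.)

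Given this, I would reduce the statement to the pointwise bound
\[
\frac{\AA_s}{\BB_s(\xi_s^P)}\ \le\ \frac{\AA_0}{\BB_0(\xi_0)}\Big(1+C\,\frac{\epsilon(s+\tau)}M\Big),\qquad C=C(\delta_0,\delta_1),
\]
uniformly in $0\le s\le\tau-1$ and over all paths $P$ reaching $\xi_\tau$; multiplying over $s$ and using $\sum_{s<\tau}(s+\tau)\le\tfrac32\tau^2$ then turns the product of errors into $\exp(c\,\epsilon\tau^2/M)$, which is \eqref{eq:psiBound}. For the denominator I would use the exact formula $\BB_s(\xi_s^P)=\BB_0(\xi_0^P)-\sum_{t<s}{\rm e}^{2\epsilon g_{j_t}}$ with $\sum_{t<s}{\rm e}^{2\epsilon g_{j_t}}=s+O(\epsilon s)$ (each summand lies in $[{\rm e}^{-2\epsilon},{\rm e}^{2\epsilon}]$), together with the observation that $\Lambda^+(\xi_0^P)$ and $\Lambda^+(\xi_0)$ are two $K_0$-subsets of $\Lambda^+(\xi_\tau)$, hence differ in at most $2\tau$ coordinates, so $\BB_0(\xi_0^P)=\BB_0(\xi_0)+O(\epsilon\tau)$; all the quantities $\AA_0,\BB_0(\xi_0),\BB_s(\xi_s^P)$ are $\Theta(M)$ with constants depending only on $\delta_0,\delta_1$, and $\AA_0/\BB_0(\xi_0)=1+O(\epsilon)$. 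For the numerator the key point will be that $\AA_s$ decays in $s$ at \emph{the same} rate $s+O(\epsilon s)$: by stochastic monotonicity of the canonical measures $\mu_s$ in $s$ — these are independent Bernoullis conditioned on the number of $+1$'s, a standard stochastically increasing family — $\mu_s(i\in\Lambda^-)$ is non-increasing in $s$ for every $i$, while $\sum_i\big[\mu_{s-1}(i\in\Lambda^-)-\mu_s(i\in\Lambda^-)\big]=1$; weighting by ${\rm e}^{2\epsilon g_i}\in[{\rm e}^{-2\epsilon},{\rm e}^{2\epsilon}]$ gives $\AA_{s-1}-\AA_s\in[{\rm e}^{-2\epsilon},{\rm e}^{2\epsilon}]$, hence $\AA_s=\AA_0-(s+O(\epsilon s))$. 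Writing $\AA_s/\BB_s(\xi_s^P)=(\AA_0-a_s)/(\BB_0(\xi_0^P)-b_s)$ with $a_s,b_s=s+O(\epsilon s)$ and expanding — numerator and denominator staying bounded away from $0$ since $\tau/\BB_0(\xi_0)\le(\delta_1-\delta_0){\rm e}^{2\epsilon}/(1-\delta_0)<1$ for $\epsilon$ small — the leading $s$-terms of $a_s$ and $b_s$ cancel, leaving only an $O(\epsilon(s+\tau)/M)$ remainder, which is the asserted bound.

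The one genuinely delicate step will be precisely this cancellation. A naive estimate only gives $\AA_s/\BB_s(\xi_s^P)\le(\AA_0/\BB_0(\xi_0))(1+O(\epsilon))$ per step — hence $\psi_\tau\le(\AA_0/\BB_0(\xi_0))^\tau{\rm e}^{O(\epsilon\tau)}$, far weaker than \eqref{eq:psiBound} once $\tau\ll\sqrt M$ — while comparing $\BB_s(\xi_s^P)$ with $\BB_0(\xi_0)$ in isolation produces a spurious ${\rm e}^{O(\tau^2/M)}$ carrying no $\epsilon$. What one has to exploit is that the average block-weight $\AA_s$ and the block-weight $\BB_s(\xi_s^P)$ along a path lose mass \emph{synchronously}, at rate $1+O(\epsilon)$ per step; that is what forces the $\epsilon$ into the final exponent, and it is exactly where the monotonicity of the canonical measures enters. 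Everything else — the path representation, the recursion for $Z_\tau$, and the counting $(K_0+\tau)!/K_0!$ — is bookkeeping.
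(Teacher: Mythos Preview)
Your argument is correct and structurally matches the paper's proof: both arrive at the exact path representation
\[
\psi_\tau(\xi)\;=\;\frac{1}{D_\tau}\sum_{\underline\xi}\prod_{s=0}^{\tau-1}\frac{\AA_s}{\BB_s(\xi_s)}
\]
and then bound the factors. The paper obtains the formula by setting up the recursion $\psi_{\tau+1}(\xi_{\tau+1})=\frac{1}{|\Lambda^+(\xi_{\tau+1})|}\sum_j\frac{\AA_\tau}{\BB_\tau(\theta_j^-\xi_{\tau+1})}\psi_\tau(\theta_j^-\xi_{\tau+1})$ and iterating; your partition-function identity $Z_\tau/Z_{\tau-1}=\AA_{\tau-1}/(K_0+\tau)$ together with the path count $(K_0+\tau)!/K_0!$ is the same computation unpacked differently. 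For the product bound the paper telescopes, proving the one-step estimate $\AA_s/\BB_s(\xi_s)=(1+O(\epsilon)/M)\,\AA_{s-1}/\BB_{s-1}(\xi_{s-1})$ and then comparing $\BB_0(\xi_0^P)$ with $\BB_0(\xi_0)$; you instead compare each factor directly with $\AA_0/\BB_0(\xi_0)$ via $\AA_s=\AA_0-(s+O(\epsilon s))$ and $\BB_s(\xi_s^P)=\BB_0(\xi_0)-(s+O(\epsilon(s+\tau)))$, which is equivalent. The one genuine improvement in your write-up is the control of $\AA_{s-1}-\AA_s$: the paper invokes a local limit estimate $\mu_{s-1}(i\in\Lambda^-)-\mu_s(i\in\Lambda^-)=O(1/M)$, whereas your observation that these differences are \emph{nonnegative} (by log-concavity of the law of a sum of independent Bernoullis, hence stochastic monotonicity of the conditioned measures) and sum to $1$ gives $\AA_{s-1}-\AA_s\in[e^{-2\epsilon},e^{2\epsilon}]$ without any asymptotic input. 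This is cleaner and removes the only place where the paper appeals to local limit theory inside this proposition.
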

\noindent
\begin{proof}
By construction, $\psi_0\equiv 1$.
Let $\xi_{\tau +1}\in \cS_M^{\tau +1}$.   Since $\nu_{\t}$ satisfies
the recursion
\be\Eq(put.26)
\nu_{\tau +1} (\xi_{\tau +1})= \sum_{j\in\Lambda^+ (\xi_{\tau +1})}
\nu_\tau (\theta_j^-\xi_{\tau +1} )q_\tau (\theta_j^-\xi_{\tau +1}, \xi_{\tau +1}),
\ee
it follows that $\psi_{\t}$ satisfies
\bea
\nonumber
\psi_{\tau +1}(\xi_{\tau +1})&=&
\sum_{j\in\Lambda^+ (\xi_{\tau +1})}
\frac{\nu_{\tau}(\theta_j^-\xi_{\tau +1} )
q_\tau (\theta_j^-\xi_{\tau +1}, \xi_{\tau +1})}
{\mu_{\tau+1}( \xi_{\tau +1})}\\
\nonumber
&=&
\sum_{j\in\Lambda^+ (\xi_{\tau +1})}\frac{\mu_{\tau}
(\theta_j^-\xi_{\tau +1} )q_\tau (\theta_j^-\xi_{\tau +1}, \xi_{\tau +1})}
{\mu_{\tau+1}( \xi_{\tau +1})}\psi_\tau
(\theta_j^-\xi_{\tau +1}).
\eea
By our choice of transition probabilities in \eqref{qRatestau},
\be
\label{numuRatio}
 \frac{\mu_{\tau}(\theta_j^-\xi_{\tau +1} )
q_\tau (\theta_j^-\xi_{\tau +1}, \xi_{\tau +1})}{\mu_{\tau+1}( \xi_{\tau +1})}=
\frac{Z_{\tau+1}}{Z_{\tau}}\lbr \sum_{i\in \Lambda^{-} (\theta_j^-\xi_{\tau +1})}
{\rm e}^{2\epsilon g_i}\rbr^{-1} .
\ee
Recalling  that $\left|\Lambda^+ (\xi_{\tau })\right| \equiv \left|\Lambda^+_{\tau}\right|
= K_0 +\tau$ does not depend
on the particular value of $\xi_{\tau }$,
\bea
\nonumber
\frac{Z_{\tau+1}}{Z_{\tau}}&=& \frac{1}{Z_\tau}\sum_{\xi\in\cS_M^{\tau+1 }}
w(\xi ) =
\frac{1}{Z_\tau}\sum_{\xi\in\cS_M^{\tau +1}}\frac{1}{\left| \Lambda^+ (\xi )\right|}
\sum_{j\in \Lambda^+ (\xi )} w(\theta_j^{-}\xi){\rm e}^{2\epsilon g_j}\\
\nonumber
&=&
\frac{1}{Z_\tau}\sum_{\xi\in\cS_M^\tau } w( \xi )\cdot
 \frac{1}{\left| \Lambda^+_{\tau+1}
\right|}
\sum_{j\in \Lambda^{-} (\xi )} {\rm e}^{2\epsilon g_j}=
\mu_\tau\lb \frac{1}{\left|\Lambda^+ (\xi_{\tau+1})\right|}
 \sum_{j\in \Lambda^{-} (\cdot )} {\rm e}^{2\epsilon g_j}\rb .
\eea
We conclude that the right hand side of \eqref{numuRatio} equals
\be\Eq(put.27)
\frac{1}{\left|\Lambda^+ (\xi_{\tau+1})\right|}\cdot
\frac{ \mu_\tau\lb
 \sum_{i\in \Lambda^{-} (\cdot )} {\rm e}^{2\epsilon g_i}\rb}
{\sum_{i\in \Lambda^{-}({\theta_j^{-}\xi_{\tau+1})} } {\rm e}^{2\epsilon g_i }}
=  \frac{1}{\left|\Lambda^+ (\xi_{\tau+1})\right|}\cdot
\frac{\AA_\tau }{\BB_{\tau}( \theta_j^{-}\xi_{\tau+1})} .
\ee
As a result,
\be\Eq(put.28)
 \psi_{\tau +1} (\xi_{\tau+1})=
\frac1{\left|\Lambda_+ (\xi_{\tau +1} )\right|}
\sum_{j\in \Lambda_+ (\xi_{\tau +1} )}
\frac{\AA_\tau}{\BB_\tau (\theta_j^-\xi_{\tau +1} )}
\psi_{\tau} (\theta_j^- \xi_{\tau +1} ) .
\ee
Iterating the above procedure we arrive to the following conclusion:
Consider the set,
$\DD (\xi_{\tau +1})$,
of all paths, $\underline{\xi} = (\xi_0 , \dots, \xi_{\tau} ,\xi_{\tau
  +1})$, of positive probability
from $\cS_M^0$ to $\cS_M^{\tau+1}$ to $\xi_{\tau+1}$.
The number, $D_{\tau +1}\equiv \left|\DD (\xi_{\tau +1 })\right|$,
of such paths does not depend on $\xi_{\tau+1}$.
Then, since $\psi_0 \equiv 1$,
\be\Eq(put.29)
 \psi_{\tau +1} (\xi_{\tau +1}) = \frac1{D_{\tau +1}}
\sum_{\underline{\xi}\in \DD (\xi_{\tau +1 })}
\prod_{s=0}^\tau\frac{\AA_s}{\BB_s (\xi_s )} .
\ee
We claim that
\be
\label{eq:RatioReduction}
\frac{\AA_s}{\BB_s (\xi_s )}  = \lb 1 + \frac{O(\epsilon )}{M}\rb
\frac{\AA_{s-1}}{\BB_{s-1} (\xi_{s -1})} ,
\ee
uniformly in all the quantities under consideration.
Once \eqref{eq:RatioReduction} is verified,
\be\Eq(put.30)
 \psi_{\tau} (\xi_{\tau} )\leq {\rm e}^{O(\epsilon )\tau^2/M}\max_{\xi_0\sim \xi_{\tau}}
\left[\frac{\AA_0}{\BB_0 (\xi_0 )}\right]^\tau ,
\ee
where for $\xi_0\in\cS_M^0$, the relation $\xi_0\sim \xi_{\tau}$ means
that there is a path of positive probability from $\xi_0$ to
$\xi_\tau$.  But all such $\xi_0$'s differ  at most in
$2\tau$ coordinates. It is then straightforward to see that if $\xi_0\sim
\xi_{\tau}$ and
$\xi_0^\prime \sim \xi_{\tau}$, then
\be\Eq(put.31)
 \frac{\BB_0 (\xi_0 )}{\BB_0 (\xi_0^\prime)} \leq  {\rm
   e}^{O(\epsilon )\tau/M},
\ee
and \eqref{eq:psiBound} follows.

It remains to prove \eqref{eq:RatioReduction}.  Let $\xi\in\cS_M^s$ and $\xi^\prime =
\theta_j^-\xi\in\cS_M^{s-1}$.  Notice, first of all, that
\be\Eq(put.32)
 \BB_{s-1}(\xi^\prime) - \BB_s (\xi ) = {\rm e}^{2\epsilon g_j} = 1 + O (\epsilon ) .
\ee
Similarly,
\bea
\nonumber
 \AA_{s-1} -\AA_s  &=&  \sum_{i=1}^M {\rm e}^{2\epsilon g_i}\lbr
\mu_{s-1}(i\in\Lambda^- ) - \mu_{s}(i\in\Lambda^- )\rbr\,
\\ \nonumber
&=&
1 + \sum_{i=1}^M\lb {\rm e}^{2\epsilon g_i}-1\rb \lbr
\mu_{s-1}(i\in\Lambda^- ) - \mu_{s}(i\in\Lambda^- )\rbr .
\eea
By usual local limit results for independent Bernoulli variables,
\be
\label{eq:mussplus1}
\mu_{s-1}(i\in\Lambda^- ) - \mu_{s}(i\in\Lambda^- )= O\lb\frac1{M}\rb ,
\ee
uniformly in $s=1, \dots,  r-1$ and $i=1, \dots ,M$.
Hence, $ \AA_{s-1} -\AA_s = 1 + O(\epsilon )$.

Finally,  both $\AA_{s-1}$ and $\BB_{s-1}({\xi^\prime})$ are (uniformly )  $O(M)$, whereas,
\be\Eq(put.33)
 \AA_{s-1} - \BB_{s-1}({\xi^\prime})= \sum_{i=1}^M\lb {\rm e}^{2\epsilon g_i} -1\rb
\lbr \mu_{s-1}(i\in\Lambda^- ) - \1_{\lbr i\in \Lambda^- (\xi^\prime )\rbr}\rbr =
O(\epsilon ) M .
\ee
Hence,
\be\Eq(put.34)
 \frac{\AA_s}{\BB_s (\xi )} = \frac{\AA_{s-1} - 1 + O(\epsilon )}
{\BB_{s-1} (\xi^\prime ) -1+ O(\epsilon )}
= \frac{\AA_{s-1}}{\BB_{s-1} (\xi^\prime )}\lb 1+ \frac{O(\epsilon)}{M}\rb ,
\ee
which is \eqref{eq:RatioReduction}.
\end{proof}

\paragraph{Back to the big microscopic chain.} Going back to \eqref{PsilProduct} we infer
that the corrector of the {\em big} chain $\Sigma$ satisfies the following upper bound: Let
 $\us = (\sigma_0 ,\sigma_1 , \dots )$ be a trajectory of $\Sigma$ (as sampled from $\P_{\ubx}$).
Then, for every $\ell = 0,1, \dots ,\ell_B -1$,
\be
\label{eq:PsilTerm}
 \Psi_\ell (\sigma_\ell )
\leq \exp\lbr c\epsilon \sum_{j=1}^n \frac{\tau_j [\ell ]^2}{M_j}\rbr
\prod_{j=1}^n \left[\frac{\AA_0^{(j)}}{\BB_0^{(j)} (\s_0^{(j)})}\right]^{\tau_j [\ell ]} ,
\ee
 where $M_j = \left|\Lambda_j\right| =\rho_j N$,
\be\Eq(put.35)
 \AA_0^{(j)} = \sum_{i\in \Lambda_j}{\rm e}^{2\tilde{h}_i}
\mu_{\b ,N}^{\bx_0 \! (j)}\!\lb i\in\Lambda^{-}_j\rb,
\quad\text{and}\quad
\BB_0^{(j)} (\s_0^{(j)}) =\sum_{i\in \Lambda_j}
{\rm e}^{2\tilde{h}_i} \1_{\lbr i\in\Lambda^{-}_j (\s_0^{(j)})\rbr} .
\ee
Of course, $\AA_0^{(j)}  = \mu_{\b ,N}^{\bx_0 \! (j)}\lb \BB_0^{(j)}\rb$. It is enough
to control the first order approximation,
\be
\label{eq:Yj}
\left[\frac{\AA_0^{(j)}}{\BB_0^{(j)} (\s_0^{(j)})}\right]^{\tau_j [\ell ]}
\approx
\exp\lbr -\tau_j [\ell ] \frac{ \BB_0^{(j)} (\s_0^{(j)}) - \AA_0^{(j)}}{\BB_0^{(j)} (\s_0^{(j)})}
\rbr \equiv
\exp\left({\tau_j [\ell ] Y_j}\right).
\ee
The variables $Y_1 ,\dots ,Y_n$ are independent once $\bx_0$ is fixed.
Thus, in view of our target, \eqref{CorrectorBound}, we need to derive
 an upper bound of order  $(1 + O(\epsilon))$ for
\bea
\nonumber
&&\E^{\ubx} \sum_{\ell = 0}^{\ell_B -1}
\exp\lbr c\epsilon \sum_{j=1}^n \frac{\tau_j [\ell ]^2}{M_j} + \sum_{j=1}^n
\tau_j [\ell ] Y_j \rbr
\phi_{\bA ,\bB}(\bx_{\ell }, \bx_{\ell +1})\\
\label{eq:RandomTerm}
&&\quad =
\sum_{\ell = 0}^{\ell_B -1}
\exp\lbr c\epsilon \sum_{j=1}^n \frac{\tau_j [\ell ]^2}{M_j}\rbr
\prod_1^n\mu_{\b ,N}^{\bx_0 (j)}\lb {\rm e}^{\tau_j [\ell ]Y_j}\rb
\phi_{\bA ,\bB}(\bx_{\ell }, \bx_{\ell +1}),
\eea
which holds with
$\P_N^{\frf_{\bA ,\bB}}$-probability of order $1 - O(\epsilon )$.

\subsection{Good mesoscopic trajectories}
\label{sub:Good}
A look at \eqref{eq:RandomTerm} reveals what is to be expected from
{\em good} mesoscopic trajectories. First of all,  we may assume
that it
passes through the
tube
$G_N^0$ (see \eqref{GNnot}) of $\bz^*$.  In particular, $\bx_0\in G_N^0$.
Next,
by our construction of the mesoscopic
chain $\P^{\frf_{\bA , \bB}}_N$, and in view of \eqref{fine.9} and
\eqref{fine.10}, the step frequencies,
 $\tau_j [\ell ]/\ell$, are, on  average, proportional to $\rho_j$. Therefore,
there exists a  constant, $C_1$, such that,
up to
exponentially  negligible $\P^{\frf_{\bA , \bB}}_N$-probabilities,
 \be
\label{Good3}
\max_j \frac{\tau_j [\ell_B]}{M_j }\leq C_1
\ee
holds.

\paragraph{A bound on microscopic moment-generating functions.}
  We will now use the  estimate \eqv(Good3) to obtain an
 upper bound on the product terms in
\eqref{eq:RandomTerm}.  Clearly, $\BB_0^{(j)} (\s_0^{(j)})= (1
 +O(\epsilon ))M_j$,  uniformly in
$j$ and $\s_0^{(j)}$. Thus, by \eqref{eq:Yj},
\be\Eq(put.36)
 Y_j (1+ O(\epsilon )) = \frac1{M_j}\sum_{i\in\Lambda_j} \lb1- {\rm e}^{2\wt{h}_i}\rb
\lb\1_{\lbr \sigma (i) = -1\rbr} - \mu_{\b ,N}^{\bx_0 (j)} (\sigma (i)
 = -1)\rb \equiv \wt{Y}_j.
\ee
Now, for any $t\geq 0$,
\be
\label{eq:logBound}
 \ln\mu_{\b ,N}^{\bx_0 (j)} \lb {\rm e}^{t\wt{Y}_j}\rb \leq \frac{t^2}{2M_j^2}
\max_{s\leq t}\Va_{\b ,N}^{\bx_0 (j) ,s}\lb \sum_{i\in\Lambda_j}
 \lb1- {\rm e}^{2\wt{h}_i}\rb \1_{\lbr \sigma (i) = -1\rbr}\rb ,
\ee
where $\Va_{\b ,N}^{\bx_0 (j) ,s}$ is the variance with respect to the
tilted conditional
measure, $ \mu_{\b ,N}^{\bx_0 (j) ,s}$, defined through
\be\Eq(put.38)
 \mu_{\b ,N}^{\bx_0 (j) ,s}(f)  \equiv
\frac{\mu_{\b ,N}^{\bx_0 (j) }\lb f{\rm e}^{s \wt{Y}_j} \rb }{
\mu_{\b ,N}^{\bx_0 (j)}\lb {\rm e}^{s \wt{Y}_j} \rb }.
\ee
However, $\mu_{\b ,N}^{\bx_0 (j) ,s}(\cdot )$ is again a conditional
product Bernoulli
measure on $\cS_N^{(j)}$, i.e.,
\be\Eq(put.39)
 \mu_{\b ,N}^{\bx_0 (j) ,s}(\cdot ) = \bigotimes_{i\in\Lambda_j}\B_{p_i (\epsilon ,s)}
\lb ~\cdot~\Big|
\sum_{i\in\Lambda_j} \sigma (i) = N\bx_0 (j)\rb ,
\ee
where
\be\Eq(put.40)
 p_i (\epsilon ,s ) = \frac{{\rm e}^{\wt{h}_i}}{{\rm e}^{\wt{h}_i} +
{\rm e}^{ - \wt{h}_i + \frac{s}{M_j}(1- {\rm e}^{2\wt{h}_i})}} .
\ee
By \eqref{Good3} we need to consider only the case $s/M_j\leq C_1$.   Evidently,
there exists $\delta_1 >0$, such that,
\be
\label{piBound}
\delta_1\leq \min_j \min_{s\leq C_1M_j}\min_{i\in\Lambda_j} p_i (\epsilon ,s)\leq
\max_j \max_{s\leq C_1M_j}\max_{i\in\Lambda_j} p_i (\epsilon ,s)\leq
1-\delta_1.
\ee
On the other hand, since $\bx_0\in G_N^0$, there exists $\delta_2 >0$,
such that
\be
\label{xnotBound}
\delta_2\leq \min_{j}\frac{N \bx_0 (j)}{M_j}\leq \max_{j}\frac{N \bx_0 (j)}{M_j}\leq
1-\delta_2.
\ee
We use  the following  general covariance bound for product of Bernoulli
measures, which can be derived from local limit results in a
straightforward,  albeit
painful manner.

\begin{lemma}\TH(put.50) Let $\delta_1>0$ and $\delta_2>0$ be fixed.
  Then, there exists a constant, $C  = C(\delta_1 ,\delta_2 )<\infty$,
   such that, for all  conditional Bernoulli product measures on $\cS_M$,
   $M\in\N$, of the   form
\be\Eq(put.41)
 \bigotimes_{i=1}^M\B_{p_i}\lb ~\cdot ~\Big|\sum_{k=1}^M \xi_k = 2M_0\rb ,
\ee
with $p_1 ,\dots, p_M\in (\delta_1, 1-\delta_1 )$ and $2M_0 \in
   (-M(1-\delta_2) ,M(1-\delta_2))$,  and for all  $1\leq k< l\leq M $,
it holds that
\be
\label{eq:covBound}
\left| \C{\rm ov} \lb\1_{\lbr \xi_k =-1\rbr} ; \1_{\lbr \xi_l=-1\rbr}\rb\right| \leq \frac{C}{M}.
\ee
\end{lemma}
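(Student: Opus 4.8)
The plan is to reduce to a centered problem by an exponential tilt and then to read the covariance off a sharp local central limit theorem. Write $\Q=\bigotimes_{i=1}^M\B_{p_i}$ for the unconditioned product measure on $\cS_M$, put $X=\sum_{i=1}^M\xi_i$ and $a(m)=\Q(X=m)$; here it is implicit that $2M_0$ has the same parity as $M$, so that the conditioning makes sense. For every $h\in\R$ the tilted law $\Q_h(\xi)\propto\Q(\xi){\rm e}^{h\sum_i\xi_i}$ is again a Bernoulli product measure and differs from $\Q$ only through a factor depending on $\xi$ via $X$ alone, so $\Q(\,\cdot\mid X=2M_0)$ is unchanged by tilting. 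Since $h\mapsto\E_{\Q_h}X$ increases continuously from $-M$ to $M$, there is a unique $h^*$ with $\E_{\Q_{h^*}}X=2M_0$, and $|2M_0|<(1-\delta_2)M$ together with $p_i\in(\delta_1,1-\delta_1)$ forces $|h^*|\le H_0(\delta_1,\delta_2)$; hence the tilted parameters still lie in $(\delta_1',1-\delta_1')$ for some $\delta_1'=\delta_1'(\delta_1,\delta_2)>0$. Replacing the $p_i$ by the tilted parameters, I may therefore assume $\E X=2M_0$, uniform ellipticity being retained.

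Next I would express the covariance through point probabilities of the marginal sums $X^{(k)}=\sum_{i\neq k}\xi_i$ and $X^{(kl)}=\sum_{i\neq k,l}\xi_i$. With $a^{(k)}(m)=\Q(X^{(k)}=m)$ and $a^{(kl)}(m)=\Q(X^{(kl)}=m)$, independence gives
\[
\Q(\xi_k=-1\mid X=2M_0)=(1-p_k)\,\frac{a^{(k)}(2M_0+1)}{a(2M_0)},
\]
and likewise $\Q(\xi_k=-1,\xi_l=-1\mid X=2M_0)=(1-p_k)(1-p_l)\,a^{(kl)}(2M_0+2)/a(2M_0)$, whence
\[
\Cov\bigl(\1_{\{\xi_k=-1\}};\1_{\{\xi_l=-1\}}\bigr)=(1-p_k)(1-p_l)\left[\frac{a^{(kl)}(2M_0+2)}{a(2M_0)}-\frac{a^{(k)}(2M_0+1)}{a(2M_0)}\cdot\frac{a^{(l)}(2M_0+1)}{a(2M_0)}\right].
\]
As $(1-p_k)(1-p_l)\le1$, it suffices to show that each of the three ratios in the bracket equals $1+O(1/M)$ uniformly over the class; then the bracket is $(1+O(1/M))-(1+O(1/M))^2=O(1/M)$ and the lemma follows.

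The main input I would invoke is a sharp, Edgeworth-level local CLT for $X$, $X^{(k)}$ and $X^{(kl)}$, uniform over the $\delta_1'$-elliptic class. Uniform ellipticity gives $|\E{\rm e}^{it\xi_i}|^2=1-4p_i(1-p_i)\sin^2t\le{\rm e}^{-ct^2}$ on $|t|\le\pi/2$, so the characteristic function of $X$ on the span-$2$ lattice decays like ${\rm e}^{-cMt^2}$; Fourier inversion, a fourth-order Taylor expansion of $\log\E{\rm e}^{itX}$, and term-by-term integration then yield, uniformly over the class and for $|m-\mu|\le2\sqrt M$,
\[
a(m)=\frac{2}{\sqrt{2\pi}\,\sigma}\,{\rm e}^{-(m-\mu)^2/(2\sigma^2)}\Bigl(1+\sfrac{\g_3}{6}H_3\bigl(\sfrac{m-\mu}{\sigma}\bigr)+O(1/M)\Bigr),
\]
with $\mu=\E X$, $\sigma^2=\Var X\asymp M$, $\g_3=\sigma^{-3}\sum_i\E(\xi_i-\E\xi_i)^3=O(M^{-1/2})$ and $H_3(x)=x^3-3x$; the analogous expansions hold for $a^{(k)},a^{(kl)}$, whose means and variances differ from $\mu,\sigma^2$ by $O(1)$. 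The observation that upgrades the error from $O(M^{-1/2})$ to $O(1/M)$ is that, having tilted so that $\mu=2M_0$ and since deleting one or two spins moves the mean only by $O(1)$, each of the three probabilities is evaluated within $O(1)$ of its own mean: there $H_3$ of a quantity of order $M^{-1/2}$ is itself $O(M^{-1/2})$, so the skewness term is $O(1/M)$, the Gaussian factor is $1+O(1/M)$, and $\sigma/\sigma^{(k)}=(1-4p_k(1-p_k)/\sigma^2)^{-1/2}=1+O(1/M)$ with $(\sigma^{(k)})^2=\Var X^{(k)}$. Hence $a^{(k)}(2M_0+1)/a(2M_0)=(\sigma/\sigma^{(k)})(1+O(1/M))=1+O(1/M)$, and the same for the $l$- and $kl$-ratios, which closes the argument.

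The hard part will be this last local limit theorem: one must make it quantitative and, above all, uniform in the non-identically distributed Bernoulli parameters and in $M$, with the Edgeworth remainder controlled by $\delta_1,\delta_2$ alone. This is the ``painful'' step alluded to in the statement, but it is entirely classical once uniform ellipticity is used to bound the characteristic function away from the origin (as in the standard monograph treatments of sums of independent random variables); the tilting reduction, the exact covariance identity, and the cancellation at the mode are all short.
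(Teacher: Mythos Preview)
The paper does not actually prove this lemma: it only states it, prefaced by the remark that it ``can be derived from local limit results in a straightforward, albeit painful manner.'' Your proposal is therefore not competing with a written proof but rather supplying the argument the authors allude to, and it does so correctly. The exponential tilt to center the sum, the exact identity expressing the conditional covariance through the ratios $a^{(kl)}(2M_0+2)/a(2M_0)$ and $a^{(k)}(2M_0+1)/a(2M_0)$, and the observation that after tilting all three point masses are evaluated within $O(1)$ of their respective means (so that the Edgeworth correction and the Gaussian exponent both contribute only $O(1/M)$) constitute exactly the ``local limit'' route the paper has in mind. Your sketch is sound; the only substantive labor left is the uniform-in-$(p_i)$ Edgeworth bound, which under the ellipticity $p_i\in(\delta_1',1-\delta_1')$ is indeed classical.
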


Going back to \eqref{eq:logBound} we  infer from this that
\be
\label{eq:productFinal}
\prod_1^n\mu_{\b ,N}^{\bx_0 (j)}\lb {\rm e}^{\tau_j [\ell ]Y_j}\rb\leq
\exp\lbr O(\epsilon^2 ) \sum_{j=1}^{n} \frac{\tau_j [\ell ]^2}{M_j}\rbr ,
\ee
uniformly in $\ell =0, \dots ,\ell_B$.

\paragraph{Statistics of mesoscopic trajectories.}
\eqref{eq:RandomTerm} together with  the bound
\eqref{eq:productFinal} suggests the following notion of {\em goodness} of
mesoscopic trajectories $\ubx$:
\begin{definition}
\label{def:Good}
We say that a mesoscopic trajectory $\ubx =(\bx_{-\ell_A }, \dots ,\bx_{\ell_{B}})$ is good,
and write $\ubx\in\TT_{\bA ,\bB}$, if it
passes through $G_N^0$, satisfies \eqref{Good3} (and its analog for the reversed chain)
and, in addition, it satisfies
\be
\label{Good1}
\sum_{\ell = -\ell_A}^{\ell_B-1}\exp\lbr O(\epsilon )\sum_{j=1}^n
\frac{\tau_j [\ell ]^2}{M_j}\rbr
\phi_{\bA ,\bB} (\bx_{\ell} ,\bx_{\ell +1}) \leq  1 + O(\epsilon ) .
\ee
\end{definition}
By construction \eqref{CorrectorBound} automatically holds for any $\ubx\in \TT_{\bA ,\bB}$.
Therefore, our target lower bound  \eqref{capaLB} on microscopic
capacities will follow from
\begin{proposition}
 \label{Good}
Let $\frf_{\bA ,\bB}$ be the mesoscopic flow constructed in Subsections~\ref{sub:FlowDN}
and \ref{sub:Outside}, and let the set of mesoscopic trajectories $\TT_{\bA ,\bB}$ be
 as in Definition~\ref{def:Good}. Then \eqref{AllAreGood} holds.
\end{proposition}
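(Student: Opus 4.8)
The plan is to show that each of the three requirements defining a \emph{good} trajectory in Definition~\ref{def:Good} --- passage through the tube $G_N^0$, the frequency bound \eqref{Good3} for both the forward and the reversed microscopic clocks, and the integral estimate \eqref{Good1} --- holds with $\P_N^{\frf_{\bA,\bB}}$-probability $1-\so$; since \eqref{Good1} already forces \eqref{CorrectorBound} through \eqref{eq:PsilTerm} and \eqref{eq:productFinal}, this is all that is needed. I would carry this out by introducing a single \emph{geometric} event $\GG_N$ of probability $1-\so$ on which the first two requirements hold \emph{together with enough quantitative control to deduce the third deterministically}, so that $\GG_N\subseteq\{\cX_{\bA,\bB}\in\TT_{\bA,\bB}\}$ and \eqref{AllAreGood} follows.

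First I would build $\GG_N$. Passage through $G_N^0$ is read off the construction: the flux of $\frf_{\bA,\bB}$ through $\partial_\bA G_N\setminus\partial_\bA G_N^0$ and $\partial_\bB G_N\setminus\partial_\bB G_N^0$ is $\so$ by \eqref{put.13}, while property {\bf P3}, \eqref{foutside}, makes the total flow through $G_N\setminus G_N^0$ at most $N^{-\kappa}$ times the number of bonds, hence $\so$; since for a unit flow the induced chain traverses a bond with probability equal to the flow on that bond (\eqref{ProbFlow}), off an event of probability $\so$ the chain stays inside $G_N^0$ during its visit to $G_N$ and enters/leaves $G_N$ only through $\partial_\bA G_N^0\cup\partial_\bB G_N^0$. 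To this I would adjoin a transverse-confinement estimate: inside $G_N$ the chain obeys the ballistic law \eqref{qell}, $q_\ell=\checkv_\ell/\sum_k\checkv_k$, with mean increment parallel to $\checkv$ and per-coordinate variance $O(N^{-2})$, so over the $O(\rho N)=O(N^{1/2+\delta})$ steps spent in $G_N$ Azuma's inequality keeps it within $\po(N^{-1/2})$ of the $\checkv$-axis with probability $1-\so$; consequently the point $\bx_0$ lying on the $m(\bz^*)$-slice satisfies $(\bx_0,\bv)=\po(N^{-1/2})$, and the unique index $\ell_0$ at which the sequence $(\bx_\ell,\bv)$ vanishes --- strictly increasing, since all entries of $\bv$ are positive by Lemma~\ref{lem:ventries} --- obeys $|\ell_0|=\po(\sqrt N)$. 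Finally, for \eqref{Good3} itself: along $\frf_\bA,\frf_\bB$ the chain follows a deterministic path $\gamma^\bx=\hat\gamma\cup\eta^\bx$ whose coordinate increments are comparable, up to fixed constants, to those of the minimal energy curve \eqref{eq:mincurve}, so it performs $\Theta(\rho_j N)=\Theta(M_j)$ flips of type $j$; on the $G_N$-excursion $q_\ell=\Theta(\rho_\ell)$ by \eqref{bounded.3} and Lemma~\ref{lem:ventries}, so the $j$-flip counts there concentrate at $\Theta(\rho_j N^{1/2+\delta})$; adding up, $\tau_j[\ell_B]=\Theta(M_j)$, and likewise for the reversed chain, off an event of (in fact exponentially) small probability. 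This defines $\GG_N$.

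The heart of the matter, and the step I expect to be hardest, is showing that on $\GG_N$ the bound \eqref{Good1} is automatic. Writing $S$ for its left-hand side, I would split $S$ at the tube $G_N^0$. For bonds $(\bx_\ell,\bx_{\ell+1})$ outside $G_N^0$ --- on $\frf_\bA$, $\frf_\bB$, or $G_N\setminus G_N^0$ --- the bounds \eqref{phiBound}, \eqref{pathdecayk} and \eqref{FLapriori} give $\phi_{\bA,\bB}(\bx_\ell,\bx_{\ell+1})\le N^{O(1)}e^{-cN(F_{\b,N}(\bz^*)-F_{\b,N}(\bx_\ell))}$ with $F_{\b,N}(\bz^*)-F_{\b,N}(\bx_\ell)\ge c_3(N^{1/2+\delta}+k)^2/N^2$, $k$ the number of steps from $\bx_\ell$ to the $m(\bz^*)$-slice, whereas \eqref{Good3}, \eqref{eq:mincurve} and $|\ell_0|=\po(\sqrt N)$ give $\sum_j\tau_j[\ell]^2/M_j=O(k^2/N)$; since $(N^{1/2+\delta}+k)^2/N\ge\max\{N^{2\delta},k^2/N\}$, for $\epsilon$ small the correction factor $\exp\{O(\epsilon)\sum_j\tau_j[\ell]^2/M_j\}$ is swamped, and summing over $k$ the geometrically decaying terms leaves an outside contribution of order $N^{O(1)}e^{-\frac{c_3}{2}N^{2\delta}}=\so$. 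For bonds inside $G_N^0$ I would invoke \eqref{up.50}, $\phi_{\bA,\bB}(\bx_\ell,\bx_{\ell+1})=\nabla_{k(\ell)}\wt g(\bx_\ell)(1+\so)\le\frac{C}{\sqrt N}e^{-\b N|\hat\g_1|(\bx_\ell,\bv)^2/2}$, together with the bookkeeping $\tau_j[\ell]=\tau_j[\ell_0]+O(\rho_j|\ell-\ell_0|)$, where $\tau_j[\ell_0]=O(\rho_j|\ell_0|)=\po(\rho_j\sqrt N)$ and $|\ell-\ell_0|=\Theta(N|(\bx_\ell,\bv)|)$ (again by Lemma~\ref{lem:ventries}); this yields $\sum_j\tau_j[\ell]^2/M_j=O(N(\bx_\ell,\bv)^2)+\po(1)$, so the correction factor degrades the Gaussian exponent only by a factor $1-O(\epsilon)$, and by the Gaussian-sum computation of \eqref{up.72}--\eqref{up.73} together with the telescoping identity behind \eqref{onepathGN}, $\sum_{\ell}\frac{C}{\sqrt N}e^{-\frac{\b}{4}N|\hat\g_1|(\bx_\ell,\bv)^2}=(1+O(\epsilon))\sum_\ell\nabla_{k(\ell)}\wt g(\bx_\ell)=(1+O(\epsilon))(1+\so)$. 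Hence $S\le 1+O(\epsilon)$, which is \eqref{Good1}.

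Putting the pieces together, $\GG_N\subseteq\{\cX_{\bA,\bB}\in\TT_{\bA,\bB}\}$ and $\P_N^{\frf_{\bA,\bB}}(\GG_N)=1-\so$, which is \eqref{AllAreGood}. The truly delicate point is the transverse-confinement estimate in the first step: it is what guarantees that $\ell_0$ sits within $\po(\sqrt N)$ of the origin of the corrector's clock, and this is precisely the input that makes the error-propagation factor $\exp\{O(\epsilon)\sum_j\tau_j[\ell]^2/M_j\}$ absorbable by the Gaussian decay of $\nabla\wt g$ in the analysis of \eqref{Good1}.
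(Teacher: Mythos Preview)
Your argument is correct, but the paper's proof is much shorter and organised differently. Instead of splitting at the boundary of $G_N^0$ and tracking the reference index $\ell_0$, the paper first invokes \eqref{onepath} to reduce \eqref{Good1} to the remainder estimate
\[
\sum_{\ell}\Bigl(\exp\Bigl\{O(\epsilon)\sum_j\tau_j[\ell]^2/M_j\Bigr\}-1\Bigr)\phi_{\bA,\bB}(\bx_\ell,\bx_{\ell+1})=O(\epsilon),
\]
and then splits this sum at $\ell=N^{1/2-\delta}$. For $\ell\le N^{1/2-\delta}$ one has $\sum_j\tau_j[\ell]^2/M_j=\so$ trivially, since $\tau_j[\ell]\le\ell$ and $M_j\ge c\rho_j N$; for $\ell>N^{1/2-\delta}$ a single law-of-large-numbers bound $\tau_j[\ell]\le K\rho_j\ell$ (valid with probability $1-\so$ by \eqref{qell} inside $G_N$ and by \eqref{eq:mincurve} along the deterministic paths outside) gives $O(\epsilon)\sum_j\tau_j[\ell]^2/M_j\le O(\epsilon)K^2\ell^2/N$, which is then absorbed by the uniform Gaussian decay \eqref{Good2}. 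This bypasses your Azuma-based transverse confinement entirely. What your route buys is a more honest justification of \eqref{Good2} inside $G_N$: your estimate $(\bx_0,\bv)=\po(N^{-1/2})$ is precisely what guarantees $(\bx_\ell,\bv)\asymp\ell/N$ uniformly, and the paper takes this for granted. One small slip: in your inside-$G_N^0$ display the degraded exponent should be $\tfrac{\b}{2}(1-O(\epsilon))N|\hat\g_1|(\bx_\ell,\bv)^2$, not $\tfrac{\b}{4}N|\hat\g_1|(\bx_\ell,\bv)^2$; halving the exponent would inflate the Gaussian sum by $\sqrt 2$ rather than $1+O(\epsilon)$, contrary to your own text.
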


\begin{proof}
By \eqref{pathdecayk} we may assume that there exists $C >0$ such  that, for all
$\ubx$ under consideration and   for all $\ell = -\ell_A , \dots ,\ell_B -1$,
\be
\label{Good2}
\phi_{\bA ,\bB} (\bx_{\ell} ,\bx_{\ell +1}) \leq {\rm e}^{-C
\ell^2 /N
}.
\ee
In view of
\eqref{onepath} it is enough to check
 that
\be
\label{Oepsilon}
\sum_{\ell = 0}^{\ell_B-1}\lb
\exp\lbr O(\epsilon )\sum_{j=1}^n
\frac{\tau_j [\ell ]^2}{M_j}\rbr  -1\rb
\phi_{\bA ,\bB} (\bx_{\ell} ,\bx_{\ell +1}) =  O(\epsilon ) ,
\ee
with $\P_N^{\frf_{\bA ,\bB}}$-probabilities of order $1-\so$.  Fix
$\delta >0$ small and
split the sum on the left hand side of  \eqref{Oepsilon} into two sums
corresponding  to the terms with
$\ell \leq N^{1/2 -\delta}$ and  $\ell > N^{1/2 -\delta}$
respectively. Clearly,
\be
\sum_{j=1}^n
\frac{\tau_j [\ell ]^2}{M_j} = \so,
\ee
uniformly in $0\leq \ell\leq N^{1/2 -\delta}$.  On the other hand,
from our construction
of the mesoscopic flow $\frf_{\bA ,\bB}$, namely from the choice
\eqref{qell} of transition
rates inside $G_N^0$, and from the property \eqref{eq:mincurve}
of the minimizing curve
$\hat\bx (\cdot )$,  it follows that there exists a universal
($\epsilon$-independent) constant,
$K <\infty$, such that
\be
 \P^{\frf_{\bA ,\bB}}_N\lb \max_j\max_{\ell >N^{1/2 - \delta}}\frac{\tau_j [\ell ]}{\ell\rho_j }
 >K\rb = \so .
\ee
Therefore,  up to $\P^{\frf_{\bA ,\bB}}_N$-probabilities of order $\so$, the
inequality
\be
 O(\epsilon )\sum_{j=1}^{n} \frac{\tau_j^2[\ell ]}{M_j} \leq  O(\epsilon )
K^2 \ell^2\sum_{j=1}^n \frac{\rho_j^2}{M_j} = K^2 O(\epsilon )\frac{\ell^2}{N} ,
\ee
holds
uniformly in  $\ell >N^{1/2 - \delta}$ . A comparison with \eqref{Good2} yields
 \eqref{Oepsilon}.
\end{proof}

The last proposition leads to the inequality \eqv(capaLB), which, together
the upper bound given in \eqv(up.70.1),  concludes the proof of Theorem
\thv(CAP-thm).

\section{Sharp estimates on the mean hitting times}

In this section we conclude the proof of Theorem \thv(theorem1).
 To do this we will use Equation \eqv(pot.12)  with
 $A= \cS[m_0^*]$  and $B=\cS[M]$, where $m_0^*$ is a
 local minimum of $F_{\b,N}$ and $M$ is the
  set of minima deeper than $m_0^*$.
 The denominator on the right-hand side of \eqv(pot.12),
the capacity, is
 controlled by Theorem \thv(CAP-thm). What we want to prove now is that
 the equilibrium potential, $h_{A,B}(\s)$, is close to one in the
 neighborhood of the starting set $A$, and so small elsewhere that the
contributions from the sum over $\s$ away from the valley containing
 the set $A$ can be neglected.
 Note that this is not generally true but depends on the choice of
sets $A$ and $B$:  the condition that all  minima $m$
of $F_{\b,N}$
 such that $F_{\b,N}(m)< F_{\b,N}(m_0^*)$
belong to the target set $B$ is crucial.

In earlier work (see \cite{Bo5}) the standard way to estimate the
equilibrium potential $h_{A,B}(\s)$ was to use the
 renewal inequality
$h_{A,B}(\s)\leq \frac{\capa(A,\s)}{\capa(B,\s)}$ and bounds on
 capacities. This bound cannot be used here,
 since the capacities of single points are too small.
We will therefore use another method to cope with this problem.

\subsection{Mean hitting time and equilibrium potential}

Let us start by considering a local minimum $m_0^*$ of the
one-dimensional function  $F_{\b,N}$, and denote by  $M$ the set
of minima $m$ such that  $F_{\b,N}(m)<F_{\b,N}(m_0^*)$.
We then consider the disjoint subsets  $A\equiv \cS[m_0^*]$
and $B\equiv\cS[M]$, and write Eq. \eqv(pot.12) as
\be\Eq(time0)
\sum_{\s\in A} \nu_{A,B} (\s)\E_\s \t_B
=\frac{1}{\capa(A,B)}\sum_{m\in [-1,1]}\sum_{\s\in\cS[m] }
\mu_{\b,N}(\s) h_{A,B}(\s) .
\ee

We want to estimate the right-hand side of \eqv(time0).
This is expected to be of order $\QQ_{\b,N}(m^*_0)$, thus we can
readily do away with all contributions where $\QQ_{\b,N}$ is much smaller.
More precisely, we choose $\d>0$ in such a way that, for all $N$ large enough,
there is no critical point $z$ of $F_{\b,N}$ with
$F_{\b,N}(z)\in\left[F_{\b,N}(m_0^*),F_{\b,N}(m_0^*)+\d\right]$, and
define
\be\Eq(udelta.1)
\cU_\d\equiv \{m: F_{\b,N}(m)\leq F_{\b,N}(m_0^*)+\d\}.
\ee
Denoting by $\cU_\d^c$ the complement of $\cU_\d$, we obviously have
\begin{lemma}\TH(time.1)
\be\Eq(strange.16)
 \sum_{m\in \cU_\d^c}\sum_{\s\in\cS[m] }
\mu_{\b,N}(\s) h_{A,B}(\s)\leq N e^{-\b N\d} \QQ_{\b,N}(m^*_0) .
\ee
\end{lemma}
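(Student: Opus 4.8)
The plan is to reduce the assertion to a purely deterministic statement about the induced measure $\QQ_{\b,N}$, since the dynamical input needed is only the trivial bound $0\le h_{A,B}\le 1$. Indeed, by the probabilistic interpretation \eqv(pot.8) one has $h_{A,B}(\s)=\P_\s[\t_A<\t_B]\in[0,1]$ for every $\s$ (equivalently, $h_{A,B}$ lies in the class over which the Dirichlet form is minimized and the maximum principle applies). Hence
\be
\sum_{m\in \cU_\d^c}\sum_{\s\in\cS[m]}\mu_{\b,N}(\s)\,h_{A,B}(\s)\ \le\ \sum_{m\in \cU_\d^c}\sum_{\s\in\cS[m]}\mu_{\b,N}(\s)\ =\ \sum_{m\in \cU_\d^c}\QQ_{\b,N}(m),
\ee
so it suffices to bound $\sum_{m\in \cU_\d^c}\QQ_{\b,N}(m)$ by $N e^{-\b N\d}\QQ_{\b,N}(m_0^*)$.

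Next I would establish a clean prefactor-free exponential upper bound on $\QQ_{\b,N}(m)$ valid for \emph{every} $m\in\G_N$. Starting from $Z_{\b,N}\QQ_{\b,N}(m)=e^{N\b m^2/2}Z^1_{\b,N}(m)$ as in \eqv(static.11), one uses the exponential Markov inequality $\1_{\{N^{-1}\sum_i\s_i=m\}}\le e^{t(\sum_i\s_i-Nm)}$, valid for any $t\in\R$, together with the factorization $\E_\s\prod_i e^{(t+\b h_i)\s_i}=\prod_{i\in\L}\cosh(t+\b h_i)$ and the definition \eqv(static.14) of $U_{N}$, to get $Z^1_{\b,N}(m)\le e^{-N(tm-U_{N}(t))}$; optimizing over $t$ gives $Z^1_{\b,N}(m)\le e^{-NI_{N}(m)}$ by Legendre duality, and therefore, by \eqv(static.116),
\be
Z_{\b,N}\QQ_{\b,N}(m)\ \le\ e^{-\b N F_{\b,N}(m)}.
\ee
On $\cU_\d^c$ the definition \eqv(udelta.1) gives $F_{\b,N}(m)\ge F_{\b,N}(m_0^*)+\d$, hence $Z_{\b,N}\QQ_{\b,N}(m)\le e^{-\b N F_{\b,N}(m_0^*)}\,e^{-\b N\d}$ there.

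Finally I would combine this with the one-point lower bound at the local minimum: since $m_0^*$ is non-degenerate, \eqv(static.202) gives $0<\E_h(1-\tanh^2(\b(m_0^*+h)))<\b^{-1}$, so Proposition \thv(static.215) (Eq. \eqv(static.161)) yields $Z_{\b,N}\QQ_{\b,N}(m_0^*)\ge c\,N^{-1/2}e^{-\b N F_{\b,N}(m_0^*)}$ with a fixed $c>0$, for all $N$ large. Dividing, $\QQ_{\b,N}(m)\le c^{-1}\sqrt{N}\,e^{-\b N\d}\QQ_{\b,N}(m_0^*)$ for each $m\in\cU_\d^c$, and since $\cU_\d^c\cap\G_N$ has at most $N+1$ points, summing gives the claim up to a polynomial factor which, in the regime $N$ large under which the lemma is stated, is absorbed by $e^{-\b N\d}$; alternatively a routine Laplace estimate of $\sum_{m\in\cU_\d^c}e^{-\b N F_{\b,N}(m)}$ — using that by the choice of $\d$ there is no critical point of $F_{\b,N}$ on $\partial\cU_\d$ — reduces the prefactor to $O(1)$, comfortably within the stated $N$. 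There is no genuine obstacle here: the lemma is a soft ``negligible region'' estimate, the only mild point being the bookkeeping of prefactors, which is why I invoke the sharp one-point asymptotics of Proposition \thv(static.215) rather than crude large-deviation bounds alone; note that the exponential Markov bound above is uniform in $m$, including the endpoints $m=\pm1$, so no separate treatment of $\cS[\pm1]$ is required.
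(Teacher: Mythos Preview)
Your argument is correct and is precisely the intended one: the paper in fact omits the proof entirely, introducing the lemma with the phrase ``we obviously have'', so you have simply supplied the obvious details --- bound $h_{A,B}\le 1$, collapse the inner sum to $\QQ_{\b,N}(m)$, and compare to $\QQ_{\b,N}(m_0^*)$ via the free energy $F_{\b,N}$ using the definition of $\cU_\d^c$. Your handling of the prefactor (Chernoff bound plus the sharp one-point asymptotics at $m_0^*$, or alternatively the Laplace estimate) is more careful than strictly necessary, since in the application the right-hand side is only required to be $o(\QQ_{\b,N}(m_0^*))$.
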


The main problem is to control the equilibrium potential $h_{A,B}(\s)$
for configurations $\s\in \cS[\cU_\d]$.
To do that, first notice that
\be\Eq(udelta.2)
\cU_\d= \cU_\d(m_0^*)\bigcup_{m\in M} \cU_\d(m),
\ee
where $\cU_\d(m)$ is the connected component of $\cU_\d$ containing $m$
(see Fig. \ref{fig.landscape}).
Note that it can happen that $\cU_\d(m)=\cU_\d(m')$
for two different minima $m,m'\in M$.
\begin{figure}\label{fig.landscape}
\begin{center}
\psfrag{a}{$m_0^*$}
\psfrag{b}{$z$}
\psfrag{c}{$m_1$}
\psfrag{d}{$m_2$}
\psfrag{e}{$\mathcal U_\d(m_0^*)$}
\psfrag{f}{$\mathcal U_\d(m_1)$}
\psfrag{g}{$\mathcal U_\d(m_2)$}
\psfrag{h}{$F_{\b,N}(m)$}
\psfrag{i}{$F_{\b,N}(m_0^*)+\d$}
\psfrag{l}{$-1$}
\psfrag{m}{$1$}

\includegraphics[width=11cm]{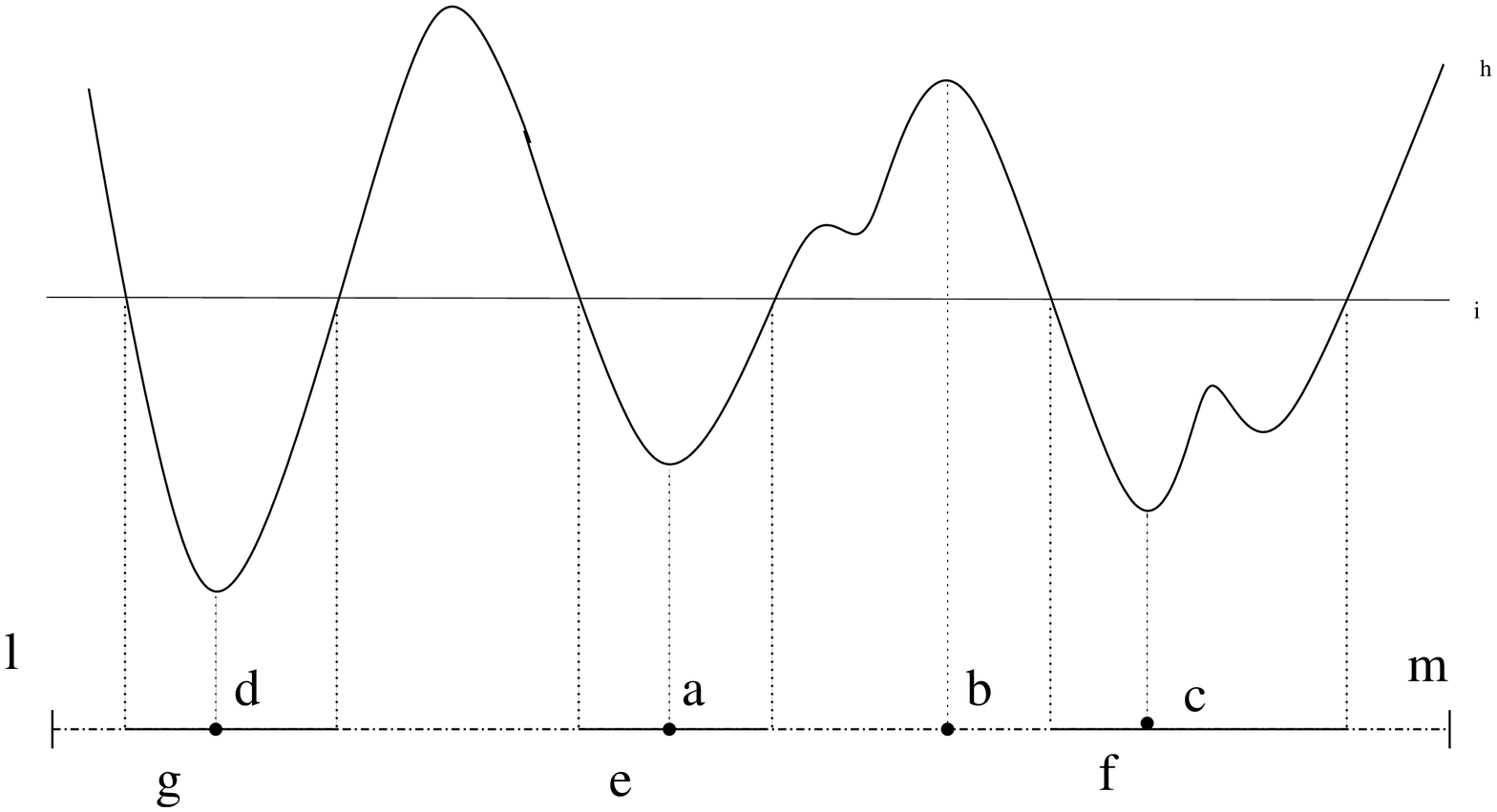}
\caption{Decomposition of the magnetization space
$[-1,1]$: $\mathcal U_\d^c$ is represented by dotted lines,
 while the continuous lines correspond 
$\mathcal U_\d\equiv\cU_\d(m_0^*)\bigcup_{m\in M} \cU_\d(m)$.}
\end{center}
\end{figure}

With this notation we have the following lemma.

\begin{lemma}\TH(time.1.2)
 There exists a constant,  $c>0$, such that,
\begin{itemize}
\item[(i)]  for every $m\in M$,
\be \Eq(time.2)
\sum_{\s\in\cS[\cU_\d(m)] }
\mu_{\b,N}(\s) h_{A,B}(\s) \leq e^{-\b N c} \QQ_{\b,N}(m^*_0), 
\ee
and
\item[(ii)] 
\be \Eq(time.3)
\sum_{\s\in\cS[\cU_\d(m_0^*)] }
\mu_{\b,N}(\s) \left[1-h_{A,B}(\s)\right] \leq e^{-\b N c}
\QQ_{\b,N}(m^*_0).
 \ee
\end{itemize}
\end{lemma}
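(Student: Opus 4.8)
The plan is to estimate the equilibrium potential $h_{A,B}$ on the valleys $\cU_\d(m)$ for $m\in M$ and on $\cU_\d(m_0^*)$ separately, in each case by comparing the harmonic function to the equilibrium potential of a \emph{larger} pair of sets for which one of the two boundary conditions forces it to be uniformly small (resp.\ uniformly close to $1$). Concretely, for part~(i), fix $m\in M$. The set $\cU_\d(m)$ is a connected component of $\cU_\d$ separated from $A=\cS[m_0^*]$ by at least one critical point $z$ of $F_{\b,N}$ with $F_{\b,N}(z)\geq F_{\b,N}(m_0^*)+\d$. I would introduce the ``door'' set $\cD$ consisting of configurations whose magnetization lies at (or very near) such a separating saddle, so that every path from $\cS[\cU_\d(m)]$ to $A$ must cross $\cS[\cD]$. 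By the renewal/strong Markov decomposition at $\cS[\cD]$ one has, for $\s\in\cS[\cU_\d(m)]$,
\be
h_{A,B}(\s)=\P_\s[\t_A<\t_B]\leq \P_\s[\t_{\cS[\cD]}<\t_B]\leq \max_{\s'\in\cS[\cD]}\P_{\s'}[\t_A<\t_{B}]+\P_\s[\t_{\cS[\cD]}<\t_B],
\ee
and it suffices to bound $\P_\s[\t_{\cS[\cD]}<\t_B]$ (the probability of escaping the valley of $m$ upward before falling into the deeper well $B$) together with a bound on $h_{A,B}$ on the door itself.

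For the escape probability I would use the renewal inequality $\P_\s[\t_{\cS[\cD]}<\t_{B'}]\leq \capa(\s,\cS[\cD])/\capa(\s,B')$ with $B'$ a small neighborhood of the minimum $m$ inside $\cU_\d(m)$ — here the point is that the single starting configuration can be replaced, after one regeneration, by the equilibrium measure on a mesoscopic ball, so that the capacities are no longer ``capacities of single points'' and the bounds of Theorem~\thv(CAP-thm) and Corollary~\thv(up.better) apply. The ratio of these capacities is then of order $\exp(-\b N[F_{\b,N}(z)-F_{\b,N}(m)])$ up to polynomial factors; summing the resulting bound for $h_{A,B}(\s)\,\mu_{\b,N}(\s)$ over $\s\in\cS[\cU_\d(m)]$ and using $\sum_{\s\in\cS[\cU_\d(m)]}\mu_{\b,N}(\s)\leq N\,\QQ_{\b,N}(m)$ together with $F_{\b,N}(m)<F_{\b,N}(m_0^*)$ and $F_{\b,N}(z)\geq F_{\b,N}(m_0^*)+\d$, one gets a net exponential gain $e^{-\b Nc}\QQ_{\b,N}(m_0^*)$ for a suitable $c>0$ (any $c<\d$ works after absorbing polynomial corrections), which is \eqv(time.2). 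Part~(ii) is the mirror image: for $\s\in\cS[\cU_\d(m_0^*)]$ one estimates $1-h_{A,B}(\s)=\P_\s[\t_B<\t_A]$; the target set $B=\cS[M]$ lies beyond the saddle $z^*$ separating $\cU_\d(m_0^*)$ from the deeper wells, so the same door argument at $\cS[z^*]$ plus the capacity ratio bound $\capa(\s,\cS[z^*])/\capa(\s,A')\lesssim \exp(-\b N[F_{\b,N}(z^*)-F_{\b,N}(m_0^*)])$ gives $\sum_{\s\in\cS[\cU_\d(m_0^*)]}\mu_{\b,N}(\s)(1-h_{A,B}(\s))\leq e^{-\b Nc}\QQ_{\b,N}(m_0^*)$, which is \eqv(time.3).

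The main obstacle — and the reason the naive renewal bound of earlier works fails — is that $\capa(\s,\cdot)$ for a single microscopic configuration $\s$ is exponentially tiny in an uncontrolled way (the configuration $\s$ may sit on an entropically unlikely slice), so the bare inequality $h_{A,B}(\s)\leq\capa(\s,A)/\capa(\s,B)$ is useless. The fix is to regenerate: after the chain started at $\s$ first returns to the bulk of its starting valley (a mesoscopic ball $B'$ around the relevant minimum) — which happens before leaving $\cU_\d(m)$ with overwhelming probability and on a time scale negligible compared to the metastable scale — it has essentially equilibrated within that valley, so one may replace $\P_\s[\cdot]$ by $\E_{\nu}\P_\cdot[\cdot]$ with $\nu$ close to $\muc{B'}$, and then $\capa(B',\cdot)$ is a genuine mesoscopic capacity to which Theorem~\thv(CAP-thm) applies. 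Making this regeneration step quantitative — i.e.\ showing the ``local relaxation inside a single well before exiting'' with the required error control, uniformly in the starting configuration $\s\in\cS[\cU_\d(m)]$ — is the technical heart of the argument; once it is in place, the remaining estimates are bookkeeping with the already-established sharp capacity asymptotics and the elementary bound $\sum_{\s\in\cS[m]}\mu_{\b,N}(\s)=\QQ_{\b,N}(m)$.
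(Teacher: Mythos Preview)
Your approach is genuinely different from the paper's, and it has a real gap precisely at the point you flag as ``the technical heart of the argument''.

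The paper does \emph{not} use capacity ratios or a regeneration-to-equilibrium step. Instead it builds a super-harmonic barrier: it shows (Proposition~\thv(time.20)) that for any $\a\in(0,1)$ and $n$ large, the function $\psi(\s)=\exp\bigl((1-\a)\b N F_{\b,N}(\bm(\s))\bigr)$ satisfies $L\psi\le 0$ on $\cS[\bU_\d\setminus\bG_\th]$, where $\bG_\th$ is a small mesoscopic ellipsoid around the minimum $\bm^*$. Doob's optional stopping theorem then gives directly, for every $\s\in\cS[\bU_\d\setminus\bG_\th]$,
\[
\P_\s\bigl[\t_{\cS[\partial_A\bU_\d]}<\t_{\cS[\partial_A\bG_\th]\cup B}\bigr]\le \max_{\s'\in\cS[\partial_A\bU_\d]}\frac{\psi(\s)}{\psi(\s')}\le e^{-(1-\a)\b N[F_{\b,N}(m_0^*)+\d-F_{\b,N}(\bm(\s))]}.
\]
This is combined with a crude lower bound $\P_\s[\t_B<\t_{\cS[\partial_A\bG_\th]}]\ge e^{-\b N\d_2}$ for $\s\in\cS[\partial_A\bG_\th]$ (obtained by exhibiting a single descending path), and a short renewal at $\partial_A\bG_\th$ yields the uniform estimate on $\P_\s[\t_A<\t_B]$. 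The real work is the super-harmonicity proof, which requires careful control of $I''_{N,\ell}$ and of the discrete gradient $g_\ell(\bx)=\tfrac N2[F_{\b,N}(\bx+\bfe_\ell)-F_{\b,N}(\bx)]$ across all regimes of $\bx_\ell/\rho_\ell$, including the boundary $\bx_\ell/\rho_\ell\to\pm1$.

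Your proposal, by contrast, reduces everything to showing that from any $\s\in\cS[\cU_\d(m)]$ the chain hits a mesoscopic ball $B'$ around the minimum before exiting $\cU_\d(m)$, with probability $1-e^{-cN}$ uniformly in $\s$. But this statement is essentially equivalent in difficulty to the lemma itself: it is exactly an escape-probability bound from an arbitrary microscopic configuration, and you offer no mechanism to prove it. The capacity tools you invoke (Theorem~\thv(CAP-thm), Corollary~\thv(up.better)) control $\capa(B',\cdot)$ for mesoscopic $B'$, not the hitting probability from a single $\s$ to $B'$; and replacing $\P_\s$ by an average over $B'$ is only legitimate \emph{after} you know $\s$ reaches $B'$ first --- which is the very thing to be shown. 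The paper's barrier function is precisely the device that breaks this circularity: it gives a pointwise, configuration-by-configuration bound without ever needing single-point capacities or local equilibration.

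A minor point: your displayed chain of inequalities is garbled --- the second inequality as written is trivially true (the right-hand side contains the left-hand side as a summand) and does not encode the strong Markov decomposition you intend.
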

 The treatment of points (i) and (ii) is completely similar, as
both rely on a rough estimate of the probabilities to leave the
starting well before visiting its minimum, and it will be discussed
in the next section.

Assuming Lemma \thv(time.1.2), we can readily
conclude the proof of Theorem \thv(theorem1).
Indeed, using \eqv(time.2) together with \eqv(strange.16),
we obtain the upper bound
 \bea\Eq(upper1)
\sum_{\s\in S_N} \mu_{\b,N}(\s) h_{A,B}(\s) &\leq&
\sum_{m\in \cU_\d(m_0^*)}\QQ_{\b,N}(m) +O\left(\QQ_{\b,N}(m_0^*)e^{-\b
Nc}\right)
 \nonumber\\
&=& \QQ_{\b,N}(m_0^*)\sqrt{\frac{\pi N}{2\b a(m_0^*)}}(1+o(1)),
 \eea
where $a(m^*_0)$ is given in \eqv(static.201).
 On the other hand, using \eqv(time.3), we get the corresponding
 lower bound
  \bea\Eq(lower1)
\sum_{\s\in S_N} \mu_{\b,N}(\s) h_{A,B}(\s)
&\geq& \sum_{m\in \cU_\d(m_0^*)} \sum_{\s\in\cS[m] }
\mu_{\b,N}(\s)\left[1-(1-h_{A,B}(\s))\right]
\nonumber\\
&\geq& \sum_{m\in \cU_\d(m_0^*)}\QQ_{\b,N}(m)
-O(\QQ_{\b,N}(m_0^*) e^{-\b Nc})
 \nonumber\\
&=& \QQ_{\b,N}(m_0^*)\sqrt{\frac{\pi N}{2\b a(m_0^*)}}(1+o(1)).
 \eea
From  Equation \eqv(static.16)
for $\QQ_{\b,N}(m_0^*)$
 and Equation \eqv(cap1.2) for  $\capa(A,B)$, we finally obtain
 \bea\Eq(upper2)
\E_{\nu_{A,B}}\t_B
&=&\sum_{\s\in S_N}\frac{\mu_{\b,N}(\s)h_{A,B}(\s)}{\capa(A,B)}
\nonumber\\
 &=& \exp\left(\b N\left( F_{\b,N}(z^*)-F_{\b,N}(m_0^*)\right)\right)
\nonumber\\
&\times&\frac {2\pi N}  {\b |\hat\g_1|} \sqrt{\frac{\b\E_h
\left(1-\tanh^2
\left(\b(z^*+h)\right)\right)-1} {1-\b\E_h
\left(1-\tanh^2\left(\b(m_0^*+h)\right)\right)}}(1+o(1)),
\eea
which proves  Theorem \thv(theorem1).

\subsection{Upper bounds on harmonic functions.}

We now prove Lemma \thv(time.1.2) giving
a detailed proof only for (i), the proof of (ii)
being completely analogous.
This requires, for the first time in this
paper, to get an estimate on the minimizer of the Dirichlet form,
the harmonic function $h_{A,B}(\s)$.

First note that, since $h_{A,B}(\s)\equiv \P_\s(\t_A<\t_B)$
for all $\s\notin A\cup B$,
the only non zero contributions to the sum in (i)
come from those  sets  $\cU_\d(m)$ (at most two)
whose corresponding  $m$ is such that
there are no minima of $M$ between $m_0^*$ and $m$.
By symmetry we can just analyze one of these two sets,
denoted by $\cU_\d(m^*)$, assuming  for
definiteness that $m_0^*<m^*$.

Note also that since  $h_{A,B}(\s)=0$  for all $\s$ such that $m^*\leq m(\s)$,
the problem  can be reduced  further on to the set
\be\Eq(definitionU)
\cU_\d^- \equiv \cU_\d(m^*)\cap \{m: m<m^*\}.
\ee
Define the mesoscopic counterpart of $\cU_\d^-$,
namely, for fixed $m^*\in M$ and $n\in\N$,
let $\bm^*\in\Gamma_N^n$
be the minimum of $F_{\b,N}(\bx)$ correspondent to $m^*$,
and define
\be\Eq(udelta.3)
\bU_\d\equiv \bU_\d(\bm^*)\equiv \{\bx\in\G_N^n: m(\bx)\in \cU_\d^-\}.
\ee
We write the boundary of $\bU_\d$ as
$\partial \bU_\d= \partial_A\bU_\d\sqcup \partial_B\bU_\d$,
where $\partial_B\bU_\d= \partial\bU_\d\cap \bB$, and observe that, for all
$\s\in\cS[\bU_\d]$
 \be\Eq(time.11)
h_{A,B}(\s)=\P_\s[\t_A<\t_B]
\leq
\P_\s[\t_{S[\partial_A \bU_\d]}<\t_{S[\partial_B \bU_\d]}].
\ee
Let $\max_{\ell} \rho_\ell\ll\th(\e)\ll 1$,
and for $\th\equiv\th(\e)$ define
\be\Eq(time.10.1)
\bG_\th\equiv \left\{\bm\in \bU_\d:
\sum_{\ell=1}^n\frac{(\bm_\ell-\bm_{\ell}^*)^2}{\rho_\ell}
\leq \frac{\e^2}{\th}\right\}.
\ee
As before, we denote  by $\partial\bG_\th$ the boundary of $\bG_\th$,
and write $\partial \bG_\th= \partial_A \bG_\th\sqcup \partial_B\bG_\th$,
where $\partial_B \bG_\th= \partial\bG_\th\cap \bB$
(see Fig. \ref{fig.potential}).

\begin{figure} \label{fig.potential}
\begin{center}
\psfrag{a}{$\bm_0^*$}
\psfrag{b}{$\bA=\{\bx: m(\bx)= m_0^*\}$}
\psfrag{c}{$\bU_\d(\bm_0^*)$}
\psfrag{d}{$\bU_\d\equiv\bU_\d(\bm^*)$}
\psfrag{e}{$\partial_A \bG_\th$}
\psfrag{f}{$\partial_B \bG_\th$}
\psfrag{g}{$\bm^*$}
\psfrag{h}{$\bG_\th$}
\psfrag{i}{$\bB=\{\bx: m(\bx)= m^*\}$}
\psfrag{l}{$\partial_A \bU_\d$}
\psfrag{m}{$\partial_B \bU_\d$}

\includegraphics[width=12cm]{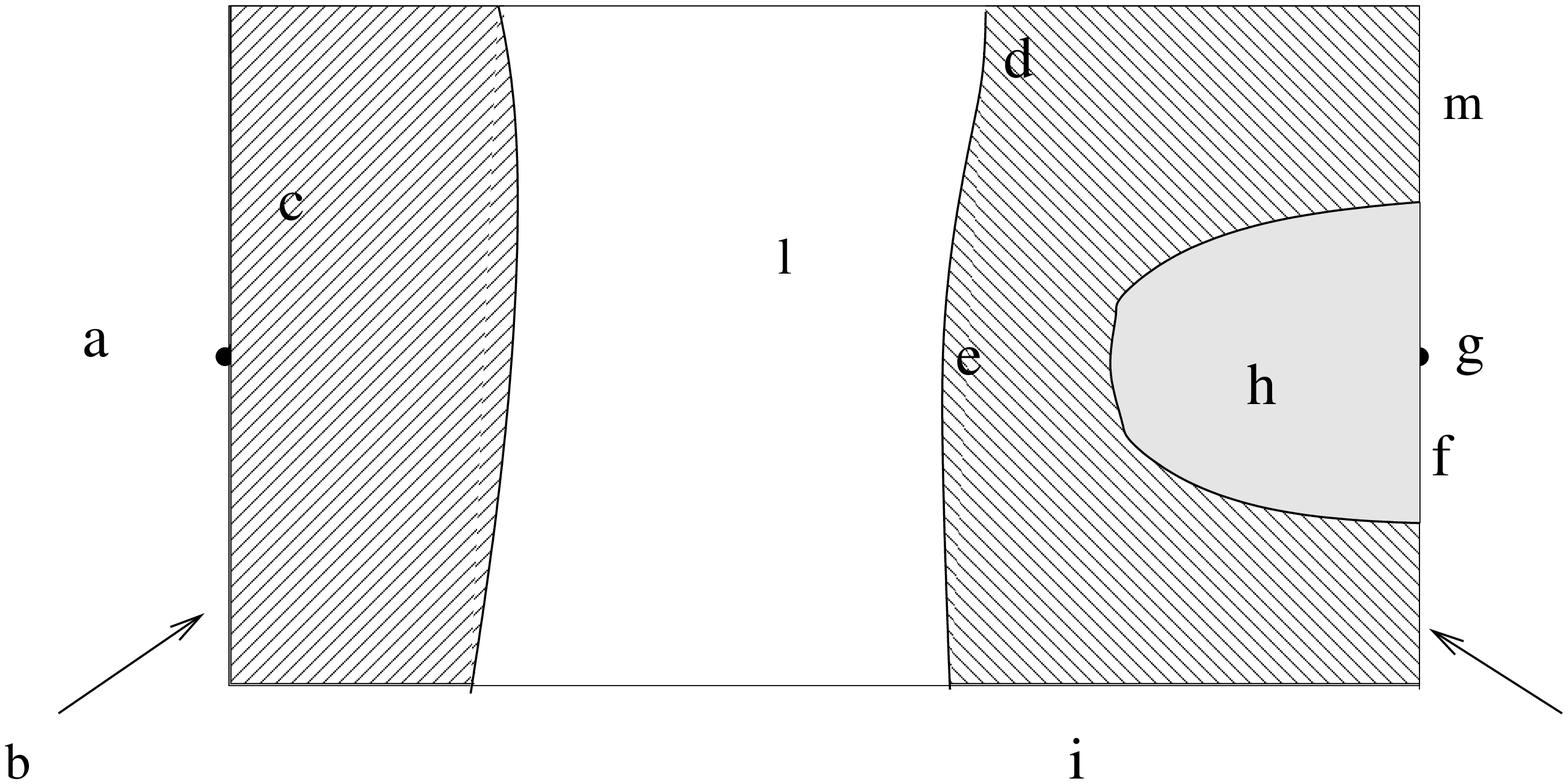}
\caption{Neighborhoods of $\bm_0^*$ and $\bm^*$ in
the space $\G_N^n$. Here we denoted by $\bU_\d(\bm_0^*)$
the mesoscopic counterpart of $\mathcal U(m_0^*)$}.
\end{center}
\end{figure}

The strategy to control the equilibrium potential, $\P_\s(\t_A<\t_B)$,
consists in estimating the probabilities
$\P_\s[\t_{A}<\t_{S[\partial_A \bG_\th]\cup B}]$,
for $\s\in\cS[\bU_\d\setminus\bG_\th]$,
and $\P_\s[\t_{S[\partial_A \bG_\th]}<\t_{B}]$, for $\s\in\bG_\th$,
in order to apply a renewal argument and to get from these estimates
a bound on the probability of the original event.

Proceeding on this line, we state the following:
 \begin{proposition}\TH(claim.01)
For any $\a\in(0,1)$, there exists $n_0\in\N$, such that the
inequality 
\be\Eq(claim.02)
\P_\s(\t_{A}<\t_{S[\partial_A \bG_\th]\cup B})
\leq  e^{-(1-\a)\b N \left[F_{\b,N}(m_0^*)+\d- F_{\b,N}(\bm(\s))\right]}
\ee
holds for all $\s\in \cS[\bU_\d\setminus\bG_\th]$, $n\geq n_0$,
and for all $N$ sufficiently large.
\end{proposition}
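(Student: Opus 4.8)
The plan is to push the whole question down to the mesoscopic scale, where the obstruction recalled just before the statement --- microscopic capacities of single configurations carry an exponentially small entropic factor --- disappears: under $\cQ_{\b ,N}$ a single mesoscopic point $\bx\in\G_N^n$ already has mass $\cQ_{\b ,N}(\bx)\asymp e^{-\b N F_{\b ,N}(\bx)}$ up to a prefactor polynomial in $N$ (Proposition~\thv(fine.15), Eq.~\eqv(fine.1.1)). Two features of the landscape will be used throughout. First, by the choice of $\d$ the set $\bU_\d$ contains no critical point of $F_{\b ,N}$ except the minimum $\bm^*$, and $\bm^*$ lies inside $\bG_\th$; hence every path from a point of $\bU_\d$ to the mesoscopic preimage $\bA$ of $A$ must cross the saddle $z^*$, and since no critical value lies in $[F_{\b ,N}(m_0^*),F_{\b ,N}(m_0^*)+\d]$ we have $F_{\b ,N}(z^*)\ge F_{\b ,N}(m_0^*)+\d+c_0$ for a \emph{fixed} $c_0>0$. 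Second, on $\bU_\d\setminus\bG_\th$ the gradient $|\nabla F_{\b ,N}|$ is bounded away from $0$ --- which is precisely the role played by excising the ball $\bG_\th$ of \eqv(time.10.1). The surplus $c_0$ is what will eventually absorb all polynomial and $O(\e)$ losses.

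\textbf{Step 1 (mesoscopic estimate).} For the nice mesoscopic chain of Section~4 (measure $\cQ_{\b ,N}$, rates $r_N$) I would bound the probability of reaching $\bA$ before $\partial_A\bG_\th\cup\bB$ by the classical renewal inequality applied to the single mesoscopic point $\bx=\bm(\s)$,
\[
h^{\mathrm{meso}}_{\bA,\,\partial_A\bG_\th\cup\bB}(\bx)\ \le\ \frac{\capa^{\mathrm{meso}}(\bx,\bA)}{\capa^{\mathrm{meso}}(\bx,\partial_A\bG_\th)} .
\]
The numerator is estimated from above by the Dirichlet principle with a test function depending on $\bm$ only --- a smoothed indicator dropping from $1$ to $0$ across the level surface $\{F_{\b ,N}=F_{\b ,N}(z^*)\}$ separating $\bx$ from $\bA$ --- which gives $\capa^{\mathrm{meso}}(\bx,\bA)\le C(n)N^{K}e^{-\b N F_{\b ,N}(z^*)}$. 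The denominator is bounded below by the flow representation of Proposition~\thv(ab.5), using the unit flow carried by a path along which $F_{\b ,N}$ is nonincreasing from $\bx$ into $\bG_\th$ (such a path exists because $\bU_\d$ has no critical point besides $\bm^*$); along it $\cQ_{\b ,N}\gtrsim\cQ_{\b ,N}(\bx)$ and the rates are bounded below, whence $\capa^{\mathrm{meso}}(\bx,\partial_A\bG_\th)\ge c(n)N^{-K}\cQ_{\b ,N}(\bx)\ge c'(n)N^{-K'}e^{-\b N F_{\b ,N}(\bx)}$ --- and here, crucially, there is no entropic loss. Combining, $h^{\mathrm{meso}}_{\bA,\,\partial_A\bG_\th\cup\bB}(\bx)\le C''(n)N^{K''}e^{-\b N[F_{\b ,N}(z^*)-F_{\b ,N}(\bx)]}$.

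\textbf{Step 2 (transfer and conclusion).} The coarse-graining comparison of Section~4 (Lemma~\thv(trivial.1) and the surrounding estimates, valid on $\cS[\bU_\d]$ up to factors $1+O(\e)$) turns this into a bound for the microscopic chain, of the form
\[
\P_\s\bigl(\t_A<\t_{S[\partial_A\bG_\th]\cup B}\bigr)\ \le\ C'''(n)\,N^{K'''}\,e^{-\b N[F_{\b ,N}(z^*)-F_{\b ,N}(\bm(\s))]} .
\]
If $F_{\b ,N}(\bm(\s))\ge F_{\b ,N}(m_0^*)+\d$ the asserted bound is $\ge1$ and there is nothing to prove, so put $G\equiv F_{\b ,N}(m_0^*)+\d-F_{\b ,N}(\bm(\s))\ge0$; then $F_{\b ,N}(z^*)-F_{\b ,N}(\bm(\s))\ge G+c_0$, so the right-hand side above is at most $e^{-(1-\a)\b N G}\cdot\bigl(C'''(n)N^{K'''}e^{-\b N(\a G+c_0)}\bigr)$. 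Since $\a G+c_0\ge c_0>0$ uniformly in $\s$, for each fixed $n$ and all $N$ large the bracket is $\le1$, which is exactly the claim. One then chooses $n_0=n_0(\a)$ large (so that the accumulated $O(\e)=O(1/n)$ errors in the transfer cost at most a harmless constant) and $N$ large afterwards.

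\textbf{Main obstacle.} The genuinely delicate point is the transfer in Step~2: replacing $\P^{\mathrm{meso}}_{\bm(\s)}$ by $\P_\s$ despite the fact that the projected process $\bm(\s(\cdot))$ is not Markov, so that the microscopic rates fluctuate by $O(\e)$ inside each fibre $\cS[\bx]$. One must show that these fluctuations do not accumulate badly along the microscopic trajectories relevant for the hitting event --- exactly the kind of estimate developed for the lower bound in Section~5 (propagation of errors along microscopic paths via local limit asymptotics for conditioned Bernoulli weights). This is where removing the neighbourhood $\bG_\th$ of $\bm^*$ (keeping $|\nabla F_{\b ,N}|$ bounded away from $0$, so the comparison does not degenerate) and taking $n$ large are both needed; a separate, minor point is the ``boundary layer'' in which $F_{\b ,N}(\bm(\s))$ lies within $O(\ln N/N)$ of the barrier, where the bound is essentially trivial and the surplus $c_0$ absorbs the remainder.
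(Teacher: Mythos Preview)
Your Step~2 is a genuine gap, and it is precisely the difficulty the paper flags at the opening of Section~6.2 (``This bound cannot be used here\dots We will therefore use another method''). Lemma~\thv(trivial.1) compares two Dirichlet forms on the \emph{same} state space (the mesoscopic chain with rates $r_N$ versus a cleaned version with constant rates $r_\ell$ on $D_N(\rho)$); it does not relate microscopic to mesoscopic dynamics. The only microscopic--mesoscopic comparison available is the inequality $\capa(A,B)\le\Capm{\bA,\bB}$ of \eqv(rfcw.10), which concerns capacities (not equilibrium potentials) and goes only one way. Since the projection $\bm(\s(\cdot))$ is not Markov, there is no a~priori reason for $\P_\s(\t_A<\t_{S[\partial_A\bG_\th]\cup B})$ to be controlled by the hitting probability of the lumped chain started from $\bm(\s)$. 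The Section~5 machinery you invoke builds microscopic \emph{flows} subordinate to mesoscopic ones, yielding \emph{lower} bounds on microscopic capacities; it does not furnish upper bounds on microscopic hitting probabilities.

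The paper's route avoids the transfer entirely. It sets $\psi(\s)=\phi(\bm(\s))$ with $\phi(\bx)=e^{(1-\a)\b N F_{\b,N}(\bx)}$ and proves (Proposition~\thv(time.20)) that this mesoscopically-defined function is super-harmonic for the \emph{microscopic} generator $L$ on $\cS[\bU_\d\setminus\bG_\th]$; Doob's optional stopping then gives \eqv(claim.02) directly via \eqv(superm.3). The technical core is Lemma~\thv(grad), which shows $\sum_{\ell}\rho_\ell\, g_\ell(\bm)^2\ge c\,\e^2/\th$ on $\bU_\d\setminus\bG_\th$ --- this is exactly the quantitative form of your observation that $|\nabla F_{\b,N}|$ is bounded away from zero there. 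Combined with the fact that microscopic flip rates are within $e^{\pm c\e}$ of the mesoscopic ones (Eq.~\eqv(claim.2)), this negative drift dominates the $O(\e^2)$ positive error in $(L\psi)(\s)$ once $n$ is large; the factor $(1-\a)$ in the exponent is what buys the room needed for this comparison. Your surplus $c_0=F_{\b,N}(z^*)-F_{\b,N}(m_0^*)-\d$ plays no role in the paper's argument: the bound \eqv(claim.02) is obtained with the level $F_{\b,N}(m_0^*)+\d$ (the height of $\partial_A\bU_\d$) directly, not via the saddle.
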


\paragraph{Proof of Proposition \thv(claim.01): Super-harmonic barrier functions.}
Throughout the next computations, $c$, $c'$ and $c''$ will denote positive
constants which are independent on $n$ but may depend on $\b$ and on
the distribution of $h$.
 The particular value of $c$ and $c'$ may change from line to line as
the discussion progresses.

We first observe
that, for all $\s\in \cS[\bU_\d\setminus\bG_\th]$,
 \be\Eq(time.11.1)
\P_\s[\t_A<\t_{\cS[\partial_A \bG_\th]\cup B}]
\leq
\P_\s[\t_{S[\partial_A\bU_\d]}<\t_{\cS[\partial_A \bG_\th]\cup B}].
\ee
The probability in the r.h.s. of \eqv(time.11.1) is
the main object of investigation here.
The idea which is beyond the proof of bound \eqv(claim.02)
is quite simple.
Suppose that  $\psi$ is a bounded
super-harmonic function defined on $\cS[\bU_\d\setminus\bG_\th]$,
i.e.
\be\label{supermart}
(L \psi)(\s)\leq 0\quad\quad \mbox{for all }\s\in \cS[\bU_\d\setminus\bG_\th].
\ee
Then  $\psi(\s_t)$ is a supermartingale, and
$T\equiv\t_{S[\partial_A \bU_\d]}\land \t_{\cS[\partial_A \bG_\th]\cup B}$ is an integrable
stopping time, so that,   by Doob's  optional stopping theorem,
  $\forall\, \s\in\cS[\bU_\d\setminus\bG_\th]$,
 \be
 \Eq(superm.1)
 \E_\s \psi(\s_T)\leq \psi(\s).
\ee
On the other hand,
 \be\Eq(superm.2)
\E_\s \psi(\s_T) \geq \min_{\s'\in \cS[\partial_A\bU_\d]} \psi(\s')
\P_\s (\t_{\cS[\partial_A\bU_\d]}<\t_{\cS[\partial_A \bG_\th]\cup B}),
 \ee
and hence
\be\Eq(superm.3)
\P_\s (\t_{\cS[\partial_A \bU_\d]}<\t_{\cS[\partial_A \bG_\th]\cup B}) \leq
\max_{\s'\in \cS[\partial_A\bU_\d]} \frac{\psi(\s)}{\psi(\s')}.
 \ee
The problem is to find a  super-harmonic function in order to get
a suitable bound in \eqv(superm.3).

\begin{proposition}\TH(time.20)
For any $\a\in(0,1)$, there exists $n_0\in\N$ such that
the function $\psi(\s)\equiv \phi(\bm(\s))$,
with $\phi:\R^n \mapsto\R$  defined as %
\be\Eq(superm.5)
 \phi(\bx)\equiv e^{(1-\a)\b NF_{\b,N}(\bx)},
\ee
is  super-harmonic in $\cS[\bU_\d\setminus\bG_\th]$
for all $n\geq n_0$ and $N$ sufficiently large.
\end{proposition}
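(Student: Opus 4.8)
The plan is to compute $L\psi$ directly, split it into one contribution per mesoscopic slot $\Lambda_\ell$, and show that each slot contributes a nonpositive amount up to errors which are swallowed by the strict convexity of $F_{\b,N}$ near $\bm^*$. Since $\psi=\phi\circ\bm$ and a spin flip at $i\in\Lambda_\ell$ moves $\bm(\s)$ by $\pm\bfe_\ell$, for $\s\in\cS_N[\bx]$ with $\bx=\bm(\s)$ one has
\[
(L\psi)(\s)=\sum_{\ell=1}^n\Bigl[R^+_\ell(\s)\bigl(\phi(\bx+\bfe_\ell)-\phi(\bx)\bigr)+R^-_\ell(\s)\bigl(\phi(\bx-\bfe_\ell)-\phi(\bx)\bigr)\Bigr],
\]
where $R^\pm_\ell(\s)$ is the total Metropolis rate of flips inside $\Lambda_\ell$ that increase, resp. decrease, $\bm_\ell$. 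By \eqv(bounded.2) and \eqref{trivial.6}, $R^\pm_\ell(\s)=r_N(\bx,\bx\pm\bfe_\ell)(1+O(\e))$ uniformly in $\s\in\cS_N[\bx]$, so one may replace these by the mesoscopic rates $R^\pm_\ell\equiv r_N(\bx,\bx\pm\bfe_\ell)$ at the cost of a factor $1+O(\e)$.

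The key algebraic input is detailed balance. The rates $r_N$ are reversible for $\cQ_{\b,N}$ (as in \eqv(pot.4), from microscopic reversibility), and by \eqv(fine.1.1) the prefactor of $\cQ_{\b,N}$ changes by $1+O(n/N)$ under a unit lattice step; combining these gives
\[
e^{(1-\a)\b N\bigl(F_{\b,N}(\bx\pm\bfe_\ell)-F_{\b,N}(\bx)\bigr)}=\bigl(R^\mp_\ell/R^\pm_\ell\bigr)^{1-\a}\bigl(1+O(n/N)\bigr)
\]
(equivalently $2\b\,\partial_\ell F_{\b,N}(\bx)=\ln(R^-_\ell/R^+_\ell)+O(\e)$, which can also be read off \eqv(fine.9)). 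Writing $\phi(\bx\pm\bfe_\ell)-\phi(\bx)=\phi(\bx)\bigl[e^{(1-\a)\b N(\cdots)}-1\bigr]$ and substituting, the two slot terms collapse and
\[
\frac{(L\psi)(\s)}{\psi(\s)}=\sum_{\ell=1}^n\Bigl[(R^+_\ell)^\a(R^-_\ell)^{1-\a}+(R^+_\ell)^{1-\a}(R^-_\ell)^\a-R^+_\ell-R^-_\ell\Bigr]+\EE,
\]
where $\EE=O(\e)\sum_\ell\rho_\ell\min\bigl\{|\ln(R^-_\ell/R^+_\ell)|,1\bigr\}+O(n/N)$, using $R^\pm_\ell\asymp\rho_\ell$ and $\sum_\ell\rho_\ell=1$. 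By the weighted arithmetic–geometric mean inequality each bracket is $\le0$; call it $-c_\ell$ with $c_\ell\ge0$, and record $c_\ell\ge c_0(\a)(R^+_\ell+R^-_\ell)\min\{(\ln(R^-_\ell/R^+_\ell))^2,1\}\gtrsim\rho_\ell\min\{(\partial_\ell F_{\b,N}(\bx))^2,1\}$. A Cauchy–Schwarz split of $\EE$ against these $c_\ell$ then yields, for $\e$ small,
\[
\frac{(L\psi)(\s)}{\psi(\s)}\le-\tfrac{1}{2}\sum_{\ell=1}^n c_\ell+O(\e)\Bigl(\sum_{\ell=1}^n c_\ell\Bigr)^{1/2}+O(n/N).
\]

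So it suffices to bound $\sum_\ell c_\ell$ from below on $\bU_\d\setminus\bG_\th$. Writing $\|\bw\|_\rho^2\equiv\sum_\ell\rho_\ell\bw_\ell^2$ for $\bw\in\R^n$, the previous bound gives $\sum_\ell c_\ell\gtrsim\|\nabla F_{\b,N}(\bx)\|_\rho^2$ (at slots where $\partial_\ell F$ is large the contribution is even $\gtrsim\rho_\ell$). The claim is that there exist $n_0$ and an admissible choice $\th=\th(\e)$, $\max_\ell\rho_\ell\ll\th\ll1$, with
\[
\sum_{\ell=1}^n c_\ell\ \ge\ c_1\,\|\nabla F_{\b,N}(\bx)\|_\rho^2\ \ge\ c_2\,\e^2/\th\qquad\text{on }\bU_\d\setminus\bG_\th,\ n\ge n_0 .
\]
Since $\th\ll1$, this dominates $O(\e)(\sum c_\ell)^{1/2}$ once $n_0$ is large, and dominates $O(n/N)$ once $N$ is large, giving $(L\psi)(\s)<0$. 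For the claim one uses that $\bm^*$ is a \emph{genuine} minimum: by \eqv(fine.13), Lemma~\thv(fine.500) and the strict inequality \eqv(static.202) at $\bm^*$, the Hessian $\A(\bm^*)$ is positive definite uniformly in $n$ for $n\ge n_0$. Since the metric defining $\bG_\th$ is exactly the $\rho_\ell^{-1}$–weighted one — the natural scale of $\A(\bm^*)$, whose diagonal is of order $\rho_\ell^{-1}$ — on the part of $\bU_\d\setminus\bG_\th$ near $\bm^*$ one gets $\|\nabla F_{\b,N}(\bx)\|_\rho^2\gtrsim\sum_\ell(\bx_\ell-\bm^*_\ell)^2/\rho_\ell>\e^2/\th$, treating separately the directions tangent to the fibres $\{m(\cdot)=\text{const}\}$ (where $F_{\b,N}$ is uniformly strictly convex throughout $\bU_\d$ because $I''_{N,\ell}\ge1$) and the direction normal to the fibres (controlled by $a(m^*)\ne0$ and \eqref{eq:mincurve}). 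On the remaining part of $\bU_\d\setminus\bG_\th$, away from $\bm^*$, the same two ingredients plus the fact that $\bx_\ell/\rho_\ell$ must stay bounded away from $\pm1$ on $\bU_\d$ give $\|\nabla F_{\b,N}(\bx)\|_\rho\gtrsim1$.

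The main obstacle is precisely this last geometric estimate: one must verify that the ``annulus'' $\bU_\d\setminus\bG_\th$ lies entirely in the region where $F_{\b,N}$ is uphill enough, with a lower bound on $\|\nabla F_{\b,N}\|_\rho$ that beats an $O(\e)$ error \emph{uniformly in the coarse-graining parameter $n$}. This is what forces the $\rho_\ell^{-1}$–weighting in the definition of $\bG_\th$ (it is the Hessian metric at $\bm^*$) and what makes the non-degeneracy of $\bm^*$, quantified uniformly in $n$ through the almost-sure law of large numbers behind \eqv(fine.13), indispensable — in particular one must handle the eigendirection of $\A(\bm^*)$ normal to the fibres separately, using $a(m^*)\ne0$. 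Everything else — the reduction to slots, the detailed-balance identity $e^{2\b\partial_\ell F}=R^-_\ell/R^+_\ell$, and the weighted AM–GM step — is routine.
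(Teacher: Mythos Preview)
Your overall architecture---split $L\psi$ into slots, use detailed balance to turn each slot contribution into a term $\le 0$ up to controllable errors, then invoke a geometric lower bound of the form $\sum_\ell\rho_\ell(\partial_\ell F)^2\gtrsim \e^2/\th$ on $\bU_\d\setminus\bG_\th$---is exactly the paper's strategy. Your AM--GM repackaging
\[
(R^+_\ell)^\a(R^-_\ell)^{1-\a}+(R^+_\ell)^{1-\a}(R^-_\ell)^\a-R^+_\ell-R^-_\ell\le 0
\]
is a clean way to see the slotwise nonpositivity that the paper extracts through the explicit inequalities \eqv(G+.1)--\eqv(Gbounds-), and the geometric claim is precisely Lemma~\thv(grad).

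There is, however, a genuine gap: your assertion that ``$\bx_\ell/\rho_\ell$ must stay bounded away from $\pm1$ on $\bU_\d$'' is false. The set $\bU_\d$ constrains only the \emph{total} magnetisation $m(\bx)=\sum_\ell\bx_\ell$, not the individual components, so points with some $\bx_\ell/\rho_\ell=\pm1$ (or arbitrarily close) are perfectly allowed. This matters twice. First, your detailed-balance identity $e^{(1-\a)\b N\Delta_\ell^\pm F}=(R^\mp_\ell/R^\pm_\ell)^{1-\a}(1+O(n/N))$ rests on the prefactor $\sqrt{I''_{N,\ell}(\bx_\ell/\rho_\ell)}$ in \eqv(fine.1.1) being slowly varying; by Lemma~\thv(second.1), $I''_{N,\ell}(y)$ blows up like $1/(1-|y|)$ near $\pm1$, so the $O(n/N)$ error is not uniform there. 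Second, your geometric argument ``away from $\bm^*$'' explicitly invokes this boundedness. The paper handles the boundary regime separately: when $\bx_\ell/\rho_\ell\in[1-\nu,1]$ (resp.\ $[-1,-1+\nu]$), Lemma~\thv(boundary-g.1) shows $g_\ell(\bx)$ diverges, which forces $L_\ell\phi(\bx)\le-(1+o(1))\rho_\ell\phi(\bx)$ directly (Eqs.~\eqv(gb.1), \eqv(boundarycase.5)). The final assembly then splits into two cases according to whether $\sum_{\ell:\bx_\ell/\rho_\ell=\pm1}\rho_\ell$ exceeds $\e^2/(8\th)$: if it does, the boundary slots alone already beat the $O(\e^2)$ positive error; if not, the hypothesis \eqv(gradshtein) of Lemma~\thv(grad) holds and your geometric estimate goes through. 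You need this dichotomy; without it the proof is incomplete.
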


The proof of Proposition
 \thv(time.20) will involve computations with  differences
 of the functions $F_{\b,N}$. We therefore first collect some
   elementary properties that we will use later. First we need some
   control on the second derivative of this function.
From
\eqv(fine.16) we infer that
\be\Eq(second.0)
\frac{\del^2 F_{\b,N}(\bx)}{\del \bx_\ell^2}= \frac{2}{N}
\left( -1+ \frac{1}{\b\rho_\ell} I''_{N,\ell}(\bx_\ell/\rho_\ell)
\right).
\ee
Thus all the potential problems come from the function $I_{N,\ell}$.

\begin{lemma}\TH(large-I)
For any $y\in (-1,1)$,
\be\Eq(boundarycase.4)
\tanh^{-1}(y)-\b\e
\leq I'_{N,\ell}(y)\leq
\tanh^{-1}(y)+\b\e,
\ee
In particular, as $y\rightarrow \pm 1$,  $I'_{N,\ell}(y)\rightarrow \pm\infty$.
\end{lemma}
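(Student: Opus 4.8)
The plan is to read off the bounds directly from the defining formula for $U_{N,\ell}$ together with the fact that $I'_{N,\ell}=(U'_{N,\ell})^{-1}$. First I would record the elementary bound on $U'_{N,\ell}$: differentiating \eqv(fine.4) gives
\be\Eq(large-I.1)
U'_{N,\ell}(t)=\frac1{|\L_\ell|}\sum_{i\in\L_\ell}\tanh\left(t+\b\tilde h_i\right),
\ee
and since $\tilde h_i\in[-\e,\e]$ and $\tanh$ is strictly increasing, each summand lies between $\tanh(t-\b\e)$ and $\tanh(t+\b\e)$. Averaging preserves the inequality, so
\be\Eq(large-I.2)
\tanh(t-\b\e)\ \leq\ U'_{N,\ell}(t)\ \leq\ \tanh(t+\b\e)\qquad\text{for all }t\in\R.
\ee

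Next I would note that $U_{N,\ell}$ is strictly convex, since $U''_{N,\ell}(t)=\frac1{|\L_\ell|}\sum_{i\in\L_\ell}(1-\tanh^2(t+\b\tilde h_i))>0$, and that $U'_{N,\ell}:\R\to(-1,1)$ is a bijection (its limits as $t\to\pm\infty$ are $\pm1$, by \eqv(large-I.1)). Hence for every $y\in(-1,1)$ there is a unique $t=t(y)$ with $U'_{N,\ell}(t)=y$, and by the standard identity for Legendre--Fenchel transforms this $t$ equals $I'_{N,\ell}(y)$. Now plug $t=I'_{N,\ell}(y)$ into \eqv(large-I.2): the upper bound $y=U'_{N,\ell}(t)\leq\tanh(t+\b\e)$ gives, after applying the increasing function $\tanh^{-1}$, that $\tanh^{-1}(y)\leq t+\b\e$, i.e. $I'_{N,\ell}(y)\geq\tanh^{-1}(y)-\b\e$; symmetrically the lower bound $y\geq\tanh(t-\b\e)$ yields $I'_{N,\ell}(y)\leq\tanh^{-1}(y)+\b\e$. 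This is exactly \eqv(boundarycase.4).

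Finally, for the limiting statement I would simply observe that the right-hand and left-hand bounds in \eqv(boundarycase.4) both tend to $\pm\infty$ as $y\to\pm1$ (with $\b\e$ a fixed finite quantity), so $I'_{N,\ell}(y)\to\pm\infty$ by squeezing. I do not expect any real obstacle here; the only point requiring a line of care is the bijectivity/strict-convexity remark, which is needed to guarantee that $I'_{N,\ell}$ is well defined and single-valued on all of $(-1,1)$ and coincides with the inverse of $U'_{N,\ell}$.
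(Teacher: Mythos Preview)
Your proof is correct and follows essentially the same route as the paper: both set $t=I'_{N,\ell}(y)$, use the formula $U'_{N,\ell}(t)=\frac1{|\L_\ell|}\sum_{i\in\L_\ell}\tanh(t+\b\tilde h_i)$ together with $|\tilde h_i|\leq\e$ to sandwich $y$ between $\tanh(t-\b\e)$ and $\tanh(t+\b\e)$, and then invert. Your extra remarks on strict convexity and bijectivity of $U'_{N,\ell}$ are a welcome bit of added care but do not change the argument.
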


\begin{proof}
Recall that  $I'_{N,\ell}(y)= U'^{-1}_{N,\ell}(y)$.
Set $I'_{N,\ell}(y)\equiv t$.
Then
\be\Eq(boundarycase.31)
y= \frac{1}{|\L_\ell|}
\sum_{i\in\L_\ell} \tanh(t +\b \tilde h_i)
\ee
and hence
\be\Eq(boundarycase.3)
\tanh( t -\b\e)\leq
y\leq \tanh( t +\b\e),
\ee
or, equivalently,
\eqv(boundarycase.4),
which proves the lemma.
\end{proof}

\begin{lemma}\Eq(second.1)
For any $y\in (-1,1)$ we have that
\be\Eq(second.2)
0\leq I''_{N,\ell}(y)\leq \frac 1{1-\left(|y|+\e \b(1-y^2)\right)^2}.
\ee
In particular,
for all $y\in[-1+\nu,1-\nu]$,
with  $\nu\in (0,1/2)$,
\be\Eq(second.5)
 0\leq I''_{N,\ell}(y) \leq \frac{1}{2\nu+\nu^2 +O(\e)}
 \leq c,
\ee
and, for all $y\in(-1,-1+\nu]\cup[1-\nu,1) $,
\be\Eq(second.4)
 0\leq I''_{N,\ell}(y)\leq \frac {1}{1-|y|}.
\ee
\end{lemma}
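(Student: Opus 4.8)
The plan is to reduce the whole statement to the elementary duality identity for Legendre--Fenchel transforms. Since $I_{N,\ell}$ is the convex conjugate of the smooth function $U_{N,\ell}$, and the latter is strictly convex, its second derivative being the strictly positive average
\[
U''_{N,\ell}(t)=\frac1{|\L_\ell|}\sum_{i\in\L_\ell}\bigl(1-\tanh^2(t+\b\tilde h_i)\bigr)>0 ,
\]
one has $I''_{N,\ell}(y)=1/U''_{N,\ell}(t)$, where $t=I'_{N,\ell}(y)$ is characterised by $y=U'_{N,\ell}(t)=\frac1{|\L_\ell|}\sum_{i\in\L_\ell}\tanh(t+\b\tilde h_i)$. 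The lower bound $I''_{N,\ell}\ge0$ in \eqv(second.2) is then immediate (and in any case follows from convexity of $I_{N,\ell}$), so the real content is the upper bound, which is equivalent to a lower bound on $U''_{N,\ell}(t)$, i.e.\ to an upper bound on $\frac1{|\L_\ell|}\sum_{i\in\L_\ell}\tanh^2(t+\b\tilde h_i)$. I would get this from the pointwise estimate
\[
\bigl|\tanh(t+\b\tilde h_i)\bigr|\ \le\ |y|+\e\b(1-y^2)\qquad\text{for every }i\in\L_\ell .
\]

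To prove this pointwise bound I would first set $\tilde h_-=\min_{i\in\L_\ell}\tilde h_i$, $\tilde h_+=\max_{i\in\L_\ell}\tilde h_i$. Because all the fields $h_i$, $i\in\L_\ell$, lie in the single interval $I_\ell$ of length at most $\e$, we have $\tilde h_+-\tilde h_-\le\e$, so the points $t+\b\tilde h_i$ fill an interval of length at most $\b\e$. As $y$ is a convex combination of the $\tanh(t+\b\tilde h_i)$ and $\tanh$ is increasing, $y\in[\tanh(t+\b\tilde h_-),\tanh(t+\b\tilde h_+)]$, hence $\tanh^{-1}(y)$ lies in $[t+\b\tilde h_-,t+\b\tilde h_+]$ as well --- which is exactly Lemma~\thv(large-I) --- and therefore $|(t+\b\tilde h_i)-\tanh^{-1}(y)|\le\b\e$ for every $i$. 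Writing now, for $|h|\le\b\e$,
\[
\tanh\bigl(\tanh^{-1}(y)+h\bigr)-y=\int_0^{h}\bigl(1-\tanh^2(\tanh^{-1}(y)+s)\bigr)\,ds ,
\]
and using that on the range of integration $\tanh(\tanh^{-1}(y)+s)=y+O(\e)$ (by the previous bound together with $|\tanh'|\le1$), so the integrand equals $1-y^2+O(\e)$, one obtains $|\tanh(t+\b\tilde h_i)-y|\le\b\e(1-y^2)+O(\e^2)$, hence $|\tanh(t+\b\tilde h_i)|\le|y|+\e\b(1-y^2)$ up to a harmless $O(\e^2)$ error. Squaring and averaging over $\L_\ell$ then gives $U''_{N,\ell}(t)\ge1-(|y|+\e\b(1-y^2))^2$, which is \eqv(second.2).

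It then remains to specialise. For $y\in[-1+\nu,1-\nu]$ one has $|y|+\e\b(1-y^2)\le1-\nu+O(\e)$, so $1-(|y|+\e\b(1-y^2))^2\ge(1-y^2)-O(\e)\ge2\nu-\nu^2-O(\e)$, which for $\e$ small (equivalently $n$ large) is bounded below by a positive constant depending only on $\nu$; this gives \eqv(second.5). For $|y|$ within $\nu$ of $\pm1$ one factors
\[
1-\bigl(|y|+\e\b(1-y^2)\bigr)^2=(1-|y|)\,\bigl(1-\e\b(1+|y|)\bigr)\,\bigl(1+|y|+\e\b(1-y^2)\bigr),
\]
and since the product of the last two factors is at least $(1-2\e\b)(2-\nu)\ge1$ for $\nu\le1/2$ and $\e$ small, the left-hand side is at least $1-|y|$, giving \eqv(second.4). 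The only genuinely delicate point is the bookkeeping in the second step: to obtain the clean factor $\e\b$ rather than a crude $2\e\b$ one must exploit that the $h_i$ inside a single block $\L_\ell$ have spread at most $\e$ (not $2\e$) and that $1-\tanh^2$ degrades only by $1-y^2$ near the value $y$; the residual $O(\e^2)$ corrections are tidied up at the end, and in any case affect nothing downstream since \eqv(second.2) is only ever used through the two specialisations above.
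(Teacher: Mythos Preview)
Your proof is correct and follows essentially the same route as the paper: both use the Legendre duality $I''_{N,\ell}(y)=1/U''_{N,\ell}(t)$ with $t=I'_{N,\ell}(y)$, reduce to a pointwise bound on $\tanh(t+\b\tilde h_i)$ via Lemma~\thv(large-I), and finish by elementary algebra. Your observation that $\tanh^{-1}(y)$ and every $t+\b\tilde h_i$ lie in one interval of length $\b\e$ is in fact slightly sharper than the paper's chaining (which produces $2\e\b$ rather than $\e\b$), and your residual $O(\e^2)$ is indeed harmless for the two specialisations \eqv(second.5) and \eqv(second.4).
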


\begin{proof} We consider only the case $y\geq 0$, the case $y<0$ is
  completely analogous. Using  the relation
$I''_{N,\ell}(x)= \left(U''_{N,\ell}(I'_{N,\ell}(x))\right)^{-1}$ and
setting $t_\ell\equiv I'_{N,\ell}(y)
\mbox{arctanh}(y)$, and using Lemma \thv(large-I),
we  obtain
\bea\Eq(second.2.1)
 I''_{N,\ell}(y)&=&
 \frac{1}{\sfrac{1}{|\L_\ell(\bx)|}\sum_{i\in\L_\ell(\bx)}
 (1-\tanh^2(\b\tilde h_i +t_\ell))}
 \nonumber\\
 &\leq&
 \frac{1}{1-\tanh^2(\e\b +t_\ell)}
\nonumber\\
 &\leq&
 \frac{1}{1-\tanh^2(\tanh^{-1}(y)+2\e\b)}
\nonumber\\
&\leq&
 \frac{1}{1-\left(y+ 2\e\b\tanh'(\tanh^{-1}(y))\right)^2}
 \nonumber\\
 &=&
 \frac{1}{1-\left(y+ 2\e\b(1-y^2)\right)^2},
\eea
where we used that $\tanh$ is monotone increasing. The remainder of
the proof is elementary algebra.
\end{proof}

Let us define, for all $\bm$ such that $\bx_\ell/\rho_\ell\in [-1,1-2/N]$,
\be\Eq(def-g)
g_\ell(\bx) \equiv \sfrac{N}{2}\left(
 F_{N,\b}(\bx +\bfe_\ell)-F_{N,\b}(\bx)\right).
\ee
Lemma \thv(second.1) has the following corollary.

\begin{corollary}\TH(second.15)
\begin{itemize}
\item[(i)] If $\bx_\ell/\rho_\ell\in [-1+\nu,1-\nu]$, with $\nu>0$,
then
\be\Eq(grad.3)
g_\ell(\bx) =
-x -\bar h_\ell +\sfrac{1}{\b}I'_{N,\ell}(\bx_\ell/\rho_\ell) + O(1/N).
\ee
\item [(ii)] If  $\bx_\ell/\rho_\ell\in [-1,-1+\nu]\cup
  [1-\nu,1-2/N]$,
then
 \be\Eq(grad.3.1)
g_\ell(\bx) =
-x -\bar h_\ell +\sfrac{1}{\b}I'_{N,\ell}(\bx_\ell/\rho_\ell) + O(1),
\ee
where $O(1)$ is independent of $N,n$, and $\nu$.
\item[(iii)]
If $\bx_\ell/\rho_\ell\in [-1+\nu,1-\nu]$, with $\nu>0$,
then there exists $c<\infty$, independent of $N$, such that
\be\Eq(second.16)
|g_\ell (\bx)- g_\ell(\bx-\bfe_\ell)|\leq \frac{c}{N}.
\ee
\item [(iv)] If  $\bx_\ell/\rho_\ell\in [-1,-1+\nu]\cup
  [1-\nu,1-2/N]$,
then
\be\Eq(second.17)
|g_\ell (\bx)- g_\ell(\bx-\bfe_\ell)|\leq C,
\ee
where $C$ is a numerical constant independent of $N,n$, and $\nu$.
\end{itemize}
\end{corollary}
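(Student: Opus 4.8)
The plan is to read everything off the explicit formula \eqv(fine.16) for $F_{\b,N}$ and reduce the four estimates to the one–dimensional facts about $I_{N,\ell}$ provided by Lemmas~\thv(large-I) and~\thv(second.1). Since $\bfe_\ell$ changes only the $\ell$-th coordinate, by $2/N$, it changes $x=\sum_j\bx_j$ by $2/N$ and $y:=\bx_\ell/\rho_\ell$ by $h:=2/|\L_\ell|$. Hence the quadratic and linear parts of $F_{\b,N}$ contribute to $g_\ell(\bx)$ in \eqv(def-g) the \emph{exact} amount $-x-1/N-\bar h_\ell$, while the Legendre part contributes $\frac{|\L_\ell|}{2\b}\bigl(I_{N,\ell}(y+h)-I_{N,\ell}(y)\bigr)$, i.e. $\frac1\b$ times the mean of $I'_{N,\ell}$ over $[y,y+h]$. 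Since $I_{N,\ell}$ is convex, $I'_{N,\ell}$ is non-decreasing, so this mean lies between $I'_{N,\ell}(y)$ and $I'_{N,\ell}(y+h)$; thus parts (i)–(ii) reduce to bounding the excess of the mean over $I'_{N,\ell}(y)$ (the $-1/N$ being harmlessly absorbed into the error).

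For (i), if $y\in[-1+\nu,1-\nu]$ then, for each fixed $n$ and $N$ large, $[y,y+h]$ lies in a slightly smaller bulk interval on which $I''_{N,\ell}\le c$ by \eqv(second.5); hence the excess is at most $\int_y^{y+h}I''_{N,\ell}\le ch=O(1/N)$, which is \eqv(grad.3). For (ii) near $y=-1$ the same one–sided bound still suffices, since there $I''_{N,\ell}(s)\le 1/(1-|s|)$ from \eqv(second.4) gives, using that $1+y\ge h$ on the grid, $I'_{N,\ell}(y+h)-I'_{N,\ell}(y)=O(1)$; near $y=+1$ this fails, because $y+h$ may be $1$ while $I'_{N,\ell}(1)=+\infty$, so one instead estimates the mean directly, writing $I'_{N,\ell}(s)\le\tanh^{-1}(s)+\b\e\le\frac12\ln\frac2{1-s}+\b\e$ (Lemma~\thv(large-I)), integrating $\ln\frac1{1-s}$ explicitly over $[y,y+h]$, and comparing with $\tanh^{-1}(y)\ge\frac12\ln\frac2{1-y}-O(1)$ to obtain mean $\le I'_{N,\ell}(y)+O(1)$ with a constant uniform in $N,n,\nu$, which is \eqv(grad.3.1).

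For (iii) and (iv) I would pass to the discrete second difference, $g_\ell(\bx)-g_\ell(\bx-\bfe_\ell)=\frac N2\bigl(F_{\b,N}(\bx+\bfe_\ell)-2F_{\b,N}(\bx)+F_{\b,N}(\bx-\bfe_\ell)\bigr)$: the linear part cancels, the quadratic part gives exactly $-2/N$, and the Legendre part gives $\frac{|\L_\ell|}{2\b}\bigl(I_{N,\ell}(y+h)-2I_{N,\ell}(y)+I_{N,\ell}(y-h)\bigr)$, which is $\ge0$ by convexity. In the bulk this second difference is $\le h^2\max I''_{N,\ell}\le ch^2$ by \eqv(second.5), so $\frac{|\L_\ell|}{2\b}$ times it is $O(h)=O(1/N)$ and \eqv(second.16) follows. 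Near the boundary one writes the second difference as $\int_0^h\!\int_{y-h+t}^{y+t}I''_{N,\ell}(s)\,{\rm d}s\,{\rm d}t$ and uses $I''_{N,\ell}(s)\le1/(1-|s|)$; the point is that the \emph{iterated} integral of $1/(1-|s|)$ is still only of order $h$ (in the extreme case it equals $\int_0^h\ln\bigl(1+h/w\bigr){\rm d}w=O(h)$, even though the inner integral diverges on a null set), so $\frac{|\L_\ell|}{2\b}$ times the second difference is $O(1)$ uniformly in $N,n,\nu$ — this, together with the $-2/N$, gives \eqv(second.17).

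The routine part is the bookkeeping of the quadratic/linear differences and the plain invocation of Lemmas~\thv(large-I) and~\thv(second.1). The genuine difficulty, and the step I expect to be the main obstacle, is the boundary regime of (ii) and (iv): there $I'_{N,\ell}$ and $I''_{N,\ell}$ are unbounded on the very interval over which one must average or take second differences, so a crude Taylor remainder diverges; the remedy in both cases is to exploit the convexity of $I_{N,\ell}$ together with the merely logarithmic nature of the singularity of $\tanh^{-1}$ at $\pm1$ (equivalently the integrability of $I'_{N,\ell}$ up to the endpoints), which is precisely what converts the apparent divergence into a constant uniform in $N$, $n$ and $\nu$.
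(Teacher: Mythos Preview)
Your argument is correct. The paper does not give a proof at all (it states only that ``the proof of this corollary is elementary and will not be detailed''), so there is no approach to compare against; your derivation from the explicit form \eqv(fine.16), reducing everything to the one-variable difference quotients and second differences of $I_{N,\ell}$ and then invoking Lemmas~\thv(large-I) and~\thv(second.1), is exactly the natural route.

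One small caveat: your claim ``$1+y\ge h$ on the grid'' in the near-$-1$ case of (ii) fails at the extreme grid value $y=-1$. But at that point $I'_{N,\ell}(-1)=-\infty$ while $g_\ell(\bx)$ is finite, so the identity \eqv(grad.3.1) cannot hold there as stated; this is a defect of the corollary's formulation rather than of your proof, and your argument correctly covers all grid points $y\ge -1+h$, which is where the statement is meaningful. Likewise, in (iv) the very definition of $g_\ell(\bx-\bfe_\ell)$ already forces $y\ge -1+h$, so your iterated-integral bound suffices.
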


The proof of this corollary is elementary and will not be detailed.

The usefulness of (ii) results from the fact that $|I'_{N,\ell}|$ is
large on that domain. More precisely, we have the following lemma.

\begin{lemma}\TH(boundary-g.1)
There exists $\nu>0$, independent of $N$ and $n$, such that, if
$\bx_\ell/\rho_\ell>1-\nu$, then
$g_\ell(\bx)$ is strictly increasing in $\bx_\ell$ and tends to
$+\infty$ as $\bx_\ell/\rho_\ell\uparrow +1$; similarly
 if
$\bx_\ell/\rho_\ell<-1+\nu$, then
$g_\ell(\bx)$ is strictly decreasing in $\bx_\ell$ and tends to
$-\infty$ as $\bx_\ell/\rho_\ell\downarrow -1$.
\end{lemma}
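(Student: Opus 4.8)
The plan is to reduce the claim to a monotonicity and divergence statement about the single-variable function $y\mapsto g_\ell(\bx)$ viewed as a function of $\bx_\ell/\rho_\ell$, using the explicit formula from Corollary~\thv(second.15)(ii). Recall that for $\bx_\ell/\rho_\ell$ near $+1$ we have
\be\Eq(plan.1)
g_\ell(\bx) = -x -\bar h_\ell +\sfrac1\b I'_{N,\ell}(\bx_\ell/\rho_\ell) + O(1),
\ee
where the $O(1)$ term is uniform in $N,n,\nu$. Here $x = m(\bx) = \sum_j \bx_j$ depends on $\bx_\ell$, but only through the single coordinate being varied, so $\del x/\del \bx_\ell = 1$, i.e. it is a harmless Lipschitz perturbation with slope $1$. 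Since $g_\ell$ is defined only on the discrete grid, ``strictly increasing'' should be read as $g_\ell(\bx+\bfe_\ell) > g_\ell(\bx)$; I would establish this by comparing the increment of $I'_{N,\ell}$ across one grid step with the bounded increments of the remaining terms.

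First I would make the divergence precise. By Lemma~\thv(large-I), $I'_{N,\ell}(y)\geq \tanh^{-1}(y) - \b\e \to +\infty$ as $y\uparrow 1$, and the terms $-x - \bar h_\ell$ together with the $O(1)$ correction are bounded uniformly (recall the random field has bounded support and $x\in[-1,1]$). Hence $g_\ell(\bx)\to +\infty$ as $\bx_\ell/\rho_\ell\uparrow 1$, which is the second assertion. Symmetrically, $I'_{N,\ell}(y)\leq \tanh^{-1}(y) + \b\e\to -\infty$ as $y\downarrow -1$, giving $g_\ell(\bx)\to -\infty$; this handles the divergence in the left-edge regime.

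Next, for the monotonicity, I would exploit the convexity of $I_{N,\ell}$. The second derivative identity $I''_{N,\ell}(y) = \bigl(U''_{N,\ell}(I'_{N,\ell}(y))\bigr)^{-1}$ shows $I''_{N,\ell}\geq 0$, and more importantly, since $U''_{N,\ell}(t) = \frac1{|\L_\ell|}\sum_{i\in\L_\ell}(1-\tanh^2(t+\b\tilde h_i))$ decays like $4e^{-2|t|}$ as $|t|\to\infty$, we have $I''_{N,\ell}(y)\to +\infty$ as $y\to\pm 1$. Concretely, from \eqv(boundarycase.4), $t_\ell \equiv I'_{N,\ell}(y) \approx \tanh^{-1}(y)$, and $U''_{N,\ell}(t_\ell)\leq 1 - \tanh^2(t_\ell - \b\e)$, which is $\leq c(1-y)$ for $y$ near $1$; therefore $I''_{N,\ell}(y)\geq \frac{1}{c(1-y)}$. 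Choosing $\nu>0$ small enough (uniformly in $N,n$, which is possible precisely because all the estimates in Lemma~\thv(large-I) and Lemma~\thv(second.1) are uniform in the bounded-support parameters) we can guarantee that for $\bx_\ell/\rho_\ell > 1-\nu$ the one-step increment of $\sfrac1\b I'_{N,\ell}$ across a grid step of size $2/(N\rho_\ell)$ in the argument strictly dominates the increment of $-x$, which is exactly $-2/N$, plus the discrete fluctuation of the $O(1)$ term controlled by Corollary~\thv(second.15)(iv). This forces $g_\ell(\bx+\bfe_\ell) - g_\ell(\bx) > 0$. The left-edge case is entirely analogous with signs reversed.

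The main obstacle is the bookkeeping around the $O(1)$ (rather than $O(1/N)$) error term near the boundary: since the correction in \eqv(grad.3.1) is only $O(1)$ and its discrete increments are only $O(1)$ (Corollary~\thv(second.15)(iv)), establishing \emph{strict} monotonicity step-by-step requires that the growth of $I'_{N,\ell}$ per grid step genuinely beats an $O(1)$ quantity, which is only true sufficiently close to $y=\pm1$; this is what pins down the existence of the uniform $\nu$. I would therefore be careful to first fix $\nu$ depending only on $\b$ and the bound on the support of $h$, then verify that in the window $(1-\nu,1)$ the divergence rate of $I''_{N,\ell}$ (hence the per-step jump of $I'_{N,\ell}$) exceeds the combined $O(1)$ perturbations, and only afterward read off both the strict monotonicity and the limit. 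No deeper tool than Lemmas~\thv(large-I) and~\thv(second.1), Corollary~\thv(second.15), and the bounded support hypothesis is needed.
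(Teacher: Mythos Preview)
Your approach matches the paper's: the proof there is literally the one-liner ``combine (ii) of Corollary~\thv(second.15) with Lemma~\thv(large-I) and note that $\bar h_\ell$ is bounded,'' which explicitly addresses only the divergence. Your treatment of the divergence is identical and correct.

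For the monotonicity, which the paper leaves implicit, your key idea --- the divergence of $I''_{N,\ell}(y)$ as $y\to\pm 1$ --- is the right one, but the execution through the decomposition \eqv(grad.3.1) plus Corollary~\thv(second.15)(iv) has a small gap. Item (iv) bounds the full increment $|g_\ell(\bx)-g_\ell(\bx-\bfe_\ell)|$, not the increment of the $O(1)$ remainder separately, so it does not give you control of the ``fluctuation of the $O(1)$ term.'' Consequently you end up worrying about beating an $O(1)$ competitor with the $I'_{N,\ell}$-increment, which is at best $O(1)$ at the very edge; that comparison cannot be won uniformly.

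The fix is simpler than your detour. Observe that, by definition, $g_\ell(\bx+\bfe_\ell)-g_\ell(\bx)=\tfrac{N}{2}\bigl[F_{\b,N}(\bx+2\bfe_\ell)-2F_{\b,N}(\bx+\bfe_\ell)+F_{\b,N}(\bx)\bigr]$, and the only contributions to this discrete second $\bx_\ell$-derivative come from $-\tfrac12 x^2$ and $\tfrac{\rho_\ell}{\b}I_{N,\ell}(\bx_\ell/\rho_\ell)$. A direct computation (exact, no remainder) gives
\[
g_\ell(\bx+\bfe_\ell)-g_\ell(\bx)=\frac{2}{N}\Bigl(-1+\frac{1}{\b\rho_\ell}I''_{N,\ell}(\eta)\Bigr)
\]
for some $\eta$ between $\bx_\ell/\rho_\ell$ and $(\bx_\ell+4/N)/\rho_\ell$. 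Both competing terms are $O(1/N)$, and positivity follows as soon as $I''_{N,\ell}(\eta)>\b\rho_\ell$; since $\rho_\ell\leq 1$ and your lower bound $I''_{N,\ell}(y)\geq c/(1-y)$ is uniform in $N,n$, a uniform $\nu>0$ does the job.
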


\begin{proof} Combine (ii) of Corollary  \thv(second.15) with
  Lemma \thv(large-I) and note that $\bar h_\ell$ is bounded by hypothesis.
\end{proof}

The next step towards the proof of Proposition \thv(time.20) is the
following lemma.

\begin{lemma}\TH(grad)
Let $\bm\in\bU_\d\setminus\bG_\th$ and denote by $S(\bm)=\{\ell:
\bm_\ell/\rho_\ell\neq 1\}$.
Then there exists a constant $c\equiv c(\b,h)>0$, independent of $N$
and $n$,
such that the following holds. If
\be\Eq(gradshtein)
\sum_{\ell\not\in  S(\bm)} \rho_\ell \leq  \frac {\e^2}{8\th},
\ee
then
\be\Eq(grad.1)
\sum_{\ell\in S(\bm)} \rho_\ell \left(g_\ell(\bm)\right)^2
\geq c \frac{\e^2}{\th},
\ee
\end{lemma}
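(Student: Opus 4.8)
The plan is to read $g_\ell(\bm)$ as a discrete version of the partial derivative $\partial_{\bx_\ell}F_{\b,N}(\bm)$ and to exploit that $\bm^*$ is the \emph{only} critical point of $F_{\b,N}$ in the relevant magnetization window, so that a point as far from $\bm^*$ as $\bm\notin\bG_\th$ forces it to be must carry a gradient of the stated size. The decomposition to keep in mind is
\[
\sum_{\ell\in S(\bm)}\rho_\ell g_\ell(\bm)^2=\bar g^2+\sum_{\ell\in S(\bm)}\rho_\ell\bigl(g_\ell(\bm)-\bar g\bigr)^2,\qquad \bar g\equiv\sum_{\ell\in S(\bm)}\rho_\ell g_\ell(\bm),
\]
whose first term is the ``normal'' (cross-slice) part of the gradient and whose second term is, up to the frozen coordinates, the squared gradient of $F_{\b,N}$ restricted to the slice $\{\by:\sum_\ell\by_\ell=m(\bm)\}$.

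First I would dispose of the non-bulk coordinates. Put $S_{\rm bulk}=\{\ell\in S(\bm):|\bm_\ell/\rho_\ell|\le 1-\nu\}$ with $\nu$ small enough for Lemma~\thv(boundary-g.1); since the fields have bounded support, $|\bm^*_\ell/\rho_\ell|$ is bounded away from $\pm1$, so on $S(\bm)\setminus S_{\rm bulk}$ also $|\bm_\ell/\rho_\ell-\bm^*_\ell/\rho_\ell|$ is bounded below. If $\sum_{\ell\in S(\bm)\setminus S_{\rm bulk}}\rho_\ell\ge\e^2/(16\th)$, then $|g_\ell(\bm)|\ge c'$ there by Lemma~\thv(boundary-g.1) and we are already done. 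Otherwise, using the hypothesis $\sum_{\ell\notin S(\bm)}\rho_\ell\le\e^2/(8\th)$ together with the crude bound $(\bm_\ell-\bm^*_\ell)^2/\rho_\ell\le 4\rho_\ell$, the inequality $\sum_\ell(\bm_\ell-\bm^*_\ell)^2/\rho_\ell>\e^2/\th$ defining $\bm\notin\bG_\th$ transfers to $\sum_{\ell\in S_{\rm bulk}}(\bm_\ell-\bm^*_\ell)^2/\rho_\ell>\e^2/(4\th)$, and moreover $\sum_{\ell\notin S_{\rm bulk}}\rho_\ell\ll1$.

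The core is a strong-convexity estimate on the slice. By Corollary~\thv(second.15), $g_\ell(\bm)=-m(\bm)-\bar h_\ell+\b^{-1}I'_{N,\ell}(\bm_\ell/\rho_\ell)+O(1/N)$ for $\ell\in S_{\rm bulk}$, and by \eqv(fine.505)--\eqv(fine.506) the constrained minimizer $\hat\bx\equiv\hat\bx(m(\bm))$ satisfies $-m(\bm)-\bar h_\ell+\b^{-1}I'_{N,\ell}(\hat\bx_\ell/\rho_\ell)=a(m(\bm))$ for \emph{all} $\ell$. The key input is $I''_{N,\ell}\ge1$ on $(-1,1)$: indeed $I''_{N,\ell}=1/(U''_{N,\ell}\circ I'_{N,\ell})$ and $U''_{N,\ell}(t)=|\L_\ell|^{-1}\sum_{i\in\L_\ell}(1-\tanh^2(t+\b\tilde h_i))\le1$ by \eqv(fine.4); together with $I''_{N,\ell}\le c$ on the bulk (Lemma~\thv(second.1)) this means that the Hessian of $F_{\b,N}$ restricted to the tangent space $\{\sum_\ell v_\ell=0\}$, measured in the norm $\|v\|_T^2=\sum_\ell v_\ell^2/\rho_\ell$, is pinched between $\b^{-1}$ and $c\,\b^{-1}$ (the rank-one term $-(\sum_\ell v_\ell)^2$ vanishes on that subspace). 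Writing $w_\ell=\bm_\ell-\hat\bx_\ell$ and $\delta m=m(\bm)-m^*$, strong convexity (with minimizer $\hat\bx$) and Cauchy--Schwarz give
\[
\sum_\ell\rho_\ell\bigl(g_\ell(\bm)-\bar g\bigr)^2\ge\frac1{4\b^2}\sum_\ell\frac{w_\ell^2}{\rho_\ell}+o\!\left(\frac{\e^2}{\th}\right),\qquad \bigl|\bar g-a(m(\bm))\bigr|\le\frac c\b\Bigl(\sum_\ell\frac{w_\ell^2}{\rho_\ell}\Bigr)^{1/2}+o\!\left(\frac\e{\sqrt\th}\right),
\]
the $o(\cdot)$'s absorbing the $O(1/N)$ and, in the second sub-case, the fact that freezing the coordinates outside $S_{\rm bulk}$ moves the constrained minimizer by only $O(\e^2/\th)$ in $\|\cdot\|_T$ (because $\sum_{\ell\notin S_{\rm bulk}}\rho_\ell\ll1$). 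Hence $\sum_\ell\rho_\ell g_\ell(\bm)^2\ge\tfrac1{4\b^2}\sum_\ell w_\ell^2/\rho_\ell$, and by comparing $\tfrac c\b(\sum_\ell w_\ell^2/\rho_\ell)^{1/2}$ with $\tfrac12|a(m(\bm))|$ also $\sum_\ell\rho_\ell g_\ell(\bm)^2\ge c''\,a(m(\bm))^2$ for some $c''>0$.

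To close, I would use two facts about $\hat\bx(\cdot)$: since $a(m^*)=0$ and, by the local-minimum condition \eqv(static.202), $\mathrm{d}a/\mathrm{d}m$ is bounded away from $0$ on $\cU_\d^-$ provided $\d$ is small enough, one has $|a(m(\bm))|\ge c_a|\delta m|$; and by \eqv(eq:mincurve), $|\hat\bx_\ell-\bm^*_\ell|\le c_2\rho_\ell|\delta m|$, so $\|\hat\bx-\bm^*\|_T\le c_2|\delta m|$. A triangle inequality in $\|\cdot\|_T$ then yields $\sum_\ell w_\ell^2/\rho_\ell+\delta m^2\gtrsim\|\bm-\bm^*\|_T^2=\sum_\ell(\bm_\ell-\bm^*_\ell)^2/\rho_\ell\gtrsim\e^2/\th$ by Step~2, whence
\[
\sum_{\ell\in S(\bm)}\rho_\ell g_\ell(\bm)^2\;\gtrsim\;\sum_\ell\frac{w_\ell^2}{\rho_\ell}+a(m(\bm))^2\;\gtrsim\;\sum_\ell\frac{w_\ell^2}{\rho_\ell}+\delta m^2\;\gtrsim\;\frac{\e^2}{\th},
\]
which is the claim. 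The step I expect to be the main obstacle is the bulk/boundary separation and its reuse inside the convexity estimate: $\partial_{\bx_\ell}F_{\b,N}$ blows up as $\bm_\ell/\rho_\ell\to\pm1$, so the slice-convexity argument cannot be applied verbatim on all of $S(\bm)$, and one must isolate the boundary and frozen coordinates, dispatch them via Lemma~\thv(boundary-g.1) and the mass hypothesis, and verify that freezing them perturbs the minimizer $\hat\bx$ only negligibly on the bulk. A secondary point is keeping every constant — $I''_{N,\ell}\ge1$, the bulk bound $I''_{N,\ell}\le c$, $c_a$, $c_2$ and $c'$ — independent of $n$, which they all are.
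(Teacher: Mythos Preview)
Your approach is correct in its main lines, but it takes a considerably more elaborate route than the paper's. The paper avoids the constrained minimizer $\hat\bx$, the strong-convexity argument on the slice, and the bulk/boundary separation entirely. Instead it uses only the direct representation $\bm_\ell=\frac{1}{N}\sum_{i\in\L_\ell}\tanh\bigl(\b(g_\ell(\bm)(1+o(1))+m+h_i)\bigr)$ for $\ell\in S(\bm)$, together with the global Lipschitz bound for $\tanh$, to obtain two complementary inequalities: summing over $\ell$ yields $\bigl(\sum_{\ell\in S}\rho_\ell g_\ell^2\bigr)^{1/2}\ge c\,|m-m^*|-2\sum_{\ell\notin S}\rho_\ell$ (using that $m^*$ is the unique zero of $m\mapsto m-\tfrac1N\sum_i\tanh(\b(m+h_i))$ in $\cU_\d^-$ with nonvanishing derivative), while the same representation coordinate-wise gives $|\bm_\ell-\bm^*_\ell|\le c\rho_\ell|m-m^*|+c'\rho_\ell|g_\ell(\bm)|$, hence $\bigl(\sum_{\ell\in S}\rho_\ell g_\ell^2\bigr)^{1/2}\ge c\,\e/\sqrt{2\th}-c'|m-m^*|$. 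These two bounds together cover both regimes of $|m-m^*|$ --- exactly the role played by your $a(m(\bm))^2$ versus $\sum_\ell w_\ell^2/\rho_\ell$ dichotomy --- but with no second-derivative input beyond the Lipschitz constant of $\tanh$. Your tangential-gradient bound $\sum_\ell\rho_\ell(g_\ell-\bar g)^2\ge\b^{-2}\sum_\ell w_\ell^2/\rho_\ell$ is a clean consequence of strong convexity on the slice (recognizing $\sum_\ell\rho_\ell(g_\ell-\bar g)^2$ as the squared $\|\cdot\|_T$-norm of the constrained gradient), and the observation $I''_{N,\ell}\ge1$ is correct and pleasant; but the bulk/boundary separation and especially the ``freezing'' perturbation of the constrained minimizer add technical overhead that the paper's first-order Lipschitz argument simply sidesteps.
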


\begin{proof}
From the relation
 $I'_{N,\ell}(x)= U'^{-1}_{N,\ell}(x)$, we get that, for all $\ell\in S(\bm)$,
\be\Eq(grad.4)
\bm_\ell
= \frac{1}{N}\sum_{i\in\L_\ell} \tanh\left(\b
\left(g_\ell(\bm)(1+\po(1))+m +
 h_i\right)\right).
\ee
Here $\po(1) $ tends to zero as $N\rightarrow\infty$.

We are concerned about small $g_\ell(\bm)$.
Subtracting $\frac{1}{N}\sum_{i\in\L_\ell} \tanh\left(\b
\left(m + h_i\right)\right)$ on both sides of \eqv(grad.4) and
expanding the right-hand side to first order in $g_\ell(\bm)$, and
then summing over $\ell\in S(\bm)$ , we obtain
\bea\Eq(grad.5)\nonumber
&&\left|m-
\frac{1}{N}\sum_{i=1}^N \tanh\left(\b
\left(m + h_i\right)\right)-\sum_{\ell\not\in
  S(\bm)}\left(\bm_\ell-\frac 1N\sum_{i\in
  \L_\ell}\tanh\left(\b(m+h_i)\right)\right)\right|
 \\&&\quad\leq
c\sum_{\ell\in S(\bm)} \rho_\ell \left|g_\ell(\bm)\right|
\leq
c\left(\sum_{\ell\in S(\bm)} \rho_\ell g_\ell^2(\bm)\right)^{1/2}.
\eea
Notice that the function $m\mapsto m-\frac{1}{N}\sum_{i=1}^N \tanh\left(\b
\left(m + h_i\right)\right)$ has, by
\eqv(static.202), non-zero derivative at $m^*$.
Moreover, by construction, $m^*$ is the only zero
of this function in $\cU_\d^-(m^*)$.
From this observations, together with \eqv(grad.5),
we conclude that
\be\Eq(grad.6)
\left(\sum_{\ell=1}^n \rho_\ell g_\ell^2(\bm)\right)^{1/2}
\geq c |m-m^*|-2\sum_{\ell\not\in S(\bm)}\rho_\ell,
\ee
for some constant $c<\infty$. Here we used the triangle inequality and
the fact that
$\left|\bm_\ell-\frac 1N\sum_{i\in
  \L_\ell}\tanh\left(\b(m+h_i)\right)\right|\leq 2\rho_\ell$. Under
the hypothesis of the lemma, this gives the desired bound if
$|m-m^*|\geq c''\e/\sqrt \th$ for some constant $c''<\infty$.
On the other hand, we can write, for $\ell\in S(\bm)$,
\bea\Eq(grad.8)
\left|\bm_\ell - \bm_\ell^*\right|&\leq&
\frac 1N \sum_{i\in\L_\ell}
\left|\tanh\left(\b\left(g_\ell(\bm)(1+\po(1))+m + h_i\right)\right)
- \tanh\left(\b\left(m + h_i\right)\right)
\right|
\nonumber\\
 &+& \frac 1N \sum_{i\in\L_\ell}
\left|\tanh\left(\b\left(m + h_i\right)\right)
- \tanh\left(\b\left(m^* + h_i\right)\right)
\right|
\nonumber\\
&\leq& c \rho_\ell |m-m^*| +c' \rho_\ell|g_\ell(\bm)|.
\eea
Hence we get the bound
\bea\Eq(grad.9)
\left(\sum_{\ell\in S(\bm)} \rho_\ell g_\ell^2(\bm)\right)^{1/2}
&\geq &
c\left(\sum_{\ell\in S(\bm)}\sfrac{(\bm_\ell-\bm_{\ell}^*)^2}{\rho_\ell}
\right)^{1/2}- c' |m-m^*|
\nonumber\\
&=&
c\left(\sum_{\ell=1}^n\sfrac{(\bm_\ell-\bm_{\ell}^*)^2}{\rho_\ell}-
\sum_{\ell\not\in S(\bm)}\sfrac{(\bm_\ell-\bm_{\ell}^*)^2}{\rho_\ell}
\right)^{1/2}- c' |m-m^*|\nonumber\\
&\geq&
c\left( \e^2/\th-
4\sum_{\ell\not\in S(\bm)}\rho_\ell
\right)^{1/2}- c' |m-m^*|\nonumber\\
&\geq& c\e/\sqrt{2\th} - c' |m-m^*|
\eea
where in the last line we just used that
$\bm\not\in \bG_\th$.
The inequalities \eqv(grad.6) and \eqv(grad.9)
now yield \eqv(grad.1),  concluding
the proof of the lemma.
\end{proof}

\begin{proof}[Proof of Proposition \thv(time.20)]
Let  $\s\in \cS[\bU_\d\setminus\bG_\th]$ and
 set $\bx\equiv \bm(\s)$, so that, for $\psi$ as in
 Proposition \thv(time.20),  $L \psi(\s)= L\phi(\bx)$.
Let $\s^i$ be  the configuration obtained from $\s$
 after a spin-flip at $i$, and introduce the notation
\be\Eq(claim.0)
 L\phi(\bx)= \sum_{\ell=1}^n L_\ell\phi(\bx),
\ee
where
\be\Eq(claim.1)
L_\ell \phi(\bx)
=\sum_{i\in\Lambda_\ell^{-}(\bx)}p_N(\s,\s^i)
 [\phi(\bx+\bfe_\ell)-\phi(\bx)]+\sum_{i\in\Lambda_\ell^{+}(\bx)}
p_N(\s,\s^i) [\phi(\bx-\bfe_\ell)-\phi(\bx)].
\ee
Notice that when $\bx_\ell/\rho_\ell=\pm1$,
then $\L^\pm_\ell(\bx)=\emptyset$
and the summation over $\L^\pm_\ell(\bx)$ in \eqv(claim.1) disappears.

We define the probabilities
 \be\Eq(time.50)
 \P_{\pm,\ell}^\s\equiv \sum_{i\in\Lambda_\ell^{\mp}(\bx)}p_N(\s,\s^i),
\ee
and observe that they are uniformly close to the
mesoscopic rates defined in \eqv(rfcw.11), namely
\be\Eq(claim.2)
e^{-c\e}\leq \frac{\P_{\pm, \ell}^\s}
{r_N(\bx,\bx\pm \bfe_\ell)}\leq e^{c\e},
\ee
for some $c>0$ and $\e=1/n$. Notice also that
\be\Eq(claim.3)
c\rho_\ell \leq \P_{+, \ell}^\s +\P_{-, \ell}^\s\leq c'\rho_\ell.
\ee
With the above notation and using the convention $0/0=0$,  we get
\bea\Eq(claim.4)
L_\ell\phi(\bx)&=&  \phi(\bx)
\P_{+,\ell}^\s\left[\exp{\left(2\b(1-\a)g_\ell(\bx)\right)}-1\right]
 \nonumber\\
 && + \phi(\bx)\P_{-,\ell}^\s\left[\exp{\left(-2\b(1-\a)
 g_\ell(\bx-\bfe_\ell)\right)}-1\right]
 \nonumber\\
&=& \phi(\bx)\left(
\1_{\{\P_{+,\ell}^\s\geq\P_{-,\ell}^\s\}}
\P_{+,\ell}^\s G_\ell^+(\bx)+\1_{\{\P_{-,\ell}^\s>\P_{+,\ell}^\s\}}
\P_{-,\ell}^\s G_\ell^-(\bx)\right)
 \eea
where we introduced the functions
\be\Eq(G+)
G_\ell^+(\bx)=
\exp{\left(2\b(1-\a) g_\ell(\bx)\right)}
-1+ \sfrac{\P_{-,\ell}^\s}{\P_{+,\ell}^\s}
\left(\exp{\left(-2\b(1-\a) g_\ell(\bx-\bfe_\ell)\right)}-1\right)
  \ee
\be\Eq(G-)
G_\ell^-(\bx)=  \exp{\left(-2\b(1-\a) g_\ell(\bx-\bfe_\ell)\right)}-1
+ \sfrac{\P_{+,\ell}^\s}{\P_{-,\ell}^\s}
\left(\exp{\left(2\b(1-\a) g_\ell(\bx)
\right)}-1\right)
  \ee
If $\bx_\ell/\rho_\ell=\pm1$,
the local generator takes the simpler form
\be\Eq(gb)
L_\ell\phi(\bx)
=\left\{
\ba{ll}
 \phi(\bx)\P_{-,\ell}^\s\left[\exp{\left(-2\b(1-\a)
 g_\ell(\bx-\bfe_\ell)\right)}-1\right]& \mbox{if } \bx_\ell/\rho_\ell=1
\\
 \phi(\bx)\P_{+,\ell}^\s\left[\exp{\left(2\b(1-\a)
 g_\ell(\bx)\right)}-1\right]& \mbox{if } \bx_\ell/\rho_\ell=-1
\ea
\right.
\ee
From Lemma \thv(boundary-g.1) and inequalities \eqv(claim.3), 
it follows that, for all $\ell$ such that $\bx_\ell/\rho_\ell=\pm1$,
\be\Eq(gb.1)
L_\ell\phi(\bx)\leq -(1+\po(1))\rho_\ell \phi(\bx).
\ee

Let us now return to the case when $\bx$ is not a boundary point.
By the detailed balance conditions, it holds that
\be\Eq(time.41)
\ba{l}
r_N(\bx,\bx+\bfe_\ell)=
\exp{\left(-2\b g_\ell(\bx)\right)}
r_N(\bx+\bfe_\ell,\bx)\\
r_N(\bx,\bx-\bfe_\ell)=
\exp{\left(2\b g_\ell(\bx-\bfe_\ell)\right)}
r_N(\bx-\bfe_\ell,\bx),
\ea
\ee
which implies, together with  \eqv(claim.2),
\be\Eq(time.41.1)
\ba{l}
\exp{\left(-2\b g_\ell(\bx) -c\e \right)}\leq
\frac{\P_{+,\ell}^\s}{\P_{-,\ell}^\s} \leq
\exp{\left(-2\b g_\ell(\bx) +c\e \right)}\\
\exp{\left(2\b g_\ell(\bx- \bfe_\ell) -c\e \right)}\leq
\frac{\P_{-,\ell}^\s}{\P_{+,\ell}^\s} \leq
\exp{\left(2\b g_\ell(\bx-\bfe_\ell) +c\e \right)}
\ea
\ee
Inserting the last bounds in \eqv(G+) and \eqv(G-),  and with some computations,
we  obtain
\bea\Eq(G+.1)
G_\ell^+(\bx)&\leq& \left(\exp{\left(2\b(1-\a)
g_\ell (\bx)\right)}-1\right)\left(1- \exp{\left( 2\b\a
g_\ell(\bx-\bfe_\ell)\mp c\e\right)}\right)
\\
&& + \exp{\left( 2\b
g_\ell(\bx-\bfe_\ell)\mp c\e\right)}
\left( \exp{2\b(1-\a)\left(g_\ell (\bx)- g_\ell(\bx-\bfe_\ell)
 \right)} -1\right)\nonumber
\eea
\bea\Eq(G-.1)\
G_\ell^-(\bx)&\leq&
\left(\exp{\left(-2\b(1-\a)
g_\ell (\bx-\bfe_\ell)\right)}-1\right)\left(1- \exp{\left(- 2\b\a
g_\ell(\bx)\mp c\e\right)}\right)\quad\quad
\\
&& + \exp{\left(- 2\b
g_\ell(\bx)\mp c\e\right)}
\left(\exp{2\b(1-\a)\left(g_\ell (\bx)- g_\ell(\bx-\bfe_\ell)
 \right)} -1\right)
\nonumber
\eea
where $\mp\equiv -\sign \left(g_\ell(\bx)\right)=
-\sign \left(g_\ell(\bx-\bfe_\ell)\right)$.

For all $\ell$ such that $\bx_\ell/\rho_\ell\in[-1+\nu,1-\nu]$,
we can use \eqv(second.16) to get
\be\Eq(G+.2)
G_\ell^+(\bx)\leq \left(\exp{\left(2\b(1-\a)
g_\ell (\bx)\right)}-1\right)\left(1- \exp{\left( 2\a\b
g_\ell(\bx)\mp c\e\right)}\right) + c/N
\ee
\be\Eq(G-.2)\
G_\ell^-(\bx)\leq
\left(\exp{\left(-2\b(1-\a)
g_\ell (\bx)\right)}-1\right)\left(1- \exp{\left(- 2\a\b
g_\ell(\bx)\mp c\e\right)}\right)+c/N.
\ee
The  right hand sides of both
 \eqv(G+.2) and \eqv(G-.2)
are  negative if and only if $\left|g_\ell\right|>\frac{c\e}{2\a\b}$.
Let us define the index sets
\bea\Eq(indices.1)
S^<&\equiv&\{\ell: \bx_\ell/\rho_\ell\in[-1+\nu,1-\nu],
\left|g_\ell(\bx)\right|\leq\sfrac{c\e}{\a\b}\}\\
S^>&\equiv&\{\ell: \bx_\ell/\rho_\ell\in[-1+\nu,1-\nu],
\left|g_\ell(\bx)\right|>\sfrac{c\e}{\a\b}\}.
\eea
If $\ell\in S^<$, we get immediately that
\be\Eq(+contr.1)
\max\{G_\ell^+(\bx), G_\ell^-(\bx)\}\leq \sfrac{c}{\a}\e^2 ,
\ee
and thus, from \eqv(claim.4) and \eqv(claim.3),
\be\Eq(+contr.2)
L_\ell\phi(\bx)\leq \sfrac{c'}{\a}\e^2 \rho_\ell\phi(\bx) .
\ee
To control the r.h.s. of \eqv(G+.2) and \eqv(G-.2) when
$\ell\in S^>$,  set
\be\Eq(smarty)
y_\ell\equiv\min\left\{\b\left|g_\ell(\bx)\right|,\sfrac{1}{2}\right\}\leq \b
\left|g_\ell(\bx)\right|.
\ee
If  $g_\ell(\bx)> \sfrac{c\e}{\a\b}$,
then
\bea\Eq(smarty.1)
\exp{\left(2\b(1-\a)g_\ell(\bx)\right)}-1
&\geq&\exp{\left(2(1-\a)y_\ell\right)}-1
\geq 2(1-\a)y_\ell
\eea
and
\bea\Eq(smarty.2)
1- \exp{\left(2\b\a g_\ell(\bx)- c\e\right)}
&\leq& 1-\exp{(\a y_\ell)}
\leq -\a y_\ell,
\eea
 so that the product in the r.h.s. of \eqv(G+.2) is bounded from above by
$ -2(1-\a)\a y_\ell^2$.
On the other hand, if
 $g_\ell(\bx) < -\sfrac{c\e}{\a\b}$,
\bea\Eq(smarty.3)
\exp{\left(2\b(1-\a)g_\ell(\bx)\right)}-1
&\leq& \exp{\left(-2(1-\a)y_\ell\right)}-1
\leq - (1-\a)y_\ell
\eea
and
\bea\Eq(smarty.4)
1- \exp{\left(2\b\a g_\ell(\bx)+
c\e\right)}
&\geq& 1-\exp{(-\a y_\ell)}
\geq \sfrac{3}{4}\a y_\ell,
\eea
 and the product in the r.h.s. of \eqv(G+.2) is bounded from above by
$ -\sfrac{3}{4}(1-\a)\a y_\ell^2$.
Altogether, this proves that, for all  $\ell\in S^>$,
\be\Eq(Gbounds+)
G_\ell^+(\bx)\leq -\sfrac{3}{4}(1-\a)\a y_\ell^2,
\ee
and with a  similar computation, that
\be\Eq(Gbounds-)
G_\ell^-(\bx)\leq -\sfrac{3}{4}(1-\a)\a y_\ell^2.
\ee
If $\ell\in S^>$, then we have
\be\Eq(contr.3)
L_\ell\phi(\bx)\leq - c\a\rho_\ell y_\ell^2\phi(\bx).
\ee

It remains to control the case when
$\bx_\ell/\rho_\ell\in(-1,-1+\nu]\cup[1-\nu,1) $.
From  Lemma \thv(boundary-g.1) it follows that,
while  the positive contribution to $G_\ell^+(\bx)$ and $G_\ell^-(\bx)$
remains bounded by a constant, the negative contribution
becomes very large as soon as $\nu$ is small enough.
More explicitly, for all $\nu$ small enough, we have
\be\Eq(G^+.3)
\ba{l}
G_\ell^+(\bx)\leq - (\exp(\pm C')-1)^2 + \exp(\pm C')(\exp(2\b(1-\a)c)-1)\leq -(1+\po(1))\\
G_\ell^-(\bx)\leq - (1-\exp(\mp C'))^2 + \exp(\mp
C'')(\exp(2\b(1-\a)c)-1)\leq -(1+\po(1))
\ea
\ee
where $C'$ and $C''$  are positive constants tending to $+\infty $
as $\nu\downarrow 0$, and the sign $\pm$ is equal to the sign of
$\bx_\ell$.
Together with \eqv(claim.3) and \eqv(claim.4), we finally get
\be\Eq(boundarycase.5)
L_\ell\phi(\bx)\leq
- (1+\po(1))\rho_\ell \phi(\bx).
\ee

From \eqv(gb.1), \eqv(+contr.2), \eqv(contr.3) and \eqv(boundarycase.5),
it turns out that  the positive contribution
to the generator $L\phi(\bx)=\sum_{\ell=1}^n L_\ell\phi(\bx)$,
comes at most from the indexes $\ell\in S^<$, and can be estimated
by
\be\Eq(time.51)
\sfrac{c'}{\a}\e^2 \sum_{\ell\in S^<}\rho_\ell
\leq \sfrac{c'}{\a}\e^2.
\ee

Now we distinguish two cases according to whether the hypothesis of
Lemma \thv(grad) are satisfied or not.

\noindent\underline {\emph{Case 1:}}  $\sum_{\ell\not\in S(\bx)}
\rho_\ell>\frac {\e^2}{8\th}$. By \eqv(gb.1), we get 
\bea\Eq(not.1)
\sum_{\ell=1}^nL_\ell\phi(\bx)& \leq& \sum_{\ell\not\in  S(\bx)}
L_\ell\phi(\bx)
+\sum_{\ell \in S^<}
L_\ell\phi(\bx)\\\nonumber
&\leq& -\frac {\e^2}{8\th} (1+\po(1)) \phi(\bx)+
\sfrac{c'}{\a}\e^2,
\eea
which is negative as desired if $\th$ is small enough, that is, with
our choice, if $\e$ is small enough.

\noindent\underline {\emph{Case 2:}}  $\sum_{\ell\not\in S(\bx)}
\rho_\ell\leq  \frac {\e^2}{8\th}$. In this case, the assertion of Lemma
\thv(grad) holds.

By \eqv(gb.1), \eqv(contr.3), and \eqv(boundarycase.5), we have that,
for all $\ell\in S(\bx)\setminus L^{<} $,
\be\Eq(-contr.1)
L_\ell\phi(\bx)\leq - \rho_\ell\phi(\bx)
\min\{ c\a y_\ell^2,1\}\leq - c\a\rho_\ell y_\ell^2\phi(\bx),
\ee
where the last inequality holds for $\a<4/c$.
Let us write the generator as
\be\Eq(not.2)
L\phi(\bx)\leq  \sum_{\ell\in  S(\bx)\setminus S^<}
L_\ell\phi(\bx)
+\sum_{\ell \in S^<}
L_\ell\phi(\bx).
\ee

The first sum in \eqv(not.2)  is bounded from above by
\bea\Eq(time.51.1)
-c\a\phi(\bx)\sum_{\ell\in S(\bx)\setminus S^<}\rho_\ell y_\ell^2
&\leq& - c\a\phi(\bx)\sum_{\ell\in s(\bx)\setminus S^<}\rho_\ell
\min\left\{\b^2 g_\ell^2(\bx);\sfrac{1}{4} \right\}
\nonumber\\
&\leq& - c\a\phi(\bx)\min\left\{
\b^2\sum_{\ell\in S(\bx)\setminus S^<}
\rho_\ell g_\ell^2(\bx) ;\sfrac{1}{4} \right\}.\eea
But from Lemma \thv(grad), we know
that, for all $\bx\in\bU_\d\setminus\bG_\th$,
\be
\sum_{\ell\in S(\bx)\setminus S^<}\rho_\ell g_\ell^2 (\bx)\geq  c\frac{\e^2}{\th} -
\frac{c'}{\a^2}\e^2\geq
c''\frac{\e^2}{\th},
\ee
where $c''$ is a positive constant provided that
$\a\geq c \th$.
Taking $n$ large enough, it holds that
\be
\min\left\{
\b^2\sum_{\ell\in s(\bx)\setminus S^<}
\rho_\ell g_\ell^2(\bx) ;\sfrac{1}{4} \right\}\geq
\min\left\{c''\frac{\e^2}{\th}; \sfrac{1}{4}\right\}
=c''\frac{\e^2}{\th},
\ee
and then, from \eqv(time.51) and \eqv(time.51.1), we get
\be\Eq(time.52)
L \psi(\s)\leq- \e^2(1-\a)\phi(\bx)
(c''\a\th^{-1} -c'\a^{-1}).
\ee
By our choice of $\th$ and taking $n$ large enough,
 the condition $c''\a\th^{-1} -c'\a^{-1}>0\Leftrightarrow
\a>c\th$  is satisfied   for any $\a\in(0,1)$.
Hence,  for such $n$'s and for $N$ large enough, we get that
$L\psi(\s) =L \phi(\bx)\leq0$
concluding the proof of Proposition \thv(time.20).
\end{proof}

Substituting the expression of the super-harmonic
function \eqv(superm.5) in \eqv(superm.3), and together
with \eqv(time.11.1), we obtain that, for all
$\s\in \cS[\bU_\d\setminus\bG_\th]$,
\bea\Eq(last.1)
 \P_\s[\t_A<\t_{\cS[\partial_A \bG_\th]\cup B}]
&\leq&
\max_{\s'\in\cS[\partial_A\bU_\d]}e^{-(1-\a)\b N
\left[F_{\b,N}(\bm(\s'))- F_{\b,N}(\bm(\s))\right]}
\nonumber\\
&\leq&
e^{-(1-\a)\b N
\left[F_{\b,N}(m_0^*)+\d- F_{\b,N}(\bm(\s))\right]},
\eea
where the last inequality follows from the definition of $\bU_\d$
together with the bounds in \eqv(fine.507).
This concludes the proof of Proposition \thv(claim.01).

\paragraph{Renewal estimates on escape probabilities.}
Let us now come back to the proof of
Lemma \thv(time.1.2).
An easy consequence of Eq. \eqv(claim.02)
is that, for all $\s\in\cS[\partial_A\bG_\th]$,
\be\Eq(escape.2)
\P_\s(\t_A<\t_{\cS[\partial_A\bG_\th]\cup B})
\leq
e^{-(1-\a)\b N \left(F_{\b,N}(m_0^*)
+\d\right)} \max_{\bm\in\partial_A\bG_\th}e^{(1-\a)
\b NF_{\b,N}(\bm)} ,
\ee
while obviously $\P_\s(\t_A<\t_{\cS[\partial_A\bG_\th]\cup B})
\equiv 0$ for all $\s\in\cS[\bG_\th\setminus \partial_A\bG_\th]$.
 To control the r.h.s. of \eqv(escape.2), we need the following
 lemma:
 \begin{lemma}\label{lemma_remark}
There exists a constant $c<\infty$, independent of $n$, such that,
for all $\bm\in\bG_\th$,
\be\Eq(remark)
F_{\b,N}(\bm)\leq F_{\b,N}(\bm^*)+ c \e.
\ee
\end{lemma}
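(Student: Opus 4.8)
The plan is to compare $F_{\b,N}(\bm)$ with $F_{\b,N}(\bm^*)$ by a second order Taylor expansion at the minimum $\bm^*$. Recall from the topology discussion of Section~3 that $\bm^*$ is a local minimum of $F_{\b,N}$ on $\R^n$, so $\nabla F_{\b,N}(\bm^*)=0$; coordinatewise this is the critical point relation $I'_{N,\ell}(\bm^*_\ell/\rho_\ell)=\b(m^*+\bar h_\ell)$ for every $\ell$ (the analogue of \eqv(fine.7) at $\bm^*$). Put $\bv\equiv\bm-\bm^*$ and $m\equiv m(\bm)=\sum_\ell\bm_\ell$; since $\bm\in\G_N^n$ we have $\bm_\ell/\rho_\ell\in[-1,1]$, while $\bm^*_\ell/\rho_\ell\in(-1,1)$. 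Using the explicit form \eqv(fine.16) of $F_{\b,N}$ and expanding the coupling term $-\tfrac12 m^2$, a short computation using the critical point relation (which kills the linear-in-$\bv$ contributions) gives the exact identity
\[
F_{\b,N}(\bm)-F_{\b,N}(\bm^*)\ =\ -\tfrac12\,m(\bv)^2\ +\ \sum_{\ell=1}^n D_\ell ,
\]
where $D_\ell\equiv\tfrac{\rho_\ell}{\b}\bigl[I_{N,\ell}(\bm_\ell/\rho_\ell)-I_{N,\ell}(\bm^*_\ell/\rho_\ell)-I'_{N,\ell}(\bm^*_\ell/\rho_\ell)\,\bv_\ell/\rho_\ell\bigr]$ is, up to the factor $\rho_\ell/\b$, the Bregman divergence of the convex function $I_{N,\ell}$; in particular $D_\ell\ge 0$, and $D_\ell=\tfrac1{2\b}I''_{N,\ell}(\eta_\ell)\,\bv_\ell^2/\rho_\ell$ for some $\eta_\ell$ between $\bm^*_\ell/\rho_\ell$ and $\bm_\ell/\rho_\ell$. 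Since $m(\bv)^2\ge 0$, it remains to bound $\sum_\ell D_\ell$ using the defining constraint $\sum_\ell\bv_\ell^2/\rho_\ell\le\e^2/\th$ of $\bG_\th$.

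The main point is a dichotomy on the slots. Fix $\nu_1>0$ with $|\bm^*_\ell/\rho_\ell|\le 1-\nu_1$ for all $\ell$ and $N$ — legitimate since $\bm^*_\ell/\rho_\ell=|\L_\ell|^{-1}\sum_{i\in\L_\ell}\tanh(\b(m^*+h_i))$ and $h$ has bounded support. Call $\ell$ \emph{good} if $|\bv_\ell/\rho_\ell|<\nu_1/2$ and \emph{bad} otherwise. For a good slot $\eta_\ell\in[-1+\nu_1/2,1-\nu_1/2]$, so $I''_{N,\ell}(\eta_\ell)\le c(\nu_1)$ by Lemma~\thv(second.1) (inequality \eqv(second.5)), whence $\sum_{\ell\ \mathrm{good}}D_\ell\le\tfrac{c(\nu_1)}{2\b}\,\e^2/\th$. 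For a bad slot I would not use the expansion of $D_\ell$ at all, but estimate it directly: $D_\ell\le\tfrac{2C_I}{\b}\rho_\ell+(1+\sup_\ell|\bar h_\ell|)\,|\bv_\ell|$, where $C_I\equiv\sup_{\ell,N}\sup_{[-1,1]}|I_{N,\ell}|<\infty$ — the rate functions $I_{N,\ell}$ are uniformly bounded on $[-1,1]$ by a one-line estimate from $\ln\cosh(x)\ge|x|-\ln2$ and $|\tilde h_i|\le\e$ (using \eqv(fine.4)). A bad slot has $|\bv_\ell|\ge\tfrac{\nu_1}2\rho_\ell$, hence $\rho_\ell\le\tfrac{4}{\nu_1^2}\,\bv_\ell^2/\rho_\ell$, so that $\sum_{\ell\ \mathrm{bad}}\rho_\ell\le\tfrac4{\nu_1^2}\,\e^2/\th$ and, by Cauchy--Schwarz, $\sum_{\ell\ \mathrm{bad}}|\bv_\ell|\le\bigl(\sum_{\ell\ \mathrm{bad}}\rho_\ell\bigr)^{1/2}(\e^2/\th)^{1/2}\le\tfrac{2}{\nu_1}\,\e^2/\th$. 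Therefore $\sum_{\ell\ \mathrm{bad}}D_\ell=O(\e^2/\th)$ as well.

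Collecting the two estimates gives $F_{\b,N}(\bm)-F_{\b,N}(\bm^*)\le C\,\e^2/\th$ with $C$ depending only on $\b$ and the law of $h$ (through $\nu_1$, $C_I$ and $\sup_\ell|\bar h_\ell|$). Finally, since $\th\gg\max_\ell\rho_\ell\ge n^{-1}$ while $\e$ is a fixed constant multiple of $n^{-1}$ (Section~3.1), one has $\th\ge\e$ for all $n$ large, so $\e^2/\th\le\e$ and the claimed bound $F_{\b,N}(\bm)\le F_{\b,N}(\bm^*)+c\e$ holds with $c$ independent of $n$. The step I expect to be the real obstacle is exactly the treatment of the bad slots: near the faces $\bm_\ell/\rho_\ell=\pm1$ of $\G_N^n$ the Hessian entry $I''_{N,\ell}$ is unbounded, so the naive second order expansion is useless there, and one must instead exploit that such slots carry only an $O(\e^2/\th)$ fraction of the total field mass $\sum_\ell\rho_\ell=1$ while $I_{N,\ell}$ itself stays bounded all the way to the boundary.
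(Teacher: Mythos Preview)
Your proof is correct. The exact Bregman identity
\[
F_{\b,N}(\bm)-F_{\b,N}(\bm^*)=-\tfrac12\,m(\bv)^2+\sum_\ell D_\ell
\]
is verified by direct substitution of \eqv(fine.16) and the critical point relation \eqv(fine.7), and your good/bad slot dichotomy together with the boundedness of $I_{N,\ell}$ on $[-1,1]$ cleanly yields $\sum_\ell D_\ell=O(\e^2/\th)\le O(\e)$.

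The paper's proof is different: it writes a third-order Taylor expansion $F_{\b,N}(\bm)=F_{\b,N}(\bm^*)+\tfrac12(\bv,\A(\bm^*)\bv)+\tfrac16 D^3F_{\b,N}(\bx)\bv^3$, then bounds the quadratic term via $\|\A(\bm^*)\|\le c\e^{-1}$ and $\|\bv\|_2^2\le \max_\ell\rho_\ell\cdot\e^2/\th\le\e^2$, and the cubic term via the explicit formula for $I_{N,\ell}'''$ together with $|\bv_\ell/\rho_\ell|\le 2$. Both routes land at $O(\e)$, but yours is more self-contained near the faces of $\G_N^n$: the paper's estimate $|D^3F_{\b,N}(\bx)\bv^3|\le c\sum_\ell\bv_\ell^3/\rho_\ell^2$ tacitly needs the intermediate point $\bx_\ell/\rho_\ell$ to stay away from $\pm1$ (otherwise $I_{N,\ell}'''$ is unbounded), whereas your bad-slot bound sidesteps this by using only that $I_{N,\ell}$ itself is uniformly bounded on $[-1,1]$. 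In the ``typical'' regime where each $\rho_\ell\asymp\e$ one checks $|\bv_\ell/\rho_\ell|\le c(\e/\th)^{1/2}\ll1$ and there are no bad slots at all, so the two arguments coincide; your dichotomy buys robustness against very light slots $\rho_\ell\ll\e$.
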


\begin{proof}
Fix $\bm\in\bG_\th$ and set $\bm-\bm^*\equiv \bv$.
Notice that, from the definition of $\bG_\th$,
\be\Eq(vnorm)
\|\bv\|_2^2\leq \max_{\ell} \rho_\ell \sum_{\ell=1}^n
\frac{(\bm_\ell-\bm_\ell^*)^2}{\rho_\ell} \leq \e^2.
\ee
 Using  Taylor's formula, we have
\be\Eq(remark.1)
F_{\b,N}(\bm)=F_{\b,N}(\bm^*)+\frac{1}{2}
\left(\bv, \A (\bm^*)\bv\right) + \frac{1}{6}D^3 F_{\b,N}(\bx) \bv^3 ,
\ee
where $\A (\bm^*)$ is the positive-definite matrix described in
Sect. \ref{sect:nearcritical} (see Eq. \eqv(fine.6)) and $\bx$
is a suitable element of the ball around $\bm^*$.
From the explicit representation of the eigenvalues of $\A(\bm^*)$,
we see that $\|\A(\bm^*)\|\leq c \e^{-1}$, and hence
\be\Eq(remark.1.1)
\left(\bv, \A (\bm^*)\bv\right)
\leq c \e^{-1}\|\bv\|_2^2\leq c \e.
\ee
The remainder is given in explicit form as
\bea\Eq(remark.1.2)
D^3 F_{\b,N}(\bx) \bv^3 &=&
\sum_{\ell=1}^n \frac{\del^3 F_{\b,N}}{\del \bx_\ell^3}(\bx)\bv_\ell^3
= \frac{1}{\b}\sum_{\ell=1}^n \frac{1}{\rho_\ell^2}
I_{N,\ell}'''(\bx_{\ell}/\rho_\ell)\bv_\ell^3
\\
&=& - \frac{1}{\b}\sum_{\ell=1}^n \frac{1}{\rho_\ell^2}
\frac{U_{N,\ell}'''(t_\ell)}
{\left(U_{N,\ell}''(t_\ell)\right)^3}\bv_\ell^3
\nonumber\\
&=& - \frac{1}{\b}\sum_{\ell=1}^n \frac{1}{\rho_\ell^2}
\frac{|\L_\ell|^{-1}\sum_{i\in\L_\ell}\tanh(t_\ell +\b \tilde h_i)
(1-\tanh^2(t_\ell +\b \tilde h_i))}
{\left(|\L_\ell|^{-1}\sum_{i\in\L_\ell}(1-\tanh^2
(t_\ell +\b \tilde h_i))\right)^3}\bv_\ell^3,
\nonumber
\eea
where $t_\ell=I_{N,\ell}'(\bx_{\ell}/\rho_\ell)$. Thus
\be\Eq(remark.1.3)
\left|D^3 F_{\b,N}(\bx) \bv^3\right| \leq c \sum_{\ell=1}^n
\frac{1}{\rho_\ell^2}\bv_\ell^3\leq c' \e^{-1} \|\bv\|_2^2\leq c'\e,
\ee
where we used that $|\bv_\ell/\rho_\ell |\leq 1$.
Hence, for some $c<\infty$, independent of $n$,
\bea\Eq(remark.3)
F_{\b,N}(\bm)&\leq& F_{\b,N}(\bm^*)+ c\e
\eea
which proves the lemma. \end{proof}

Inserting the result of Lemma \ref{lemma_remark} into \eqv(escape.2),
and recalling that $F_{\b,N}(\bm^*)=F_{\b,N}(m^*)$,
we get that for all $\s\in\cS[\partial_A \bG_\th]$
\be\Eq(escape.3)
\P_\s(\t_A<\t_{\cS[\partial_A \bG_\th]\cup B})
\leq e^{-(1-\a)\b N \left(F_{\b,N}(m_0^*)
+\d -F_{\b,N}(m^*)-c\e\right)}  .
\ee

 The last needed ingredient in order to get a suitable estimate
 on $\P_\s(\t_A<\t_B)$, is stated in the following lemma.

 \begin{lemma}\TH(lemma.claim.1)
For any $\d_2>0$, there exists $n_0\in\N$, such that,
for all $n\geq n_0$,
for all $\s\in \cS[\partial_A \bG_\th]$, and for all $N$ large enough,
\be\Eq(claim.04)
\P_\s(\t_B< \t_{\cS[\partial_A \bG_\th]})\geq e^{- N\b\d_2 }.
\ee
\end{lemma}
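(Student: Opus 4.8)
The plan is to prove the bound by a direct construction of short microscopic trajectories joining $\s$ to $B$. Fix $\d_2>0$. For $\s\in\cS[\partial_A\bG_\th]$ I will exhibit trajectories from $\s$ to $\cS[\bB]\subseteq B$ that, after their first step, stay inside $\cS[\bG_\th]$ --- hence never return to $\cS[\partial_A\bG_\th]$, since $\partial_A\bG_\th$ is the portion of the exterior boundary of $\bG_\th$ lying outside $\bB$ --- that have length $L\le\tfrac12 N\e/\sqrt\th+O(1)$, and that carry a conditional probability at least $c(n)>0$ (independent of $N$) at each step. By the Markov property this gives $\P_\s(\t_B<\t_{\cS[\partial_A\bG_\th]})\ge c(n)^L$, and a short computation shows the right-hand side is $\ge e^{-N\b\d_2}$ once $n$ is large.

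\emph{The trajectory.} Write $d(\bx)\equiv\sum_{\ell=1}^n(\bx_\ell-\bm^*_\ell)^2/\rho_\ell$, so that $\bG_\th=\{\bx\in\bU_\d:d(\bx)\le\e^2/\th\}$ by \eqv(time.10.1). Since $\bm(\s)$ is adjacent to $\bG_\th$, the first step flips a spin so as to move the block magnetization into $\bG_\th$. Afterwards the trajectory follows a nearest-neighbour $\G_N^n$-approximation of the straight segment $[\,\bm(\s_1),\bm^*\,]$: at each step it flips one spin in a slot $\L_\ell$ so that the $\ell$-th block magnetization moves one grid unit towards $\bm^*_\ell$, and it is stopped the moment the total magnetization reaches $m^*$ (i.e.\ the configuration enters $\cS[\bB]$). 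Moving a coordinate towards $\bm^*_\ell$ can only decrease the corresponding term of $d$, so $d$ is non-increasing along the trajectory; hence it stays in $\{d\le\e^2/\th\}$, and, since on that set $F_{\b,N}\le F_{\b,N}(\bm^*)+c\e<F_{\b,N}(m_0^*)+\d$ (the estimate behind Lemma~\ref{lemma_remark}), it stays in $\bU_\d$, thus in $\bG_\th$. As long as $m(\bx)<m^*$ there is a coordinate with $\bx_\ell<\bm^*_\ell$, so the construction never gets stuck; and by Cauchy--Schwarz $\sum_\ell|\bx_\ell-\bm^*_\ell|\le\sqrt{d(\bx)}\le\e/\sqrt\th$ on $\bG_\th$, whence $L\le\tfrac12 N\e/\sqrt\th+O(1)$.

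\emph{Per-step estimate and bookkeeping.} At each step the chain need only flip \emph{one} of the spins in $\L_\ell$ moving the $\ell$-th block magnetization in the required direction, and by \eqv(metro.1) each such flip has probability at least $\tfrac1N e^{-2\b(1+\|h\|_\infty)}$, where $\|h\|_\infty$ bounds the support of $h$. Because $\bm^*_\ell/\rho_\ell=|\L_\ell|^{-1}\sum_{i\in\L_\ell}\tanh(\b(m^*+h_i))$ is bounded away from $\pm1$, uniformly in $\ell$ and $n$ (as $|\tanh(\b(m^*+h_i))|\le\tanh(\b(1+\|h\|_\infty))<1$), and $\bm(\s)$ lies close to $\bm^*$ throughout $\bG_\th$ (the requirement $\e\ll\th$ makes $|\bm_\ell/\rho_\ell-\bm^*_\ell/\rho_\ell|\le\e/\sqrt{\th\rho_\ell}$ small), the number of admissible spins is at least $c\,\eta\,\rho_\ell N$ for a universal $\eta>0$. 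Hence the per-step probability is at least $c(n)\equiv c'\eta\,\rho_{\min}$ with $\rho_{\min}=\min_\ell\rho_\ell$, and for a regular partition $\rho_{\min}\asymp1/n$, so $|\ln c(n)|=O(\ln n)$. Therefore, for all $N$ large,
\[
\P_\s\big(\t_B<\t_{\cS[\partial_A\bG_\th]}\big)\ \ge\ c(n)^{L}\ \ge\ \exp\!\Big(-\tfrac{N\e}{\sqrt\th}\,|\ln c(n)|\Big),
\]
and since $\th\gg\max_\ell\rho_\ell\asymp\e$ one has $\e|\ln c(n)|/\sqrt\th=\sqrt\e\,|\ln c(n)|\sqrt{\e/\th}=\son$ as $n\to\infty$; choosing $n_0=n_0(\d_2)$ so that this quantity is $\le\b\d_2$ for $n\ge n_0$ yields the claim (the polynomial-in-$N$ loss from the first step is absorbed for $N$ large).

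\emph{Main obstacle.} The delicate point is the claim that the trajectory genuinely stays in $\bG_\th$ after the first step, so that it never meets $\cS[\partial_A\bG_\th]$. This rests on the monotonicity of $d$ along the segment approximation, on the grid being fine enough that the $O(1/N)$ overshoots at individual coordinates cannot accumulate past $\{d\le\e^2/\th\}$, and on the fact that the only exit from $\bG_\th$ available to an ``inward'' trajectory is through $\partial_B\bG_\th\subseteq\bB\subseteq B$ --- precisely what is wanted. The remaining ingredients, namely the uniform lower bound on the number of admissible flips and the length bound $L=O(N\e/\sqrt\th)$, are routine and of the same flavour as the estimates already carried out in Sections~3--5.
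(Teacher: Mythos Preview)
Your proposal is correct and follows essentially the same strategy as the paper: construct a mesoscopic nearest-neighbour path from $\bm(\s)$ towards $\bm^*$ by moving each coordinate monotonically towards its target, use the monotonicity of $d(\cdot)$ to keep the path inside $\bG_\th$ (hence away from $\partial_A\bG_\th$), bound each step from below, and multiply.

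The only notable difference is in the per-step bookkeeping. You use the uniform lower bound $c(n)=c'\eta\,\rho_{\min}$ at every step and then invoke $\rho_{\min}\asymp 1/n$ to get $|\ln c(n)|=O(\ln n)$. The paper instead retains the coordinate-dependent bound $c'\rho_{\ell_t}$ at step $t$, obtaining $\prod_\ell \rho_\ell^{|\bv_\ell|N}$, and then uses $\ln(1/\rho_\ell)\le 1/\sqrt{\rho_\ell}$ together with the weighted Cauchy--Schwarz inequality $\sum_\ell |\bv_\ell|/\sqrt{\rho_\ell}\le (\sum_\ell \bv_\ell^2/\rho_\ell)^{1/2}\sqrt n\le \sqrt{\e/\th}$. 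The paper's route therefore needs no lower bound on $\rho_{\min}$, a slight gain in generality; your route is marginally quicker but tacitly assumes a regular partition. As a minor simplification of your own argument: the closeness of $\bm_\ell/\rho_\ell$ to $\bm^*_\ell/\rho_\ell$ on $\bG_\th$ is not actually needed for the spin-count bound, since moving towards $\bm^*_\ell$ already gives $|\L_\ell^{s_t}(\bx)|\ge |\L_\ell^{s_t}(\bm^*)|\ge c\eta\rho_\ell N$ directly --- this is how the paper handles it.
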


\begin{proof}
Fix $\s\in\cS[\partial_A \bG_\th]$ and set $\bm(0)\equiv\bm(\s)$.
As pointed out in the proof of Lemma \thv(lemma_remark),
every $\bm(0)\in\partial_A \bG_\th$ can be written
in the form $\bm(0)=\bm^* +\bv$,
with $\bv\in\G_N^n$ such that $\|\bv\|_2\leq \e$.
Then, let
$\underline{\bm}=(\bm(0),\bm(1),\ldots,\bm(\|\bv\|_1N)\equiv \bm^*)$
be a nearest neighbor path in $\G_N^n$ from $\bm(0)$ to $\bm^*$, of
 length $N\|\bv\|_1$, with
the following property: Denoting by $\ell_t$
the unique index in $\{1,\ldots, n\}$ such that
$\bm_{\ell_t}(t)\ne\bm_{\ell_t}(t-1)$, it holds that
\be
\Eq(propertypath)
\bm_{\ell_t}(t)=\bm_{\ell_t}(t-1) + \sfrac{2}{N}
s_t,\quad \forall t\geq1,
\ee
where we define 
\be\Eq(soft)
s_t\equiv \sign\left(\bm^*_{\ell_t}-\bm_{\ell_t}(t-1)\right).
\ee
Note that, by property \eqv(propertypath),
$\bm(t)\in\bG_\th$ for all $t\geq 0$.
Thus, all microscopic paths, $(\s(t))_{t\geq 0}$,
 such that $\s(0)=\s$ and $\bm(\s(t))=\bm(t)$,
 for all $t\geq 1$,
 are contained in the event 
 $\{\t_B< \t_{\cS[\partial_A \bG_\th]}\}$. Thus we get that 

\bea\Eq(minpath.1)
\P_\s(\t_B< \t_{\cS[\partial_A \bG_\th]})
&\geq&
\P_\s (\bm(\s(t))=\bm(t), \forall t=1,\ldots,\|\bv\|_1N)
\nonumber\\
&=&
\prod_{t=1}^{\|\bv\|_1N}\P_\s(\bm(\s(t))=\bm(t)\big{|}\bm(\s(t-1))=\bm(t-1))
\nonumber\\
&=&
\prod_{t=1}^{\|\bv\|_1N}\sum_{i\in\L_{\ell_t}^{s_t} }p_N(\s(t-1),\s^i(t-1)).
\eea

Note that  $\L^{s_t}_{\ell_t}$ is the  set of sites in which a spin-flip
corresponds to a step from $\bm(t-1)$ to $\bm(t)$.

The sum of the probabilities in the r.h.s. of \eqv(minpath.1)
corresponds to the quantity $\P^{\s(t-1)}_{s_t,\ell_t}$ defined in
\eqv(time.50). From the inequalities
\eqv(claim.2) 
and \eqv(bounded.2), it follows that, for some constant $c>0$
depending on $\b$ and on the distribution of the field,
\be\Eq(ratespath.1)
\P^{\s(t-1)}_{s_t,\ell_t}\geq
c |\L_{\ell_t}^{s_t}(\bm(t-1))|/N\geq
c |\L_{\ell_t}^{s_t}(\bm^*)|/N,
\ee
where the second inequality follows by our choice of the path
$\underline{\bm}$.
Now, since $|\L_{\ell}^\pm(\bm^*)|/N=
\sfrac{1}{2}\left(\rho_{\ell}\pm\bm^*_{\ell}\right)$,
using the expression \eqv(fine.9) for $\bm^*_{\ell_t}$
and continuing from \eqv(ratespath.1), we obtain
\be\Eq(ratespath.2)
\P^{\s(t-1)}_{s_t,\ell_t}\geq c' \rho_{\ell_t}.
\ee
Inserting the last inequality in \eqv(minpath.1), and using that,
by definition of the path $\underline{\bm}$,
the number of steps corresponding to a spin-flip in $\L_\ell$
is equal to $|\bv_\ell|N$, for all $\ell=\{1,\ldots,n\}$ ,
we get
\bea\Eq(minpath.2)
\P_\s(\t_B< \t_{\cS[\partial_A \bG_\th]})
&\geq&
\prod_{t=1}^{\|\bv\|_1N}c' \rho_{\ell_t}
\nonumber\\
&=&
e^{\|\bv\|_1N\ln(c')}
\prod_{\ell=1}^n \rho_\ell^{|\bv_\ell|N}
\nonumber\\
&\geq&
e^{N\sqrt\e \ln(c')}
e^{-N \sum_{\ell=1}^n \bv_\ell \ln\left(1/\rho_\ell\right)}
\nonumber\\
&\geq&
e^{ N\sqrt\e \ln(c')}
e^{-N \sum_{\ell=1}^n \bv_\ell/\sqrt\rho_\ell}
\nonumber\\
&\geq&
e^{N\e\ln(c')}
e^{-N \left(\sum_{\ell=1}^n \bv_\ell^2/\rho_\ell\right)^{1/2}\e^{-1/2}}
\nonumber\\
&\geq& e^{-N \left(\sqrt{\sfrac{\e}{\th}}- \sqrt\e \ln(c')\right)},
\eea
where in the third line we used  the inequality
$\|\bv\|_1\leq \e^{-1/2}\|\bv\|_2 \leq \sqrt\e$, and
in the last line we used that $\bm(0)= \bm^* + \bv \in\bG_\th$.
By our choice of $\th\gg\e$, there exists $n_0\in\N$
such that, for all $n\geq n_0$,
$\sqrt{\sfrac{\e}{\th}}- \sqrt\e \ln(c')\leq \b\d_2$.
For such $n$'s, inequality \eqv(minpath.2)
yields the bound \eqv(claim.04)
and concludes the proof of the Lemma.
\end{proof}
We finally state  the following proposition:
\begin{proposition}\TH(prop.claim.1)
For all $\s\in\cS[\bU_\d]$ it holds that
\be\Eq(prop.claim.2)
\P_\s(\t_A<\t_B) \leq
e^{-\b N \left((1-\a)\left(F_{\b,N}(m_0^*)
+\d -F_{\b,N}(m^*)-c\e\right) -\d_2\right)}(1+\po(1))
\ee
\end{proposition}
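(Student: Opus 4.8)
The plan is to obtain~\eqv(prop.claim.2) by a renewal argument organised around the ``inner wall'' $R\equiv\cS[\partial_A\bG_\th]$, assembling the three one-sided estimates already at hand: Proposition~\thv(claim.01), which bounds the probability of running from a configuration in $\cS[\bU_\d\setminus\bG_\th]$ all the way to $A$ without first touching $R$ or $B$; the bound~\eqv(escape.3), which bounds the probability $q_c$ that one excursion started in $R$ reaches $A$ before returning to $R$ or hitting $B$; and Lemma~\thv(lemma.claim.1), which bounds from below the probability $q_b$ that such an excursion reaches $B$ before returning to $R$. Throughout we use the convention $\t_C=\inf\{t>0:\s(t)\in C\}$, so these are genuine return times for chains started in $R$.

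\emph{Step 1 (a geometric bound over excursions from $R$).} Write $q_c\equiv e^{-(1-\a)\b N(F_{\b,N}(m_0^*)+\d-F_{\b,N}(m^*)-c\e)}$ and $q_b\equiv e^{-\b N\d_2}$. By the strong Markov property at $\t_R\wedge\t_A\wedge\t_B$, an excursion started at $\s\in R$ ends in exactly one of three ways: it returns to $R$, it is absorbed in $B$ (so that $\t_A>\t_B$), or it is absorbed in $A$. By~\eqv(escape.3) the probability of the last is at most $q_c$, and, using $\{\t_B<\t_R\}\subseteq\{\t_B<\t_{R\cup A}\}\cup\{\t_A<\t_{R\cup B}\}$ together with Lemma~\thv(lemma.claim.1), the probability of the middle one is at least $q_b-q_c$; hence the probability of returning to $R$ is at most $1-q_b+q_c$. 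Summing over the number of returns,
\[
\sup_{\s\in R}\P_\s(\t_A<\t_B)\ \leq\ \sum_{k\geq 0}(1-q_b+q_c)^k\,q_c\ =\ \frac{q_c}{q_b-q_c}.
\]
Since $F_{\b,N}(m^*)<F_{\b,N}(m_0^*)$, the difference $F_{\b,N}(m_0^*)+\d-F_{\b,N}(m^*)$ stays bounded away from zero, so $\d_2$ and $\e=1/n$ can be chosen small enough that $(1-\a)(F_{\b,N}(m_0^*)+\d-F_{\b,N}(m^*)-c\e)>\d_2$; then $q_c/q_b=\po(1)$ and
\[
\sup_{\s\in R}\P_\s(\t_A<\t_B)\ \leq\ e^{-\b N[(1-\a)(F_{\b,N}(m_0^*)+\d-F_{\b,N}(m^*)-c\e)-\d_2]}(1+\po(1)).
\]

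\emph{Step 2 (from $R$ to all of $\cS[\bU_\d]$).} If $\s\in\cS[\bG_\th]$, every trajectory realising $\{\t_A<\t_B\}$ must leave $\bG_\th$, and since the only part of $\partial\bG_\th$ not contained in $B$ is $\partial_A\bG_\th$, it does so through $R$; the strong Markov property at $\t_R$ gives $\P_\s(\t_A<\t_B)\leq\sup_{\s'\in R}\P_{\s'}(\t_A<\t_B)$. If $\s\in\cS[\bU_\d\setminus\bG_\th]$, decomposing the first excursion at $\t_R\wedge\t_A\wedge\t_B$ shows it reaches $A$ directly with probability at most $e^{-(1-\a)\b N(F_{\b,N}(m_0^*)+\d-F_{\b,N}(\bm(\s)))}$ (Proposition~\thv(claim.01)) and otherwise enters $R$, after which Step~1 applies; since $F_{\b,N}(\bm(\s))\geq F_{\b,N}(m(\bm(\s)))\geq F_{\b,N}(m^*)$, the direct term is controlled by the Step~1 bound near $\bm^*$ and by the slack factor $e^{(1-\a)\b N c\e+\b N\d_2}$ present in~\eqv(prop.claim.2) elsewhere, and~\eqv(prop.claim.2) follows.

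\emph{Where the difficulty lies.} Given the three inputs the renewal bookkeeping is routine; the only delicate points in it are ensuring convergence of the geometric series ($q_c\ll q_b$, whence the admissible parameter ordering $\e=1/n\ll\th\ll\d_2,\a$ with $\d$ a fixed small constant) and the slack in~\eqv(prop.claim.2) needed to make the estimate uniform over $\cS[\bU_\d]$. The part of the story that is genuinely hard — but already carried out in the excerpt — is the escape estimate~\eqv(escape.3), which rests on the super-harmonicity of $\psi=e^{(1-\a)\b N F_{\b,N}}$ throughout $\cS[\bU_\d\setminus\bG_\th]$ (Proposition~\thv(time.20)); establishing that forced the case analysis of the local generator over boundary layers $\bx_\ell/\rho_\ell\approx\pm1$ (Lemma~\thv(boundary-g.1)), over small-gradient indices whose positive contribution is only $O(\e^2)$, and the quantitative lower bound $\sum_{\ell\in S(\bx)}\rho_\ell\,g_\ell^2(\bx)\gtrsim\e^2/\th$ of Lemma~\thv(grad) that makes the net drift strictly negative off $\bG_\th$.
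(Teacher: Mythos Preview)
Your proof follows essentially the same renewal argument as the paper: first bound $\sup_{\s\in R}\P_\s(\t_A<\t_B)$ for $R=\cS[\partial_A\bG_\th]$, then extend to all of $\cS[\bU_\d]$ via first passage at $R$. In Step~1 you sum the geometric series over excursions explicitly, obtaining $q_c/(q_b-q_c)$; the paper instead writes the equivalent self-referential inequality $M\leq q_c+M(1-e^{-\b N\d_2})$ and rearranges to $M\leq q_c\,e^{\b N\d_2}$---the two agree up to the $(1+\po(1))$.

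One small slip in your Step~2: the inequality $F_{\b,N}(\bm(\s))\geq F_{\b,N}(m^*)$ that you invoke goes the wrong way for an \emph{upper} bound on the direct-hit term $e^{-(1-\a)\b N[F_{\b,N}(m_0^*)+\d-F_{\b,N}(\bm(\s))]}$ (you would need $F_{\b,N}(\bm(\s))\leq F_{\b,N}(m^*)+c\e+\d_2/(1-\a)$, which is not uniform over $\cS[\bU_\d\setminus\bG_\th]$). The paper is equally imprecise at this very point---it simply asserts the term is ``exponentially small by Proposition~\thv(claim.01)''---and the looseness is immaterial for the downstream application in~\eqv(upper3), where the Gibbs weight $\mu_{\b,N}(\s)$ compensates for configurations with large $F_{\b,N}(\bm(\s))$.
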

\begin{proof}
Let us first consider a configuration $\s\in\cS[\partial_A\bG_\th]$.
Then it holds
\bea\Eq(renewal.1)
\P_\s(\t_A<\t_B)&\leq&
\P_\s(\t_A<\t_{\cS[\partial_A\bG_\th]\cup B}) +
\sum_{\eta\in\cS[\partial_A\bG_\th]}
\P_\s(\t_A<\t_B,\,\t_{\eta}\leq\
\t_{\cS[\partial_A\bG_\th]\cup A\cup B})\quad\quad
\nonumber\\
&\leq&
\P_\s(\t_A<\t_{\cS[\partial_A\bG_\th]\cup B}) +
\max_{\eta\in\cS[\partial_A\bG_\th]}
\P_\eta(\t_A<\t_B) \P_\s(\t_{\cS[\partial_A\bG_\th]}<\t_{B})
\nonumber\\
&\leq& \P_\s(\t_A<\t_{\cS[\partial_A\bG_\th]\cup B}) +
\max_{\eta\in\cS[\partial_A\bG_\th]}
\P_\eta(\t_A<\t_B) \left(1-e^{-\b N\d_2 }\right),
\nonumber\\
\eea
where in the second line we applied the Markov property,
and in the last line
we insert the result  \eqv(prop.claim.1).
Taking the maximum over $\s\in\cS[\partial_A\bG_\th] $
on both sides of \eqv(renewal.1),
and rearranging the summation,  we get
\bea\Eq(renewal.2)
\max_{\s\in\cS[\partial_A\bG_\th]} \P_{\s}(\t_A<\t_B)
&\leq& \max_{\s\in\cS[\partial_A\bG_\th\cup B]}
\P_\s(\t_A<\t_{\cS[\partial_A\bG_\th]})e^{\b N\d_2}
\nonumber\\
&\leq&
e^{-\b N \left((1-\a)\left(F_{\b,N}(m_0^*)
+\d -F_{\b,N}(m^*)-c\e\right) -\d_2\right)},
\eea
where in the last line we used the bound \eqv(escape.3).
This concludes the proof of \eqv(prop.claim.2)
for $\s\in\cS[\partial_A\bG_\th]$.

Then, let us consider $\s\in\cS[\bU_\d\setminus\partial_A\bG_\th]$.
As before, it holds
\bea\Eq(renewal.3)
\P_\s(\t_A<\t_B)&\leq&
\P_\s(\t_A<\t_{\cS[\partial_A\bG_\th]\cup B}) +
\sum_{\eta\in\cS[\partial_A\bG_\th]}
\P_\s(\t_A<\t_B,\,\t_{\eta}\leq
\t_{\cS[\partial_A\bG_\th]\cup A\cup B})
\nonumber\\
&\leq&
\P_\s(\t_A<\t_{\cS[\partial_A\bG_\th]\cup B})
+ \max_{\eta\in\cS[\partial_A\bG_\th]}
\P_\eta(\t_A<\t_B) \P_\s(\t_{\cS[\partial_A\bG_\th]}<\t_{B})
\nonumber\\
&\leq& \P_\s(\t_A<\t_{\cS[\partial_A\bG_\th]\cup B}) +
\max_{\eta\in\cS[\partial_A\bG_\th]}
\P_\eta(\t_A<\t_B) ,
\eea
where $\P_\s(\t_A<\t_{\cS[\partial_a\bG_\th]\cup B})$ is $0$ for all
$\s\in\cS[\bG_\th\setminus\partial_A\bG_\th]$,
and exponentially small in $N$ for all $\s\in\cS[\bU_\d\setminus\bG_\th]$
(due to Proposition \thv(claim.01)).
Inserting the bound \eqv(renewal.2) in the last equation, provides Eq.
\eqv(prop.claim.2) for $\s\in\cS[\bU_\d\setminus\partial_A\bG_\th]$ and 
concludes the proof.
\end{proof}

The proof of formula \eqv(time.2) now follows straightforwardly.
From  \eqv(prop.claim.2), we get
\bea\Eq(upper3)
&&\hspace{-1cm}\sum_{\s\in\cS[\cU_\d(m^*)]} \mu_{\b,N}(\s) \P_\s(\t_A<\t_B)
\nonumber\\
&&\quad\quad\quad\leq e^{-\b N
\left[(1-\a)\left(F_{\b,N}(m_0^*) +\d- F_{\b,N}(m^*)-c\e\right) -
\d_2
\right]}\sum_{\bm\in \bU_\d}\QQ_{\b,N}(\bm)
 \nonumber\\
&&\quad\quad\quad=\QQ_{\b,N}(m_0^*)e^{\b N \left[\a F_{\b,N}(m_0^*)
-(1-\a)(\d-F_{\b,N}(m^*)-c\e)+\d_2\right]}
\sum_{\bm\in\bU_\d} e^{-\b NF_{\b,N}(\bm)}
\nonumber\\
&&\quad\quad\quad\leq \QQ_{\b,N}(m_0^*) N^n e^{\b N\left[\a\left(F_{\b,N}
(m_0^*)- F_{\b,N}(m^*)\right)-(1-\a)\left(\d-c \e
\right)+\d_2\right]},
\eea
where in the second inequality  we used the expression \eqv(static.11.1)
for $\QQ_{\b,N}(m_0^*)$, while in the last line we applied the bound
$F_{\b,N}(\bm)\leq F_{\b,N}(\bm^*)= F_{\b,N}(m^*)$,
and then bounded the cardinality of $\bU_\d$ by $N^n$.
Finally, choosing $\a$ small enough, namely
\be\Eq(me.1)\a < \frac{\d-c\e-\d_2}{F_{\b,N}(m_0^*)-
F_{\b,N}(m^*) +\d -c \e},
\ee
  we can easily
 ensure that \eqv(upper3) implies \eqv(time.2).

In exactly the same way one proves \eqv(time.3).
This concludes the proof of Lemma \thv(time.1.2) and thus
of  Theorem \thv(theorem1).


 \end{document}